\newtheorem{theorem}{Theorem}[section]
\newtheorem{lemma}[theorem]{Lemma}
\newtheorem{notation}[theorem]{Notation}
\newtheorem{convention}[theorem]{Convention}
\newtheorem*{notation*}{Notation}
\newtheorem*{convention*}{Convention}
\newtheorem{prop}[theorem]{Proposition}
\theoremstyle{definition}
\newtheorem{definition}[theorem]{Definition}
\newtheorem{example}[theorem]{Example}
\newtheorem{exercise}[theorem]{Exercise}
\theoremstyle{remark}
\newtheorem{remark}[theorem]{Remark}
\numberwithin{equation}{section}
\newcommand{\abs}[1]{\lvert#1\rvert}
\DeclareMathOperator{\GL}{GL}%
\DeclareMathOperator{\Div}{Div}%
\DeclareMathOperator{\dom}{dom}%
\DeclareMathOperator{\ind}{ind}%
\DeclareMathOperator{\Eff}{Eff}%
\DeclareMathOperator{\Fix}{Fix}
\DeclareMathOperator{\ord}{ord}
\DeclareMathOperator{\supp}{supp}
\DeclareMathOperator{\mult}{mult}
\DeclareMathOperator{\crem}{c}
\DeclareMathOperator{\topo}{top}
\DeclareMathOperator{\codim}{codim}
\DeclareMathOperator{\graph}{graph}
\DeclareMathOperator{\Bl}{Bl}
\DeclareMathOperator{\trdeg}{trdeg}
\DeclareMathOperator{\rk}{rk}
\DeclareMathOperator{\id}{id}
\DeclareMathOperator{\Bir}{Bir}
\DeclareMathOperator{\Bs}{Bs}
\DeclareMathOperator{\coeff}{coeff}
\DeclareMathOperator{\red}{red}
\DeclareMathOperator{\divi}{div}
\DeclareMathOperator{\stab}{stab}
\DeclareMathOperator{\Span}{Span}
\DeclareMathOperator{\Hom}{Hom}
\newcommand{\BA}{\mathbb{A}}
\newcommand{\BC}{\mathbb{C}}
\newcommand{\BN}{\mathbb{N}}
\newcommand{\BZ}{\mathbb{Z}}
\newcommand{\BQ}{\mathbb{Q}}
\newcommand{\BR}{\mathbb{R}}
\newcommand{\BP}{\mathbb{P}}
\newcommand{\BF}{\mathbb{F}}
\newcommand{\OC}{\mathscr{C}}
\newcommand{\OP}{\mathscr{P}}
\newcommand{\OQ}{\mathscr{Q}}
\newcommand{\OR}{\mathscr{R}}
\newcommand{\OOE}{\mathscr{E}}
\newcommand{\OO}{\mathscr{O}}
\newcommand{\Fm}{\mathfrak{m}}
\DeclareMathOperator{\Cr}{Cr}
\DeclareMathOperator{\Aut}{Aut}
\DeclareMathOperator{\Pic}{Pic}
\newcommand{\pain}[1]{\operatorname{P}_{\mathrm{#1}}}
\newcommand{\CU}{\mathcal{U}}
\newcommand{\CW}{\mathcal{W}}
\newcommand{\nought}[1]{\accentset{\circ}{#1}}
\newcommand{\defeq}{\vcentcolon=}
      \numberwithin{equation}{section}
\begin{document}

\title{Classical Algebraic Geometry and Discrete Integrable Systems}

\author{Gessica Alecci} 
\address{Politecnico di Torino, Corso Duca degli Abruzzi 24, 10129, Torino (Italy)}
\email{gessica.alecci@polito.it} 

\author{Michele Graffeo}
\address{SISSA, Via Bonomea 265, 34136, Trieste (Italy)}

\email{mgraffeo@sissa.it}

\author{Alexander Stokes}
\address{Waseda Institute for Advanced Study, Waseda University, 1-21-1 Nishi Waseda Shinjuku-ku Tokyo 169-0051 (Japan)}
\email{stokes@aoni.waseda.jp}
\thanks{We thank Giorgio Gubbiotti for many useful discussions. We thank the anonymous reviewer for the helpful comments and suggestions, which greatly improved the notes. A.~S. was supported by Japan Society for the Promotion of Science (JSPS) KAKENHI Grant Number 24K22843. G.~A. thanks SISSA for the hospitality while writing these notes.  G.~A. and M.~G. are members of the GNSAGA - INdAM}
\subjclass[2020]{Primary 14J81; Secondary 34M55}
\date{May 15, 2025 and, in revised form, XXXX.}


\keywords{Algebraic surfaces, algebraic entropy, Painlev\'e equations}

\begin{abstract}
The aim of these notes is to present an accessible overview of some topics in classical algebraic geometry which have applications to aspects of discrete integrable systems. 
Precisely, we focus on surface theory on the algebraic geometry side, which is applied to differential and discrete Painlev\'e equations on the integrable systems side.
Along the way we also discuss the theory of resolution of indeterminacies, which is applied to the cohomological computation of algebraic entropy of birational transformations of projective spaces, which is closely related to the integrability of the discrete systems they define.

\end{abstract}

\maketitle


\section*{Introduction}
Classical algebraic geometry interacts with the theory of integrable systems in many ways.
In these notes we focus on some topics that have appeared frequently in SIDE conferences over the years, namely those related to singularity confinement and algebraic entropy, as well as the Okamoto--Sakai theory of spaces of initial conditions for differential and  difference Painlev\'e equations.
These have deep connections to objects of study in classical algebraic geometry. 
For instance, the theory of rational elliptic surfaces provides many of the foundations for the Okamoto--Sakai theory. 
Also, singularity confinement and algebraic entropy are deeply related to notions of regularisation of birational transformations of complex projective spaces. 
The aim of these lectures is to present a hands-on  introduction to some of the tools from classical algebraic geometry that are needed to face problems coming from (discrete) integrable systems.

In \Cref{sec:AG}, we  discuss with explicit formulas and examples the resolution of indeterminacies of maps through the blow-up procedure and its generalisations, and provide the reader with the necessary vocabulary to understand Sakai's description of discrete Painlev\'e equations in terms of generalised Halphen surfaces.

In \Cref{subsec:varieties}, we introduce the basic notions of varieties and morphisms. Then, in \Cref{subsec:ratmap,subsec:linebundledivisors}, we demonstrate how to relate projective varieties via rational maps, and explain how linear systems can be used to produce rational maps that take values in projective spaces.  In \Cref{subsec:intersection,subsec:canonical}, we begin to focus on surface theory by introducing the intersection pairing on smooth, projective, complex algebraic surfaces, as well as their canonical bundles. Finally, in \Cref{subset:ratsurf}, we present the first examples of rational surfaces.

In \Cref{sec:2}, we demonstrate the calculation of the action on cohomology of a map, which can be used to compute its algebraic entropy. Precisely, in \Cref{subset:degreebir} we present the main definition and properties of the degree of a birational transformation of $\BP^n$ and of algebraic entropy and then, in 
\Cref{subsec:2example}, we provide an explicit example of computation in dimension three.
 
In \Cref{sec:sec3}, we shift our focus to applications of the concepts from  \Cref{subsec:varieties,subsec:ratmap,subsec:linebundledivisors} to specific situations in integrable systems, namely the descriptions of differential and discrete Painlev\'e equations in terms of rational surfaces associated with affine root systems.
In \Cref{subsec:okamotospace} we will explain Okamoto's construction of spaces of initial conditions for the Painlev\'e differential equations, and in \Cref{example:wp} demonstrate the explicit calculations involved. 

Then we will move on to the more general Sakai framework for discrete and differential Painlev\'e equations in terms of generalised Halphen surfaces, beginning in \Cref{subsec:QRT} with methods for constructing spaces of initial conditions for discrete systems defined by birational maps.
In \Cref{subsec:generalisedHalphensurfaces}  we use the terminology established in \Cref{sec:AG} to describe the rational surfaces appearing in the Sakai framework, give an account of their classification in \Cref{subsec:classificationofSakaisurfaces} and finally give the definition of discrete Painlev\'e equations in terms of symmetries of these surfaces in \Cref{subsec:symmetries}.
We conclude by illustrating the general theory in the example about surfaces of type $D_5^{(1)}$ in \Cref{sec:KNY}.

\section{Algebraic geometry}\label{sec:AG}
\subsection{Quasi-projective varieties}\label{subsec:varieties}
We work over the field of complex numbers $\BC$. We denote by $R=\BC[x_0,\ldots,x_n]$ the polynomial ring in $n+1$ variables, without mentioning the dependence on $n$. We also denote by $\BP^n=\BP(\BC^{n+1})$ the $n$-dimensional projective space. Notice that, to ease the reader, we omit the field $\BC$ from the notation. Similarly, we denote the $n$-dimensional complex affine space by $\BA^n$, but when dealing with differential equations we will switch to the more appropriate vector space notation $\BC^n$ and vice versa, depending on our convenience.

Given a subset $X\subset \BP^n$, we denote by $I(X)\subset R$ the ideal
\[
I(X)=\left(\Set{f\in R | f \mbox{ is homogeneous and } f(x)=0,\ \forall x\in X}\right)\subset R.
\]
Clearly, $I(X)$ is a homogeneous ideal by definition. Moreover, the ideal $I(X)$ is \textit{radical}, i.e. if $f^m\in I(X)$ for some $f\in R$ and some $m\ge 0$, then $f\in I(X)$. Finally, as a consequence of \textit{Hilbert's basis theorem} \cite[Theorem 1.2]{EISENBUD}, the ideal $I(X)$ is finitely generated.

\begin{definition}
    A   subset $X\subset \BP^n$ is a \textit{projective variety} if  
    \[
    X=\Set{p\in \BP^n| f (p)=0,\ \forall f\in I(X)}\!.
    \]
    Given a set of generators $\Set{f_1,\ldots,f_s}$ of the ideal $I(X)$, we write sometimes $X=V(f_i\ |\ i=1,\ldots,s)$ to keep track of the $f_i$'s. We say that $X $ is a \textit{hypersurface} if $X=V(f)$, for some homogeneous $f\in R$.
\end{definition}

\begin{remark}\label{rem:chow}
    As a consequence of \textit{Chow's theorem} \cite[Chap I.3.1]{GRIHARR}, any complex (analytic) variety  holomorphically embedded in $\BP^n$ admits the structure of a projective variety. Recall that there is a bijective correspondence (\textit{GAGA}, \cite{GAGA}), preserving exactness, cohomology, and all classical constructions between algebraic and analytic coherent sheaves. Therefore, it is not restrictive to consider a closed analytic subvariety of $\BP^n$ as an algebraic one. 
\end{remark}

We endow the projective space $\BP^n$ with the structure of a \textit{Zariski topological space}. Precisely, we declare a subset $\CU\subset \BP^n$ open if its complement $\BP^n\setminus \CU$ is a projective variety.
\begin{definition}
    A \textit{quasi-projective variety}, or simply a \textit{variety}, is  $X\cap 
\CU \subset \BP^n$ for some projective variety $X\subset \BP^n$ and some open subset $\CU\subset\BP^n$. We endow quasi-projective varieties with the Zariski topology induced by the ambient space $\BP^n$. 
    A quasi-projective variety $X$ is \textit{irreducible} if it does not have two proper and disjoint open subsets. Finally, a \textit{subvariety} is a subset of a variety which is a variety itself. 
\end{definition}
\begin{remark}     Notice that projective varieties are precisely the
quasi-projective varieties that are closed in the Zariski topology, i.e. those having $\mathcal{U} = \BP^n$.

It is worth mentioning that often, in the literature, the term projective variety is used to name irreducible Zariski closed subsets of the projective space and similarly for quasi-projective varieties. Here, we adopt the (less usual) terminology from \cite{GEOFSCHEME} not requiring irreducibility, because in many instances we work with not necessarily irreducible geometrical objects.
\end{remark}

\begin{notation}\label{rmk:coorchart}
    Recall that the projective space $\BP^n$ is covered by the \textit{coordinate atlas} $\mathscr U=\Set{\CU_i}_{i=0}^n$ where $\CU_i=\Set{[x_0:\cdots:x_n]\in\BP^n|x_i\not=0}\cong\BA^n$. We use this atlas to endow  subvarieties of $\BP^n$ with an atlas.  If many sets of variables are involved, we write $\CU_{x_i}$ in place of $\CU_i$.

    Similarly we have a coordinate atlas on $\BP^1\times \BP^1$. Precisely, we put 
    \[
    \mathcal{U}_{i,j}=\Set{([x_0:x_1],[y_0:y_1])\in\BP^1\times\BP^1| x_i\not=0,\ y_j\not=0}\!.
    \]
    In order to ease the notation we put 
    \[
    x=\frac{x_1}{x_0},\    X=\frac{x_0}{x_1},\    y=\frac{y_1}{y_0},\    Y=\frac{y_0}{y_1}. 
    \]
    Therefore, we get the coordinate charts
    \begin{equation*}
    \begin{aligned}
        &\mathcal{U}_{0,0} \cong \BA^2_{(x,y)}, &&\qquad\mathcal{U}_{1,0} \cong \BA^2_{(X,y)}, \\  
        &\mathcal{U}_{0,1}  \cong \BA^2_{(x,Y)}, &&\qquad\mathcal{U}_{1,1}  \cong \BA^2_{(X,Y)}. 
        \end{aligned}
    \end{equation*}   
    
\end{notation}

\begin{definition}
    Let $X\subset \BP^n$ be a quasi-projective variety. A \textit{regular function} on $X$ is a function $f:X\rightarrow\BC$ such that, for every point $p\in X$, there exists an open neighborhood $\CU\subset X$ of $p$ and two homogeneous polynomials $g,h\in R$ of the same degree such that $V(h)\cap \CU=\varnothing$ and $f|_\CU\equiv g/h$.   We denote by $\BC[X]$ the set of regular functions on $X$.
\end{definition}
 
\begin{definition}  Let $X\subset \BP^n$ and $Y\subset\BP^m$ be two quasi-projective varieties. A continuous map $\varphi:X\to Y$ is a \textit{morphism}, if $f\circ \varphi:\varphi^{-1}(\CU) \to \BC$ is a regular function   for every $\CU\subset Y$ open and every $f\in\BC[\CU]$. 

\begin{remark}\label{rem:morphismring}
    Notice that the set $\BC[X]$ admits a structure of ring and that we have $\BC[X]\cong \BC$ whenever $X$ is a projective variety.
\end{remark} 

Composition of morphisms is defined in the usual way.  We say that a morphism $\varphi:X\to Y$ is \textit{dominant} if its \textit{image} $\varphi(X)$ is dense in $Y$. Sometimes, in this case we will also say that $X$ \textit{dominates} $Y$. It is an \textit{isomorphism} if there exists a morphism $\psi:Y\to X$ such that $\psi\circ \varphi \equiv \id_X $ and $\varphi\circ \psi\equiv \id_Y$.
\end{definition}

\begin{remark}Let $X\subset \BP^n$ and $Y\subset\BP^m$ be two quasi-projective varieties.    Given $m+1$ homogeneous polynomials $f_0,\ldots,f_m\in R$ of the same degree such that
    \begin{itemize}
        \item $X\cap  V(f_0,\ldots,f_m)=\varnothing$, and
        \item  $[f_0(p):\cdots:f_m(p)]\in Y$, for all $p\in X$,
    \end{itemize}
   one can define a morphism $f : X\to Y$.
\end{remark}

Many classical geometrical objects admit structures of projective varieties, i.e. they can be holomorphically embedded in projective spaces, see \Cref{rem:chow}. A basic example is provided by \textit{Segre embeddings} of products of projective spaces, which we explain in \Cref{exa:1.1}. 
In \Cref{exa:1.2} we discuss the two-dimensional case in more detail.

\begin{example} \label{exa:1.1}

Fix some $m,n\ge 1$ and put homogeneous coordinates $x_i$, $y_j$, $z_{i,j}$, for $i=0,\ldots,n$ and $j=0,\ldots,m$, on $\BP^n,\BP^m$ and $\BP^{mn+m+n}$ respectively. Let us denote the points in $\BP^{mn+m+n}$ as matrices
\[\begin{bmatrix}
    z_{0,0}&\cdots&z_{0,m}\\
    \vdots&\ddots&\vdots\\
    z_{n,0}&\cdots&z_{n,m}\\
\end{bmatrix}\in \BP^{nm+n+m}.\]
Then, the map 
\[
\begin{tikzcd}[row sep=tiny]
\BP^n\times\BP^m\arrow[r,"s_{n,m}"]&\BP^{nm+n+m}\\
([x_0:\cdots:x_n],[y_0:\cdots:y_m])\arrow[r,mapsto]& \begin{bmatrix}
    x_{0}y_0&\cdots&x_{0}y_m\\
    \vdots&\ddots&\vdots\\
    x_{n}y_0&\cdots&x_{n}y_m\\
\end{bmatrix}
\end{tikzcd}
\]
is an isomorphism between $\BP^n\times\BP^m$ and the projective variety 
\[
S_{n,m}=\Set{[M]\in\BP^{mn+n+m}|\rk M\le 1}\!.
\]
The morphism $s_{n,m}$ is the \textit{Segre $(n,m)$-embedding}, see \cite[Sec. 5.1]{Shafarevich} for more details. On the open subset $S_{n,m}\cap\CU_{z_{i,j}}$, its inverse has the form
\[
\begin{tikzcd}[row sep=tiny]
S_{n,m}\cap\CU_{z_{i,j}}\arrow[r]&\BP^n\times\BP^m\\
\begin{bmatrix}
    z_{0,0}&\cdots&z_{0,m}\\
    \vdots&\ddots&\vdots\\
    z_{n,0}&\cdots&z_{n,m}\\
\end{bmatrix}\arrow[r,mapsto]& ([z_{0,j}:\cdots:z_{n,j}],[z_{i,0}:\cdots:z_{i,m}]).
\end{tikzcd}
\]

\end{example}

\begin{remark}
    We remark that, since Segre embeddings endow products of projective spaces with structures of projective varieties, many of the properties we will state for projective spaces and their subvarieties in the next sections can be extended to products.
\end{remark}

\begin{example} \label{exa:1.2}In this example, we focus on the Segre embedding $s_{1,1}$, as it will be relevant in the rest of these notes.     Consider the quadric hypersurface $Q=V(z_0z_3-z_1z_2)\subset \BP^3$. Then, the association
    \[
    \begin{tikzcd}[row sep=tiny]
        Q\arrow[r]& \BP^1\times \BP^1\\
        {[}z_0:z_1:z_2:z_3{]}\arrow[r,mapsto]&({[}z_0:z_1{]},{[}z_0:z_2{]})
    \end{tikzcd}
    \]
    is an isomorphism as shown in \Cref{exa:1.1}.  Therefore, since all the smooth quadrics of $\BP^3$ are $\BP\GL(4,\BC)$-equivalent, they are all isomorphic to $\BP^1\times \BP^1$. Notice also that the quasi-projective variety $Q\setminus V(z_0)$ agrees with the coordinate chart 
    \[
    \CU_{0,0}=\left\{ ([x_0:x_1],[y_0:y_1])\in\BP^1\times\BP^1\ |\ x_0,y_0\not=0 \right\}\cong\BA^2.
    \] 

    The other coordinate charts $\CU_{i,j}$ for $(i,j)\in\Set{0,1}^{\times 2}$ of $\BP^1\times\BP^1$ are easily recovered in a similar way.
\end{example}

\subsection{Rational maps}\label{subsec:ratmap} 
Morphisms in algebraic geometry are very rigid and hence not suitable for classification, see \Cref{rem:morphismring}. Instead, algebraic geometers use rational maps and classify varieties up to birational equivalence.

\begin{definition}
  Let $X,Y$ be two irreducible quasi-projective varieties. A \textit{rational map} $\phi:X\dashrightarrow Y$ is the datum of a pair $(\CU,\varphi)$, where $\CU\subset X$ is an open subset and $\varphi:\CU\rightarrow Y$ is a morphism not   extendable to any proper open subset $\CU\subsetneq \CU' \subset X$. 
     
     We say that $\CU$ is the \textit{domain} of $\phi$  and we denote it by $\CU=\dom (\phi)$. On the other hand, the \textit{indeterminacy locus} of $\phi $ is its  complement  $\ind(\phi)=X\setminus \dom(\phi)$. The \textit{image} of $\phi$ is $\phi(X)=\varphi(\CU)$, i.e. the image of the morphism $\varphi$ corresponding to $\phi$. The map $\phi$ is \textit{dominant} if its image $\phi(X)\subset Y$ is dense.

     We denote the set of rational maps between $X$ and $Y$ by $\BC(X,Y)$. If $Y=\BA^1$, we say that $\phi$ is a \textit{rational function} and we put $\BC(X)=\BC(X,\BA^1)$.
 \end{definition}

 \begin{remark}
 We remark that the composition of rational maps is in general defined only for dominant maps. Let us discuss briefly the definition of composition.
 
Consider two rational maps $\phi\in\BC(X,Y),\theta\in\BC(Y,Z)$ with $\phi $ dominant, corresponding to morphisms $\varphi:\CU\to Y$ and $\vartheta:\mathcal V\to Z$ respectively. The composition $\theta\circ\phi\in\BC(X,Z)$ is the rational map corresponding to a morphism $\varsigma:\CU'\to Z$ where 
 \begin{itemize}
     \item $\CU'\supset \varphi^{-1}(\varphi(\CU)\cap \mathcal  V)$,
     \item $\varsigma|_{ \varphi^{-1}(\varphi(\CU)\cap \mathcal V)}\equiv \vartheta\circ \varphi $,
     \item $\varsigma$ can not be extended to any proper open subset $\CU'\subsetneq \CU''\subset X$.
 \end{itemize}
 Note that the composition is well-defined since the locus where two morphisms agree is closed.
 \end{remark} 
 \begin{definition}
A rational map $\phi\in\BC(X,Y)$ between irreducible varieties is \textit{birational} if there exists a rational map $\psi\in\BC(Y,X)$  such that $\psi\circ \phi \equiv \id_X $ and $\phi\circ \psi\equiv \id_Y$. 
 \end{definition}
\begin{remark}\label{rem:prop ratmap}  
   Given two irreducible quasi-projective varieties $X\subset\BP^n$, $Y\subset \BP^m$ and $m+1$ homogeneous polynomials $f_0,\ldots,f_m\in R$ of the same degree such that
    \begin{itemize}
        \item $X\not\subset V(f_0,\ldots,f_m)$,
        \item   $[f_0(p):\cdots:f_m(p)]\in Y$, for all $p\in X\setminus V(f_0,\ldots,f_m)$,
    \end{itemize}
    one can define an element   $\phi\in\BC(X,Y)$.
\end{remark}

\begin{remark}
The set $\BC(X)$ admits a ring structure induced by the natural ring structure on $\BA^1$. Moreover, since the variety $X$ is irreducible, the ring $\BC(X)$ is actually a field. Precisely, the field $\BC(X)$ is a transcendental extension of the base field $\BC$. In this setting,  the \textit{dimension} of $X $ is
    \[
    \dim X=\trdeg_{\BC}\BC(X),
    \]
    where $\trdeg_{\BC}\BC(X)$ is the minimal number of (transcendental) generators of $\BC(X)$ as a $\BC$-algebra, see \cite[Chap. II.8]{EISENBUD} for more details. 
    
    We do not expand on the purely algebraic definition of dimension as it is classical, technical and out of the scope of these notes. In turn, we give a geometrical interpretation of this notion. The dimension of a quasi-projective variety $X$ is
    \[
    \dim X=\max \Set{\!k\in\BZ _{\ge0}  |  \mbox{ there exists a } \pi:X\dashrightarrow \BP^k \mbox{ dominant}\!}\!,
    \] 
   see \cite[Section 6.1]{Shafarevich}. We refer to varieties of dimension $1, 2, n$, as curves, surfaces and $n$-folds respectively.
\end{remark}

\begin{remark}\label{rem:irr-rational}
For any open subset $\CU\subset X$ of an irreducible variety $X$ we have $\BC(\CU)\cong\BC(X)$ and consequently $\dim \CU=\dim X$. This follows from the fact that, by definition of Zariski topology, all the non-empty open subsets of an irreducible variety are dense.
\end{remark}

    Given a dominant morphism $\varphi:X\to Y$ of irreducible varieties, there is a well-defined notion of \textit{pull-back $\varphi^*$   of rational functions}. We have
 \begin{equation}\label{eqn:pull-back}
         \begin{tikzcd}[row sep = tiny]
        \BC(Y) \arrow[r,"\varphi^*"] &\BC(X)\\
         f  \arrow[r,mapsto] &{ f\circ\varphi }.
    \end{tikzcd}
 \end{equation}
Clearly, this definition does not extend to non-dominant morphisms as the image of $\varphi$ might be contained in the indeterminacy locus of some $f\in\BC(Y)$. 

If $X$ and $Y $ have the same dimension, the pull-back map $\varphi^*$ defines an algebraic field extension $\varphi^*(\BC(Y))\subset\BC(X)$. In this setting, the \textit{degree} $\deg \varphi$ of the morphism $\varphi$ is defined to be the index $[\BC(X):\varphi^*(\BC(Y))]$ of the extension, i.e.
\[
\deg \varphi=[\BC(X):\varphi^*(\BC(Y))].
\]

Geometrically, this translates into the fact that there is an open subset $\CU\subset   X$ such that $\varphi|_\CU:\CU\to\varphi(\CU)$ is a topological cover of degree $d$.

\begin{example}
  Consider the irreducible subvariety $X\subset \BP^2$ defined by $X=V(x_0x_1-x_2^2)$ and consider the following association,
    \[
    \begin{tikzcd}[row sep = tiny]
        X\arrow[r,"\pi"]& \BP^1_{[y_0:y_1]}\\
        {[x_0:x_1:x_2]}\arrow[r,mapsto] & {[x_0:x_1]},
    \end{tikzcd}
    \]
    see \Cref{ex:projections} for a geometrical description of the map $\pi$. This is a morphism\footnote{In particular, it is a rational map.} between irreducible curves. Let us  compute its degree. As observed in \Cref{rem:irr-rational}, we can perform the computation after restricting to the open dense subsets $ \mathcal{X}=X\cap \mathcal{U}_{x_0}\subset X$ and $\mathcal{U}_{y_0}\subset \BP^1$. First we notice that $\mathcal{X}\cong\BA^1_x$, with $x=\frac{x_2}{x_0}$. Locally, the map $\pi$ takes the form
    \[
    \begin{tikzcd}[row sep = tiny]
        \mathcal{X}\arrow[r,"\pi|_{\mathcal{X}}"]& \mathcal{U}_{y_0}\\
        x\arrow[r,mapsto] & x^2.
    \end{tikzcd}
    \]
    As a consequence, we get $\pi^*\BC(\BP^1)=\BC(x^2)\subset \BC(x)=\BC(X)$, which implies $\deg\pi=2$. 
\end{example} 

\begin{definition}
  Let $X=V(f_1,\ldots,f_s)\subset\BP^n$ be an irreducible variety of dimension $\dim X=d$. Consider a coordinate chart $\mathcal X_i=X\cap \CU_i$, for some $i=0,\ldots,n$, and a  point $p\in\mathcal X_i$, see \Cref{rmk:coorchart}. Fix affine coordinates $y_j=x_j/x_i$, for $j\in\Set{0,\ldots,n}\setminus\Set{i}$ on $\CU_i\cong\BA^n$ and put $y_i=1$. Then, the   point $p$ is a \textit{smooth point of} $X$ if the Jacobian matrix 
    \[
    J_{X,p}=\left(\!\left(\frac{\partial}{\partial y_j}{f}_k(y_0,\ldots,y_n)\right)(p)\  |\     k=1,\ldots,s,\ j\in\Set{0,\ldots,n}\setminus\Set{i}\!\right)\!
    \]
    has  rank $\rk J_{X,p}=n-d $, and it is a \textit{singular point} otherwise. The variety $X$ is \textit{smooth} if it has no singular point and it is \textit{singular} otherwise. 

    In the case when $X$ is not irreducible, suppose instead that $X=\bigcup_{i=1}^sV$, for $s\ge2$, is the decomposition of $X$ into irreducible components, and consider a point $p\in X$. If there is a unique index $1\le i \le s$ such that $p\in V_i$ we say that $p$ is a smooth point of $X$ if and only if it is a smooth point of $V_i$. Otherwise we declare it singular point.
\end{definition} 

As a consequence of the principal ideal theorem  \cite[Theorem 10.1]{EISENBUD}, we have the following proposition.
\begin{prop}\label{prop:codim2ind}
    Let $X,Y$ be two irreducible smooth projective varieties and let $\phi\in\BC(X,Y)$ be a rational map. Then, $\codim  \ind(\phi) \ge 2$.
\end{prop}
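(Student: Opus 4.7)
The plan is to argue by contradiction: suppose $\ind(\phi)$ contains an irreducible component $Z$ of codimension $1$ in $X$, and construct an extension of $\phi$ across the generic point of $Z$, contradicting the definition of $\ind(\phi)$. The point of using the principal ideal theorem is that in a smooth (hence regular, hence locally factorial) ambient, a codimension $1$ irreducible subvariety is locally cut out by a single equation, so the stalk $\OO_{X,Z}$ at the generic point of $Z$ is a DVR with some uniformiser $t$ and valuation $v$.

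First I would reduce to the case $Y\subset \BP^m$ (possibly via a Segre embedding if $Y$ sits in a product of projective spaces). By \Cref{rem:prop ratmap}, on a suitable affine chart $\CU\subset X$ around a generic point of $Z$, the map $\phi$ is represented as $[f_0:\cdots:f_m]$ with $f_i\in\BC(X)$. Setting $k=\min_i v(f_i)$, the rational functions $g_i=t^{-k}f_i$ all lie in $\OO_{X,Z}$ and at least one of them is a unit there. Hence $[g_0:\cdots:g_m]$ defines a morphism $\psi$ on a Zariski open neighbourhood $\CW\subset X$ of the generic point of $Z$ (the complement of $V(g_0,\ldots,g_m)$, which does not contain that point). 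Since $\psi$ agrees with $\phi$ on $\CW\cap \dom(\phi)$, which is non-empty and dense in $\CW$ as $X$ is irreducible, and since the locus where two morphisms to a projective variety coincide is closed, the image of $\psi$ is contained in the closure of $\phi(\dom(\phi))$, hence in $Y$. Therefore $\psi:\CW\to Y$ extends $\phi$, and by maximality of $\dom(\phi)$ we must have $\CW\subset \dom(\phi)$. This forces the generic point of $Z$ to be in $\dom(\phi)$, contradicting $Z\subset\ind(\phi)$.

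The main obstacle I expect is the bookkeeping around the global structure: one has to invoke factoriality of the regular local rings $\OO_{X,p}$ (Auslander--Buchsbaum) to produce the uniformiser $t$ and carry out the factoring $f_i\mapsto t^{-k}f_i$, and one must check that the construction really produces a morphism into $Y$, not merely into $\BP^m$. The remaining conclusion that extension at the generic point of $Z$ yields extension on a dense open of $Z$ follows from the fact that $\dom(\phi)$ is open and the locus where the chosen local equations $[g_0:\cdots:g_m]$ simultaneously vanish is closed, so everything reduces to a clean application of \textit{Krull's Hauptidealsatz} to the ideal $(g_0,\ldots,g_m)\subset \OO_{X,p}$, which has height at least $2$ precisely because the $g_i$ have no common factor at $p$.
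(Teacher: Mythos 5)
Your proof is correct and develops exactly the argument the paper has in mind: the paper offers no proof of \Cref{prop:codim2ind}, merely attributing it to the principal ideal theorem, and your DVR/valuation argument at a codimension-one component (clear the uniformiser, observe the resulting morphism lands in the closed subvariety $Y$, contradict maximality of $\dom(\phi)$) is the standard way to fill in that one-line citation. The only cosmetic point is that the step ``no common factor implies height at least two'' rests on factoriality of the regular local rings $\OO_{X,p}$ (Auslander--Buchsbaum, which you do cite) rather than on Krull's Hauptidealsatz itself, whose content runs in the opposite direction (minimal primes over $n$ elements have height at most $n$); since you invoke both, the argument stands.
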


\begin{remark}\label{rem:indRatOnSurf}
    As a consequence of \Cref{prop:codim2ind}, if $X$ is a smooth projective curve, a rational map $\phi\in\BC(X,Y)$  is a morphism. Moreover, a rational map whose domain is a smooth projective surface is not defined at   finitely many points.
\end{remark}

\begin{definition}\label{def:birati}
    We say that two irreducible varieties $X,Y$ are \textit{birational} to each other, in symbols $X\sim Y$, if there is a birational map $X\dashrightarrow Y$. We denote by $\Bir(X)$ the set of birational transformations of $X$, i.e.
    \[
    \Bir(X)=\Set{\!\phi:X\dashrightarrow X |\phi\mbox{ is birational}\!}\!.
    \]
    A variety is \textit{rational} if it is birational to a projective space.
\end{definition}

\begin{remark}
    Note that $\Bir(X)$ is a group with respect to the composition, see \Cref{rem:prop ratmap}. Moreover, being birational is an equivalence relation for irreducible quasi-projective varieties.
\end{remark}

As expressed in the following theorem, the field of rational functions is a complete invariant for birational equivalence classes of varieties, see \cite[Corollary I.4.5]{Hartshorne77}. 

\begin{theorem}
Two irreducible varieties $X,Y$ are birational to each other if and only if $\BC(X)\cong \BC(Y)$.
\end{theorem}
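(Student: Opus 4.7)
The plan is to prove both implications by showing that for irreducible varieties the assignment $X \rightsquigarrow \BC(X)$, on dominant rational maps, induces a contravariant equivalence between irreducible quasi-projective varieties with dominant rational maps and finitely generated field extensions of $\BC$ with $\BC$-algebra homomorphisms.

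For the forward implication, assume $\phi\in\BC(X,Y)$ is birational with inverse $\psi\in\BC(Y,X)$. Since birational maps are automatically dominant (their image contains a nonempty open subset, hence is dense in an irreducible variety), I can extend the pull-back formula \eqref{eqn:pull-back} from dominant morphisms to dominant rational maps: given representatives $\varphi:\CU\to Y$ of $\phi$ and a rational function $f$ on $Y$ represented by $g:\mathcal V\to\BA^1$, the pre-image $\varphi^{-1}(\mathcal V)$ is a nonempty open subset of $X$ (using density of the image), and $\phi^*(f)$ is the class of $g\circ\varphi$. Functoriality of pull-back on dominant composable maps, which follows directly from associativity of composition of functions where defined, gives $(\psi\circ\phi)^*=\phi^*\circ\psi^*$ and similarly in the other order. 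Since $\psi\circ\phi\equiv\id_X$ and $\phi\circ\psi\equiv\id_Y$, the induced pull-back $\phi^*:\BC(Y)\to\BC(X)$ is a $\BC$-algebra isomorphism.

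For the converse, suppose $\Phi:\BC(Y)\to\BC(X)$ is a $\BC$-algebra isomorphism. Embed $Y$ in some $\BP^m$ and fix an affine chart $Y\cap\CU_0\subset\BA^m_{(y_1,\ldots,y_m)}$ meeting $Y$ in a dense open subset; the coordinates $y_1,\ldots,y_m$ become elements of $\BC(Y)$. Set $f_i=\Phi(y_i)\in\BC(X)$ and let $\CU\subset X$ be the intersection of the domains of the $f_i$, which is open and, by \Cref{rem:irr-rational}, dense. Define a morphism $\varphi:\CU\to\BA^m$ by $\varphi(p)=(f_1(p),\ldots,f_m(p))$. To see that the image lies in $Y$, take any homogeneous $G\in I(Y)\subset \BC[x_0,\ldots,x_m]$ and consider its dehomogenisation $g(y_1,\ldots,y_m)=G(1,y_1,\ldots,y_m)$; this vanishes on $Y\cap\CU_0$, so its class in $\BC(Y)$ is zero. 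Applying the ring homomorphism $\Phi$ yields $g(f_1,\ldots,f_m)=0$ in $\BC(X)$, hence $G(1,\varphi(p))=0$ for every $p\in\CU$. Thus $\varphi$ factors through $Y$, producing a rational map $\phi\in\BC(X,Y)$. Performing the symmetric construction with $\Phi^{-1}$ yields $\psi\in\BC(Y,X)$.

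It remains to verify that $\phi$ and $\psi$ are mutually inverse. By construction $\phi^*$ coincides with $\Phi$ on the $y_i$'s, hence everywhere by multiplicativity, so $\phi^*=\Phi$ and similarly $\psi^*=\Phi^{-1}$. The composition $\psi\circ\phi$ is dominant (as a composition of dominant maps) and has pull-back $\phi^*\circ\psi^*=\Phi\circ\Phi^{-1}=\id_{\BC(X)}$; the same identity holds for $\id_X$. The main technical point, and the step I expect to require the most care, is to conclude from the coincidence of two dominant rational maps on pull-back of functions that they agree as rational maps: restricting to an open subset $\CU'\subset X$ on which both $\psi\circ\phi$ and $\id_X$ are represented by morphisms into an affine chart of $X$ with affine coordinates $x_1,\ldots,x_n$, the agreement $(\psi\circ\phi)^*(x_j)=x_j$ as elements of $\BC(X)$ says that the $j$-th components of the two morphisms agree as regular functions on a dense open subset of $\CU'$, hence on all of $\CU'$. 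Therefore $\psi\circ\phi\equiv\id_X$, and symmetrically $\phi\circ\psi\equiv\id_Y$, so $\phi$ is a birational map.
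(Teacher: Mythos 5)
The paper does not prove this statement itself; it defers to \cite[Corollary I.4.5]{Hartshorne77}, and your argument is essentially a self-contained reconstruction of the standard proof given there (the contravariant equivalence between irreducible varieties with dominant rational maps and finitely generated field extensions of $\BC$). Both directions are sound: the forward implication via functoriality of the pull-back along dominant rational maps, and the converse via choosing affine coordinates $y_1,\dots,y_m$ on a dense chart of $Y$, transporting them through $\Phi$, and checking that the resulting tuple of rational functions satisfies the dehomogenised equations of $Y$. The one step you assert without justification is that the map $\phi$ you construct from $\Phi$ is \emph{dominant}; this is needed both to speak of $\phi^*$ (which you only define for dominant maps) and to form the composition $\psi\circ\phi$ under the paper's conventions. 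It is a one-line fix: if $\overline{\varphi(\CU)}\cap\CU_0$ were a proper closed subset of $Y\cap\CU_0$, some nonzero regular function $g$ on $Y\cap\CU_0$ would vanish on $\varphi(\CU)$, whence $\Phi(g)=g(f_1,\dots,f_m)=0$ in $\BC(X)$, contradicting the injectivity of the field homomorphism $\Phi$. With that sentence added the proof is complete; the final step (two dominant rational maps with equal pull-backs coincide), which you rightly flag as the delicate point, is handled correctly using the closedness of the locus where two morphisms agree, together with \Cref{rem:irr-rational}.
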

Fix homogeneous polynomials $f_0,\ldots,f_m\in R$ of the same degree,   generating a radical ideal $(f_0,\ldots,f_m)\subset R$. Consider the rational map  
\[
\begin{tikzcd}[row sep = tiny]
    \BP^n \arrow[r,"\phi"]&\BP^m\\
    p\arrow[r,mapsto] & {[f_0(p):\cdots:f_m(p)]}.
\end{tikzcd}
\]
The \textit{graph} of $\phi$ is the closed subset
\[
\graph(\phi)=\overline{\Set{(x,\phi(x))|x\in \dom(\phi)}}\subset \BP^n\times \BP^m,
\]
 fitting in the following commutative diagram
\begin{equation}\label{eq:resmap}
\begin{matrix}
    \begin{tikzpicture}[scale =2 ] 
    \node at (0,-0.05) {$\graph(\phi)$};
    \node at (1,-1) {${\BP^m}.$};
    \node at (-1,-1) {${\BP^n}$};
    \node[below] at (0,-1) {$\phi$};

    \draw[->] (-0.1,-0.2) -- (-0.9,-0.8);
    \draw[->] (0.1,-0.2) -- (0.9,-0.8);
    \draw[dashed,->] (-0.8,-1) -- (0.8,-1);
    \node[right] at (0.5,-0.5) {\small $\pi_{\BP^m}|_{\graph(\phi)}$};
    \node[left] at (-0.5,-0.5) {\small $\pi_{\BP^n}|_{\graph(\phi)}$};
\end{tikzpicture}
\end{matrix}
\end{equation}
In this setting, we call the diagram \eqref{eq:resmap} the \textit{resolution of indeterminacies} of $\phi$.
Denote by $Z=\ind(\phi)=V(f_0,\ldots,f_m)$ the indeterminacy locus of $\phi$, see \Cref{rem:prop ratmap}. Then, the graph $\graph (\phi)$ is called the \textit{blow-up} of $\BP^n$ with \textit{centre} $Z$ and it is denoted by $\graph(\phi)=\Bl_Z\BP^n$. In this context, the morphism $\pi_{\BP^n}|_{\graph(\phi)}$ is the \textit{blow-up morphism} and the preimage $\pi_{\BP^n}|_{\graph(\phi)}^{-1}(Z)$ is the \textit{exceptional locus}.

If $X\subset \BP^n$ is a quasi-projective subvariety such that $X\not\subset Z$, the \textit{strict transform} $\widehat{X}$ of $X$ is the closure of the preimage 
\[
\widehat{X}=\overline{\pi_{\BP^n}|_{\graph(\phi)}^{-1}(X\setminus Z)} \subset\Bl_Z\BP^n.
\]
Then, if $Z\subset X\subset \BP^n$ and $I_X+I_Z\subset R$ is a radical ideal, we say that the strict transform $\widehat{X}$ is the \textit{blow-up} of $X$ with centre $Z$ and we denote it by $\widehat{X}=\Bl_ZX$.

Finally, if 
\[\varphi: X_s\xrightarrow{\varepsilon_s}X_{s-1}\xrightarrow{\varepsilon_{s-1}}\cdots X_1\xrightarrow{\varepsilon_1}X\]
is a sequence of blow-ups and $p\in X$ is any point, we say that the points in the preimage $\varphi^{-1}(p)\subset X_s$ are \textit{points infinitely near to} $p$. Similarly, if $Y\subset X$ is a subvariety all the irreducible components of $\varphi^{-1}(Y)$ are said to be \textit{infinitely near to} $Y$.
 
\begin{remark}
Let $X\subset \BP^n$ be a quasi-projective variety and let $Z\subset X$ be a closed subset. Consider the blow-up morphism $\varepsilon:\Bl_ZX\to X$ and denote by $E=\varepsilon^{-1}(Z)$ the exceptional locus. Then, the restriction $\varepsilon|_{\Bl_ZX\setminus \varepsilon^{-1}(Z)}:\Bl_ZX\setminus \varepsilon^{-1}(Z)\to X\setminus Z$ is an isomorphism. Moreover, the exceptional locus has codimension 1.
\end{remark}
 
We present below  some basic examples of rational maps and blow-ups. We mostly focus on the two-dimensional setting which will be of our interest in the rest of the notes. Precisely, we discuss projections, blow-ups of  surfaces at a few points and the standard Cremona transformation of the projective plane.  
\begin{example}[Projections]\label{ex:projections} In this example we present  simplest instance of a rational map, namely the projection. Consider two projective subspaces $H,K\subset\BP^n$ such that $H\cong\BP^k$, $K\cong\BP^{n-k-1}$ and $H\cap K=\varnothing$. Without loss of generality, we assume $K=V(x_0,\ldots,x_k)$ and $H=V(x_{k+1},\ldots,x_n)$. Then, the \textit{projection onto $H$ with centre $K$} is the rational map\footnote{ We omit the dependence on $H$ from the notation for the rational map as it will not play any role in what follows.}
\[
\begin{tikzcd}[row sep = tiny]
    \BP^n\arrow[r,dashed,"\pi_{K}"]& H\\
    {[}x_0:\cdots:x_n{]}\arrow[r,mapsto]&{[}x_0:\cdots:x_k{]}.
\end{tikzcd}
\]
Note that $\pi_{K}$ is not defined along $K$, i.e. the indeterminacy locus of $\pi_K$ is $\ind(\pi_{K})=K$. Geometrically, we associate to any $p\in\BP^n\setminus K$ the unique intersection point $\pi_{K}(p)= \langle p,K\rangle\cap H$.  We recall two useful properties of projections.
\begin{itemize}
    \item Any linear subspace $W\subset\BP^n$  such that $K\subset W$ and $W\cong\BP^{n-k}$ is contracted to the point  $p=\pi_K(W)=W\cap  H$.
    \item If $L\subset \BP^n$ is a line  such that $L\not\subset K$ then $\pi_K(L) = p $ is a point if and only if $L\cap K\not=\varnothing$. 
\end{itemize}
 
\end{example}

\begin{example}[Blow-up at a point]\label{exa:plow1} We describe now the projection $\pi_K$ in the case $n=2$ and $\dim K=0$. Without loss of generality we put $K=\Set{e_2}$ and we consider the projection
\[ 
\begin{tikzcd}[row sep = tiny]
\BP^2\arrow[r,dashed,"\pi_{e_2}"]& \BP^1\\
    {[}x_0:x_1:x_2{]}\arrow[r,mapsto]&{[}x_0:x_1{]},
\end{tikzcd}
\]
    from $\BP^2$ with centre the coordinate point $e_2=[0:0:1]$. Then, the blow-up with centre $e_2$ is
    \begin{align*}
\Bl_{e_2}\BP^2&={\graph(\pi_{e_2})}\\&=\overline{\Set{(p,q)\in\dom(\pi_{e_2})\times \BP^1 | \pi_{e_2}(p)=q}}\subset \BP^2\times\BP^1\\
&=\overline{\Set{({[x_0:x_1:x_2]},{[y_0:y_1]})\in\dom(\pi_{e_2})\times \BP^1 | \det\begin{pmatrix}
    x_0&x_1\\y_0&y_1
\end{pmatrix}=0}}\subset \BP^2\times\BP^1\\
&={\Set{({[x_0:x_1:x_2]},{[y_0:y_1]})\in\BP^2\times \BP^1 | x_0y_1-x_1y_0=0}}\subset \BP^2\times\BP^1.
    \end{align*}
    We stress that there is a commutative diagram 
    \[ 
\begin{tikzpicture} [scale = 2]
    \node[right] at (0.2,0) {$\subset \BP^2\times \BP^1$};
    \node at (0,0) {$\Bl_{e_2}\BP^2$};
    \node at (1,-1) {$\BP^1,$};
    \node at (-1,-1) {$\BP^2$};
    \node[below] at (0,-1) {$\pi_{e_2}$};

    \draw[->] (-0.1,-0.2) -- (-0.9,-0.8);
    \draw[->] (0.1,-0.2) -- (0.9,-0.8);
    \draw[dashed,->] (-0.75,-1) -- (0.75,-1);
    \node[right] at (0.5,-0.5) { $\pi_1|_{\Bl_{e_2}\BP^2}$};
    \node[left] at (-0.5,-0.5) { $\pi_2|_{\Bl_{e_2}\BP^2}$};
\end{tikzpicture}
    \]
    where $\pi_2|_{\Bl_{e_2}\BP^2}$ is the blow-up morphism and $\pi_1|_{\Bl_{e_2}\BP^2}$ is a $\BP^1$-fibration as described in \Cref{ex:projections}. 

We give now an explicit affine atlas of the blow-up $\Bl_{e_2}\BP^2$.  Since the blow-up is an isomorphism over $\CU_{x_0},\CU_{x_1}$,   it is enough to give an affine cover  of $\Bl_{e_2}\CU_{x_2}$, see \Cref{rmk:coorchart} for the notation. This consists of the following two charts 
\[
 \CW^{(i)}=(\Bl_{e_2}{\CU_{x_2}})\cap( \CU_{x_2}\times \CU_{y_i})\subset \CU_{x_2}\times \BP^1,
\]
for $i=0,1$. Precisely, if we fix affine coordinates
\[
(X_0,X_1)=\left(\frac{x_0}{x_2},\frac{x_1}{x_2}\right)
\]
on $\CU_{x_2}$, then we have 
    \begin{equation} \label{eq:blow-upchart2}    
        \CW^{(0)}  = \Set{\!((X_0,X_1),[y_0:y_1])\in \CU_{x_2}\times\BP^1 | X_0y_1-X_1y_0=0, ~   
        y_0\neq 0}  \cong \BA^2_{(U,V)}, 
    \end{equation}
    and
    \begin{equation} \label{eq:blow-upchart1} 
        \CW^{(1)}  = \Set{ ((X_0,X_1),[y_0:y_1])\in \CU_{x_2}\times\BP^1 | X_0y_1-X_1y_0=0, ~   
        y_1\neq 0}  \cong \BA^2_{(u,v)}, 
    \end{equation} 
    where
    \[  (U,V ) = \left(\frac{y_1}{y_0} 
        , X_0\right)\qquad \mbox{ and }\qquad ( u ,v)=\left( \frac{y_0}{y_1},X_1\right)\!. \] 
    Note that the local equations of the exceptional curve $E\subset \Bl_{e_2}\BP^2$ in the coordinates $(u,v)$ and $(U,V)$ are $v=0$ and $V=0$ respectively. 
    In terms of the affine coordinates $(X_0,X_1)$ for $\CU_{x_2}$, in which $e_2$ is given by $(X_0,X_1)=(0,0)$, the restriction of the projection $\pi_{e_2}$ to the blow-up, is written in the 
    new coordinates as
    \[
    \begin{tikzcd}[row sep=tiny]
         \CW^{(0)} \cong \BA^2_{(U,V)}\arrow[r,"\pi_{e_2}|_{\CW_0}"]&\CU_{x_2}\\
         (U,V) \arrow[r,mapsto]&  (V,U V),
    \end{tikzcd}\qquad\qquad
     \begin{tikzcd}[row sep=tiny]
         \CW^{(1)} \cong \BA^2_{(u,v)}\arrow[r,"\pi_{e_2}|_{\CW_1}"]&\CU_{x_2}\\
         (u,v) \arrow[r,mapsto]& (u v, v).
    \end{tikzcd}
    \] 
\end{example}

We explain now the blow-up at a point of any smooth surfaces following the ideas in \Cref{exa:plow1}. 

\begin{example}\label{exa:plow11} 
Let $S$ be a quasi-projective surface and $p\in S$ a smooth point. Let $C_1,C_2\subset S$ be two irreducible distinct curves intersecting \textit{transversally} at $p$, i.e. they are smooth at $p$ and their Jacobian matrices are linearly independent. Let $p\in \CU\subset S$ be a smooth and connected open neighbourhood such that $\mathcal{C}_{i }=\CU\cap C_i$ is a smooth curve for $i=1,2$, $\mathcal{C}_{1 }\cap \mathcal{C}_{2 }=\Set{p} $ and\footnote{This can always be achieved paying the price of restricting the open $\CU$.} $\mathcal{C}_{i }=V(f_i)$ with $f_i\in\BC[\CU]$, for $i=1,2$.  
Then, the quasi-projective surface
\[
\widehat \CU=\Set{(q,[\lambda_0:\lambda_1])\in  \CU\times \BP^1| \lambda_0f_1(q)-\lambda_1f_2(q)=0 }\subset \CU\times \BP^1
\]
    is the blow-up\footnote{Formally, it is the strict transform under the blow-up of the ambient space.} of $\CU$ with centre $p$ and the blow-up morphism is $\pi_{\widehat \CU}=\pi_\CU|_{\widehat \CU}$. In particular, we have $E=\pi_{\widehat   \CU}^{-1}(  p)\cong \BP^1$ and  $\pi_{\widehat \CU}|_{\widehat \CU\setminus E}:\widehat \CU\setminus E\to \CU\setminus p$ is an isomorphism.
    
    In order to construct $\Bl_pS$, we first write $S=\CU\cup \mathcal{V}$, where $\mathcal{V}=S\setminus p$. Then, we have $\Bl_pS=\widehat{\CU}\cup \mathcal{V}$ with the obvious gluing. 
\end{example}

\Cref{exa:plow1,exa:plow11} are particularly meaningful in the surface setting. This is expressed in \Cref{prop:factor}. 

\begin{prop}[{\cite[Corollary II.12]{BEAUVILLE}}]\label{prop:factor}
Let $S,S'$ be two smooth irreducible surfaces  and let  $\phi\in\BC(S,S')$ be a birational map. Then, there exists a third smooth  surface $\widetilde{S}$ and a commutative diagram
\[
\begin{tikzcd}
    &\widetilde{S}\arrow[dl,"\varphi"']\arrow[dr,"\varphi'"]&\\
    S\arrow[rr,dashed,"\phi"]&&S',
\end{tikzcd}
\]
    such that the morphisms $\varphi$ and $\varphi'$ are compositions of blow-ups at one point and isomorphisms.
\end{prop}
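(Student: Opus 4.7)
The plan is to reduce the statement to the following classical fact: any birational morphism between smooth irreducible projective surfaces is a composition of blow-ups at smooth points and an isomorphism. Granting this, it suffices to produce a single smooth irreducible projective surface $\widetilde{S}$ together with birational morphisms $\varphi:\widetilde{S}\to S$ and $\varphi':\widetilde{S}\to S'$ such that $\phi\circ\varphi=\varphi'$ as rational maps; applying the reduction to each of $\varphi$ and $\varphi'$ then yields the required factorisations.

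To produce $\widetilde{S}$, I would take the closure of the graph $\Gamma=\overline{\graph(\phi)}\subset S\times S'$, in the spirit of the resolution-of-indeterminacies diagram \eqref{eq:resmap}. The two projections restrict to birational morphisms $\Gamma\to S$ and $\Gamma\to S'$ that realise $\phi$ and $\phi^{-1}$. A priori $\Gamma$ is only an irreducible projective surface, possibly singular, so I would apply resolution of singularities for surfaces (classical, via iterated normalisation and blow-up of singular points) to obtain a smooth projective surface $\widetilde{S}$ with a birational morphism $\widetilde{S}\to\Gamma$; composing with the projections yields the required $\varphi$ and $\varphi'$, and the commutativity follows from the definition of $\Gamma$.

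For the reduction step, let $f:X\to Y$ be a birational morphism between smooth irreducible projective surfaces. If $f$ is not an isomorphism, Zariski's Main Theorem shows that its exceptional locus is a nonempty union of curves contracted to points of $Y$, and a standard intersection-theoretic argument on the adjunction formula shows that any connected component contracted to a smooth point of $Y$ contains a $(-1)$-curve, i.e.\ a smooth rational curve $E$ with $E^2=-1$. Castelnuovo's contractibility criterion then produces a smooth surface $X_1$ and a morphism $X\to X_1$ that is the blow-up at a smooth point of $X_1$ and contracts precisely $E$, giving a factorisation $f:X\to X_1\to Y$ with $\rk\Pic(X_1)=\rk\Pic(X)-1$. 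Since $\rk\Pic$ is a nonnegative integer, iterating terminates after finitely many steps with a morphism $X_k\to Y$ having empty exceptional locus, which is an isomorphism.

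The main obstacle is Castelnuovo's contractibility criterion itself, whose proof requires constructing the contraction morphism explicitly, typically by analysing sections of a linear system of the form $\lvert mH+nE\rvert$ for a suitable very ample polarisation $H$ on $X$; this, together with the existence of a $(-1)$-curve inside every exceptional component, is classical but substantially outside the scope of these notes, and I would cite both directly from \cite{BEAUVILLE} rather than reproducing the arguments here.
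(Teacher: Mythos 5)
The notes do not actually prove this proposition --- it is quoted verbatim from \cite[Cor.~II.12]{BEAUVILLE} --- so the relevant comparison is with Beauville's argument, and your route is genuinely different. Beauville builds $\widetilde{S}$ by elimination of indeterminacy (\cite[Thm.~II.7]{BEAUVILLE}): blow up a point of $\ind(\phi)$, observe that the self-intersection of the mobile part of the linear system defining $\phi$ strictly decreases while remaining non-negative, and conclude that finitely many blow-ups make $\phi$ a morphism. This produces $\varphi\colon\widetilde{S}\to S$ as a composition of blow-ups \emph{by construction}, so the structure theorem for birational morphisms (\cite[Prop.~II.11]{BEAUVILLE}, your ``reduction step'') is needed only on the $S'$ side, and resolution of singularities never enters. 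Your construction --- desingularise $\overline{\graph(\phi)}\subset S\times S'$ --- is closer in spirit to the diagram \eqref{eq:resmap} of these notes, but it pays for this by importing resolution of surface singularities as an extra black box and by invoking the structure theorem on both sides; what it buys in return is symmetry between $S$ and $S'$ and no need to analyse the linear system defining $\phi$. Your reduction step is sound, with two small repairs: (i) for the termination of the induction, count the finitely many irreducible curves contracted by $f$ rather than $\rk\Pic$, since for irregular surfaces $\Pic$ has a positive-dimensional identity component and the finite invariant is the N\'eron--Severi rank; (ii) after contracting the $(-1)$-curve $E$ via \Cref{thm:Castelnuovo} you should justify that $f$ descends to $X_1$ --- it is constant on $E$, the contraction has connected fibres, and $X_1$ is normal, so the set-theoretic factorisation is automatically a morphism. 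Finally, note that both your argument and Beauville's use projectivity throughout (graph closure, Castelnuovo, intersection theory), which the statement as printed omits; that is an imprecision of the statement rather than of your proof.
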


In  \Cref{exa:2points,exa:blow3} we interpret the surface $\widetilde{S}$ of \Cref{prop:factor} as a blow-up of $S$ and $S'$ at the same time, thus endowing it with two  different \textit{blowing-down structures}. We will discuss this notion in \Cref{subsec:changeblowstruc}, see \Cref{def:geobasis}.

\begin{example}[Blow-up at two points]\label{exa:2points}
We construct now the blow-up of the projective plane at two points as the graph of a birational map given by a pair of projections.  Consider the rational map
\[
\begin{tikzcd}[row sep=tiny]
\BP^2\arrow[r,dashed,"\phi"]&\BP^1\times\BP^1\\    {[x_0:x_1:x_2]}\arrow[r,mapsto] &({[x_0:x_1],[x_1:x_2]}),
\end{tikzcd}
\]
given as a pair of projections.
It is birational   with inverse 
\[
\begin{tikzcd}[row sep=tiny]
\BP^1\times\BP^1\arrow[r,dashed,"\theta"]&\BP^2\\     ({[y_0:y_1]},{[z_0:z_1]})\arrow[r,mapsto]&
{[y_0z_0:y_1z_0:y_1z_1]} .
\end{tikzcd}
\]
The two maps have the following indeterminacy loci
    \[
    \ind(\phi)=\Set{\!e_0,e_2\!}\quad \mbox{ and }\quad\ind(\theta)=\Set{\!([1:0],[0:1])\!}\!.
    \]
    On the one hand, the map $\phi$ contracts the line $V(x_1)=\langle e_0,e_2\rangle$ onto the point $([1:0],[0:1])$ while the map $\theta$ contracts the two lines $L_0=\Set{[1:0]}\times\BP^1$ and $L_2=\BP^1\times \Set{[0:1]}$, passing through $([1:0],[0:1])$, onto $e_0$ and $e_2$ respectively.
    \Cref{fig:blow2pt} depicts the construction.
   \begin{figure}[ht!]
       \centering 
       \begin{tikzcd}[row sep = tiny]
          & \begin{tikzpicture}[scale=0.7]
       \draw (-1.5,0)--(0,-1)--(1.5,0)--(0,1)--cycle;
           \node at (0,-1.3) {\tiny $\Bl_{p,q}\BP^2\cong \Bl_r (\BP^1)^{\times2}$};
           \draw[thick , color=blue] (-1,0)--(1,0); 
  \draw[thick , color=red] (-0.8,-0.4) --(-0.6,0.4) ;
  \draw[thick , color=red] (0.8,-0.4) --(0.6,0.4) ;
       \end{tikzpicture}\arrow[rd]\arrow[ld]\\
       \begin{tikzpicture}[scale=0.7]
       \draw (-1.5,0)--(0,-1)--(1.5,0)--(0,1)--cycle;
           \node at (0,-0.7) {\small $\BP^2$};
           \draw[thick , color=blue] (-1,0)--(1,0);
  \fill[color=red] (0.7,0) circle (2pt);
  \fill[color=red] (-0.7,0) circle (2pt);
       \end{tikzpicture}\arrow[rr,dashed]&&
       \begin{tikzpicture}[scale=0.7]
       \draw (0.9,0.8)--(-0.1,-0.8);
       \draw (0.8,0.8)--(-0.2,-0.8);
       \draw (0.7,0.8)--(-0.3,-0.8);
       \draw (0.6,0.8)--(-0.4,-0.8);
       \draw[color = red, thick] (0.5,0.8)--(-0.5,-0.8);
       \draw (0.4,0.8)--(-0.6,-0.8);
       \draw (0.3,0.8)--(-0.7,-0.8);
       \draw (0.2,0.8)--(-0.8,-0.8);
       
       \draw (-0.9,0.8)--(0.1,-0.8);
       \draw (-0.8,0.8)--(0.2,-0.8);
       \draw (-0.7,0.8)--(0.3,-0.8);
       \draw (-0.6,0.8)--(0.4,-0.8);
       \draw[color = red, thick] (-0.5,0.8)--(0.5,-0.8);
       \draw (-0.4,0.8)--(0.6,-0.8);
       \draw (-0.3,0.8)--(0.7,-0.8);
       \draw (-0.2,0.8)--(0.8,-0.8);
  \fill[color= blue ] (0,0) circle (2pt); 
            \node at (1.3,0) {\small $\BP^1\times \BP^1$};
       \end{tikzpicture}
       \end{tikzcd}
       \caption{Pictorial description of the construction in \Cref{exa:2points}.}
       \label{fig:blow2pt}
   \end{figure}
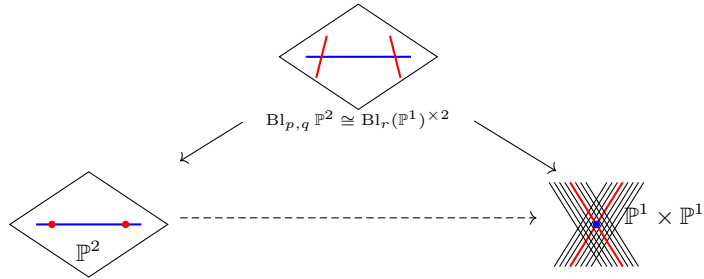
    As a consequence, we get the isomorphism 
    \[
    \Bl_{p,q}\BP^2\cong\Bl_r(\BP^1\times \BP^1),
    \]
    for any choice of distinct points $p,q\in\BP^2$ and of $r\in\BP^1\times \BP^1$.
\end{example}

\begin{example}\label{exa:CREMONA}
    In this example we present a birational involution of the projective space, namely the \textit{standard Cremona transformation}. This is the birational map $\crem_n\in\Bir(\BP^n) $ defined by 
\begin{equation}
        \begin{tikzcd}[row sep = tiny]
    \BP^n\arrow[r,dashed , "\crem_n"] &\BP^n\\
    {[x_0:\cdots:x_n]}\arrow[r,mapsto]&{\left[ \frac{1}{x_0}:\cdots:\frac{1}{x_n} \right]} &[-1cm] ={\left[ x_1\cdots x_n:\cdots:x_0\cdots x_{n-1} \right]} .
\end{tikzcd}
        \label{eq:cremM}
\end{equation} 
Its indeterminacy locus is
 
\[
\ind(\crem_n) =\underset{{0\le i<j\le n}}{\bigcup}V({x_i,x_j})\subset\BP^n.
\]
 Note that, the Cremona map  contracts the $i$-th coordinate hyperplane $V({x_i})$ 
to the $i$-th coordinate point $e_i$, for $i=0,\ldots,n$. Moreover, $\crem_n$ is an involution, i.e. $\crem_n^2\equiv \id_{\BP^n}$. Strictly speaking we have
\[
\crem_n^2([x_0:\cdots:x_n])=[x_0^2x_1\cdots x_n:\cdots:x_0\cdots x_{n-1}x_n^2]
\]
and, since we work in the projective setting, we can remove the common factor from the entries of $\crem_n^2$ to get the identity.
\end{example}

\begin{example}[Blow-up at three points]\label{exa:blow3}
    In this example we focus on the Cremona transformation $\crem_2\in\Bir(\BP^2)$ of the projective plane. Consider two copies of the projective plane with homogeneous coordinates $x_0,x_1,x_2$ and  $x_0',x_1',x_2'$ respectively. Similarly, denote by $e_i,L_i=V(x_i)$ and $e_i',L'_i=V(x_i')$, for $i=0,1,2$, the coordinate points and lines on the first and the second copy of $\BP^2$ respectively. Then, the resolution of indeterminacies of $\crem_2$ is the commutative diagram
\begin{equation}
        \label{eq:C2}
        \begin{tikzcd}[row sep=tiny]
        &B\arrow[dr,"\varepsilon'"]\arrow[dl,"\varepsilon"']\\
                \BP^2_{{[}x_0:x_1:x_2{]}} \arrow[dashed]{rr}{\crem_2} & &\BP^2_{{[}x_0':x_1':x_2'{]}} \\
                {[}x_0:x_1:x_2{]} \arrow[mapsto]{rr} && \left[\frac{1}{x_0}:\frac{1}{x_1}:\frac{1}{x_2}\right]\!,
        \end{tikzcd}
\end{equation}
where   $B={\graph(\crem_2)}$ denotes  the graph of $ \crem_2$. The transformation $\crem_2 $ is an involution and it contracts the coordinate line $L_i$ to the coordinate point $e_i'$, for $i=0,1,2$. On the other hand, the inverse $\crem_2^{-1}$ contracts the coordinate line $L_i'$ to the coordinate point $e_i$, for $i=0,1,2$. We stress that $\crem_2$ is an involution in the sense that, if we identify $x_i=x_i'$, for $i=0,1,2$, we get $\crem_2^2=\id_{\BP^2}$. 

Consider the set $\Set{\!\widehat L_0,\widehat L_1,\widehat L_2,\widehat L_0',\widehat L_1',\widehat L_2'\!}$ consisting of the strict transforms, via $\varepsilon$ and $\varepsilon'$, of the lines $L_i,L_i'$, for $i=0,1,2$.  Then, the morphism $\varepsilon$  contracts only the triple $\Set{\!\widehat L_0,\widehat L_1,\widehat L_2\!}$, while $\varepsilon'$ contracts $\Set{\!\widehat L_0',\widehat L_1',\widehat L_2'\!}$ so realising $B$ as the blow-up of $\BP^2$ at 3 points in two different ways, see \Cref{Cremonaresolution2} for a graphical description of the construction. As anticipated, this is an example of two different  {blowing-down structures}, see \Cref{def:geobasis}.

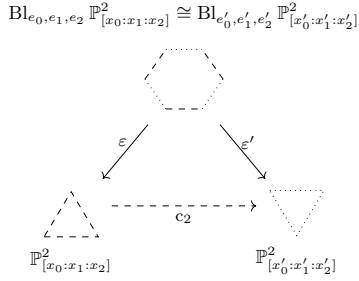
\begin{figure}[ht!]
        \centering
        \scalebox{0.8}{\begin{tikzpicture}[scale=1.5]
                \node at (-1.25,-1) {$\underset{\mbox{\small $\BP^2_{[x_0:x_1:x_2]}$}}{\begin{matrix}
\begin{tikzpicture}[scale=1.5]
        \draw [dashed] (-0.3,-0.5)--(0.3,-0.5)--(0,0)--(-0.3,-0.5);
\end{tikzpicture} 
            \end{matrix}}$};
    \node at (1.25,-1) {$\underset{\mbox{\small $\BP^2_{[x_0':x_1':x_2']}$}}{\begin{matrix}
                            \begin{tikzpicture}[scale=1.5]
                                    \draw [dotted] (-0.3,0.5)--(0.3,0.5)--(0,0)--(-0.3,0.5);
                            \end{tikzpicture} 
            \end{matrix}}$};
    \node at (0,1) {\begin{tikzpicture}[scale=1.5]
    \draw [dashed] (-0.2,-0.33)--(0.2,-0.33)(0.433,0)--(0.2,0.33)(-0.433,0)--(-0.2,0.33);
                    \node at (0,0.7) {\small$\Bl_{e_0,e_1,e_2}\BP^2_{[x_0:x_1:x_2]}\cong\Bl_{e_0',e_1',e_2'}\BP^2_{[x_0':x_1':x_2']}$};
                    \draw [dotted] (-0.2,0.33)--(0.2,0.33)(0.433,0)--(0.2,-0.33)(-0.433,0)--(-0.2,-0.33);
    \end{tikzpicture}};
    \draw[dashed,->] (-0.8,-0.7)--(0.8,-0.7);
    \draw[->] (0.4,0.2)--(0.9,-0.4);
    \node[right] at (0.55,0) {\footnotesize $\varepsilon'$};
    
    \draw[->] (-0.4,0.2)--(-0.9,-0.4);
    \node[left] at (-0.55,0) {\footnotesize $\varepsilon$};
    \node[below] at (0,-0.7) {\footnotesize $\crem_2$};
        \end{tikzpicture}}
        \caption{The resolution of the indeterminacies of the standard 
        Cremona transformation in dimension 2.}
        \label{Cremonaresolution2}
\end{figure}
 
\end{example}

\begin{exercise} 
Consider the standard Cremona transformation $\crem_2\in\Bir(\BP^2)$ presented in \Cref{exa:blow3}. 
\begin{itemize}
    \item Realise the blow-up $\Bl_{e_0,e_1,e_2}\BP^2$ as a closed subset of  $\BP^2\times\BP^2$, see \Cref{Cremonaresolution2}.
    \item Realise the blow-up $\Bl_{e_0,e_1,e_2}\BP^2$ as a closed subset of   $\BP^2\times\BP^1\times\BP^1\times\BP^1$.\\
    \textbf{Hint:} Imitate the strategy in \Cref{exa:2points} by considering the triple of projections $\phi=(\pi_{e_0},\pi_{e_1},\pi_{e_2})$.
    \item Show that the restriction of the canonical projection
    \[ \pi:\BP^2\times\BP^1\times\BP^1\times\BP^1\to  \BP^1\times\BP^1\times\BP^1
    \]
    to $\Bl_{e_0,e_1,e_2}\BP^2$ is an embedding.\\
    \textbf{Hint:} Adopt the strategy suggested in the previous item and show that the image of the restriction of $\pi$ to $\Bl_{e_0,e_1,e_2}\BP^2$ is defined by a trilinear equation. Then, show that $\pi|_{\Bl_{e_0,e_1,e_2}\BP^2}$ establishes an embedding of the blow-up in $\BP^1\times\BP^1\times\BP^1$.
    \item In the previous items, find the equations of the lines $ \widehat L_0,\widehat L_1,\widehat L_2,\widehat L_0',\widehat L_1',\widehat L_2' $.
\end{itemize}

\end{exercise}

As proved by \textit{Max Noether} and \textit{Guido Castelnuovo}, the standard Cremona transformation plays a special role in dimension two. 

\begin{theorem}[Noether-Castelnuovo, \cite{Noether1870,Castelnuovo1901}]
   The group $\Bir(\BP^2)$ is generated by $\BP\GL(3,\BC)$ and the standard Cremona transformation $\crem_2$.
\end{theorem}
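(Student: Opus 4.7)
The plan is to proceed by induction on the \emph{degree} $d$ of a birational map $\phi \in \Bir(\BP^2)$, where $d$ is the common degree of a triple $[f_0:f_1:f_2]$ of homogeneous polynomials with no common factor representing $\phi$ (one checks this is independent of the representation). The base case $d=1$ gives $\phi \in \BP\GL(3,\BC)$, so there is nothing to prove. For the inductive step, I would show that one can always find a projective transformation $A \in \BP\GL(3,\BC)$ and a composition $\crem_2 \circ A \circ \phi$ whose degree is strictly less than $d$, whereupon induction terminates and expresses $\phi$ as a word in $\BP\GL(3,\BC)$ and $\crem_2$.

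The central tool is the \emph{homaloidal net} $\mathcal{L}_\phi \subset |\OO_{\BP^2}(d)|$ consisting of the proper transforms $\phi^{-1}(\ell)$ of lines $\ell \subset \BP^2$. By applying \Cref{prop:factor} to resolve the indeterminacies of $\phi$ as a sequence of blow-ups $\varepsilon: \widetilde{S} \to \BP^2$ at (possibly infinitely near) base points $p_1,\ldots,p_k$ of the linear system, with multiplicities $m_1 \geq m_2 \geq \cdots \geq m_k$, one extracts the two \emph{Noether equalities}
\begin{equation*}
\sum_{i=1}^k m_i^2 \; = \; d^2 - 1, \qquad \sum_{i=1}^k m_i \; = \; 3(d-1),
\end{equation*}
coming respectively from the self-intersection of a general member of $\mathcal{L}_\phi$ being $1$ (two generic lines meet once) and from the genus formula applied to such a member being a rational curve. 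From these two identities, a standard manipulation (using $\sum m_i(m_i-1) = (d-1)(d-2) + 2(d-1) = d^2 - 1 - (3d - 3)$ combined with an averaging argument on the three largest $m_i$) yields \textbf{Noether's inequality}: whenever $d \geq 2$, the three largest multiplicities satisfy $m_1 + m_2 + m_3 \geq d+1$.

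Once Noether's inequality is in place, the reduction step proceeds as follows. If the three highest-multiplicity base points $p_1, p_2, p_3$ are all \emph{proper} (none infinitely near the others) and not collinear, one chooses $A \in \BP\GL(3,\BC)$ sending them to the coordinate points $e_0,e_1,e_2$; then by the contraction/multiplicity behaviour of $\crem_2$ described in \Cref{exa:CREMONA,exa:blow3}, the composition $\crem_2 \circ A \circ \phi$ has degree $2d - m_1 - m_2 - m_3 < d$, completing the induction.

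The hard part, and the true content of Noether--Castelnuovo, is handling the case when the three top-multiplicity base points are \emph{not} in general position, in particular when some $p_i$ is infinitely near another (i.e.\ lies on an exceptional divisor over another base point, in the sense of \Cref{exa:plow1,exa:plow11}). Here one cannot directly apply $\crem_2$ to reduce the degree, and the classical proof requires either a careful combinatorial argument showing that, by conjugating $\crem_2$ with a suitable projective automorphism, one can still find three points (possibly after preliminary quadratic transformations supported at a single proper base point together with two of its infinitely near neighbours) whose blow-up and subsequent contraction strictly decreases $d$; or a more modern approach factoring the de Jonqui\`eres transformations into quadratic ones. In either presentation, verifying that the degree genuinely drops at each step, despite the presence of infinitely near points, is the main technical obstacle and accounts for the bulk of the proof.
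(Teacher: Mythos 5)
The paper does not prove this theorem: it is quoted as a classical result with citations to Noether and Castelnuovo, so there is no internal proof to compare your argument against. Judged on its own terms, your outline is the standard Noether strategy: induction on the degree $d$ of the homaloidal net, the two equalities $\sum_i m_i^2 = d^2-1$ (self-intersection $1$ of a general member) and $\sum_i m_i = 3(d-1)$ (genus formula for a rational member), Noether's inequality $m_1+m_2+m_3\geq d+1$, and the degree drop $d\mapsto 2d-m_1-m_2-m_3$ after composing with a quadratic transformation based at three non-collinear proper points of top multiplicity. This is all correct in substance, up to one arithmetic slip: the two equalities give $\sum_i m_i(m_i-1)=(d-1)(d-2)$, not $(d-1)(d-2)+2(d-1)$; the inequality then follows from a short standard computation using $m_i\leq d-1$, so the conclusion you want is still available.

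The genuine gap is the one you name yourself: the case where the three top-multiplicity base points are infinitely near one another or collinear. This is not a residual case to be cleaned up at the end; it is the entire content of the theorem, and it is precisely where Noether's 1870 argument was incomplete --- Castelnuovo's 1901 contribution was to supply a correct reduction, by first factoring $\phi$ through de Jonqui\`eres transformations and then writing each of those as a product of quadratic maps, rather than by applying $\crem_2$ directly at the three largest multiplicities. The difficulty is real: if one is forced to base the quadratic transformation at proper points of smaller multiplicity, the composition $\crem_2\circ A\circ\phi$ can have degree \emph{larger} than $d$, so the naive induction on $d$ breaks down and must be replaced by a more delicate induction (for instance on the lexicographically ordered pair $(d,m_1)$, or on the number of non-proper points among the top three). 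As written, your proposal proves the easy case and correctly locates, but does not close, the hard one; it is an accurate road map rather than a proof.
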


It is worth mentioning that in dimension higher than two, the problem of finding generators and relations of $\Bir(\BP^n)$ is highly non-trivial and still open already for $\BP^3$.

\begin{exercise}\label{exe:CREMONA}
Let $T=\underset{0\le i<j\le 3}{\bigcup}V({x_i,x_j})\subset\BP^3$ be  the \textit{coordinate tetrahedron}, i.e. the union of the coordinate lines of $\BP^3$. Denote by $X$ the blow-up $X=\Bl_T\BP^3$.
\begin{itemize}
    \item Realise $X$ as a closed subset of $\BP^3\times\BP^3$.
    \item Show that $X$ has 12 singular points.
    \item Find all the irreducible components of the exceptional locus $E_T$. \\\textbf{Hint:} There are 10 of them.
\end{itemize}
    
\end{exercise}

\subsection{Line bundles and divisors}\label{subsec:linebundledivisors} 
In this subsection we present some basic facts from the theory of divisors and line bundles on smooth varieties.

Let $X$ be a smooth quasi-projective variety. Denote by $\Pic(X)$ the \textit{Picard group} of $X$, i.e. the  group of isomorphism classes of line bundles on $X$, 
\[
\Pic(X)=\Set{\!\mbox{line bundles}\!}/\mbox{isomorphisms}, \]
with operation given by tensor product $\otimes$, identity element given by the class of the structure sheaf $\OO_X$, i.e. the class of the trivial line bundle, and inverse given by the dual $^\vee$.

Recall that a \textit{prime divisor} $V\subset X$ is an irreducible closed subvariety of codimension 1. Then, the \textit{group of divisors} $\Div(X)$ on $X$ is the free abelian group generated by the prime divisors of $X$, i.e. 
\[ \Div(X)=\left\langle\Set{ \!V\subset X |  V \mbox{ is a prime divisor}\!  }\right\rangle_{\BZ}.
\]
By definition, the elements of $\Div(X)$ are formal sums, with integral coefficients, of prime divisors of $X$. In this setting, if $D=\sum_{i=1}^s\alpha_iV_i$, we say that the \textit{support} of $D$ is the closed hypersurface $\supp(D)=\bigcup_{i=1}^s V_i\subset X$. The \textit{cone of effective divisors} is
\[
\Eff(X)=\left\langle\Set{ \!V\subset X |  V \mbox{ is a prime divisor}  \!}\right\rangle_{\BZ_{> 0}},
\]
i.e. the set of formal sums of prime divisors with positive coefficients.  The notion of effective divisor allows the group $\Div(X)$ to be endowed with a   \textit{partial order}. Given two divisors $D,D'\in\Div(X)$, we say that 
\begin{equation}\label{eq:partialordereffectivedivisors}
D\prec D'\mbox{ if }D'-D \in \Eff(X).
\end{equation} 

Recall that, by the \textit{principal ideal theorem} \cite[Theorem 10.1]{EISENBUD}, a divisor $D=\sum_{i=1}^t\alpha_i V_i\in\Div(X)$ on a smooth variety is uniquely determined by its \textit{local data}. That is a set of pairs $\Set{\left(\CU^{(k)},\prod_{i=1}^t   (f_i^{(k)})^{\alpha_i}\right)}_{k=1}^N$, where $\Set{\CU^{(k)}}_{k=1}^N$ is an open cover of $X$ and, for $k=1,\ldots,N$, the element $f^{(k)}_i\in\BC\left[\CU^{(k)}\right]$ is a local equation for $V_i$ on $\CU^{(k)}$, for $i=1,\ldots,t$, i.e. such that  $V_i\cap \CU^{(k)}=V(f_i^{(k)})$. 

We now briefly present the relationship between line bundles and divisors. For the sake of brevity, we will keep the discussion to a minimum, see \cite[Section 1.1]{GRIHARR} for more details.

Given a holomorphic section $s\in H^0(X,L)$ of a line bundle $L$ on $X$, its \textit{divisor of zeroes} is defined as follows. Let $V\subset X$ be a prime divisor and let $f\in \BC[\CU]$ be a local equation for $V$ on some open subset $\CU\subset X$ with $\CU\cap V\not=\emptyset$, i.e. $I(V\cap \CU) =(f)\subsetneq \BC[\CU]$. Then, the \textit{order of $s$ along $V$} is
\[
\ord_{V}(s)=\max\Set{k\ge 0 | \frac{s|_{\CU}}{f^k} \mbox{ is holomorphic on $\CU$}}\in\BZ_{\ge 0}.
\]
Note that this number is independent of the choice of the open $\CU\subset X$ and on the local equation $f\in\BC[\CU]$. In this setting, the divisor associated to $s$ is
\begin{equation}\label{eq:divass}
    \divi(s)=\sum_{\mbox{\tiny $V$ prime divisor}} \ord_V(s)\cdot V.
\end{equation}

It is worth noting that the sum in \eqref{eq:divass} is finite since a holomorphic function has positive order along only finitely many prime divisors.  

On the other hand, given an effective divisor $D\in \Eff(X)$, one can cook up\footnote{The association boils down to the fact that the local data of $D$ naturally induce local data and a section of a line bundle.} a line bundle $\OO_X(D)$ and a section $s\in H^0(X,\OO_X(D))$ such that $\divi(s)=D$. Moreover, this association can be extended by $\BZ$-linearity to the whole $\Div(X)$ via the rules
\begin{equation}\label{eq:picDiv}
    \begin{cases}
    \OO_X(D+D')=\OO_X(D)\otimes \OO_X(D'),\\
    \OO_X(-D)=\OO_X(D)^\vee.
\end{cases}
\end{equation}

\begin{definition}
    Two divisors $D,D'\in\Div(X)$ are \textit{linearly equivalent} if the associated line bundles are isomorphic. In symbols, we write $D\sim D'$ if $\OO_X(D)\cong \OO_X(D')$. 
\end{definition}
In other words, two divisors are linearly equivalent if they correspond to the same element in $\Pic(X)$. This gives an isomorphism
\begin{equation}\label{eq:Pic<->Div}
    \Pic(X)\cong\Div(X)/\sim.
\end{equation}
In what follows, we shall implicitly  make intensive use of the identification in \eqref{eq:Pic<->Div}.

\begin{example}\label{exa:PicPn} Recall that $\Pic(\BP^n)\cong\BZ$ is generated by the class of a line bundle $\OO_{\BP^n}(1)$ whose local sections on the coordinate charts $\CU_i$, for $i=0,\ldots,n$, are ratios of the form $\ell/x_i$ for some homogeneous form $\ell\in R$ of degree one. 

Let $H\in\Div(\BP^n)$ be a hyperplane. Then $H$ corresponds, via the identification \eqref{eq:Pic<->Div}, to the line bundle  $\OO_{\BP^n}(1)$. Moreover every hypersurface $X\subset \BP^n$ of degree $d$ is linearly equivalent to $dH$ and it  corresponds to the line bundle $\OO_{\BP^n}(1)^{\otimes d}=\OO_{\BP^n}(d)$, i.e. $\OO_{\BP^n}(X)\cong\OO_{\BP^n}(d H)\cong \OO_{\BP^n}(d)$, see \eqref{eq:picDiv}. 
\end{example}

\begin{notation} \label{not:linearequiv}
Since we consider constructions  independent of linear equivalence, with a slight abuse of notation we refer to the elements of $\Pic(X)$ as line bundles. If $L\in\Pic(X)$ is a line bundle on $X$ and $D\in\Div(X)$ is a divisor, we denote by $L(D)$ the line bundle $L\otimes\OO_X(D)$. We also abuse notation by denoting  a divisor and its class in $\Div(X)/\sim$ by the same symbol.
\end{notation}

A key tool to deal with line bundles and divisors is the pull-back.

\begin{definition}
Let $\varphi:X\to X'$ be a surjective morphism of smooth projective varieties. And let $D\in \Div(X')$ be a divisor with local data  $\Set{\left(\CU^{(k)},\prod_{i=1}^t  g_i^{(k)}\right)}_{k=1}^N$, for some open cover $\Set{\CU^{(k)}}_{k=1}^N$ of $X'$. Then, the \textit{pull-back divisor} $\varphi^*D\in\Div X$ is the divisor having local data $\left\{\left(\varphi^{-1}\left(\CU^{(k)}\right),\prod_{i=1}^t  g_i^{(k)}\circ{\varphi|}_{\varphi^{-1}\left(\CU^{(k)}\right)}\right)\right\}_{k=1}^N$. 
\end{definition} 

\begin{remark}\label{rmk:genPull}
 Given any morphism $\varphi:X\to X'$ of smooth varieties and a line bundle $L\in\Pic(X')$, the \textit{pull-back} $\varphi^*L$ is always a well-defined element of $\Pic(X)$. This observation, together with the isomorphism \eqref{eq:Pic<->Div} allows one to extend the notion of pull-back of divisors to all morphisms.

Given a closed subset $F\subset S$ of codimension $\codim F\ge 2$,  we clearly have an isomorphism $\Pic(X)\cong\Pic(X\setminus F)$ consisting in taking closures in $X$ in one direction and intersections with $X\setminus F$ in the other. As a consequence, for a rational map $\phi:X\dashrightarrow X'$ of smooth varieties, it  always makes sense to consider the \textit{pull-back} $\phi^*L\in\Pic(X)$ of a line bundle $L\in\Pic(X') $ or similarly of a divisor.
\end{remark} 

The relation between line bundles and divisors reflects on the well-known relation between rational maps with target projective spaces and the so-called \textit{linear systems without fixed part}, see \cite[Sections 1.4.1]{GRIHARR} for more details. We report this relation explicitly here.

\begin{definition}
    Let $X$ be a smooth quasi-projective variety and let $D\in\Eff(X)$ be an effective divisor. Then, the \textit{complete linear system associated to} $D$ is 
    \[
    |D|=\Set{D'\in\Eff(X)| D'\sim D}\!.
    \]
\end{definition}

\begin{remark}
 Given an effective divisor $D\in \Eff(X)$,   we can canonically identify the complete linear system $|D|$ with the projective space $\BP H^0(X,\OO_X(D))$ via the association between line bundles and divisors explained at the beginning of this subsection. 

 In this setting, a \textit{linear system} $\mathfrak{P}$ is any linear subspace $\mathfrak{P}\subset |D|$ of a complete linear system. A linear system is a \textit{pencil}, \textit{net} or \textit{web} according to its dimension if this is $1$, $2$, or $3$, respectively.
\end{remark}
\begin{definition}
    Let $\mathfrak{P}$ be a linear system on a smooth variety $X$. The \textit{fixed part} of $\mathfrak{P}$ is the greatest effective divisor $\Fix(\mathfrak{P})\in\Eff(X)$, with respect to $\prec$, such that $D-\Fix(\mathfrak{P})\in\Eff(X)$ for all $D\in \mathfrak{P}$.     A point $p\in X$ is a \textit{base point} for $\mathfrak{P}$ if $p\in\supp(D)$, for all $D\in \mathfrak{P} $. The \textit{base locus} of $\mathfrak{P}$ is the set of its base points, i.e.
    \[
    \Bs(\mathfrak{P})= \bigcap_{D\in \mathfrak{P}} \supp D .
    \]
\end{definition}
  
\begin{example}\label{pencillines}
    Let $L\subset \BP^2$ be a line. Then, we have $|L|={\BP^2}^\vee$. Now, fix a point $p\in \BP^2$. Then, the pencil $\mathfrak{P}_p$ of lines through $p$ is a (non-complete) linear system $\mathfrak{P}_p\subset |L|$, with $\Bs(\mathfrak{P}_p)=\Set{p}$.
\end{example}

As anticipated, linear systems having no fixed part are associated to rational maps to projective spaces. The idea behind this correspondence is that, if $p\in X\setminus\Bs(\mathfrak{P})$, then the locus $\Set{D\in \mathfrak{P}|p\in\supp D }$ is a hyperplane in $\mathfrak{P}$, i.e. a point of $\mathfrak{P}^\vee$. This defines a rational map with value in $\mathfrak{P}^\vee$ whose domain is the complement $X\setminus \Bs(\mathfrak{P})$ of the base locus.  For this reason, we only consider linear systems satisfying the necessary condition  $\Fix(\mathfrak{P})=\varnothing$, see \Cref{prop:codim2ind}. The next proposition shows that this condition is also sufficient, see \cite[Section II.6]{BEAUVILLE} for the surface case and \cite[Theorem II.7.1]{Hartshorne77} for a more general statement. 

\begin{prop} Let $X$ be a smooth surface. Then, there is a bijection 
\begin{center}
    \begin{tikzpicture}
    \draw[<->] (3.2,1)--(4.7,1);
    \draw[<-|] (3.2,0)--(4.7,0);
    \draw[|->] (3.2,-1)--(4.7,-1);
        \node at (7,1) { $\Set{\mathfrak{P}\mbox{ lin. sys.}| \begin{matrix}
        \dim \mathfrak{P}=n,\\ \Fix(\mathfrak{P})=\varnothing
    \end{matrix}} $};
        \node at (0,1) { $\Set{\phi\in\BC(X,\BP^n)|\phi(X)\not\subset H \ \forall[H]\in{\BP^n}^\vee }$};
        \node at (7,0) {$\mathfrak{P}$};
        \node at (0,0) {\footnotesize
    ${\left(\begin{tikzcd}[row sep=tiny]
        X\arrow[r,dashed,"\phi_\mathfrak{P}"]&\mathfrak{P}^\vee\\
        p\arrow[r,mapsto]&\Set{D\in \mathfrak{P}|p\in\supp D}
    \end{tikzcd}\right)}$}; 
        \node at (0,-1) {$\phi$};
        \node at (7,-1) {$|\phi^*\OO_{\BP^n}(1)|.$};
    \end{tikzpicture}
\end{center} 
\end{prop}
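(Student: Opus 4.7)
The strategy is to construct explicit inverse maps and verify mutual inversion. For the forward direction $\phi \mapsto \mathfrak{P}_\phi$: given $\phi \in \BC(X,\BP^n)$ with image in no hyperplane, use \Cref{rmk:genPull} to pull back the line bundle $\OO_{\BP^n}(1)$ together with its coordinate sections $x_0, \ldots, x_n$. The hypothesis on the image ensures the pulled-back sections $\phi^*x_0, \ldots, \phi^*x_n \in H^0(X, \phi^*\OO_{\BP^n}(1))$ are linearly independent, and hence span an $(n+1)$-dimensional subspace $V_\phi$ whose projectivisation is a linear system $\mathfrak{P}_\phi$ of dimension $n$. To verify that $\Fix(\mathfrak{P}_\phi) = \varnothing$, suppose a prime divisor $W$ lies in the support of every $\phi^* H$, $H \in (\BP^n)^\vee$; then $\phi(W \cap \dom(\phi))$ would lie in every hyperplane of $\BP^n$, hence be empty, forcing $W \subset \ind(\phi)$, contradicting $\codim \ind(\phi) \geq 2$ from \Cref{prop:codim2ind}.

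For the backward direction $\mathfrak{P} \mapsto \phi_\mathfrak{P}$: realise $\mathfrak{P} = \BP(V)$ with $V \subset H^0(X, \OO_X(D))$ of dimension $n+1$ for some (hence every) $D \in \mathfrak{P}$, and choose a basis $s_0, \ldots, s_n$ of $V$. Upon identifying $\mathfrak{P}^\vee \cong \BP^n$ via the dual basis, the formula $p \mapsto [s_0(p) : \cdots : s_n(p)]$ defines a morphism in any local trivialisation of $\OO_X(D)$; since any two trivialisations differ by a common nonvanishing scalar, these patch to a morphism $X \setminus \Bs(\mathfrak{P}) \to \BP^n$. Now the assumption $\Fix(\mathfrak{P}) = \varnothing$ forces $\Bs(\mathfrak{P})$ to have codimension $\geq 2$: any prime divisor in $\Bs(\mathfrak{P})$ would lie in every element of $\mathfrak{P}$ and thus contribute to $\Fix(\mathfrak{P})$. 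Since $X$ is a smooth surface, $\Bs(\mathfrak{P})$ is a finite set of points, and by \Cref{rem:indRatOnSurf} this morphism extends to a rational map $\phi_\mathfrak{P} \in \BC(X, \BP^n)$. Finally $\phi_\mathfrak{P}(X)$ lies in no hyperplane $V(\sum a_i y_i)$, for otherwise $\sum a_i s_i$ would vanish on $X \setminus \Bs(\mathfrak{P})$ and hence identically, violating linear independence of the $s_i$.

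The two constructions are mutually inverse. Starting from $\phi$, applying the backward construction to $\mathfrak{P}_\phi$ with the distinguished basis $\phi^*x_0, \ldots, \phi^*x_n$ recovers $p \mapsto [\phi^*x_0(p) : \cdots : \phi^*x_n(p)] = \phi(p)$. Conversely, starting from $\mathfrak{P}$ with basis $s_0, \ldots, s_n$, pulling back the coordinate sections of $\BP^n \cong \mathfrak{P}^\vee$ along $\phi_\mathfrak{P}$ returns precisely the $s_i$, hence the same subspace $V$ and the same linear system $\mathfrak{P}$.

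The principal obstacle is reconciling the rational (versus morphic) nature of $\phi$ with the codimension-one notion of divisors and base loci. The entire argument rests on the surface hypothesis entering through \Cref{prop:codim2ind} and \Cref{rem:indRatOnSurf}, which ensure respectively that the indeterminacy locus of $\phi$ is too small to contain a prime divisor and that a rational map from a smooth surface is defined outside a finite set. Any extension to higher dimensions requires additional care with possible divisorial contributions to $\ind(\phi)$ and $\Bs(\mathfrak{P})$.
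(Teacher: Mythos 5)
Your proof is correct and follows the intended route: the paper does not actually prove this proposition but delegates it to \cite[Section II.6]{BEAUVILLE} and \cite[Theorem II.7.1]{Hartshorne77}, after sketching in the preceding paragraph exactly the construction $p\mapsto\Set{D\in\mathfrak{P}|p\in\supp D}$ that you carry out in detail, with the surface hypothesis entering precisely where you say it does (codimension of the indeterminacy and base loci). The one point worth recording is that you, correctly, read $|\phi^*\OO_{\BP^n}(1)|$ in the bottom arrow as the $n$-dimensional subsystem spanned by the pullbacks of hyperplanes rather than the full complete linear system; this is the reading under which the two assignments are mutually inverse, the notation in the statement being a standard abuse.
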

\begin{definition}
    A linear system $\mathfrak{P}$ on $X$ is \textit{very ample} if the associated rational map $\phi_{\mathfrak{P}}:X\dashrightarrow \mathfrak{P}^\vee$ is an embedding. It is \textit{ample} if there exists $k\in\BZ_{\ge 1}$, such that $k\mathfrak{P}$ is very ample.
\end{definition}

\begin{example}
 Let $L\subset \BP^2$ be a line. Then, the projection $\pi_{e_2}$ presented in \Cref{exa:1.2} corresponds to the pencil
\[
\mathfrak{P}_{e_2}=\Set{D\in \lvert L\rvert \mbox{ such that } e_2\in D }\!,
\]
see \Cref{pencillines}.
\end{example}

\subsection{Intersection pairing on smooth surfaces} \label{subsec:intersection} 
In this section we introduce the \textit{intersection pairing} on smooth surfaces, mostly following the presentations in \cite{Hartshorne77,BEAUVILLE}. This formula is a generalisation of the  well-known \textit{Bezout formula} for the intersection multiplicity of two distinct irreducible plane curves  to the more general intersections of divisors on any smooth projective surface. In the rest of the subsection we explain some facts about algebraic surfaces that we shall need in the  notes.

\begin{convention}
From now on, we will only consider irreducible surfaces. Therefore,  in order to ease the notation, we will implicitly assume all the surfaces to be irreducible without saying it explicitly.
\end{convention}

\begin{theorem}[Intersection pairing]\label{thm:intprod}
  Let $S$ be a smooth projective surface.  Then, there is a unique symmetric bilinear form
\[
\begin{tikzcd}[row sep =tiny]
    \Div(S)\times \Div(S)\arrow[r] &\BZ\\
    (C,D)\arrow[r,mapsto] & C.D,
\end{tikzcd}
\]
named {the} \textit{intersection pairing}, such that:
\begin{itemize}
    \item if $C,D\subset S$ are smooth curves meeting transversely then $C.D$ equals the cardinality of the set $ C\cap D $;
    \item the integer $C.D$ only depends on linear equivalence classes, i.e. if $C,D,D'\in\Div(S)$ are divisors then $C.D=C.D'$ whenever $D\sim D'$. 
\end{itemize} 
\end{theorem}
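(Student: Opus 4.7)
The plan is to establish uniqueness and existence separately, using a moving lemma to reduce uniqueness to the transverse case, and a cohomological formula for existence.

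For uniqueness, I would argue as follows. Given two symmetric bilinear pairings on $\Div(S)$ satisfying the stated properties, by bilinearity it suffices to check agreement on generators of $\Div(S)$, i.e., on prime divisors. If $C, D \in \Div(S)$ are two prime divisors, the strategy is to move each within its linear equivalence class to a $\BZ$-linear combination of smooth curves meeting pairwise transversally; then property (i) forces the same value. Concretely, for any divisor $C$ one can find a very ample divisor $H$ such that both $C + nH$ and $nH$ are very ample for $n \gg 0$, writing $C \sim (C+nH) - nH$. By Bertini's theorem, a general element of a very ample linear system on a smooth surface is a smooth irreducible curve, and general elements of two distinct very ample systems can be arranged to meet transversally. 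Hence $C$ and $D$ are each linearly equivalent to $\BZ$-linear combinations of smooth curves meeting transversally, and bilinearity pins the value uniquely.

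For existence, I would define
\begin{equation*}
C.D := \chi(\OO_S) - \chi(\OO_S(-C)) - \chi(\OO_S(-D)) + \chi(\OO_S(-C-D)),
\end{equation*}
where $\chi$ denotes the holomorphic Euler characteristic. Invariance under linear equivalence is immediate since the right-hand side depends only on the line bundles $\OO_S(C)$ and $\OO_S(D)$, and symmetry is manifest. To verify property (i), when $C$ and $D$ are smooth curves meeting transversally, the Koszul-type resolution
\begin{equation*}
0 \to \OO_S(-C-D) \to \OO_S(-C) \oplus \OO_S(-D) \to \OO_S \to \OO_{C \cap D} \to 0
\end{equation*}
is exact, so additivity of $\chi$ in exact sequences gives $C.D = \chi(\OO_{C\cap D}) = \lvert C \cap D \rvert$, since $C \cap D$ is a reduced finite set of points.

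The main obstacle is bilinearity. Fixing $D$ and writing $\varphi_D(C) := \chi(\OO_S(-C)) - \chi(\OO_S(-C-D))$, the required additivity of $C.D$ in $C$ amounts to showing $\varphi_D(C+C') - \varphi_D(C) - \varphi_D(C') + \varphi_D(0) = 0$. The plan is to reduce to the case where $C'$ is a prime divisor by $\BZ$-linearity (after writing a general divisor as a difference of effective ones), and then use the short exact sequence
\begin{equation*}
0 \to \OO_S(-C-C') \to \OO_S(-C) \to \OO_{C'}(-C|_{C'}) \to 0
\end{equation*}
together with its twist by $\OO_S(-D)$, taking $\chi$ of both. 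The identity then follows from the fact that on the smooth curve $C'$, the difference $\chi(\OO_{C'}(-C|_{C'})) - \chi(\OO_{C'}(-C|_{C'} - D|_{C'}))$ depends on the degree of $D|_{C'}$, which in turn behaves additively in $D$ by Riemann--Roch on $C'$ (or its singular variant if $C'$ is not smooth, handled by passing to the normalisation). This is the delicate step and I would follow the arrangement in Beauville's Chapter~I to push it through cleanly.
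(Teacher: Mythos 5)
The paper states this theorem without proof, referring the reader to \cite{Hartshorne77,BEAUVILLE}, so there is no in-text argument to compare against; your proposal is correct and is essentially the standard amalgam of those two references. Your uniqueness argument (write any divisor as a difference of very ample ones, use Bertini to replace general members by smooth curves meeting transversally, then invoke bilinearity and the normalisation on transverse pairs) is exactly \cite[Lem. V.1.2 and Thm. V.1.1]{Hartshorne77}, and your existence formula $C.D=\chi(\OO_S)-\chi(\OO_S(-C))-\chi(\OO_S(-D))+\chi(\OO_S(-C-D))$, verified on transverse smooth curves via the Koszul resolution, is the definition used in \cite[Ch. I]{BEAUVILLE}. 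The one step where your route genuinely diverges from Beauville is bilinearity: you restrict to an arbitrary prime divisor $C'$ and reduce to the statement that $\chi(M)-\chi(M\otimes N^{-1})$ is independent of the line bundle $M$ on $C'$, which for singular $C'$ requires Riemann--Roch on an integral (possibly singular) projective curve or a normalisation argument, as you note; Beauville instead sidesteps singular curves entirely by recycling the moving lemma from the uniqueness step, so that additivity only ever needs to be checked against smooth very ample curves, where the degree formula $(C.D)=\deg(\OO_S(D)|_C)$ does all the work. Both routes are sound; yours is more self-contained in the first variable but pays for it with the singular-curve Riemann--Roch input, while Beauville's trades that for a second application of Serre vanishing and Bertini.
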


The push-forward of divisors is often involved in surface theory.

\begin{definition}
Let $\varphi:S\to S'$ be a degree $d$ (surjective) morphism of smooth projective surfaces. Let $V\subset S$ be a prime divisor. Then, the \textit{push-forward divisor} $\varphi_*V$ is
\begin{equation}\label{eq:push}
    \varphi_*V=\begin{cases}
    0 &\mbox{ if $\varphi(V)$ is a point,} \\
   \deg(\varphi|_V)\cdot \varphi(V) &\mbox{ otherwise}.
\end{cases}
\end{equation}
The \textit{push-forward} $\varphi_*:\Div(S)\to\Div(S')$ is the $\BZ$-linear extension of the association \eqref{eq:push} to the group $\Div(S)$.
\end{definition}

The interplay between pull-back and push-forward via a surjective degree $d$ morphism $\varphi:S\to S'$ of smooth projective surfaces is expressed by the equality
\[
\varphi_*\varphi^*D=d\cdot D,
\]
which holds for every $D\in\Div(S')$.

Consider a smooth quasi-projective surface $S$. Recall that, if $C=V(f)\subset S$ is a (not necessarily  reduced) curve and $p\in S $ is a point, the \textit{multiplicity of $C$ at $p $} is the integer
\[
\mult_p C = \min \Set{k\in\BZ_{\ge 0} | f\in\Fm^k_p\subset \OO_{S,p} }\!,
\] 
where $\OO_{S,p} $ is the \textit{stalk} of the structure sheaf $\OO_S$ at $p$, i.e. the local ring of germs of rational functions on $S$ which are holomorphic at $p$, and $\Fm_p\subset \OO_{S,p}$ its \textit{maximal ideal}, i.e. the ideal consisting of germs in $\OO_{S,p}$ vanishing at $p$.

Clearly, the notion of multiplicity is local, and it can naturally be extended to any curve on  a smooth surface. The following lemma, whose proof is a direct consequence of the defining properties of the intersection pairing, establishes a relation between the pull-back and the strict transform of a curve via blow-ups.
\begin{lemma}\label{lemma:strictmult}
Let $\varepsilon:\Bl_p S\to S$ be the blow-up of a smooth surface $S$ centred at a point $p\in S$ and let $E\subset \Bl_p S$ be the exceptional curve. Let also $C\subset S$ be an irreducible curve and let $\widehat{ C}\subset \Bl_p S$ be its strict transform.  Then, we have
\[
\varepsilon^* C = \widehat{ C} + (\mult_p C)\cdot E.
\]
\end{lemma}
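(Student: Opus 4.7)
The plan is to reduce the identity to a local computation in affine coordinates on one of the two standard charts of the blow-up described in \Cref{exa:plow1,exa:plow11}, and then to use the Taylor expansion of a local defining equation for $C$ at $p$.

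As a first step, I would observe that the restriction $\varepsilon|_{\Bl_p S\setminus E}\colon \Bl_pS\setminus E\to S\setminus\{p\}$ is an isomorphism. Hence the pull-back divisor $\varepsilon^*C$ agrees with the preimage of $C\setminus\{p\}$ on $\Bl_pS\setminus E$, whose closure is by definition the strict transform $\widehat C$. Since $E$ is the only prime divisor contracted by $\varepsilon$, this forces
\[
\varepsilon^*C=\widehat C+k\,E
\]
for a unique integer $k\ge 0$, and the whole lemma reduces to showing $k=\mult_p C$.

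Next I would pin down $k$ by a local calculation. Choose a connected affine neighbourhood $\CU\subset S$ of $p$ with local coordinates $(x_0,x_1)$ vanishing at $p$, so that $C\cap\CU=V(f)$ for some $f\in\BC[\CU]$. By definition $m\defeq\mult_p C$ is the largest integer with $f\in\Fm_p^m$, so we may write the (convergent) expansion
\[
f(x_0,x_1)=f_m(x_0,x_1)+f_{m+1}(x_0,x_1)+\cdots,
\]
with $f_k$ homogeneous of degree $k$ and $f_m\not\equiv 0$. In the chart $\CW^{(0)}\cong\BA^2_{(U,V)}$ of \eqref{eq:blow-upchart2}, where $\varepsilon$ is given by $(U,V)\mapsto(V,UV)$ and the exceptional curve is $V=0$, I would substitute to obtain
\[
\varepsilon^*f=f(V,UV)=V^m\bigl(f_m(1,U)+V f_{m+1}(1,U)+\cdots\bigr)=V^m\cdot g(U,V),
\]
with $g(U,0)=f_m(1,U)\not\equiv 0$. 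Hence $V^m$ is the exact power of $V$ dividing $\varepsilon^*f$, which means the coefficient of $E$ in $\divi(\varepsilon^*f)=\varepsilon^*C$ equals $m$; the analogous computation in the chart $\CW^{(1)}$ gives the same answer and shows that the residual divisor $\divi(g)$ defines $\widehat C$ locally (its support meets $E$ only at the points corresponding to the roots of $f_m(1,U)$, i.e.\ at the tangent directions of $C$ at $p$). Putting everything together yields $k=m$, as required.

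The only genuinely delicate point is to verify that the residual factor $g(U,V)$ actually cuts out the strict transform on the blow-up chart, rather than some larger subscheme containing $E$. This is guaranteed by the fact that $g(U,0)=f_m(1,U)$ is a nonzero polynomial, so $V=0$ is not a component of $V(g)$; hence $V(g)\cap(\CW^{(0)}\setminus E)$ maps isomorphically onto $(C\cap\CU)\setminus\{p\}$, and taking closures identifies $V(g)$ with $\widehat C\cap\CW^{(0)}$. Repeating the argument in $\CW^{(1)}$ and gluing gives the global equality $\varepsilon^*C=\widehat C+mE$.
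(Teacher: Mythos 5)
Your proof is correct and follows exactly the strategy the paper prescribes (the lemma is left as an exercise there, with the hint to write $\varepsilon$ in the standard blow-up charts, pull back a local equation $f$ of $C$, and compute the multiplicity of the factor corresponding to $E$). Your preliminary reduction to $\varepsilon^*C=\widehat C+kE$ via the isomorphism off $E$, the expansion $f=f_m+f_{m+1}+\cdots$ giving the exact power $V^m$, and the check that the residual factor $g$ cuts out $\widehat C$ are all sound and match the intended argument.
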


We present now more direct consequences of the defining properties of the intersection pairing.
\begin{lemma}\label{lemma:eltransf}
Let $\varepsilon:\Bl_p S\rightarrow S$ be the blow-up of a smooth projective surface $S$  centred at a point $p\in S$. Let also $E=\varepsilon^{-1}(p)$ be the (rational) exceptional curve and $D,D'\in\Div(S)$ two divisors. Then, we have
\begin{itemize}
    \item $\varepsilon^*D.E=0$,
    \item $\varepsilon^*D.\varepsilon^*D'=D.D'$,
    \item $E^2=-1$.
\end{itemize}
Moreover, the association
\[
\begin{tikzcd}[row sep = tiny ]
\Pic(S)\oplus\BZ\arrow[r]&\Pic(\Bl_pS)\\
    (D,n)\arrow[r,mapsto]& \varepsilon^*D+nE,
\end{tikzcd}
\]
is an isomorphism. 
\end{lemma}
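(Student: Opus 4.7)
The plan is to verify the three numerical identities first, and then deduce the isomorphism from them combined with the structure of $\Pic(\Bl_p S)$. Throughout I will exploit that the intersection pairing is defined on linear equivalence classes, so I can freely move divisors within their class to put them in convenient position with respect to the point $p$.

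For the first item, given $D \in \Div(S)$ I would choose a representative $D' \sim D$ whose support does not contain $p$; such a $D'$ exists because the linear system $|D|$ is not concentrated at $p$ after possibly adding and subtracting a divisor of a rational function with a pole at $p$. Then by \Cref{lemma:strictmult}, applied with $\mult_p D' = 0$, the pullback $\varepsilon^* D'$ equals the strict transform $\widehat{D'}$, which is disjoint from $E$; hence $\varepsilon^* D . E = \varepsilon^* D'. E = 0$. For the second item, the same movement argument lets me pick $D, D'$ whose supports avoid $p$, and \Cref{thm:intprod} together with the fact that $\varepsilon$ is an isomorphism away from $E$ gives $\varepsilon^* D . \varepsilon^* D' = \widehat{D}.\widehat{D'} = D.D'$, since any transverse intersection can be arranged away from $p$.

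For the third item, I would pick a smooth curve $C \subset S$ passing through $p$ with $\mult_p C = 1$ (for instance a local analytic disc, or after working in an affine chart any smooth curve through $p$; if $S$ is projective, take a hyperplane section through $p$ avoiding the rest of a chosen configuration). By \Cref{lemma:strictmult}, $\varepsilon^* C = \widehat{C} + E$, so
\begin{equation*}
0 = \varepsilon^* C . E = \widehat{C}.E + E^2,
\end{equation*}
by the first item. Because $C$ is smooth at $p$, the strict transform $\widehat{C}$ meets $E$ transversally at a single point, so $\widehat{C}.E = 1$, and hence $E^2 = -1$.

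Finally, for the isomorphism, I would argue surjectivity and injectivity separately. For surjectivity, every prime divisor $V \subset \Bl_p S$ is either $E$ itself or is the strict transform $\widehat{C}$ of an irreducible curve $C \subset S$; in the latter case $\widehat{C} = \varepsilon^* C - (\mult_p C)\, E$, again by \Cref{lemma:strictmult}, so the class of $V$ lies in the image of $(D,n) \mapsto \varepsilon^* D + nE$, and by $\BZ$-linearity so does every class. For injectivity, suppose $\varepsilon^* D + nE \sim 0$ in $\Pic(\Bl_p S)$. Intersecting with $E$ and using the first and third items gives $0 = \varepsilon^* D . E + n E^2 = -n$, so $n = 0$. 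It remains to show that $\varepsilon^* D \sim 0$ forces $D \sim 0$; this follows because $\varepsilon$ is birational of degree one, so pushing forward we obtain $D = \varepsilon_* \varepsilon^* D \sim \varepsilon_* 0 = 0$. The main subtlety, and the step I would treat most carefully, is the moving argument used repeatedly to translate intersection computations into transverse intersections occurring outside a prescribed locus; everything else is a direct combination of \Cref{thm:intprod} and \Cref{lemma:strictmult}.
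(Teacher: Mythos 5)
Your proof is correct and follows exactly the strategy the paper intends: the lemma is left there as an exercise whose hint is precisely your moving argument (replace $D$, $D'$ by linearly equivalent divisors whose supports avoid $p$, so that pull-back coincides with strict transform and is disjoint from $E$). The remaining steps — computing $E^2=-1$ from a curve smooth at $p$ via \Cref{lemma:strictmult}, and deducing the decomposition of $\Pic(\Bl_pS)$ by writing every prime divisor other than $E$ as $\varepsilon^*C-(\mult_pC)E$ and checking injectivity by intersecting with $E$ and pushing forward — are the standard completion of that hint, so there is nothing to flag.
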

\begin{exercise}
    Prove \Cref{lemma:strictmult,lemma:eltransf}.\\
    \textbf{Hint}: In order to prove \Cref{lemma:strictmult}, write the map $\varepsilon$ in coordinates, compute the equation of the pull-back of $C$ and the multiplicity of the factor corresponding to $E$. For \Cref{lemma:eltransf}, use divisors linearly equivalent to $D$ and $D'$. For instance, you can ask that their supports do not contain $p$.
\end{exercise} 

\begin{remark} 
Let $\varepsilon:\Bl_p S\rightarrow S$ be the blow-up of a smooth projective surface $S$  centred at a point $p\in S$. Let also $p\in C\subset S$ be an irreducible (closed) curve passing through $p$, and smooth at $p$. Then, \Cref{lemma:eltransf} implies $\widehat{C}^2=C^2-1$. 
\end{remark}

\begin{definition} \label{def:exceptionalcurve}
    Let $S$ be a smooth quasi-projective surface and let $C\subset S$ be an irreducible curve. Then, a prime divisor $C$ is an \textit{exceptional curve} if there exists another smooth surface $S'$ and an isomorphism $\Bl_p S'\cong S$ identifying $C$ with the exceptional curve of the blow-up $\Bl_p S'$.
\end{definition} 

    As a consequence of \Cref{exa:plow1} and \Cref{lemma:eltransf}, when $S$ is projective, an exceptional curve $C\subset S$ satisfies $C\cong\BP^1$ and $C^2=-1$. The following celebrated result by \textit{Guido Castelnuovo} characterises exceptional curves, showing that these conditions are not just necessary but also sufficient.

\begin{theorem}[Castelnuovo's contractibility criterion {\cite[Theorem II.17]{BEAUVILLE}}]\label{thm:Castelnuovo}
Let $S$ be a smooth projective surface and let $C\subset S$ be an irreducible curve such that $C\cong\BP^1$ and $C^2=-1$, then $C $ is an exceptional curve.
\end{theorem}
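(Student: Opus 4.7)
The strategy is to build a morphism $\varphi:S\to S'$ onto a smooth projective surface which contracts $C$ to a single point and is an isomorphism on $S\setminus C$. Once such a $\varphi$ is produced, uniqueness of the blow-up guarantees that $S'$, together with the image point $p=\varphi(C)$, realises $S$ as $\Bl_{p}S'$ with $C$ as the exceptional curve, which is exactly the content of \Cref{def:exceptionalcurve}.

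The map $\varphi$ will be given by the linear system of a carefully chosen divisor. Start from a very ample divisor $H_0$ on $S$ (which exists since $S$ is projective), set $k=H_0.C>0$ and consider
\[
H=H_0+kC,\qquad\text{so that}\qquad H.C=H_0.C+k\,C^2=k-k=0,
\]
using $C^2=-1$. The idea is then to show that for all sufficiently large $n$, the linear system $|nH|$ is base-point-free, contracts $C$ to a single point and separates points and tangent directions away from $C$, so that the induced morphism $\varphi_{|nH|}:S\to\BP^N$ has image a smooth projective surface $S'$ onto which it is the desired contraction.

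The main technical input is cohomological. I would analyse the restriction sequences
\[
0\to \OO_S\bigl((n-1)H-(m-1)C\bigr)\to\OO_S\bigl((n-1)H-(m-2)C\bigr)\to \OO_C\otimes\OO_S\bigl((n-1)H-(m-2)C\bigr)\to 0,
\]
for $m\ge 1$. Because $H.C=0$ and $C^2=-1$, the restricted line bundle on $C\cong\BP^1$ has degree $m-2$, and the isomorphism $C\cong\BP^1$ lets me compute its cohomology explicitly. Iterating these sequences, combined with Serre vanishing applied to $H_0$ for large $n$, produces the vanishing $H^1(S,\OO_S(nH-mC))=0$ for all $0\le m\le n$, and then a dimension count showing that restriction
\[
H^0\bigl(S,\OO_S(nH)\bigr)\twoheadrightarrow H^0\bigl(C,\OO_C\bigr)\cong\BC
\]
is surjective while global sections of $\OO_S(nH-C)$ already base-point freely embed $S\setminus C$.

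From this I would read off three facts, each the key to one property of $\varphi=\varphi_{|nH|}$. First, since every section of $\OO_S(nH)$ restricts to a constant on $C$ (as $\OO_C(nH)$ is the trivial bundle on $\BP^1$), the curve $C$ is mapped to a single point $p\in\BP^N$. Second, the sections coming from $H^0(S,\OO_S(nH-C))$, together with one section nonzero on $C$, separate points of $S\setminus C$ and distinguish them from $p$, so $\varphi$ restricts to a closed embedding on $S\setminus C$. Third, and most delicately, sections of $\OO_S(nH-C)$ and $\OO_S(nH-2C)$ realise local coordinates on $S$ transverse to $C$ as functions vanishing at $p$ of different orders, which shows that $S':=\varphi(S)$ is smooth at $p$ and that $\varphi$ is a local isomorphism at $p$ in the normal direction to $C$. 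This last point, the smoothness of $S'$ at $p$, is the \emph{main obstacle}: all the cohomological vanishings are fairly mechanical once set up, but one must carefully exhibit enough sections of $\OO_S(nH-mC)$ for $m=1,2$ to generate $\Fm_{S',p}/\Fm_{S',p}^{2}$. Having done this, Castelnuovo's criterion follows because a smooth point on a projective surface with a morphism from $S$ collapsing a single $(-1)$-curve is necessarily obtained by blowing up that point.
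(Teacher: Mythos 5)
The paper does not actually prove \Cref{thm:Castelnuovo}; it is quoted from Beauville, so there is no in-paper argument to compare against. Your plan is essentially the classical contraction argument from that reference (and from Hartshorne, Thm.\ V.5.7): pass from a very ample $H_0$ to $H=H_0+kC$ with $H.C=0$, use restriction sequences to $C\cong\BP^1$ together with Serre vanishing for $H_0$ to obtain the $H^1$-vanishings, deduce that the linear system contracts $C$ and embeds $S\setminus C$, and conclude via the factorisation of birational morphisms of smooth surfaces (\Cref{prop:factor}). The outline is sound. One bookkeeping point: $H$ is nef but not ample, so the vanishing $H^1(S,\OO_S(nH-mC))=0$ cannot come from Serre vanishing applied to $nH$ directly; it must be obtained by starting at $\OO_S(nH_0)$ and climbing the chain $nH_0,\,nH_0+C,\,\dots,\,nH_0+nkC=nH$, using that the restricted degree on $C$ is $\geq -1$ at each step, so the natural range of the induction is $0\le m\le nk$ rather than $0\le m\le n$.

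The genuine gap is the step you yourself flag: smoothness of $S'=\varphi(S)$ at $p=\varphi(C)$. Exhibiting sections of $\OO_S(nH-C)$ and $\OO_S(nH-2C)$ that ``generate $\Fm_{S',p}/\Fm_{S',p}^2$'' is not by itself a proof, because at that stage you do not yet know the local ring $\OO_{S',p}$: a priori the image could be non-normal at $p$, or normal but singular, and two sections having independent leading terms along $C$ does not control the embedding dimension of $\OO_{S',p}$ itself. The standard ways to close this are (i) Hartshorne's route via the theorem on formal functions, which identifies the completion of $\OO_{S',p}$ with the inverse limit of the $H^0(mC,\OO_{mC})$ and then with a formal power series ring in two variables using $\OO_C(-C)\cong\OO_{\BP^1}(1)$, or (ii) Beauville's explicit power-series computation with two sections whose restrictions to $C$ give a basis of $H^0(C,\OO_S(-C)|_C)$. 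Some version of this formal-neighbourhood computation along $C$ is unavoidable, and your plan should invoke it explicitly rather than reduce the issue to a statement about the Zariski cotangent space.
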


\begin{remark}
The projectivity assumption on $S$ in \Cref{thm:Castelnuovo} is actually not necessary. Indeed, having self-intersection  $-1$ is a property of a curve $C\subset S$ that can be phrased in terms of the normal bundle of $C$ in $S$ and it can be checked in an analytic neighbourhood of $C$, see \cite[Subsection 1.4.2]{GRIHARR}. As a consequence, \Cref{thm:Castelnuovo} can be in principle stated for quasi-projective surfaces  and proved on any smooth compactification.
\end{remark}

\subsection{The canonical bundle}\label{subsec:canonical} We introduce now the notion of canonical bundle and show how to relate the canonical bundles of smooth varieties via morphisms.

\begin{definition}
Let $X$ be a smooth quasi-projective variety of dimension $n$. The \textit{canonical bundle}  of $X$ is the line bundle $\omega_X$ whose local sections on an open subset $\CU\subset X$ consist of holomorphic $n$-forms on $\CU$. A \textit{canonical divisor} on $X$ is a divisor $K_X\in\Div(X)$ corresponding to $\omega_X$ via the association \eqref{eq:Pic<->Div}. An \textit{anti-canonical divisor} on $X$ is a divisor $-K_X$ corresponding to the dual bundle $\omega_X^{\vee}$ of $\omega_X$.
\end{definition}
Note that we will often refer to $K_X$ as \textit{``the canonical divisor"}. This convention is common in the literature. We stress that the abuse of the notation is justified by the fact that the constructions we consider are independent of linear equivalence.
\begin{example} 
The canonical bundle of the projective space is $\omega_{\BP^n}=\OO_{\BP^n}(-n-1)$, see \Cref{exa:PicPn}, and the anti-canonical bundle is $\omega_{\BP^n}^{\vee}=\OO_{\BP^n}(n+1)$.
\end{example}

An important feature of the canonical bundle is that it provides the so-called \textit{Serre duality}, which is a powerful tool for computing the cohomology of line bundles or vector bundles in general, see \cite[Section III.7]{Hartshorne77}.
\begin{theorem}[Serre duality]\label{thm:SErredual}
    Let $X$ be a smooth projective variety and let $L\in\Pic(X)$ be a line bundle. Then, we have $H^n(X,\omega_X)\cong\BC$. Moreover, for all $i=0,\ldots,n$, the cup product pairing
    \[
\begin{tikzcd}
        H^i(X,L)\times H^{n-i}(X,\omega_X\otimes L^{-1})\arrow[r]&H^n(X,\omega_X)\cong\BC,
\end{tikzcd}
    \]
    defines a perfect pairing. 
\end{theorem}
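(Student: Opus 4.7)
The plan is to reduce Serre duality on a general smooth projective $X$ to the special case $X=\BP^n$, where it can be verified by an explicit \v{C}ech calculation. Using the standard affine open cover $\CU_0,\ldots,\CU_n$ from Notation~\ref{rmk:coorchart}, the \v{C}ech complex for a twisted line bundle $\OO_{\BP^n}(d)$ can be written out explicitly, and the top cohomology $H^n(\BP^n,\OO_{\BP^n}(d))$ is identified with the space spanned by Laurent monomials $x_0^{a_0}\cdots x_n^{a_n}$ with $\sum a_i=d$ and each $a_i<0$. Specialising to $d=-n-1$ shows $H^n(\BP^n,\omega_{\BP^n})\cong\BC$, spanned by the class of $(x_0\cdots x_n)^{-1}$, which furnishes the trace map; and under the cup product, a monomial in $H^0(\OO_{\BP^n}(d))$ paired with a monomial in $H^n(\OO_{\BP^n}(-n-1-d))$ gives the trace generator precisely when the exponent multi-indices are complementary, so the pairing is perfect.

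Next, I would upgrade this to an Ext duality for an arbitrary coherent sheaf $\mathcal{F}$ on $\BP^n$, of the shape
\[
H^i(\BP^n,\mathcal{F})\;\cong\;\operatorname{Ext}^{n-i}_{\OO_{\BP^n}}(\mathcal{F},\omega_{\BP^n})^{\vee}.
\]
This is a standard dimension-shifting argument: every coherent sheaf on $\BP^n$ admits a resolution by finite direct sums of twists $\OO_{\BP^n}(d_j)$, and the duality statement established for line bundles propagates along such resolutions via the five-lemma.

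Then, for a general smooth projective $X$ of dimension $n$, I would embed $\iota\colon X\hookrightarrow\BP^N$. Applying the Ext duality on $\BP^N$ to the coherent sheaf $\iota_*L$, together with the local-to-global Ext spectral sequence and the identification $\mathcal{E}xt^{N-n}_{\OO_{\BP^N}}(\iota_*\OO_X,\omega_{\BP^N})\cong\iota_*\omega_X$ obtained from a Koszul resolution along the conormal sequence of $\iota$, produces a perfect pairing $H^i(X,L)\times\operatorname{Ext}^{n-i}_{\OO_X}(L,\omega_X)\to\BC$. Local freeness of $L$ then converts $\operatorname{Ext}^{n-i}_{\OO_X}(L,\omega_X)$ into $H^{n-i}(X,\omega_X\otimes L^{-1})$, and a direct computation on \v{C}ech cocycles shows that the resulting pairing coincides with the cup product in the statement of the theorem.

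The main obstacle is the identification of the abstract ``dualizing sheaf'' produced by the Ext construction on $\BP^N$ with the sheaf of top holomorphic forms $\omega_X$: this is exactly where the smoothness of $X$ is essential, via a local Koszul calculation using that the conormal sheaf of $\iota$ is locally free of rank $N-n$, together with the adjunction-type formula relating top forms on $X$ with top forms on $\BP^N$ twisted by the determinant of the normal bundle. Once this identification is in hand, everything else is bookkeeping with spectral sequences and \v{C}ech complexes; in particular, the embedding-independence of the resulting pairing is automatic from the intrinsic characterisation of $\omega_X$.
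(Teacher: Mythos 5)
The paper does not prove this theorem; it is stated as a classical result with a reference to \cite[Section III.7]{Hartshorne77}, and your outline is precisely the argument given there: the explicit \v{C}ech computation of $H^n(\BP^n,\OO_{\BP^n}(d))$ and the perfect pairing on monomials, the extension to coherent sheaves by resolving with sums of twists and dimension-shifting, and finally the identification of the abstract dualizing sheaf $\mathcal{E}xt^{N-n}_{\OO_{\BP^N}}(\iota_*\OO_X,\omega_{\BP^N})$ with the sheaf of top forms via a local Koszul resolution of the (locally free) conormal sheaf. Your sketch is correct and matches the cited proof; the only point worth flagging is that $H^n(X,\omega_X)\cong\BC$ (rather than a higher-dimensional space) uses connectedness of $X$, which is implicit in the statement.
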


\begin{remark}
    The takeaway  from \Cref{thm:SErredual} is that there is an isomorphism $H^i(X,L)\cong H^{n-i}(X,\omega_X\otimes L^{-1})^\vee$, which translates in an equality between the dimensions of the two complex vector spaces.
\end{remark}

Let $X$ be a smooth projective variety of dimension $n$. Recall that the \textit{arithmetic genus} $p_a(X)$ and the \textit{geometric genus} $p_g(X)$  are the integers
\[
p_a=(-1)^n(\chi(\OO_X)-1)\mbox{ and }p_g(X)=\dim _{\BC} H^0(X,\omega_X),
\]
where $\chi(L)=\sum_{i=0}^{\dim X} (-1)^i\dim_{\BC} H^i(X,L)$ denotes the
\textit{Euler characteristic}  of a line bundle $L\in\Pic(X)$.

We   discuss now the \textit{adjunction formula} for morphisms between smooth varieties. This formula establishes a clear relation between the canonical bundle of a smooth $n$-fold and the canonical bundle of a smooth prime divisor, see \cite[Subsection 1.1.3]{GRIHARR}.
\begin{theorem}[Adjunction formula] \label{thm:adjunctionformula}
Let $X$ be a smooth quasi-projective variety. Let $Y\subset X$ be a closed smooth hypersurface. Then, we have
\[
\omega_Y=\omega_X(Y)|_Y.
\]
    In terms of divisors this can be written as $K_Y=(K_X+Y)|_Y$.
\end{theorem}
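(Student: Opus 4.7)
The plan is to exhibit an explicit isomorphism of line bundles $\omega_X(Y)|_Y \cong \omega_Y$ on $Y$, known as the \emph{Poincar\'{e} residue map}. Producing such an isomorphism immediately yields the claimed linear equivalence $K_Y \sim (K_X + Y)|_Y$ of divisor classes via the identification of $\Pic(Y)$ with $\Div(Y)$ modulo linear equivalence.

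First I would cover $X$ by open subsets $\CU_\alpha$ on which $Y$ is cut out by a single regular function $f_\alpha$; since $Y$ is smooth in $X$, one can complete $f_\alpha$ to a system of local coordinates $(x_1^{(\alpha)}, \ldots, x_{n-1}^{(\alpha)}, x_n^{(\alpha)})$ on $\CU_\alpha$ with $x_n^{(\alpha)} = f_\alpha$. On each such patch the line bundle $\omega_X(Y)$ is trivialised by
$$\sigma_\alpha = \frac{dx_1^{(\alpha)} \wedge \cdots \wedge dx_n^{(\alpha)}}{x_n^{(\alpha)}},$$
while $\omega_Y$ is trivialised on $Y \cap \CU_\alpha$ by $\tau_\alpha = dx_1^{(\alpha)} \wedge \cdots \wedge dx_{n-1}^{(\alpha)}|_Y$. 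I would then define the local residue map $\operatorname{Res}_\alpha$ on $Y \cap \CU_\alpha$ to be the unique $\OO_Y$-linear map sending $\sigma_\alpha|_Y$ to $\tau_\alpha$; by construction each $\operatorname{Res}_\alpha$ is an isomorphism of line bundles on its domain, taking generator to generator.

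To finish, I would glue the $\operatorname{Res}_\alpha$ into a global isomorphism by checking their compatibility on overlaps. On $\CU_\alpha \cap \CU_\beta$ the two local equations for $Y$ differ by a unit, $f_\alpha = u\, f_\beta$, and writing the coordinate change as $x_i^{(\alpha)} = F_i(x^{(\beta)})$ with $F_n = u \cdot x_n^{(\beta)}$, a direct computation of the last row of the Jacobian $J = (\partial F_i / \partial x_j^{(\beta)})$ and its restriction to $\{x_n^{(\beta)} = 0\}$ gives $\det(J)|_Y = u|_Y \cdot \det(J')|_Y$, where $J'$ is the Jacobian of the induced coordinate change on $Y$. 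Combined with the transformation laws $\sigma_\alpha = (\det(J)/u)\, \sigma_\beta$ for $\omega_X(Y)$ and $\tau_\alpha = \det(J')|_Y\, \tau_\beta$ for $\omega_Y$, the ratios of the local trivialisations match on $Y$, so the $\operatorname{Res}_\alpha$ glue.

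The main obstacle is precisely this Jacobian bookkeeping: checking that the factor $u$ coming from the change of local defining equations for $Y$ cancels exactly against the matching factor produced by the last row of the Jacobian of the ambient change of coordinates. Once this cancellation is verified, everything else is formal and the resulting $\operatorname{Res}$ is a global isomorphism $\omega_X(Y)|_Y \xrightarrow{\sim} \omega_Y$, which proves the adjunction formula.
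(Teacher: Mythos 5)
The paper states the adjunction formula without proof, deferring to \cite[Subsection 1.1.3]{GRIHARR}; your Poincar\'e residue construction is precisely the argument given there, and it is correct — in particular the key Jacobian computation (the last row of $J$ restricted to $Y$ being $(0,\dots,0,u)$, whence $\det(J)|_Y = u|_Y\det(J')|_Y$) does cancel the unit $u$ exactly as you claim. Nothing to add.
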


In dimension two, {the} adjunction formula is equivalent to the \textit{genus formula}, see \cite[Section I.15]{BEAUVILLE}.  That is
\begin{equation} \label{eq:genusformula}
    p_a(C)=1+\frac{1}{2}(C^2+C.K_S),
\end{equation}
for any smooth surface $S$ and any closed irreducible curve $C\subset S$. Notice that we do not need to assume $C$ to be smooth for the genus formula.  
\begin{example}
Let $S$ be a surface with trivial canonical bundle, i.e. $\omega_S\cong \OO_S$. Assume\footnote{This is a non-trivial assumption. For example \textit{abelian surfaces} have trivial canonical bundle and do not contain any rational curve.} that there exists a curve $C\subset S$ such that $C\cong\BP^1$. Then, we have $C^2=-2$. Indeed, the genus formula gives
\[
0=1+\frac{C^2}{2}.
\]
Infinitely many examples of surface with trivial canonical bundle are provided by smooth quartic hypersurfaces of $\BP^3$. Indeed, by the adjunction formula, we have
\[
\omega_S=\omega_{\BP^3}(S)|_S=(\OO_{\BP^3}(-4)\otimes\OO_{\BP^3}(4))|_S=\OO_S.
\]
Finally, to give an explicit example, the so-called \textit{quartic Fermat hypersurface}  $V(x_0^4-x_1^4+x_2^4-x_3^4)$ is smooth and it contains the line $V(x_0-x_1,x_2-x_3)$.
\end{example}

\begin{remark}
    When $C$ is a smooth curve we have $p_a(C)=p_g(C)=g(C)$, where $g(C)$ is the \textit{topological genus} of $C$. On the other hand, thanks to Serre duality, when $S$ is a smooth surface the difference
    \[
    q(S)=p_a(S)-p_g(S)=\dim_{\BC}H^1(S,\OO_S),
    \]
    plays an important role. In fact, knowing the integers $q(S)$, $p_a(S)$ and $p_g(S)$ implies knowing $\chi(\OO_S)$, but this is a finer information.    The  number $q(S)$ is called the \textit{irregularity} of $S$. Another infinite family of invariants of $S$ is given by its \textit{plurigenera} $P_k(S)$, for $k\ge1$. These are the integers defined by
    \[
    P_k(S)= \dim _{\BC} H^0(S,\omega_S^{\otimes k})\in\BZ.
    \]
\end{remark}
Note that $P_1(S)=p_g(S)$. The following proposition highlights the importance of the numerical invariants $ q(S)$ and $P_k(S)$, for $k\ge1$, in the birational classification of surfaces.
\begin{prop}[{\cite[Proposition III.20]{BEAUVILLE}}]\label{prop:brinva}
    The integers $q(S)$ and $P_k(S)$, for $k\ge1$, are birational invariants of $S$.
\end{prop}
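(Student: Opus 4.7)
The plan is to reduce the statement to invariance under a single blow-up at a point, and then to exploit the explicit relation $K_{\widetilde{S}} = \varepsilon^{*}K_{S} + E$ together with \Cref{lemma:eltransf}. By \Cref{prop:factor}, any birational map $\phi : S \dashrightarrow S'$ between smooth projective surfaces factors through a third smooth surface $\widetilde{S}$ which dominates both $S$ and $S'$ by compositions of blow-ups at single points (and isomorphisms). Hence it suffices to prove the equalities $q(\widetilde{S}) = q(S)$ and $P_{k}(\widetilde{S}) = P_{k}(S)$ for a single point blow-up $\varepsilon : \widetilde{S} = \Bl_{p} S \to S$, with exceptional curve $E \cong \BP^{1}$ and $E^{2} = -1$.

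For the plurigenera, a standard local computation in the charts of \Cref{exa:plow1} yields $K_{\widetilde{S}} = \varepsilon^{*}K_{S} + E$. By \Cref{lemma:eltransf} this gives $kK_{\widetilde{S}} \cdot E = -k$, and more generally $(kK_{\widetilde{S}} - (j-1)E) \cdot E = j - 1 - k$. For $j = 1, \dots, k$ I restrict the short exact sequence
\begin{equation*}
0 \longrightarrow \OO_{\widetilde{S}}(kK_{\widetilde{S}} - jE) \longrightarrow \OO_{\widetilde{S}}(kK_{\widetilde{S}} - (j-1)E) \longrightarrow \OO_{E}(j - 1 - k) \longrightarrow 0
\end{equation*}
along $E$. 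Since $j - 1 - k < 0$ for $j \le k$, the quotient has no global sections, so the natural map on $H^{0}$ is an isomorphism at every step. Iterating from $j = 1$ to $j = k$ gives
\begin{equation*}
H^{0}(\widetilde{S}, \omega_{\widetilde{S}}^{\otimes k}) \;=\; H^{0}(\widetilde{S}, \OO_{\widetilde{S}}(kK_{\widetilde{S}} - kE)) \;=\; H^{0}(\widetilde{S}, \varepsilon^{*}(kK_{S})),
\end{equation*}
and the projection formula together with $\varepsilon_{*}\OO_{\widetilde{S}} = \OO_{S}$ identifies the last space with $H^{0}(S, \omega_{S}^{\otimes k})$. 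This yields $P_{k}(\widetilde{S}) = P_{k}(S)$ for every $k \ge 1$.

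For the irregularity, the fastest route is the Leray spectral sequence applied to $\varepsilon$: the vanishings $\varepsilon_{*}\OO_{\widetilde{S}} = \OO_{S}$ and $R^{i}\varepsilon_{*}\OO_{\widetilde{S}} = 0$ for $i \ge 1$ (the latter coming from the theorem on formal functions together with $H^{i}(E, \OO_{E}) = 0$ for $i \ge 1$, since $E \cong \BP^{1}$) collapse the spectral sequence and give $H^{1}(\widetilde{S}, \OO_{\widetilde{S}}) \cong H^{1}(S, \OO_{S})$, whence $q(\widetilde{S}) = q(S)$. Alternatively, and closer to the spirit of these notes, Hodge theory on a K\"ahler surface gives $q(S) = h^{1,0}(S)$; pullback along $\varepsilon$ embeds $H^{0}(S, \Omega^{1}_{S})$ into $H^{0}(\widetilde{S}, \Omega^{1}_{\widetilde{S}})$, and conversely any $\alpha \in H^{0}(\widetilde{S}, \Omega^{1}_{\widetilde{S}})$ restricts to a holomorphic $1$-form on $\widetilde{S} \setminus E \cong S \setminus \{p\}$ which extends across the codimension-two point $p$ by Hartogs' theorem, proving surjectivity.

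The main obstacle is the inductive argument for $P_{k}$: although $k = 1$ (i.e.\ $p_{g}$) is essentially immediate from $K_{\widetilde{S}} = \varepsilon^{*}K_{S} + E$, for general $k$ one must carefully track the intersection numbers $(kK_{\widetilde{S}} - (j-1)E)\cdot E = j - 1 - k$ to ensure that each restriction to $E$ is a negative-degree line bundle on $\BP^{1}$. This is what forces all sections of $\omega_{\widetilde{S}}^{\otimes k}$ to vanish to order $k$ along $E$ and thus to descend, via the projection formula, to sections of $\omega_{S}^{\otimes k}$ on $S$.
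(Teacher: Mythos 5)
The paper states this proposition without proof, citing Beauville, so there is no in-text argument to compare you against; your proof is correct. At the top level it also follows the standard route: reduce, via \Cref{prop:factor}, to invariance under a single point blow-up $\varepsilon:\widetilde S=\Bl_pS\to S$. Where you and the usual textbook argument part ways is in how the blow-up step is handled for the plurigenera. The quick argument extends a section of $\omega_{\widetilde S}^{\otimes k}$ across $E$ by restricting to $\widetilde S\setminus E\cong S\setminus\{p\}$ and invoking Hartogs --- exactly the mechanism you already use for $1$-forms --- together with the observation that $\varepsilon^*$ maps $H^0(S,\omega_S^{\otimes k})$ into $H^0(\widetilde S,\omega_{\widetilde S}^{\otimes k})$ because $kE$ is effective. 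You instead run the twisted restriction sequences and use the intersection numbers $(kK_{\widetilde S}-(j-1)E).E=j-1-k<0$ to kill the quotients $\OO_E(j-1-k)$ one step at a time. Your computation is correct (it rests on $K_{\widetilde S}.E=-1$, which follows from \Cref{th:blow-upformula} and \Cref{lemma:eltransf}), and it buys slightly more than a dimension count: it shows that every section of $\omega_{\widetilde S}^{\otimes k}$ vanishes to order at least $k$ along $E$, so that the inclusion $H^0(\widetilde S,\varepsilon^*(kK_S))\subseteq H^0(\widetilde S,kK_{\widetilde S})$ is an equality on the nose. Two minor points. First, in the Leray route for $q$, the theorem on formal functions requires the vanishing of $H^1$ on every infinitesimal neighbourhood of $E$, not just on $E$ itself; this holds because the successive quotients are $\OO_{\BP^1}(n)$ with $n\ge 0$, which have no higher cohomology, but the justification as written elides this. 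Second, your alternative Hodge-theoretic route for $q$ uses $\dim H^1(S,\OO_S)=\dim H^0(S,\Omega^1_S)$, which is where projectivity (K\"ahlerness) of $S$ enters; the Leray argument avoids this and works directly with the paper's definition of $q$.
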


 We present now the \textit{blow-up formula} relating the canonical bundle of the blow-up of a smooth surface $S$ at a point $p$ and the pull-back of the canonical bundle of $S$.
\begin{theorem}[Blow-up formula] \label{th:blow-upformula}
    Let $\varepsilon:\Bl_p S\rightarrow S$ be the blow-up of a smooth quasi-projective surface $S$  centred at a point $p\in S$. Then, we have
    \[
    \omega_{\Bl_p S}=\varepsilon^*\omega_S(E).
    \]
    In terms of divisors, this can be written as $K_{\Bl_p S}=\varepsilon^*K_S+E$.
\end{theorem}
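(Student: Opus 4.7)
The plan is to prove the blow-up formula by combining an explicit local coordinate computation of the pull-back of a generating $2$-form with the global observation that $\varepsilon$ is an isomorphism away from $E$. The upshot is that $\varepsilon^*(dx\wedge dy)$ will acquire a simple vanishing along $E$, which accounts for precisely the $+E$ in the formula.

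First, since $\varepsilon\colon \Bl_p S\setminus E\to S\setminus\{p\}$ is an isomorphism, the canonical divisors $K_{\Bl_p S}$ and $\varepsilon^*K_S$ restrict to the same divisor there. Hence their difference is linearly equivalent to a divisor supported on $E$, and by \Cref{lemma:eltransf} (more precisely, by the decomposition $\Pic(\Bl_p S)\cong\Pic(S)\oplus\BZ E$) we may write $K_{\Bl_p S} = \varepsilon^* K_S + aE$ for some $a\in\BZ$. The task reduces to computing the integer $a$.

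To pin down $a$, I pick local coordinates $(x,y)$ on an open neighbourhood $\mathcal{U}\subset S$ of $p$ centred at $p$, so that $dx\wedge dy$ generates $\omega_S$ on $\mathcal{U}$ and $K_S=0$ as a divisor on $\mathcal{U}$. Using the blow-up charts of \Cref{exa:plow1}: on the chart $\BA^2_{(u,v)}$ mapping to $\mathcal{U}$ via $(u,v)\mapsto(uv,v)$, with $E=\{v=0\}$, one computes
\[
\varepsilon^*(dx\wedge dy) = d(uv)\wedge dv = v\,du\wedge dv,
\]
and on the chart $\BA^2_{(U,V)}$ with $(U,V)\mapsto(V,UV)$, with $E=\{V=0\}$, one gets $\varepsilon^*(dx\wedge dy) = -V\,dU\wedge dV$. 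In both charts the pulled-back $2$-form vanishes to exactly order one along $E$, so the divisor of $\varepsilon^*(dx\wedge dy)$ on $\varepsilon^{-1}(\mathcal{U})$ is $E$, i.e.\ $K_{\Bl_p S}$ restricts to $E$ on $\varepsilon^{-1}(\mathcal{U})$. Comparing with $\varepsilon^*K_S = 0$ on the same open set forces $a=1$.

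The main potential obstacle is the jump from a purely local coordinate computation to a global divisor-class equality, but this is precisely what the initial reduction takes care of: the isomorphism away from $E$ combined with \Cref{lemma:eltransf} confines the discrepancy to an integer multiple of $E$, and the local computation then determines the coefficient. As a sanity check, one may redetermine $a$ intersection-theoretically by applying the genus formula \eqref{eq:genusformula} to $E\cong\BP^1$, which gives $E.K_{\Bl_p S} = -2-E^2 = -1$; intersecting the ansatz $K_{\Bl_p S}=\varepsilon^*K_S+aE$ with $E$ and using $\varepsilon^*K_S.E=0$ together with $E^2=-1$ from \Cref{lemma:eltransf} again yields $a=1$, confirming the formula.
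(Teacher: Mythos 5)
The paper states this theorem without proof (it is quoted as a standard fact, cf.\ \cite{BEAUVILLE,Hartshorne77}), so there is no argument in the text to measure yours against; judged on its own, your proof is correct and is the standard one. The reduction to $K_{\Bl_pS}=\varepsilon^*K_S+aE$ via the isomorphism off $E$, the chart computation $\varepsilon^*(dx\wedge dy)=v\,du\wedge dv$ showing order-one vanishing along $E$, and the intersection-theoretic cross-check ($E.K_{\Bl_pS}=-1$ from the genus formula \eqref{eq:genusformula} versus $(\varepsilon^*K_S+aE).E=-a$) are all sound, and having both determinations of $a$ makes the argument robust. Two small points worth tightening: first, \Cref{lemma:eltransf} is stated in the paper for \emph{projective} surfaces while \Cref{th:blow-upformula} is for quasi-projective ones, so to justify the ansatz $K_{\Bl_pS}=\varepsilon^*K_S+aE$ in the stated generality you should instead invoke $\Pic(S)\cong\Pic(S\setminus\{p\})$ (\Cref{rmk:genPull}) together with the isomorphism $\Bl_pS\setminus E\cong S\setminus\{p\}$, which gives the same conclusion; second, when you pass from the local identity $\divi\bigl(\varepsilon^*(dx\wedge dy)\bigr)=E$ on $\varepsilon^{-1}(\mathcal{U})$ to the value $a=1$, you are implicitly using that $E$ is non-torsion in $\Pic(\varepsilon^{-1}(\mathcal{U}))$ (e.g.\ because $\deg\OO(E)|_E=-1$) — your sanity check via intersection numbers supplies exactly this, so the gap is already closed, but it is worth saying explicitly that the local computation plus restriction to $E$ is what pins down the coefficient.
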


We conclude this subsection with a celebrated result by \textit{Max Noether} relating the topological and complex structures of a smooth projective surface $S$.
\begin{theorem}[Noether's formula, {\cite[Section I.14]{BEAUVILLE}}]
\label{thm:noetherformula}
Let $S$ be a smooth projective surface. Then, we have
\[
\chi(\OO_S)=\frac{1}{12}(K_S^2+\chi_{\topo}(S)),
\]
where $\chi_{\topo}(S)$ denotes the topological Euler characteristic of $S$.
\end{theorem}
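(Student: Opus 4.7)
The plan is to prove this classical Noether formula elementarily, by showing that both sides transform identically under blow-ups and then reducing to surfaces where the formula can be checked directly. The high-powered alternative would be to apply Hirzebruch--Riemann--Roch to the structure sheaf $\OO_S$: the Todd class of the tangent bundle of a surface is $\operatorname{td}(T_S) = 1 + \tfrac{1}{2}c_1(T_S) + \tfrac{1}{12}(c_1(T_S)^2 + c_2(T_S))$, so integrating over $S$ yields $\chi(\OO_S) = \tfrac{1}{12}(c_1(T_S)^2 + c_2(T_S))$; identifying $c_1(T_S) = -K_S$ and using Gauss--Bonnet to identify $\int_S c_2(T_S) = \chi_{\topo}(S)$ then delivers the result. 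However, this is overkill for the level of these notes, so I would sketch the elementary route.

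First I would verify invariance of both sides under a single blow-up $\varepsilon : \Bl_p S \to S$ with exceptional curve $E$. The left-hand side is preserved because $\varepsilon_*\OO_{\Bl_p S} = \OO_S$ and the higher direct images $R^i\varepsilon_* \OO_{\Bl_p S}$ vanish for $i > 0$, so via the Leray spectral sequence $\chi(\OO_{\Bl_p S}) = \chi(\OO_S)$. For the right-hand side, the blow-up formula (\Cref{th:blow-upformula}) combined with \Cref{lemma:eltransf} gives
\[
K_{\Bl_p S}^2 = (\varepsilon^* K_S + E)^2 = K_S^2 + 2\,\varepsilon^*K_S \cdot E + E^2 = K_S^2 - 1,
\]
while topologically the blow-up replaces a point (Euler characteristic $1$) by the exceptional $\BP^1$ (Euler characteristic $2$), so $\chi_{\topo}(\Bl_p S) = \chi_{\topo}(S) + 1$. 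The contributions cancel, showing that $K_S^2 + \chi_{\topo}(S)$ is invariant under blow-ups, matching the invariance of $\chi(\OO_S)$.

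Next, combining this invariance with Castelnuovo's contractibility criterion (\Cref{thm:Castelnuovo}), every smooth projective surface admits a sequence of blow-downs to a minimal model, which reduces the formula for $S$ to the formula on that model. For the rational surfaces that are the focus of these notes the minimal model is $\BP^2$ or a Hirzebruch surface, where direct computation verifies the formula: on $\BP^2$ for instance $\chi(\OO_{\BP^2}) = 1$, $K_{\BP^2}^2 = 9$, $\chi_{\topo}(\BP^2) = 3$, and $\tfrac{1}{12}(9+3) = 1$.

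The main obstacle will be handling the minimal models of non-rational surfaces (abelian, K3, Enriques, properly elliptic, and general type); verifying the formula on each class either requires the Enriques--Kodaira classification together with case-by-case computations, or a direct appeal to Hirzebruch--Riemann--Roch as above. For the scope of these notes, where rational surfaces dominate, the blow-up plus Castelnuovo reduction already yields a self-contained argument; for the general statement I would simply cite Hirzebruch--Riemann--Roch.
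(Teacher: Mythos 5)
The paper does not actually prove this statement; it is quoted from \cite[Section I.14]{BEAUVILLE}, where it is obtained as a consequence of (Hirzebruch--)Riemann--Roch rather than by the reduction you propose. So the comparison to make is between your sketch and the standard proof. The parts of your argument that you carry out are correct: $\chi(\OO_S)$ is a birational invariant (within the paper's own toolkit this follows from \Cref{prop:brinva}, since $\chi(\OO_S)=1-q(S)+p_g(S)$, without needing the Leray spectral sequence), the computation $K_{\Bl_pS}^2=K_S^2-1$ and $\chi_{\topo}(\Bl_pS)=\chi_{\topo}(S)+1$ is right, and the verification on $\BP^2$, on the Hirzebruch surfaces, and indeed on geometrically ruled surfaces over a genus-$g$ curve (where both sides equal $1-g$) is straightforward.

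The genuine gap is in the final step for the theorem as actually stated, i.e.\ for \emph{all} smooth projective surfaces. After descending to a minimal model, the non-ruled cases (abelian, K3, Enriques, bielliptic, properly elliptic, general type) are not amenable to ``direct computation'': identifying what the minimal models are requires the Enriques--Kodaira classification, which is far deeper than Noether's formula and whose standard proofs \emph{use} Noether's formula (for instance in analysing surfaces with $K$ nef), so that route is circular. Your fallback of citing Hirzebruch--Riemann--Roch is the correct and non-circular way to close the argument, but once you invoke it the entire blow-up reduction becomes redundant, since HRR applied to $\OO_S$ gives the formula for every smooth projective surface in one line. In short: as a self-contained elementary proof the argument only establishes the formula for rational and ruled surfaces; for the general statement you must, like the paper, simply appeal to Riemann--Roch.
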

\subsection{Rational surfaces}\label{subset:ratsurf} 

As anticipated in \Cref{subsec:ratmap}, usually algebraic geometers classify quasi-projective varieties up to birational equivalence. In dimension two, there is a well-defined notion of minimality which makes the classification more transparent.

\begin{definition}
Given a smooth projective  surface $S$, we denote by  $B(S)$ the collection of isomorphism classes of smooth projective surfaces birational to $S$.  The surface $S$ is \textit{minimal} if every birational morphism $S\to S'$ to another smooth surface $S'$ is an isomorphism.
\end{definition}

\begin{remark}
A smooth projective surface $S$ is minimal if and only if it does not contain any exceptional curve. Moreover, every surface dominates a minimal surface. Indeed, if $S$ contains an exceptional curve, this can be contracted via a dominant morphism  and this operation drops the rank of $\Pic(S)$ by one, see \Cref{lemma:eltransf}. By iterating this process, one eventually ends up with a minimal surface.
\end{remark}

\begin{definition}
Let $C$ be a smooth irreducible curve. A smooth surface $S$ is \textit{ruled over $C$} if it is birational to $C\times \BP^1 $. It is \textit{rational}\footnote{Note that this definition of rational surface is consistent with \Cref{def:birati}.} if $C\cong\BP^1$. The surface $S$ is  \textit{geometrically ruled over $C$} if it admits a morphism $\begin{tikzcd}
    S\arrow[r,"\pi_S"]&C
\end{tikzcd}$ with all fibres isomorphic to $\BP^1$.
\end{definition}

As a consequence of the Noether--Enriques Theorem \cite[Theorem II.4]{BEAUVILLE}, every surface $S$ geometrically ruled over a smooth  curve $C$ is a Zariski locally trivial $\BP^1$-fibration over $C$. In other words, there exist a Zariski open cover $\mathscr U_C=\Set{\mathcal C_i}_{i=1}^s$ of $C$ and isomorphisms $\varphi_i:\pi_S^{-1}(\mathcal C_i)\to \mathcal C_i\times\BP^1$, making the following diagram commutative
\[
\begin{tikzcd}
    \pi_S^{-1}(\mathcal C_i)\arrow[rr,"\varphi_i"]\arrow[rd,"\pi_S|_{\pi_S^{-1}(\mathcal C_i)}"']&& \mathcal C_i\times\BP^1\arrow[ld,"\pi_1"]\\
    &\mathcal C_i,
\end{tikzcd} 
\]
for all $i=1,\ldots,s$. As a consequence, all the geometrically ruled surfaces $S$ over $C$ are projectivisations of rank 2 vector bundles, i.e. $S\cong\BP_C E$, for some rank 2 vector bundle $E$ on $C$. The following proposition characterises vector bundles giving the same surface.

\begin{prop}[{\cite[Proposition II.7]{BEAUVILLE}}] \label{prop:invundertens}Let $C$ be a smooth projective curve. Two geometrically ruled surfaces $\BP_CE$, $\BP_CE'$, are $C$-isomorphic if and only if there exists a line bundle $L\in\Pic(C)$ such that $E'\cong E\otimes L$. 
\end{prop}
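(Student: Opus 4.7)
The plan is to handle the two directions separately, with the easy implication given by functoriality of projectivisation and the nontrivial one extracted from the Picard group structure of a projective bundle over a curve together with the projection formula.

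For the ``if'' direction, assume $E'\cong E\otimes L$ for some $L\in \Pic(C)$. I will argue that projectivisation is insensitive to tensoring with a line bundle. Concretely, choose a Zariski open cover $\{\mathcal{C}_i\}$ of $C$ trivialising both $E$ and $L$, with transition functions $g_{ij}\in GL(2,\OO(\mathcal{C}_i\cap\mathcal{C}_j))$ for $E$ and $\lambda_{ij}\in\OO^\times(\mathcal{C}_i\cap\mathcal{C}_j)$ for $L$. Then $E\otimes L$ has transition functions $\lambda_{ij}g_{ij}$, which agree with $g_{ij}$ after passing to $PGL(2)$. Hence the cocycles glue to the same $\BP^1$-bundle, giving a $C$-isomorphism $\BP_C(E\otimes L)\cong \BP_C E$.

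For the ``only if'' direction, let $\pi:\BP_C E\to C$ and $\pi':\BP_C E'\to C$ be the structure morphisms, and suppose $\varphi:\BP_C E\to \BP_C E'$ is a $C$-isomorphism, so $\pi'\circ\varphi=\pi$. Denote by $\OO_E(1)$ and $\OO_{E'}(1)$ the relative tautological line bundles; these satisfy $\pi_*\OO_E(1)\cong E^\vee$ and $\pi'_*\OO_{E'}(1)\cong (E')^\vee$ (with the Grothendieck convention on $\BP_C(-)$). The key structural fact I will invoke is that for a geometrically ruled surface $\pi:S\to C$ over a smooth curve, the Picard group decomposes as
\begin{equation*}
\Pic(\BP_C E)\cong\pi^*\Pic(C)\oplus\BZ\cdot[\OO_E(1)],
\end{equation*}
analogously to the decomposition in \Cref{lemma:eltransf} for blow-ups; this is a consequence of the Noether--Enriques local triviality together with the fact that $\Pic(\BP^1)=\BZ$.

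Applying this decomposition to $\varphi^*\OO_{E'}(1)\in\Pic(\BP_C E)$, I can write
\begin{equation*}
\varphi^*\OO_{E'}(1)\cong \OO_E(1)^{\otimes n}\otimes \pi^*M,
\end{equation*}
for some $M\in\Pic(C)$ and $n\in\BZ$. Restricting to a fibre of $\pi$ (a copy of $\BP^1$ sent by $\varphi$ isomorphically onto a fibre of $\pi'$) forces $n=1$, since $\OO_{E'}(1)$ has degree one on fibres and $\pi^*M$ is trivial there. Now push forward by $\pi$ and use the projection formula together with $\pi_*\OO_E(1)\cong E^\vee$:
\begin{equation*}
\pi_*\bigl(\varphi^*\OO_{E'}(1)\bigr)\cong \pi_*\bigl(\OO_E(1)\otimes \pi^*M\bigr)\cong E^\vee\otimes M.
\end{equation*}
On the other hand, because $\varphi$ is a $C$-isomorphism,
\begin{equation*}
\pi_*\bigl(\varphi^*\OO_{E'}(1)\bigr)\cong \pi'_*\OO_{E'}(1)\cong (E')^\vee.
\end{equation*}
Comparing these yields $(E')^\vee\cong E^\vee\otimes M$, and dualising gives $E'\cong E\otimes M^{-1}$, so $L=M^{-1}$ is the desired line bundle.

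The main obstacle is the Picard group decomposition for $\BP_C E$, which is the nontrivial input; everything else is bookkeeping with the projection formula and the identification of pushforwards of tautological bundles. I would either cite it from \cite[Proposition III.18]{BEAUVILLE} (or the analogous statement in Hartshorne) or derive it on a trivialising open cover of $C$ via the Mayer--Vietoris-style argument using $\Pic(\BP^1_K)=\BZ$ for $K$ the function field of $C$.
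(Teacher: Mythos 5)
Your proof is correct. Note first that the paper itself contains no proof of this proposition: it is quoted directly from \cite{BEAUVILLE}, where the argument is the cocycle one --- from the central extension $1\to\OO_C^*\to\GL(2,\OO_C)\to\BP\GL(2,\OO_C)\to1$ one obtains maps $H^1(C,\OO_C^*)\to H^1(C,\GL(2,\OO_C))\to H^1(C,\BP\GL(2,\OO_C))$ classifying line bundles, rank-two bundles and $\BP^1$-bundles respectively, and two rank-two bundles have the same image on the right exactly when they lie in a single orbit of the twisting action of $H^1(C,\OO_C^*)\cong\Pic(C)$, i.e.\ when they differ by tensoring with a line bundle. Your ``if'' direction is precisely the easy half of that argument. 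For the converse you take the Hartshorne-style route instead (cf.\ \cite[Prop.~II.7.9]{Hartshorne77}, which works for arbitrary rank and base): decompose $\Pic(\BP_CE)\cong\pi^*\Pic(C)\oplus\BZ\cdot[\OO_E(1)]$, pin down the fibre degree of $\varphi^*\OO_{E'}(1)$ to force $n=1$, and recover $E'$ from $E$ via the projection formula. This buys generality and avoids non-abelian cohomology, at the price of needing the Picard decomposition as input; that input is effectively recorded in the paper, in the form $\Pic(S)\cong\langle C_0\rangle_{\BZ}\oplus\pi_S^*\Pic(C)$, in the proposition quoted from \cite[Prop.~V~2.3]{Hartshorne77} immediately after this one (the class of a section $C_0$ and $\OO_E(1)$ differ only by the pull-back of a line bundle from $C$, so the two decompositions agree).

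Two harmless slips are worth fixing. First, under the Grothendieck convention $\BP_CE=\Proj(\operatorname{Sym}E)$ one has $\pi_*\OO_E(1)\cong E$, not $E^\vee$; the dual belongs to the classical ``lines in $E$'' convention. Either convention yields the conclusion --- only the name of the resulting line bundle changes ($L=M$ versus $L=M^{-1}$) --- but you should fix one convention and use it consistently. Second, in deducing $n=1$ you implicitly use that a $C$-isomorphism carries fibres of $\pi$ isomorphically onto fibres of $\pi'$; this follows from $\pi'\circ\varphi=\pi$ and deserves a sentence.
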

\begin{remark}
Given a geometrically ruled surface $ 
  \pi: \BP _CE \to C $, we can always assume that $\pi$ has a \textit{section} $\sigma$, i.e. a morphism $ 
    \sigma:C\to\BP _CE  $ such that $\pi\circ\sigma \equiv \id_C$. Indeed, we can twist $E$ with some line bundle $L\in \Pic(C)$ in order to satisfy the requirement.    
\end{remark}

The Picard group and the intersection pairing on a geometrically ruled surface are well understood. We encode them in the next proposition.

\begin{prop}[{\cite[Proposition V 2.3]{Hartshorne77}}] 
    Let $\pi_S:S\rightarrow C$ be a geometrically ruled surface and let $\sigma:C\rightarrow S$ be a section. Denote by $C_0\in\Div(S)$ the divisor defined by $\sigma$, i.e. $C_0=\sigma(C)$, and by $c_0,f\in H^2(S,\BZ)$ the cohomology classes of $C_0$ and of a fibre respectively. Then, the following isomorphisms hold
    \[
    \Pic(S)\cong \langle C_0\rangle_{\BZ}\oplus\pi_S^* \Pic(C),
    \]
    and
    \[
    H^2(S,\BZ)\cong \langle c_0\rangle_{\BZ}\oplus\langle f\rangle_{\BZ}.
    \]
    Moreover, we have $F^2=0$ and $F.C_0=1$.
\end{prop}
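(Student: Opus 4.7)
The plan is to handle the intersection numbers first, then deduce the Picard group and second cohomology from the $\BP^1$-bundle structure of $\pi_S\colon S\to C$.

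For the intersection numbers, I invoke \Cref{thm:intprod} directly. The section $\sigma$ meets each fibre $F=\pi_S^{-1}(p)$ transversely in the single point $\sigma(p)$, with transversality a consequence of $\pi_S\circ\sigma\equiv\id_C$; this yields $F.C_0=1$. For $F^2=0$, take two distinct points $p,p'\in C$: the fibres $F_p,F_{p'}$ are disjoint smooth rational curves, and since they sit in the algebraic family of fibres parametrised by $C$, intersection numbers are constant along this family, whence $F_p^2=F_p.F_{p'}=\#(F_p\cap F_{p'})=0$.

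For the Picard group, my approach is the Leray spectral sequence for $\pi_S$ with coefficients in $\OO_S^*$. Since each fibre is a $\BP^1$, one has $\pi_{S,*}\OO_S^*=\OO_C^*$ and $R^1\pi_{S,*}\OO_S^*\cong\underline{\BZ}_C$, producing a short exact sequence
\[
0\to \Pic(C)\xrightarrow{\pi_S^*}\Pic(S)\xrightarrow{\rho}\BZ\to 0,
\]
where $\rho$ records the degree on a fibre. The existence of $\sigma$ splits this sequence: $\sigma^*\circ\pi_S^*=\id_{\Pic(C)}$, and the previous step gives $\rho(C_0)=C_0.F=1$, so $\BZ\cdot C_0$ provides a canonical complement, yielding the claimed decomposition.

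For $H^2(S,\BZ)$, the cleanest route is the Leray--Hirsch theorem applied to the $\BP^1$-bundle $\pi_S$. Since $c_0|_F$ generates $H^2(F,\BZ)$ (again by $F.C_0=1$), the graded ring $H^*(S,\BZ)$ is a free $H^*(C,\BZ)$-module on the basis $\{1,c_0\}$. Reading off degree two, with $f=\pi_S^*[\mathrm{pt}]$ the pullback of the class of a point on $C$, gives $H^2(S,\BZ)\cong\BZ\,c_0\oplus\BZ\,f$. The main obstacle I expect is the identification $R^1\pi_{S,*}\OO_S^*\cong\underline{\BZ}_C$ together with the surjectivity of $\rho$: both become automatic once one uses the section $\sigma$, but without it one would have to appeal to cohomology and base change, which is less hands-on than the rest of the argument.
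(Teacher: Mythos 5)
The paper does not prove this proposition; it simply cites \cite[Prop.\ V.2.3]{Hartshorne77}, so there is no in-text argument to compare against. Your proof is essentially the standard one (and close to Hartshorne's own), and the overall structure is sound: $F.C_0=1$ by transversality of the section, $F^2=0$ from disjointness of distinct fibres, the split exact sequence $0\to\Pic(C)\to\Pic(S)\to\BZ\to 0$ for the Picard group, and Leray--Hirsch for $H^2$. Two points deserve care. First, for $F^2=0$ you invoke constancy of intersection numbers in algebraic families; the paper's \Cref{thm:intprod} only guarantees invariance under \emph{linear} equivalence, so either quote numerical invariance under algebraic equivalence explicitly, or argue within the paper's toolkit: for $n\gg 0$ the linear system $|np|$ on $C$ is base-point free, so $nF=\pi_S^*(np)\sim\pi_S^*D$ with $D$ supported away from $p$, whence $nF^2=F.\pi_S^*D=0$. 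Second, the sentence claiming that the identification $R^1\pi_{S,*}\OO_S^*\cong\underline{\BZ}_C$ becomes ``automatic once one uses the section'' oversells the role of $\sigma$: the section gives injectivity of $\pi_S^*$ and the splitting via $C_0$, but the exactness in the middle --- that a line bundle of degree zero on every fibre is pulled back from $C$ --- is the real content of the Picard computation and is independent of $\sigma$. It does follow cleanly from the Zariski-local triviality supplied by the Noether--Enriques theorem quoted in \Cref{subset:ratsurf} together with $\Pic(U\times\BP^1)\cong\Pic(U)\times\BZ$, or alternatively from cohomology and base change as you note; one of these should be said explicitly rather than deferred. With those two repairs the argument is complete and, if anything, more self-contained than the projective-bundle-formula route taken in the cited reference.
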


By a celebrated result of \textit{Alexander Grothendieck}, every vector bundle $E$ on $\BP^1$ decomposes as a direct sum of line bundles, see \cite{GrotP1}.  As a consequence of this result and of \Cref{prop:invundertens}, rational geometrically ruled surfaces are indexed by non-negative integers. We present them in the next definition. 

\begin{definition}[Hirzebruch surface]
Let $k\ge 0$ be a non-negative integer, the  \textit{$k$-th Hirzebruch surface} $\BF_k$ is the projectivisation of the rank-two vector bundle $\OO_{\BP^1}\oplus\OO_{\BP^1}(k)$, i.e.
\[
\BF_k\cong \BP_{\BP^1}(\OO_{\BP^1}\oplus\OO_{\BP^1}(k)).
\]
\end{definition} 

\begin{example}
    The first example of a Hirzebruch surface is $\BF_0$, i.e. the projectivisation of the trivial rank 2 vector bundle $\OO_{\BP^1}^{\oplus2}$. Therefore we have $\BF_0\cong \BP^1\times \BP^1$. On the other hand, the first Hirzebruch surface is $\BF_1\cong\Bl_p\BP^2$. In the language of \Cref{exa:plow1}, the fibration over $\BP^1$ corresponds to the morphism $\pi_1|_{\Bl_{e_2}\BP^2}$.
\end{example}

For the sake of completeness, in the next theorem we give the classical characterisation of minimal surfaces.
 
\begin{theorem}[{\cite[Theorems III.10,V.10,V.19]{BEAUVILLE}}]
Let $S$ be a smooth projective surface. Then, 
\begin{itemize}
    \item if $S$ is ruled and not rational the minimal surfaces in $B(S)$ are the geometrically ruled surfaces;
    \item if $S$ is rational the minimal surfaces in $B(S)$ are $\BP^2$ and the Hirzebruch surfaces $\BF_k$ for $k\not=1$;
    \item if $S$ is not ruled, there is a unique minimal surface in $B(S)$.
\end{itemize}
\end{theorem}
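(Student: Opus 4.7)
The plan is to prove the theorem by treating each case separately, and for each case arguing in two directions: first verifying that the proposed minimal surfaces are indeed minimal (i.e.\ contain no exceptional curves), and then showing that every minimal surface in $B(S)$ is among those listed. The key tool throughout is \Cref{thm:Castelnuovo}: a smooth irreducible curve $C$ on a smooth projective surface is exceptional if and only if $C \cong \BP^1$ and $C^2 = -1$.

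For the direction asserting minimality of the candidates, I would argue as follows. On $\BP^2$, every irreducible curve has positive self-intersection, so no $(-1)$-curve exists. For a Hirzebruch surface $\BF_k$ with $k \neq 1$, using the structure of $\Pic(\BF_k)$ (generated by the distinguished section $C_0$ with $C_0^2 = -k$ and the fibre class $F$ with $F^2 = 0$, $F.C_0 = 1$), any irreducible curve $D = aC_0 + bF$ with $D^2 = -1$ forces $a = 1$ and $2b - k = -1$, which only has a solution compatible with irreducibility when $k = 1$. For a geometrically ruled surface $\pi_S \colon S \to C$ over a curve $C$ of genus $g(C) \geq 1$, any exceptional curve $E \cong \BP^1$ would map to a point of $C$ by the universal property of non-constant maps from $\BP^1$ to higher genus curves, so $E$ would lie in a fibre; but fibres are isomorphic to $\BP^1$ with self-intersection $0$, forcing $E$ to equal a fibre and contradicting $E^2 = -1$.

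For the converse direction in the ruled cases, I would first handle the rational case. Given a minimal rational surface $S$, one extracts a pencil of rational curves on $S$ (using the birational equivalence with $\BP^2$ or $\BP^1 \times \BP^1$ and the fact that pencils of lines pull back), resolves its base points via blow-ups to obtain a base-point-free pencil, and then appeals to the Noether--Enriques theorem to obtain a $\BP^1$-fibration over $\BP^1$. By minimality the fibration cannot contain reducible fibres (else a $(-1)$-curve would appear in one), so $S$ is a geometrically ruled surface over $\BP^1$, i.e.\ $S \cong \BF_k$ for some $k \geq 0$; the Castelnuovo analysis above forces $k \neq 1$, unless $S$ was already $\BP^2$ (obtained by contracting the exceptional section of $\BF_1$). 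For the ruled non-rational case, a similar strategy works starting from a pencil of rational curves pulled back from $C \times \BP^1$, where now the resulting fibration is over a curve $C'$ birational (hence isomorphic, as both are smooth projective curves) to $C$.

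The main obstacle is the third case: showing that a non-ruled surface $S$ has a \emph{unique} minimal model in $B(S)$. The key input here is the theorem (due to Enriques, Mumford, and Bogomolov in various generalisations) that a non-ruled minimal surface satisfies $K_S$ nef, i.e.\ $K_S.C \geq 0$ for every irreducible curve $C$. Granted this, uniqueness follows by a standard argument: given two minimal models $S_1, S_2 \in B(S)$ connected by a birational map $\phi \colon S_1 \dashrightarrow S_2$, apply \Cref{prop:factor} to factor $\phi$ through a common smooth surface $\widetilde{S}$ via morphisms $\varphi_i \colon \widetilde{S} \to S_i$ which are compositions of blow-ups. Using the blow-up formula \Cref{th:blow-upformula} iteratively, the ratios $\varphi_i^* K_{S_i}$ differ from $K_{\widetilde{S}}$ by a non-negative combination of exceptional divisors. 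Since both $K_{S_1}$ and $K_{S_2}$ are nef, comparing pull-backs and pushing forward forces the exceptional contributions to vanish, so $\varphi_1$ and $\varphi_2$ are isomorphisms, giving $S_1 \cong S_2$. Establishing the nefness of $K_S$ for non-ruled minimal surfaces is the genuinely hard ingredient and would require developing Reider-type results or Castelnuovo--Enriques criteria for ruledness in terms of plurigenera (via \Cref{prop:brinva}), which is beyond the elementary tools developed in these notes and justifies citing \cite{BEAUVILLE} for the full argument.
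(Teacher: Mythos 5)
The paper does not prove this statement: it is quoted verbatim from Beauville with the citation \cite{BEAUVILLE} (Theorems III.10, V.10, V.19), so there is no in-paper argument to compare yours against. Judged on its own, your sketch follows the standard route and correctly isolates the two genuinely hard inputs (the structure of minimal ruled/rational surfaces via Noether--Enriques, and the uniqueness of the minimal model for non-ruled surfaces via nefness of $K_S$), and the ``minimality of the candidates'' half is essentially complete: the arguments for $\BP^2$, for $\BF_k$ with $k\neq 1$ (modulo stating cleanly that an irreducible curve in the class $aC_0+bF$ other than the negative section satisfies $a\geq 0$, $b\geq ak$, hence has non-negative self-intersection), and for geometrically ruled surfaces over a curve of positive genus are all sound.

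There is, however, a genuine gap in your converse direction for the ruled and rational cases. You propose to take a pencil of rational curves on the minimal surface $S$, ``resolve its base points via blow-ups to obtain a base-point-free pencil,'' and then apply Noether--Enriques. But blowing up base points replaces $S$ by a non-minimal surface $\widetilde{S}$, and Noether--Enriques then exhibits $\widetilde{S}$, not $S$, as a conic bundle; you cannot conclude from this that $S$ itself is geometrically ruled, which is exactly what the theorem asserts. The correct argument (this is the content of Beauville V.10) must produce a base-point-free pencil of rational curves on the minimal surface $S$ itself --- typically by taking a smooth rational curve $C\subset S$ with $C^2\geq 0$ minimal and using Riemann--Roch and adjunction to show $|C|$ has no base points and no reducible members --- or else show $S\cong\BP^2$. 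Without this step the classification of minimal rational (and ruled) surfaces does not follow. Your treatment of the third case is fine as a conditional argument, and you are right that the nefness of $K_S$ for non-ruled minimal surfaces is the deep ingredient that cannot be derived from the tools in these notes; citing \cite{BEAUVILLE} there is appropriate.
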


It is well known that every smooth projective curve can be holomorphically embedded in $\BP^3$, see \cite[Corollary IV 3.7]{Hartshorne77} and \Cref{rem:chow}. Similarly, every smooth projective surface can be embedded in $\BP^5$, see \cite[Proposition IV.5]{BEAUVILLE}. In \Cref{exa:realiHirz} we present Hirzebruch surfaces as projective subvarieties of $\BP^5$.

\begin{example}\label{exa:realiHirz}  In this example we construct an explicit embedding $\BF_k\hookrightarrow \BP^5$, for $k\ge 0$. Technically, we realise $\BF_k$ as a hypersurface in $\BP^2\times\BP^1$  which is then embedded in $\BP^5$ via the Segre $(2,1)$-embedding $s_{2,1}$, see \Cref{exa:1.1}.
First,  observe that we have
\[
\BF_0\cong\Set{([x_0:x_1:x_2],[\lambda_0:\lambda_1])\in\BP^2\times\BP^1|x_0-x_1=0}\subset \BP^2\times\BP^1,
\]
and
\[
\BF_1\cong \Set{([x_0:x_1:x_2],[\lambda_0:\lambda_1])\in\BP^2\times\BP^1|x_0\lambda_0-x_1\lambda_1=0}\subset \BP^2\times\BP^1,
\]
see \Cref{exa:plow1}. We promote this pattern to a sequence of surfaces. Define, for $k\ge 0$,
\[
S_k=\Set{([x_0:x_1:x_2],[\lambda_0:\lambda_1])\in\BP^2\times\BP^1|x_0\lambda_0^k-x_1\lambda_1^k=0}\subset \BP^2\times\BP^1.
\]
Then, we have $\BF_k\cong S_k$. This can be easily checked on a coordinate atlas for $S_k$. Precisely, the surface $S_k$ is contained in four   among the six charts coming from coordinate charts of $\BP^2\times\BP^1$, so providing four charts each isomorphic to an affine plane $\BA^2$. Then, a direct computation shows that these charts glue according to the transition functions of the projective bundle $\BF_k$. 
  
\end{example}

\begin{remark} 

    The fibration $\BF_k\to\BP^1$ has two distinguished sections $C_+,C_-$ such that $C_+^2=-C_-^2=k$. More details on Hirzebruch surfaces can be found in  \cite[Chapter V]{Hartshorne77}.
    \label{rem:sechirz}
\end{remark}
Any two Hirzebruch surfaces are birational to each other. The birational tranformations relating them are sequences of  the so-called \textit{elementary transformations}.

\Cref{fig:eltransf}   describes the elementary transformation relating $\BF_k$ and $\BF_{k+1}$. Each curve is labeled by its self-intersection. Notice that at each step self-intersections are computed via  \Cref{lemma:eltransf}.
\begin{figure}[ht!]
    \centering
    \scalebox{0.7}{\begin{tikzpicture}[basept/.style={circle, draw=black!100, fill=black!100, thick, inner sep=0pt,minimum size=1.2mm},scale=0.6]    
     \begin{scope}[xshift = -4cm, yshift=+1cm]
        \node at (0,2.5) {$\BF_k$};
        \draw[thick] (-2,-2)--(-2,2) node [midway, left] {\small $0$}
            (2,-2)--(2,2) node [midway, right] {\small $0$}
            (-2.5,1.5)--(2.5,1.5) node [midway, above] {\small $k$}
            (-2.5,-1.5)--(2.5,-1.5) node [midway, below] {\small $-k$}
      ;
      
	   \node at (-2,1.5) [basept,label={[xshift=-16pt, yshift = -3 pt] \small{$(\infty,0)$}}] {};
    \node at (2,1.5) [basept,label={[xshift=18pt, yshift = -3 pt] \small{$(\infty,\infty)$}}] {};
    \node at (2,-1.5) [basept,label={[xshift=18pt, yshift = -16 pt] \small{$(0,\infty)$}}] {};
    \node at (-2,-1.5) [basept,label={[xshift=-14pt, yshift = -16 pt] \small{$(0,0)$}}] {};
    \end{scope}

    \begin{scope}[xshift = +4cm, yshift=+1cm]
    \node at (0,2.5) {$\BF_{k+1}$};
    \draw[thick] (-2,-2)--(-2,2) node [midway, left] {\small $0$}
            (2,-2)--(2,2) node [midway, right] {\small $0$}
            (-2.5,1.5)--(2.5,1.5) node [midway, above] {\small $k+1$}
            (-2.5,-1.5)--(2.5,-1.5) node [midway, below] {\small $-k-1$}
      ;
      
	   \node at (-2,1.5) [basept,label={[xshift=-16pt, yshift = -3 pt] \small{$(\infty,0)$}}] {};
    \node at (2,1.5) [basept,label={[xshift=18pt, yshift = -3 pt] \small{$(\infty,\infty)$}}] {};
    \node at (2,-1.5) [basept,label={[xshift=18pt, yshift = -16 pt] \small{$(0,\infty)$}}] {};
    \node at (-2,-1.5) [basept,label={[xshift=-14pt, yshift = -16 pt] \small{$(0,0)$}}] {};
    \end{scope}
    
    \draw [->] (1.5,-2.75)--(2.5,-1.75) node[pos=0.5, below right] {$\text{Bl}_{(\infty,\infty)}\BF_{k+1}$};

    \begin{scope}[xshift = 0cm, yshift=-5.5cm]
    \draw[thick] (-2,-2)--(-2,2) node [midway, left] {\small $0$} 
            (-2.5,1.5)--(2.5,1.5) node [midway, above] {\small $k$}
            (-2.5,-1.5)--(2.5,-1.5) node [midway, below] {\small $-k-1$}
      ;
    \draw[thick,red] (2,2)  .. controls (2,0.8) .. (1.3,-0.2) node [midway, right] {\small $-1$};
    \draw[thick,red] (2,-2)  .. controls (2,-0.8) .. (1.3,0.2) node [midway, right] {\small $-1$};
       
    \end{scope}
 \draw [->] (-1.5,-2.75)--(-2.5,-1.75) node[pos=0.5, below left] {$\text{Bl}_{(0,\infty)}\BF_{k}$};
\end{tikzpicture}}
    \caption{The elementary transformation relating $\BF_k$ and $\BF_{k+1}$.}
    \label{fig:eltransf}
\end{figure}
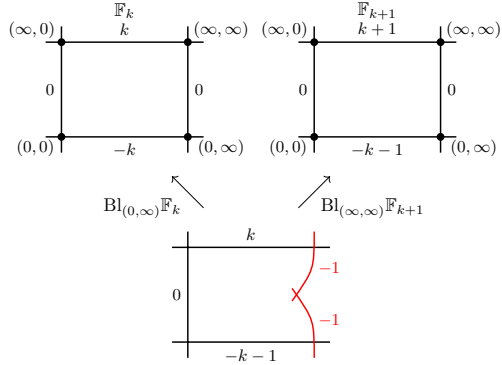

We conclude this section by stating Castelnuovo's rationality criterion and by giving a useful consequence that we shall use in what follows.

\begin{theorem}[Castelnuovo's rationality criterion, {\cite[Theorem V.1]{BEAUVILLE}}]\label{thm:CASTrat}
   Let $S$ be a smooth projective surface with $q(S)=P_2(S)=0$. Then, $S$ is rational.
\end{theorem}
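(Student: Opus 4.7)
The plan is to reduce to a minimal model, exhibit a smooth rational curve of non-negative self-intersection, and then use it to construct a pencil of rational curves witnessing rationality via Noether--Enriques. First, I would invoke \Cref{prop:brinva} to replace $S$ by a minimal model, since $q$ and $P_k$ are birational invariants. Next I would note that $p_g(S) = 0$: a nonzero holomorphic section of $\omega_S$ would square to a nonzero section of $\omega_S^{\otimes 2}$, contradicting $P_2(S) = 0$. Together with $q(S) = 0$ this gives $\chi(\OO_S) = 1$ and, by Serre duality (\Cref{thm:SErredual}), $h^2(S, \OO_S) = h^0(S, \omega_S) = 0$.

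The core of the proof lies in exhibiting a smooth integral curve $C \subset S$ with $C \cong \BP^1$ and $C^2 \ge 0$. Applying Riemann--Roch to $-K_S$ and using $h^2(-K_S) = h^0(2K_S) = P_2(S) = 0$ yields
\[
h^0(-K_S) \ge \chi(-K_S) = \chi(\OO_S) + \tfrac{1}{2}(-K_S)(-K_S - K_S) = 1 + K_S^2,
\]
so $|{-}K_S| \ne \varnothing$ whenever $K_S^2 \ge 0$. Combined with Noether's formula (\Cref{thm:noetherformula}), which now reads $K_S^2 = 12 - \chi_{\topo}(S)$, one proceeds by case analysis on $K_S^2$: in the range $K_S^2 \ge 1$, a Bertini-type analysis of members of $|{-}nK_S|$ extracts a smooth rational component; in the delicate regime $K_S^2 \le 0$ one leverages minimality together with the vanishings $p_g = q = P_2 = 0$ and the classification of minimal surfaces to rule out non-rational behaviour and locate a pencil of rational curves by hand. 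I expect this case analysis to be the principal obstacle.

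Given such a $C$, the genus formula \eqref{eq:genusformula} forces $C \cdot K_S = -2 - C^2 \le -2$, and Riemann--Roch on $C$ gives $h^0(C) \ge 2 + C^2 - h^0(K_S - C)$. But $h^0(K_S - C) = 0$: an effective divisor in $|K_S - C|$ would add to $C$ to produce an effective divisor in $|K_S|$, contradicting $p_g(S) = 0$. Hence $h^0(C) \ge 2$, and $|C|$ contains a pencil $\mathfrak{P}$ whose general member is irreducible and rational. Resolving the base locus of $\mathfrak{P}$ by blow-ups produces a smooth $\widetilde{S}$ equipped with a morphism $\pi : \widetilde{S} \to B$ onto a smooth curve $B$, whose general fibre is a smooth rational curve. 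Since $q(\widetilde{S}) = q(S) = 0$ one has $g(B) = 0$, hence $B \cong \BP^1$. The Noether--Enriques theorem then realises $\widetilde{S}$ as a Zariski-locally trivial $\BP^1$-fibration over $\BP^1$, so $\widetilde{S}$ is birational to $\BP^1 \times \BP^1$. Therefore $S$ itself is rational.
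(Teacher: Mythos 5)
The paper does not actually prove this statement; it quotes it from \cite[Theorem V.1]{BEAUVILLE}, so your attempt has to be measured against the proof given there. Your architecture is the standard one and matches that source: reduce to a minimal model via \Cref{prop:brinva}, produce a smooth rational curve $C$ with $C^2\ge 0$, deduce $\dim|C|\ge 1$ from Riemann--Roch together with the vanishing $h^0(K_S-C)=0$, and conclude by resolving the base locus of the pencil and applying Noether--Enriques. Everything from the existence of $C$ onwards is essentially correct (you should still justify that the general member of the pencil is irreducible --- for instance, a pencil without fixed part that contains the irreducible member $C$ cannot be composite with another pencil --- but this is routine).

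The genuine gap is the step you explicitly defer: the existence, on a minimal surface with $q=P_2=0$, of a smooth rational curve of non-negative self-intersection. This is not a residual case analysis; it is the entire content of Castelnuovo's theorem, and neither of the routes you sketch for it works. For $K_S^2\ge 1$, a general member of $|-nK_S|$ has arithmetic genus $1+\tfrac{1}{2}n(n-1)K_S^2\ge 1$ by the genus formula \eqref{eq:genusformula}, so a Bertini-type argument produces irreducible curves of \emph{positive} genus rather than rational ones; and if you instead pass to rational components of special reducible members, nothing forces such a component to have non-negative self-intersection (on a minimal surface it could well be a $(-2)$-curve). For $K_S^2\le 0$, invoking ``the classification of minimal surfaces to rule out non-rational behaviour'' is circular: the characterisation of minimal surfaces without pluricanonical sections as the ruled ones is itself deduced from Castelnuovo-type rationality and ruledness criteria, not an input to them. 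The actual argument in \cite[Chapter V]{BEAUVILLE} is considerably more delicate --- it extracts the rational curve from the last non-empty linear system among the $|H+nK_S|$, with $H$ a hyperplane section, by a termination-of-adjunction induction, with a separate treatment of the remaining case --- and until some such argument is supplied, the proof is incomplete.
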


\begin{prop}\label{prop:ratCOhomo}
Let $S$ be a smooth projective rational surface. Then, we have
\begin{equation}
    \label{eq:ratpichomo}\Pic(S)\cong H^2(S,\BZ).
\end{equation}
\end{prop}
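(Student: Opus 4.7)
The plan is to deduce the isomorphism from the exponential exact sequence of sheaves
\[
0 \to \BZ \to \OO_S \to \OO_S^* \to 0,
\]
whose associated long exact sequence in cohomology contains the segment
\[
H^1(S, \OO_S) \longrightarrow H^1(S, \OO_S^*) \longrightarrow H^2(S, \BZ) \longrightarrow H^2(S, \OO_S).
\]
Since $H^1(S, \OO_S^*) \cong \Pic(S)$, the desired isomorphism \eqref{eq:ratpichomo} will follow at once if we can show the vanishings $H^1(S,\OO_S) = 0$ and $H^2(S,\OO_S) = 0$.

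The first of these is, by definition, $q(S) = \dim_\BC H^1(S, \OO_S)$. By \Cref{prop:brinva}, the irregularity is a birational invariant, so $q(S) = q(\BP^2)$. The latter is zero by the standard computation of the cohomology of line bundles on projective space, which gives $H^1(\BP^2, \OO_{\BP^2}) = 0$.

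For the second vanishing, I would invoke Serre duality (\Cref{thm:SErredual}) to identify
\[
\dim_\BC H^2(S, \OO_S) = \dim_\BC H^0(S, \omega_S) = p_g(S) = P_1(S).
\]
Again by \Cref{prop:brinva}, the plurigenus $P_1$ is a birational invariant, so it equals $P_1(\BP^2) = \dim_\BC H^0(\BP^2, \OO_{\BP^2}(-3)) = 0$, since $\OO_{\BP^2}(-3)$ carries no nonzero global sections.

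Combining these two vanishings with the long exact sequence above yields that $\Pic(S) \to H^2(S, \BZ)$ is both injective (no kernel from $H^1(S, \OO_S)$) and surjective (cokernel lands in the zero group $H^2(S, \OO_S)$), proving the isomorphism. There is no real obstacle here: once one is willing to quote the birational invariance of $q$ and $P_1$ together with Serre duality and the exponential sequence, the statement reduces to the two elementary vanishings on $\BP^2$ above.
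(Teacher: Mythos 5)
Your proposal is correct and follows essentially the same route as the paper: the exponential sequence, the identification $\Pic(S)\cong H^1(S,\OO_S^*)$, and the vanishing of $H^1(S,\OO_S)$ and $H^2(S,\OO_S)$ via Serre duality and the birational invariance of $q$ and $P_1$ (\Cref{prop:brinva}), reduced to the elementary computations on $\BP^2$. The only cosmetic difference is that the paper also cites Castelnuovo's rationality criterion, which is not needed here since $S$ is assumed rational from the outset.
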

\begin{proof}
    Consider the exponential sequence
    \begin{equation}\label{eq:expo}
        \begin{tikzcd}[row sep = tiny] 0\arrow[r]&\BZ\arrow[r]&\OO_S\arrow[r,"\exp"]&\OO_S^*\arrow[r]&0\\
        &&f\arrow[r,mapsto]&e^{2\pi i f}.
        \end{tikzcd}
    \end{equation}
    Then, since $H^0(S,\OO_S)\cong\BC$ and $H^0(S,\OO_S^*)\cong\BC^*$, we can split the long exact sequence in cohomology coming from \eqref{eq:expo} and we get
    \[
    \begin{tikzcd}
    0\arrow[r]&H^0(S,\BZ)\arrow[r]&H^0(S,\OO_S)\arrow[r]&H^0(S,\OO_S^*)\arrow[r]&0,
    \end{tikzcd}
    \]
    and 
     \[
   \begin{tikzcd}[column sep =tiny]
        \cdots\arrow[r]&H^1(S,\OO_S)\arrow[r]&H^1(S,\OO_S^*)\arrow[r]&H^2(S,\BZ)\arrow[r]&H^2(S,\OO_S)\arrow[r]&\cdots.
   \end{tikzcd}
    \]
    Recall\footnote{The isomorphism associates to each line bundle the 1-cocycle given by its transition functions, see \Cref{subsec:linebundledivisors} and  \cite[Section 1.1.2]{GRIHARR} for more details.} that the isomorphism $\Pic(S)\cong H^1(S,\OO_S^*)$ holds in general. Moreover, as a consequence of \Cref{thm:SErredual}, \Cref{prop:brinva} and \Cref{thm:CASTrat}, we have $H^1(S,\OO_S)\cong H^2(S,\OO_S)\cong H^0(S,\omega_S)=0$, which implies the statement.
\end{proof}
\begin{remark}
For the rest of these notes, we will mostly consider rational surfaces, frequently making use of the isomorphism \eqref{eq:ratpichomo}. To simplify the notation, we will use the same symbol to denote both line bundles and their cohomology classes.
\end{remark}
\section{Algebraic Entropy}\label{sec:2}
In this section we introduce the notion of integrability with respect to algebraic entropy. Then, we introduce three  alternative versions of the notion of a \emph{space of initial conditions} for  a given map $\Phi\in\Bir(\BP^{n})$ suitable for the computation of algebraic entropy. 
This concept is an analogue of Okamoto's construction
\cite{Okamoto1979,Okamoto1977} for the continuous Painlev\'e equations
\cite{InceBook},  formulated in the case of discrete Painlev\'e equations by Sakai in \cite{Sakai2001}.

\subsection{Degree of birational maps and algebraic entropy}\label{subset:degreebir}  We start from the definition of degree of a birational transformation of the projective space.

\begin{definition}[\cite{BellonViallet1999}]
        \label{def:degree}
        Given a birational map $\Phi\in\Bir(\BP^{n})$
        \begin{equation}
                    \label{eq:birpol}
               \begin{tikzcd}[row sep = tiny]
               \BP^n\arrow[r,dashed,"\Phi"]&\BP^n\\
                {[x_0:\cdots:x_{n}] }\arrow[r,mapsto] &{\left[f_{0}(x_0,\ldots,x_{n}):\cdots :f_{n}(x_0,\ldots,x_{n}) \right]},
               \end{tikzcd}
        \end{equation}
        such that its (homogeneous) polynomial entries $f_{i}\in R$ are devoid of common factors,
        that is $\gcd(f_{0},\ldots,f_{n})=1$, we define its \emph{degree} to
        be:
        \begin{equation}
                \label{eq:dphi}
                d^\Phi = \deg f_{i},
                \quad \text{ for any $i=0,\ldots, n$}.
        \end{equation}
        In the same way, for all $k\in\BN$ we define $d_k^\Phi $ as the \emph{degree} 
        of the $k$-th iterate to be
        \begin{equation}
                \label{eq:dn}
                d^\Phi_{k} = d^{\Phi^{\circ k}}\!.
        \end{equation}
\end{definition}

\begin{remark}
        \label{rem:degree}
        We make the following observations.
        \begin{enumerate}
            \item The degree of a birational map is invariant under conjugation by 
                projectivities
                (see \cite{BellonViallet1999,HietarintaViallet1998}) and the numbers $d^{\Phi}$ and $d_{k}^{\Phi}$ are
                uniquely determined by the birational map $\Phi\in\Bir(\BP^{n})$.
            \item  \Cref{def:degree} is not the usual definition of degree
                in algebraic geometry, see \Cref{subsec:ratmap}, and in particular all birational maps have degree 1 in the sense of \Cref{subsec:ratmap}, if considered as dominant rational maps between varieties of the same dimension. 
                The notion of degree in \Cref{def:degree} is used to measure the growth of complexity of birational maps under iteration, in the same spirit as the notion of intersection complexity due to \textit{Arnol'd} \cite{arnold}.
            \item It is crucial in \Cref{def:degree} to require that the
                polynomial entries have no common factors. For a given birational map $\Phi \in\Bir(\BP^{n})$, 
                after some iterations common
                factors can appear and they must be removed, see \Cref{exa:CREMONA}. This process
                has geometric meaning which we will discuss later in this section.
        \end{enumerate}
\end{remark}

Having specified the notion of degree of a birational map, we give
the definition of algebraic entropy, which measures the growth of the complexity of a birational map after iterations.

\begin{definition}[\cite{BellonViallet1999}]
        \label{def:algent} 
        The \emph{algebraic entropy} of a birational map $\Phi\in\Bir(\BP^{n})$ is the 
        following limit:
        \begin{equation}
                \label{eq:algentdef}
                S_{\Phi} = \lim_{k\to\infty}\frac{1}{k}\log d_{k}^\Phi.
        \end{equation} 
\end{definition}
It is worth mentioning that the related notion of the (first) dynamical degree is equivalent to the algebraic entropy in the special case of a birational self-map of $\BP^2$, and that this notion, going back at least to \textit{Russakovskii} and \textit{Shiffman} \cite{RussakovskiiShiffman} and to \textit{Friedland} \cite{Friedland} following his work with \textit{Milnor} \cite{friedland1989dynamical}, is widely used in the algebraic dynamics community, see e.g.  \cite{DillerFavre2001,BedfordKim2004,BedfordKim2008,BEDKIM}. 
\begin{remark}[\cite{BellonViallet1999, GubbiottiASIDE16,
                GrammaticosHalburdRamaniViallet2009}]
        \label{rem:algent}
We also remark that the algebraic entropy has the following properties:
        \begin{itemize}
            \item by the properties of birational maps and the subadditivity
                of the logarithm, using Fekete's lemma \cite{Fekete1923},
                the algebraic entropy always exists;
            \item the algebraic entropy is non-negative and bounded from above: $0\le S_\Phi \le \log d^\Phi$;
            \item the algebraic entropy is invariant with respect to birational
                conjugation. That is, given two birational maps
                $\Phi,\Theta\in\Bir(\BP^{n})$, we have $S_{\Phi}
                = S_{\Theta^{-1}\circ\Phi\circ\Theta}$;
            \item if $d_{k}^\Phi$ is sub-exponential as $k\to\infty$, e.g.       polynomial, then
                $S_{\Phi}=0$, while, if $d_{k}^\Phi \sim a^{k}$ for some $a\in\BR_{> 0}$, 
                then $S_{\Phi} = \log a$.
        \end{itemize}
\end{remark}

We are now in a position to define  integrability
with respect to algebraic entropy.

\begin{definition}[\cite{BellonViallet1999,HietarintaBook}]
        \label{def:integrability}
        A birational map $\Phi\in\Bir(\BP^{n})$ is
        \emph{integrable according to the algebraic entropy} if $S_{\Phi}
        = 0$. If $S_{\Phi}>0$ the map is said to be \emph{non-integrable}
        or \emph{chaotic}. Moreover, if $d_{k}^\Phi\sim k$ as $k\to \infty$ the
        map is said to have \emph{linear degree growth}. 
        Finally, if $d_{k}^\Phi$ is periodic
        the map is said to have \emph{periodic degree growth}. 
\end{definition}

\begin{remark}
       	\label{rem:integrability}
        Most of the known integrable maps are such that $d_{k}^{\Phi}\sim k^{2}$ as
        $k\to\infty$. From \cite{Bellon1999}, it is known that if 
        $\Phi\in\Bir(\BP^n)$ preserves a fibration of $\BP^n$ by elliptic curves on which $\Phi$ induces translation with respect to the corresponding group structure, 
        then the degree growth is quadratic.
        From~\cite{DillerFavre2001}, it is known that in $\BP^2$ the only sub-exponential 
        behaviours are quadratic, linear, and periodic. The first is associated
        with the preservation of an elliptic fibration, the second
        with the preservation of a rational fibration, the latter with a power of
        the map being isotopic to the identity.
        In $\BP^n$ with $n>2$, it is possible that $d_{k}^{\Phi}\sim k^{\alpha}$ as $k\to\infty$
        with $\alpha>2$. For instance, in \cite{AnglesMaillarViallet2002,
                LaFortuneetal2001, GJTV_class, JoshiViallet2017,Viallet2019} maps with cubic  growth were presented.  
\end{remark}

In \Cref{rmk:genPull} we presented a notion of pull-back of divisors via a rational function. 
We now briefly present the analogous cohomological notion, see \cite{CarsteaTakenawa2019JPhysA,FULTON} for more details.

\begin{definition}[Pull-back in cohomology] 
Let $X,Y$ be two smooth irreducible projective varieties and let $\Phi\in\BC(X,Y)$ be a birational map. Denote by $Z=\graph(\Phi)$ the graph of $\Phi$ and consider the  commutative diagram 
\begin{equation}
\begin{tikzpicture}[scale = 2] 
    \node at (0,0) {$Z$};
    \node at (1,-1) {$Y.$};
    \node at (-1,-1) {$X$};
    \node[above] at (0,-1) {\small$\Phi$};

    \draw[->] (-0.1,-0.2) -- (-0.9,-0.8);
    \draw[->] (0.1,-0.2) -- (0.9,-0.8);
    \draw[dashed,->] (-0.8,-1) -- (0.8,-1);
    \node[right] at (0.5,-0.5) {\small $\pi_Y|_{Z}$};
    \node[left] at (-0.5,-0.5) {\small $\pi_X|_{Z}$};
\end{tikzpicture}
\end{equation}
Then, \textit{the pull-back} $\Phi^*$ is the map
\begin{equation}
    \Phi^*\colon H^2(Y,\BZ)\rightarrow H^2(X,\BZ),
\end{equation}
defined as $\Phi^*=\left(\pi_X|_{Z}\right)_*\circ\left(\pi_Y|_{Z}\right)^*$, where the pull-back and the push-forward on the right-hand side are the usual inverse and direct images of cycles via morphisms.  On the other hand, the push-forward $\Phi_*$ is the pull-back $\Phi_*=(\Phi^{-1})^*$ of the inverse of $\Phi$.
\end{definition}

\begin{remark}
    As explained in \cite{CarsteaTakenawa2019JPhysA}, it is possible to perform the actual computation via an auxiliary smooth variety $\widetilde{Z}$ instead of the possibly singular graph of $\Phi$. This is possible thanks to the celebrated result of \textit{Hironaka} on resolution of singularities  (see \cite{Hironaka1964}).
\end{remark}

 In order to introduce
the concept of \emph{ space
of initial conditions} we need to give the following definition.

\begin{definition}
        \label{def:anstable}
        A rational map $\Phi$ from a smooth projective variety $X$  to itself is   \emph{algebraically stable} if the equality $(\Phi^*)^k = (\Phi^k)^*$ holds for all $k\ge0$.
\end{definition}

\begin{remark}
        \label{rem:singularities}
        The concept of algebraic stability is related to the one of
        singularity confinement. Indeed, heuristically algebraic stability means that the
        indeterminacies of the iterations of the map and its inverse 
        behave in a controlled way: they either form finite 
        or periodic patterns. 
        In practical terms, for any divisor $D$ contracted by $\Phi$, there exists a positive integer $k$ and divisor $D'$ such that $\Phi^{\circ k}(D)=D'$. The sequence of subvarieties encountered under iteration of $\Phi$ from $D$ is the \textit{singularity pattern} of $\Phi$ starting from $D$. The term \textit{confinement} refers to the return of $D$ to a divisor after finitely many steps.
        Specifying to the case of
        interest, i.e. birational transformations of $\BP^{n}$, a singularity pattern will be of the
        following form:
        \begin{equation}
                \label{eq:singpatt}
                D \xrightarrow{\Phi} \gamma_{1} \xrightarrow{\Phi}
                \gamma_{2}\xrightarrow{\Phi} \cdots \xrightarrow{\Phi}
                \gamma_{K} \xrightarrow{\Phi} D',
        \end{equation}    
        where $D$, $D'$ are divisors and
        $\gamma_{i}$ are varieties of codimension greater than one. Finite concatenations of
        patterns of the form \eqref{eq:singpatt} can repeat periodically as
        long as the number of centres $\gamma_{i}$ stays finite (this
        last requirement can be false for linearisable equations
        \cite{Ablowitz_et_al2000,Takenawaatel2003,HayHoweseNakazonoShi2015}).
        Following \cite{BellonViallet1999,Viallet2015}, we can detect the divisors contracted by the map $\Phi$ and its inverse. Precisely, denoting by $\Psi\in\Bir(\BP^{n})$
        the inverse of $\Phi$, the following relations hold:
        \begin{equation}
                \label{eq:kappadef}
                \Psi \circ \Phi\equiv \kappa \cdot \id_{\BP^n},
                \quad
                \Phi \circ \Psi\equiv \lambda \cdot\id_{\BP^n},
        \end{equation}
        for some   $ 
                \kappa,\lambda\in R$. The polynomials $\kappa$ and $\lambda$ admit possibly trivial 
        factorisations of the form:
        \begin{equation}
                \label{eq:kappafact}
                \kappa = \prod_{i=1}^{K_{\kappa}} \kappa_{i}^{d_{\kappa,i}},
                \quad
                \lambda = \prod_{i=1}^{K_{\lambda}} {\lambda}_{i}^{d_{\lambda,i}},
        \end{equation}
        where $\kappa_i \neq \kappa_j$ and $\lambda_i \neq \lambda_j$ for $i\neq j$.
         {The only (prime) divisors that can be contracted to subvarieties of higher
        codimension by $\Phi$ are then the hypersurfaces:
        \begin{equation}
                \label{eq:singvarphi}
                \mathrm{K}_i =  \{\kappa_{i}=0\},
                \quad\mbox{for }i=1,\ldots,K_\kappa,
        \end{equation}
        while $\Psi$ can only contract the hypersurfaces:
        \begin{equation}
                \label{eq:singvarpsi}
                \Lambda_j = \{\lambda_{j}=0\}\quad\mbox{for }j=1,\ldots,K_\lambda.
        \end{equation}}
        In \Cref{fig:singularities} we present a possible blow-down blow-up 
        sequence in $\BP^{3}$: the surface $D$ is mapped to a curve $\gamma$ and then to a point $p$, 
        but after four steps the singularity is confined 
        and a new surface $D'$ is
        found. This is a graphical representation of a sequence as in \eqref{eq:singpatt}.
\end{remark}

\begin{figure}[htb]
        \centering
        \scalebox{0.7}{\begin{tikzpicture}[scale=0.3]
                \draw[thick] (1,0)--(5,4)--(5,12)--(1,8)--(1,0);
                \node[below right] at (3,2) {$D$};
                
                \draw[->] (5.8,6) -- (8.2,6);
                \draw (9,0).. controls (7,5) and (11,9) ..(9.1,12);
                \draw[->] (9.2,6) -- (12-0.5,6);
                
                \node[below] at (9,0) {$\gamma$};
                \filldraw (12,6) circle (3pt);
                \node[above] at (12,6) {$p$};
                \draw[->] (12.4,6) -- (14.6,6);
                \filldraw (15,6) circle (3pt);
                \node[above] at (15,6) {$p'$};
                
                \draw[->,dashed] (15.5,6)--(18-0.6,6);
                \draw (18,0).. controls (16,5) and (20,9) ..(18,12);
                \node[below] at (18,0) {$\gamma'$};
                
                \draw[->, dashed] (18.3,6) -- (20.7,6);
                \draw[thick] (21.5,0)--(25.5,4)--(25.5,12)--(21.5,8)--(21.5,0);
                \node[below right] at (23.5,2) {$D'$};
        \end{tikzpicture}}
        \caption{A possible blow-down blow-up sequence in $\BP^{3}$.}
        \label{fig:singularities}
\end{figure}
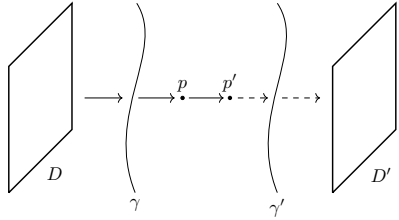

The following result allows us to characterise algebraically stable maps
by studying their indeterminacy loci, see \Cref{rem:singularities}.

\begin{prop}[\cite{Bayraktar2013,CarsteaTakenawa2019JPhysA,BedfordKim2004,BedfordKim2008}]
        \label{prop:anstable}
         {Let $X$ be a smooth projective variety and let
        $\Phi\in\Bir( X)$ be a birational map with indeterminacy locus
        $ \ind\Phi$. Then, the map $\Phi$ is algebraically stable if
        and only if there do not exist a positive integer $k$ and a prime 
        divisor $E$ on $X$  such that $\Phi (E \smallsetminus  \ind\Phi )
        \subset  \ind( \Phi^k)$.}
\end{prop}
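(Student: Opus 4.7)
My plan is to derive the proposition from a composition formula for pullbacks on second cohomology. The key lemma to establish first is the following: for birational self-maps $\Psi, \Phi \in \Bir(X)$ of a smooth projective variety $X$, the identity $(\Phi \circ \Psi)^* = \Psi^* \circ \Phi^*$ holds on $H^2(X,\BZ)$ if and only if there is no prime divisor $V$ on $X$ with $\Psi(V \setminus \ind \Psi) \subset \ind \Phi$.

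To prove the lemma, I would pass to a smooth projective variety $W$ admitting birational morphisms to $X$ that simultaneously resolve $\Psi$, $\Phi$, and $\Phi \circ \Psi$; its existence is guaranteed by Hironaka's theorem, already invoked in the excerpt. On $W$ both sides of the claimed identity become honest pullback-pushforward composites along genuine morphisms, so their difference lies in the subgroup of $H^2(W,\BZ)$ generated by exceptional divisors of the resolution. Using the projection formula together with \Cref{prop:codim2ind}, which guarantees $\codim \ind \Phi \geq 2$, this difference can be written as a $\BZ$-linear combination of strict transforms of exactly those prime divisors $V \subset X$ whose image under $\Psi$ lies in $\ind \Phi$, with coefficients coming from the blow-ups required above $\ind \Phi$ in order to regularise $\Phi$. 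Absence of such $V$ makes the combination empty; conversely, if some such $V$ exists, I would pick a class $D$ for which $\Phi^* D$ meets the relevant centres non-trivially, exhibiting a genuine discrepancy on a cohomology class.

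Granted the lemma, the proposition follows by induction on $k$. Assuming $(\Phi^*)^k = (\Phi^k)^*$ and writing $\Phi^{k+1} = \Phi^k \circ \Phi$, applying the lemma with $\Psi = \Phi$ and with the role of $\Phi$ there played by $\Phi^k$ yields
\[
(\Phi^{k+1})^* = \Phi^* \circ (\Phi^k)^* = (\Phi^*)^{k+1}
\]
if and only if no prime divisor $E$ on $X$ satisfies $\Phi(E \setminus \ind \Phi) \subset \ind \Phi^k$. Iterating, algebraic stability in the sense of \Cref{def:anstable} is equivalent to the absence of such a pair $(E,k)$ for every positive integer $k$, which is exactly the statement.

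I expect the lemma to be the main obstacle, specifically the bookkeeping on $W$: one must match the exceptional divisors that carry the defect of compositionality with the geometric contraction hypothesis, track their multiplicities in the pullback-pushforward computation, and confirm that the contribution survives as a nontrivial class in $H^2$ rather than being absorbed into higher-codimension cycles. The codimension bound from \Cref{prop:codim2ind} is precisely what makes the defect genuinely divisorial and allows the identification with the geometric condition in the statement to go through.
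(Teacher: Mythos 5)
The paper does not prove this proposition at all: it is stated with citations to Bayraktar, Carstea--Takenawa and Bedford--Kim and used as a black box, so there is no in-paper argument to compare against. Your proposal is essentially the standard proof from that cited literature (Diller--Favre in dimension two, Bayraktar in general): the composition lemma $(\Phi\circ\Psi)^*=\Psi^*\circ\Phi^*$ iff no prime divisor is sent by $\Psi$ into $\ind\Phi$, followed by induction on $k$ via $\Phi^{k+1}=\Phi^k\circ\Phi$. The strategy is sound and the directions of composition and of the contraction condition are matched correctly. Two points need tightening. First, the induction as written only gives that absence of bad pairs implies stability; for the converse you need the small case split: if a pair $(E,k)$ exists, either $(\Phi^k)^*\neq(\Phi^*)^k$ already, or else $(\Phi^k)^*=(\Phi^*)^k$ and the lemma forces $(\Phi^{k+1})^*\neq(\Phi^*)^{k+1}$. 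Second, the lemma itself is only sketched; the real content is that for a nef (say ample) class $\theta$ the difference $\Psi^*\Phi^*\theta-(\Phi\circ\Psi)^*\theta$ is an effective class supported on exactly the prime divisors $V$ with $\Psi(V\setminus\ind\Psi)\subset\ind\Phi$, and that it is genuinely nonzero when such a $V$ exists --- your plan to test against a class meeting the centres is the right move, but note also that exceptional divisors of the resolution $W\to X$ that are contracted into $\ind\Phi$ die under the final pushforward to $X$, so only strict transforms of divisors of $X$ survive; this bookkeeping is where \Cref{prop:codim2ind} and the projection formula actually get used. With those details filled in, the argument is complete and agrees with the references the paper relies on.
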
 
\begin{definition}
        \label{def:spaceofinitialcond}
    Let $\Phi\in\Bir(\BP^n)$ be a birational transformation. A birational projective morphism $\varepsilon\colon B\to\BP^{n}$ with $B$ a smooth variety and
        the lifted (birational) maps denoted by $\widetilde{\Phi},\widetilde{\Phi}^{-1}\in\Bir (B)$ is a  
        \begin{itemize}
            \item \textit{space of initial conditions in the algebraic stability sense} if the lifted (birational) maps $\widetilde{\Phi},\widetilde{\Phi}^{-1}\in\Bir (B)$ are algebraically stable,
            \item \textit{space of initial conditions in the pseudo-automorphism sense} if the lifted (birational) map $\widetilde{\Phi} \in\Bir (B)$ is a pseudo-automorphism, i.e. an automorphism in codimension 1,
            \item \textit{space of initial conditions in the automorphism sense} if the lifted (birational) map $\widetilde{\Phi} \in\Bir (B)$ is an automorphism.
        \end{itemize}

A space of initial condition in the algebraic stability / pseudo-automorphism / automorphism sense $B$ is minimal if any birational morphism $\varphi : B\to B'$ to another space of initial conditions in the same sense  is an isomorphism.

    For the remainder of this section, whenever not specified we say space of initial conditions to indicate space of initial conditions in the algebraic stability sense.
\end{definition}

\begin{remark}
    The notion of space of initial conditions in the algebraic stability sense is used in \cite{GraffeoGubbiotti} and  \cite{CarsteaTakenawaAlgebraicallyStable} where $B$   is called an \textit{`algebraically stable variety'}.  
    Spaces of initial conditions in the pseudo-automorphism sense when $n\geq 2$ have been considered, for instance, in \cite{BEDKIM,CarsteaTakenawa2019JPhysA}. 
     Note that for $n=2$ the pseudo-automorphism sense and automorphism sense are the same. 
     Note also for $n=2$ that the results of \textit{Diller} and \textit{Favre} \cite{DillerFavre2001} imply that any $\Phi\in\Bir(\BP^2)$ is birationally conjugate to an algebraically stable self-map of a smooth rational surface. 
     In his work, \textit{Sakai} uses space of initial conditions in the isomorphism sense (a more general one allowing for non-autonomous equations), as does \textit{Mase} \cite{Mase}. 
    It is worth mentioning that the last notion, of lifting a birational map to an automorphism, is hardly used in the context of integrable systems when $n>2$, but there are some cases where they are of interest, see the Ph.D. thesis \cite{Kusnetsova2022} of \textit{Alexandra Kuznetsova} and references therein.
\end{remark}

Spaces of initial conditions are constructed via blow-ups. In the context of computation of algebraic entropy, the idea is that if a divisor $D\subset\BP^n$  is contracted by $\Phi\in\Bir(\BP^n)$ to a subvariety $\gamma$ of codimension greater than 1, then after blow-up $\gamma$ is turned into a divisor. However, the lift of $\Phi$ to the blow-up can in principle still contract $D$ on some subvariety of the  exceptional  divisor over $\gamma$. This behaviour requires one to perform iterated blow-ups, i.e. to blow-up infinitely near subvarieties of $\BP^n$, until $D$ is no longer contracted, see \Cref{sec:sec3} for explicit examples of iterated blow-ups.

Assume\footnote{We will just blow-up reduced points and no iterated blow-ups will be performed in this section, as the general case is more intricate and beyond our purpose.} now that the (irreducible) subvarieties $\gamma_i$, for $i=1,\ldots,K$, of codimension  greater than one encountered in the singularity pattern \eqref{eq:singpatt} of some map $\Phi$ are disjoint, i.e. $\gamma_i\cap\gamma_j=\varnothing$ for $i\not=j$, irreducible, smooth and all lie in $\BP^n$, that is no iterated blow-up is required. 
As a consequence of \Cref{rem:singularities} and of the properties of the blow-up (see \cite[Proposition IV-22]{GEOFSCHEME}), we have that
\begin{equation}
        \label{eq:blow-ups}
        B = \Bl_{\underset{i=1}{\overset{K}{\cup}}\gamma_{i}}\BP^{n}
\end{equation}
is a space of initial conditions for $\Phi$. Denoting by $F_{i}$, for $i=1,\ldots,K$, the components of the exceptional locus of $\varepsilon$, we attach to $B$ its second cohomology group (see \cite[Section 4.6.2]{GRIHARR}):
\begin{equation}
        \label{eq:H2B}
        H^{2}(B,\BZ) =
        \langle \varepsilon^{*}H , F_{1},\ldots, F_{K}\rangle_{\BZ}.
\end{equation}
Then, the action of $(\Phi^{-1})^{*}$ on $H^{2}(B,\BZ)$ is linear and
the coefficient of the pull-back of $\varepsilon^{*}H$ via $\Phi$ agrees with the \emph{degree of $\Phi$} in the sense of \Cref{eq:dn}.
So, following \cite{Takenawa2001JPhyA,BedfordKim2004}, from the algebraic stability 
condition we get that:
\begin{equation}
        \label{eq:dncoeff}
        d_{k}^\Phi  = \coeff\left( ((\Phi^{-1})^{*})^{k}\varepsilon^* H, \varepsilon^{*}H \right) = \coeff\left( (\Phi^{*})^{-k}\varepsilon^* H, \varepsilon^{*}H \right),
\end{equation}
that is, we converted the problem of finding a closed form
expression for $d_{k}^\Phi$ to a problem in linear algebra over
the $\BZ$-module $H^{2}(B,\BZ)$. 

\subsection{A working example: the Cremona-Cubes group}\label{subsec:2example} We present now an explicit example of a discrete integrable system in dimension 3 following \cite{GraffeoGubbiotti}, see \cite{alonso2025discretepainleveequationspencils2,alonso2025discretepainleveequationspencils,alonso2023dynamicaldegreesbirationalmaps,alonso2023three} for other 3-dimensional examples and more details.

Let $\crem_3\in\Bir(\BP^3)$ be the standard Cremona transformation, see \Cref{exa:CREMONA} and \Cref{exe:CREMONA}.

\begin{notation}
 In this subsection we adopt the unusual choice $[x_1:\cdots:x_4]$ to denote the homogeneous coordinates on $\BP^3$. We also denote by $e_i,H_i$, for $i=1,\ldots,4$, the coordinate points and hyperplanes of $\BP^3$. 
\end{notation}

\begin{remark}
    \label{puntifissi} We have
    \begin{equation}
        \label{eq:fixc3}
        \Fix\crem_3 = \OP \cup \OQ,
        \quad 
         \OP=\Set{p_1,p_2,p_3,p_4},
        \,
        \OQ=\Set{q_1,q_2,q_3,q_4},
    \end{equation}
    where:
    \begin{equation}
        \begin{aligned}
                p_1&=[1:-1:-1:-1],  \\
                p_2&=[-1:1:-1:-1], \\
                p_3&=[-1:-1:1:-1],  \\
                p_4&=[-1:-1:-1:1], 
        \end{aligned}
        \quad
        \begin{aligned}
                q_1&=[1:-1:-1:1],  \\
                q_{2}&=[-1:1:-1:1],  \\
                q_{3}&=[1:1:-1:-1],  \\
                q_{4}&=[1:1:1:1].
        \end{aligned}
    \end{equation}
    These eight points correspond to two four-tuples of lines of 
    $\BC^4$ orthogonal with respect to the standard scalar product. In particular, these are four-tuples of points in general position.  We highlight that the points in $\Fix \crem_3$ can be interpreted as the vertices of a  cube in the affine space, as depicted in \Cref{CUBE}. Here, by vertices of a cube we mean the base locus of a general net of quadrics of $\BP^3$ (see  \cite[App. B.5.2]{HuntGeomQuot1996} and \cite[Section 1.5.2]{Dolgachev2012book}). Note that we are considering only general nets in order to have a 0-dimensional\footnote{The dimension of the base locus may jump in some special cases, an example being the twisted cubic.} reduced base locus. 
\end{remark}

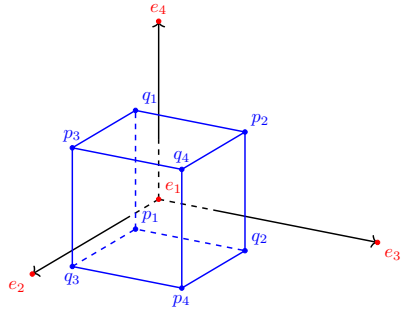
\begin{figure}[htb]
	\centering
    \tdplotsetmaincoords{70}{120}
    \scalebox{0.7}{\begin{tikzpicture}[scale=1.2,tdplot_main_coords]
        \draw[color=black,thick,dashed] (0,0,0) -- (1,0,0);
        \draw[color=black,thick,->] (1,0,0) -- (4,0,0) node[anchor=north east,red]{$e_2$};
        \draw[color=black,thick,dashed] (0,0,0) -- (0,1,0);
        \draw[color=black,thick,->] (0,1,0) -- (0,4,0) node[anchor=north west,red]{$e_3$};
        \draw[color=black,thick,dashed] (0,0,0) -- (0,0,1);
        \draw[color=black,thick,->] (0,0,1) -- (0,0,3) node[anchor=south,red]{$e_4$};

        \coordinate (p1) at (-1,-1,-1);
        \coordinate (p2) at (-1,1,1);
        \coordinate (p3) at (1,-1,1);
        \coordinate (p4) at (1,1,-1);
        
        \coordinate (q1) at (-1,-1,1);
        \coordinate (q2) at (-1,1,-1);
        \coordinate (q3) at (1,-1,-1);
        \coordinate (q4) at (1,1,1);
                
        \draw[blue,thick] (q1)--(p2)--(q4)--(p3)--cycle;
        \draw[blue,thick] (q4)--(p4)--(q2)--(p2);
        \draw[blue,thick] (p3)--(q3)--(p4);
        \draw[blue,thick,dashed] (q3)--(p1)--(q2);
        \draw[blue,thick,dashed] (p1)--(q1);
        
        \node[blue,anchor=south west] at (p1) {$p_1$};
        \draw[fill = blue,draw=blue] (p1) circle (1pt);
        \node[blue,anchor=south west] at (p2) {$p_2$};
        \draw[fill = blue,draw=blue] (p2) circle (1pt);
        \node[blue,anchor=south] at (p3) {$p_3$};
        \draw[fill = blue,draw=blue] (p3) circle (1pt);
        \node[blue,anchor=north] at (p4) {$p_4$};
        \draw[fill = blue,draw=blue] (p4) circle (1pt);
        
        \node[blue,anchor=south west] at (q1) {$q_1$};
        \draw[fill = blue,draw=blue] (q1) circle (1pt);
        \node[blue,anchor=south west] at (q2) {$q_2$};
        \draw[fill = blue,draw=blue] (q2) circle (1pt);
        \node[blue,anchor=north] at (q3) {$q_3$};
        \draw[fill = blue,draw=blue] (q3) circle (1pt);
        \node[blue,anchor=south] at (q4) {$q_4$};
        \draw[fill = blue,draw=blue] (q4) circle (1pt);
        
        \node[red,anchor=south west] at (0,0,0) {$e_1$};
        \draw[fill = red,draw=red] (0,0,0) circle (1pt);
        \draw[fill = red,draw=red] (4,0,0) circle (1pt);
        \draw[fill = red,draw=red] (0,4,0) circle (1pt);
        \draw[fill = red,draw=red] (0,0,3) circle (1pt);
    \end{tikzpicture}  }
    
	\caption{The configuration, in the chart $\mathcal{U}_{x_1}\subset \BP^3$, of the points in $\OR$.}
	\label{CUBE}
\end{figure}

\begin{example}
        \label{punticonfinement} 
        If we are interested in spaces of initial conditions for $\crem_3$, 
        we do not need to work with a resolution of singularities of the singular variety $X$ constructed in  \Cref{exe:CREMONA},
        and it is enough to consider the variety
        \begin{equation}
        {B}=\Bl_{\OOE}\BP^3, \quad
                \OOE=\Set{\!e_1,e_2,e_3,e_4\!}\!,    
        \end{equation}
        where
        \begin{equation}
            \label{eq:edef}
            \begin{aligned}
                e_1&=[1:0:0:0],  
                \\
                e_2&=[0:1:0:0],  
                \\
                e_3&=[0:0:1:0],  
                \\
                e_4&=[0:0:0:1].     
            \end{aligned}
        \end{equation}
        Indeed, the only divisorial contractions of $\crem_3$ consist of contractions over 
        one of the $e_i$'s and the map induced by $\crem_3$ on ${B}$ is algebraically stable.  Let us denote by $E_{i}$
the exceptional divisor over the coordinate point $e_{i}$, for
$i=1,\ldots,4$. In this setting, one can choose (see \cite[Section 4.6.2]{GRIHARR} or \Cref{lemma:eltransf})
the following basis of the second singular cohomology group of ${B}$:
\begin{equation}
	\label{eq:H2cremona} H^{2}( {B},\BZ ) = \langle
	\varepsilon^*H,E_1,E_2,E_3,E_4\rangle_{\BZ},
\end{equation} 
where $H$ is the class of a hyperplane in $\BP^3$ and, by abuse
of notation, we have denoted by the same symbols the exceptional
divisors $E_i$, for $i=1,\ldots,4$, and their cohomology classes.
         
Then, the action of the standard Cremona transformation on the
second cohomology group $H^{2}({B},\BZ)$ is expressed, in terms of the
basis \eqref{eq:H2cremona}, by the following matrix
\begin{equation}
    \label{CREMONACTION} 
(\crem_3^{-1})^* =   \crem_3^*=
    \begin{pmatrix} 
        3 & 1 & 1 & 1 & 1
        \\
	-2 & 0 & -1 & -1 & -1
        \\
        -2 & -1 & 0 & -1 & -1
        \\
        -2 & -1 & -1 & 0 & -1
        \\ 
        -2 & -1 & -1 &- 1 & 0
    \end{pmatrix}\!,
\end{equation} 
see for instance \cite{CarsteaTakenawa2019JPhysA} 
{or \cite[Eq. (3.1)]{BedfordKim2004} evaluated at $d=3$}.
\end{example}

\begin{definition}\label{specialpoints}
        We will denote\footnote{The letter $\OR$ stands for \textit{Reye}.} by $\OR\subset \BP^3$ the
        finite subset containing all the points appearing in
        \Cref{puntifissi,punticonfinement}, i.e.
        \begin{equation}
                \label{eq:setSdef}
                \OR=\OOE \cup\OP\cup\OQ.
        \end{equation}
\end{definition}

In what follows, we compute the algebraic entropy of maps of the
form $\Phi=g\circ \crem_3$ where $g\in \BP\GL(4,\BC)$ is a projectivity of finite order
that acts on the set $\OR$.

\begin{definition}\label{def:cubegroup}
        We will call the \textit{Cremona-cubes group} the subgroup 
        $\OC$ of $\BP\GL(4,\BC)$ defined by:
        \begin{equation}
                \label{eq:CremonaGroup}
                \OC=
                \Set{g\in\BP\GL(4,\BC)|g\cdot \OR\subseteq \OR}\!.
        \end{equation}
\end{definition} 

\begin{remark}
  We remark that the condition $g\cdot \OR\subseteq \OR$ is equivalent to $g\cdot \OR= \OR$. However, we have opted for this more common presentation.  We also remark that since $\OR $ contains five-tuples of points in general position, we have $\stab_{\langle g \rangle} 
        (\OR)=\langle\id_{\BP\GL
        (4,\BC)}\rangle$,  for any $g\in\OC$ (see \cite[Section 1.3]{PARDINI}).   
        This implies that all the elements
        $g\in\OC$ have finite order. Indeed, suppose that there exists a $g\in\OC$ of infinite order. In particular, for any integer $k>1$, $g^k$ is not the inverse of $g$. Now, since $g$ acts on the finite set $\OR$, there is an integer $k>1$ such that $g|_{\OR}\equiv g^k|_{\OR}$. This implies that $g^{1-k}$ would be a non-trivial element in $\stab_{\langle g \rangle} 
        (\OR)$. 
\end{remark}

The following result tells us that, within $\OR$, the three subsets $\OOE$,
$\OP$, and $\OQ$ are mapped between themselves {as a whole}.

\begin{lemma}\label{azioneindotta}
        The action of $\OC$ on $\OR$ induces an action of
        $\OC$ on the set $\Set{\OOE,\OP,\OQ}$.
\end{lemma}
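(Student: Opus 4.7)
The plan is to characterise the partition $\OR = \OOE \sqcup \OP \sqcup \OQ$ by a projectively intrinsic condition, and then observe that any $g \in \OC$ must respect it. The intrinsic datum is the collection $\mathscr{L}$ of lines of $\BP^{3}$ that contain at least three points of $\OR$: this set is clearly preserved by every projectivity that sends $\OR$ to itself, since projectivities map lines to lines and $g(\OR)=\OR$ for $g\in\OC$.

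First, I would enumerate $\mathscr{L}$ and show that $|\mathscr{L}|=16$, with every line of $\mathscr{L}$ containing exactly one point from each of $\OOE$, $\OP$, $\OQ$. The template incidence is the identity $p_{1}+q_{4}=2\,e_{1}$ in $\BC^{4}$, which shows that $\overline{p_{1}q_{4}}$ passes through $e_{1}$; the remaining fifteen trisecants follow by applying the $S_{4}$-action on $\BP^{3}$ by coordinate permutations (which preserves each of the three subsets set-wise) together with the sign change $[x_{1}:x_{2}:x_{3}:x_{4}]\mapsto[-x_{1}:x_{2}:x_{3}:x_{4}]$ (which swaps $\OP\leftrightarrow\OQ$ and fixes $\OOE$ pointwise). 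One also checks directly that no three points of $\OR$ lying in the same subset can be collinear: the four points of $\OOE$ form a projective frame, and small computations settle the cases of $\OP$ and $\OQ$.

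Second, the relation on $\OR$ defined by ``$x$ and $y$ do not lie on any common line of $\mathscr{L}$'' is then the equivalence relation whose three classes are precisely $\OOE$, $\OP$ and $\OQ$. Indeed, the $48$ inter-class pairs of points are exactly those contained in the $16$ lines of $\mathscr{L}$ (each line contributing $\binom{3}{2}=3$ such pairs), while the $3\cdot\binom{4}{2}=18$ intra-class pairs never lie on a line of $\mathscr{L}$.

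Finally, any $g\in\OC$ permutes $\mathscr{L}$ by the initial observation, and therefore preserves the equivalence relation above; hence it permutes its three classes $\OOE$, $\OP$, $\OQ$ among themselves, producing the induced action on $\Set{\OOE,\OP,\OQ}$. The principal obstacle is the bookkeeping in the first step, but it is well contained by exploiting the $S_{4}$-symmetry and the sign-change $\BZ/2$ just mentioned: modulo these, a single trisecant through one $e_{i}$ generates the entire configuration.
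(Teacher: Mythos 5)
Your proof is correct and rests on exactly the same key fact as the paper's: a line through two points of $\OR$ either is a trisecant meeting each of $\OOE$, $\OP$, $\OQ$ once, or contains only two points of $\OR$, both from the same class, so that the partition is characterised by which pairs span a trisecant. The paper turns this into a two-line contradiction argument (if $g$ sent $e_1\mapsto p_1$ and $e_2\mapsto q_2$, the third point of $\OR$ on $\overline{p_1q_2}$ would pull back to a third point of $\OR$ on $\overline{e_1e_2}$, which does not exist), whereas you package it as an explicitly $\OC$-invariant equivalence relation and supply the enumeration of the $16$ trisecants that the paper merely asserts with ``first notice that''; both are fine.
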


\begin{proof}
        First notice that, if a line $L$ in $\BP^3$ contains at least two points from $\OR$, then it contains either three  collinear points each
        belonging to one of the sets $\OOE,\OP$ and $\OQ$ or two points
        from the same collection $\OOE,\OP$ or $\OQ$.
        
        We now proceed by contradiction. Suppose, without loss of generality,
        that the projectivity $g$ sends the point $e_1$ to the point $p_1$
        and the point  $e_2$ to the point $q_2$, i.e.
        \begin{equation}
                \label{eq:p1p2proof}
                \begin{tikzcd}[row sep=tiny]
                        e_1 \arrow[mapsto]{r}{g} & p_1 \\
                        e_2\arrow[mapsto]{r}{g} & q_2 .
                \end{tikzcd}
        \end{equation}
        Let $L_{12}$ be the line through $p_1$ and $q_2$ and let $e_j$ be
        the third intersection point in $L_{12}\cap\OR$, i.e.:
        \begin{equation}
                \label{eq:L59intersect}
                L_{12}\cap\OR=\Set{\!e_j,p_1,q_2\!}\!.
        \end{equation}
        Then, we get 
        \begin{equation}
                g^{-1 }\cdot e_j \in (g^{-1}(L_{12})\smallsetminus\{e_1,e_2\})\cap\OR.
                \label{eq:gpapplied}
        \end{equation} 
        Which is a contradiction.
\end{proof}

Now, we characterise the elements of $\OC$ as belonging to three
different classes depending on their action on the set $\Set{\!\OOE,\OP,\OQ\!}$. 
The following Lemma is crucial in this characterisation.

\begin{lemma}
        \label{possibilemma}
        Let $g\in\OC\subset \BP\GL(4,\BC)$ be an element
        of the Cremona-cubes group. Then, there is a matrix
        $\widetilde{g}\in\GL(4,\BC)$ representing $g$ whose entries
        belong to $\{-1,0,1\}$. Moreover, $g$ falls in one of the
        following cases.
        \begin{itemize}
                \item[\textit{(A)}] Both,  the columns and rows of $g$ represent the points in $\OP$ (or in $\OQ$).
                \item[\textit{(B)}] The matrix $g$ is a permutation matrix with signs.
                \item[\textit{(C)}] The columns of $g$ represent the points in $\OP$ and the rows represent the points in $\OQ$ (or viceversa).
        \end{itemize}
\end{lemma}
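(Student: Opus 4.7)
The plan is to proceed by case analysis on the permutation of $\{\OOE,\OP,\OQ\}$ induced by $g$, as provided by \Cref{azioneindotta}, while carefully keeping track of a single global rescaling of a matrix representative $\tilde g\in\GL(4,\BC)$.

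If $g(\OOE)=\OOE$, then any lift $\tilde g$ has columns of the form $c_i\,e_{\sigma(i)}$ for some permutation $\sigma$ and nonzero scalars $c_i$. Applying $\tilde g$ to a lift of $p_1$, whose coordinates are all nonzero, produces a vector with coordinates $\pm c_i$; for its projective class to lie in $\OR$, a common rescaling must bring all entries into $\{-1,0,1\}$, which forces $|c_1|=\cdots=|c_4|$. Rescaling $\tilde g$ then yields a signed permutation matrix, giving case (B).

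Assume next that $g(\OOE)=\OP$; the case $g(\OOE)=\OQ$ is symmetric. By \Cref{azioneindotta}, either $g$ induces the transposition $\OOE\leftrightarrow\OP$ fixing $\OQ$, or the 3-cycle $\OOE\to\OP\to\OQ\to\OOE$. Lift $g$ to $\tilde g$ with columns $\tilde g\,e_i=\mu_i\,p_{\sigma(i)}$ for nonzero scalars $\mu_i$ and a permutation $\sigma$. The central claim is that a single global rescaling can put all $\mu_i$ into $\{-1,1\}$. To prove it, apply $\tilde g$ to the vector $(1,1,1,1)^\top$, which represents $q_4$: the $k$-th entry of the image equals $2\mu_{\sigma^{-1}(k)}-S$ with $S=\sum_i\mu_i$. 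In the transposition sub-case $g(q_4)\in\OQ$, so these entries must be proportional to a $\pm1$-vector with an even number of $-1$'s, and a parity count on the resulting relation $S=(2-k)c$ forces the $|\mu_j|$ to coincide. In the 3-cycle sub-case $g(q_4)\in\OOE$, so three of the entries $2\mu_j-S$ vanish and one does not, which immediately yields three $\mu_j$'s equal to $S/2$ and the fourth equal to $-S/2$. In both sub-cases the ratios $\mu_i/\mu_j$ lie in $\{\pm1\}$, so after rescaling we may take $\mu_i\in\{-1,1\}$ and then $\tilde g$ has entries in $\{-1,1\}\subset\{-1,0,1\}$.

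To identify the rows, use that $\{p_1,\ldots,p_4\}$ is an orthogonal basis of $\BC^4$ with $\lVert p_i\rVert^2=4$, so $\tilde g^\top\tilde g=4\,I$ and hence $\tilde g^{-1}=\tfrac14\tilde g^\top$. Consequently, the $i$-th row of $\tilde g$ is proportional to $\tilde g^{-1}e_i$, whose projective class is $g^{-1}(e_i)$. In the transposition sub-case, $g^{-1}(\OOE)=\OP$, so every row represents a point of $\OP$, giving case (A); in the 3-cycle sub-case, $g^{-1}(\OOE)=\OQ$, so every row represents a point of $\OQ$, giving case (C). The main obstacle is establishing that a single global rescaling suffices to place the $\mu_i$ in $\{-1,1\}$; this is exactly where the hypothesis that $g$ preserves all of $\OR$ (and not just $\OOE\cup\OP$) enters, through the constraint on $g(q_4)$ that pins down the ratios $\mu_i/\mu_j$ to $\pm 1$. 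Once this step is in place, the orthogonality of the $\OP$-basis makes the row analysis immediate.
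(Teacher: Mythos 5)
Your proof is correct and follows the same strategy as the paper's, namely a case analysis on the permutation of $\{\OOE,\OP,\OQ\}$ induced by $g$ and $g^{-1}$ via \Cref{azioneindotta}; the paper records this only as a ``direct consequence'' without supplying details. The two steps you make explicit --- pinning the column scalars down to a common value up to sign by tracking the image of $q_4$ (resp.\ of $p_1$ in the signed-permutation case), and identifying the rows with representatives of $g^{-1}(e_i)$ via the orthogonality relation $\widetilde{g}^{\top}\widetilde{g}=4I$ --- are exactly the checks the paper leaves implicit, and you carry them out correctly.
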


\begin{proof}
    The first part of the statement follows from the second, while the second
    part is  a direct consequence of \Cref{azioneindotta}. Indeed, as per
    \Cref{azioneindotta}, $g$ and $g^{-1}$ act on $\Set{\OOE,\OP,\OQ}$
    and, depending on the action on this set, we get \textit{(A)},
    \textit{(B)} or \textit{(C)}.
\end{proof}

\begin{remark}\label{orbits}
    As a consequence of \Cref{possibilemma}, we can divide the elements of
    the Cremona-cubes group according to the orbit $\langle g \rangle\cdot
    \OOE$ of $\OOE$ via $g$:
    \begin{equation}\label{eq:orbit}
            \langle g \rangle\cdot \OOE=\Set{g^k\cdot e_i | k\in\BN, \ 1\le i\le 4}\!.
    \end{equation}
    We have the following characterisation.
    \begin{itemize}
        \item An element $g \in \OC$ belongs to case \textit{(A)} in
            \Cref{possibilemma} if and only if 
            $\langle g \rangle\cdot \OOE=\OOE\cup \OP$ (or 
            $\langle g \rangle\cdot \OOE=\OOE\cup \OQ$), i.e. if and only if $\abs{\langle g\rangle\cdot \OOE} = 8$.
        \item An element $g \in \OC$ belongs to case \textit{(B)} in 
            \Cref{possibilemma} if and only if 
            $\langle g \rangle\cdot \OOE=\OOE$, i.e. if and only if $\abs{\langle g\rangle\cdot \OOE} = 4$.
        \item An element $g \in \OC$ belongs to case \textit{(C)} in 
            \Cref{possibilemma} if and only if 
            $\langle g \rangle\cdot \OOE=\OOE\cup \OP\cup \OQ=\mathscr{ R}$, i.e. if and only if $\abs{\langle g\rangle\cdot \OOE} = 12$. 
    \end{itemize}
    In what follows, we show that the orbit $\langle g \rangle
    \cdot \OOE$ \eqref{eq:orbit} plays a fundamental role in the
    confinement of singularities of the maps of the form $\Phi=g\circ
    \crem_{3}$ for $g\in\OC$.
\end{remark}

\begin{definition}
    We   say that an element $g\in \OC$ is of type \textit{(A)}
    (resp. of type \textit{(B)} or \textit{(C)}) if it belongs to
    the case \textit{(A)} (resp. \textit{(B)} or \textit{(C)}) in
    \Cref{possibilemma}.
\end{definition}

The following lemma investigates the relation between elements of the
Cremona-cubes group of different type.

\begin{lemma}\label{proprietagruppi}
        The following properties hold for the elements in $\OC$ (see \Cref{possibilemma}).
        \begin{itemize}
                \item Two elements of type \textit{(A)} (resp.  \textit{(C)}) differ by multiplication by a permutation matrix with sign having an even number of -1 (which is an element of $\OC$ of type \textit{(B)}).
                \item Two elements of type  \textit{(B)} differ by multiplication by a permutation matrix with signs, i.e. by an element of type \textit{(B)}. In particular, the elements of type \textit{(B)} form a subgroup of $\OC$ that we will denote by $\OC_{\mbox{\tiny\textit{(B)}}}$.
                \item An element of type \textit{(A)} differs by an element of type \textit{(C)} by multiplication by a permutation matrix with signs having an odd number of -1 (which is an element of $\OC$ of type \textit{(B)}). 
                \item The inverse of an element of type \textit{(A)} (resp. \textit{(B)} or \textit{(C)}) is of  the same type.
        \end{itemize}
\end{lemma}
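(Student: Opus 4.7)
The approach is to exploit \Cref{azioneindotta}, which provides a group homomorphism $\rho\colon\OC\to S_{\{\OOE,\OP,\OQ\}}$ to the symmetric group on the three-element set $\{\OOE,\OP,\OQ\}$, together with the normal-form description from \Cref{possibilemma}. The first step is to observe that the three types correspond exactly to the value of $\rho(g)$: type \textit{(B)} elements are those stabilising $\OOE$ (which forces $g$ to be a signed permutation matrix), type \textit{(A)} elements are those inducing a transposition involving $\OOE$, and type \textit{(C)} elements are those inducing a $3$-cycle. This tabulation reduces each assertion to a computation in $S_{3}$ together with an entry-wise check.

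Property (2) follows at once: signed permutation matrices are closed under products and inverses (the inverse is the transpose with the same nonzero entries), and they automatically lie in $\OC$ since they permute $\OR$. Hence $\OC_{\mbox{\tiny\textit{(B)}}}=\rho^{-1}(\stab(\OOE))$ is a subgroup. Property (4) for type \textit{(B)} is immediate; for types \textit{(A)} and \textit{(C)} it follows from $\rho(g^{-1})=\rho(g)^{-1}$, since the inverse of a transposition involving $\OOE$ is the same transposition, and the inverse of a $3$-cycle is again a $3$-cycle.

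For properties (1) and (3) the plan is to refine $\OC_{\mbox{\tiny\textit{(B)}}}$ by the parity homomorphism $\mu\colon\OC_{\mbox{\tiny\textit{(B)}}}\to\{\pm 1\}$ sending a signed permutation to the parity of its number of $-1$ entries. A direct check on a generator such as $\mathrm{diag}(1,1,1,-1)$, which sends $p_{1}$ to $q_{1}$ (see \Cref{puntifissi}), shows that $\mu$ records precisely whether an element of $\OC_{\mbox{\tiny\textit{(B)}}}$ preserves each of $\OP$ and $\OQ$ setwise (even) or swaps them (odd). Given two type \textit{(A)} elements with a common image under $\rho$ (both swapping $\OOE$ with $\OP$, say), the product $g_{1}^{-1}g_{2}$ lies in $\ker\rho=\OC_{\mbox{\tiny\textit{(B)}}}$ and additionally fixes $\OP$ setwise because both factors do, so $\mu(g_{1}^{-1}g_{2})=+1$. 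The analogous computation handles type \textit{(C)}. For property (3), an element of type \textit{(A)} and one of type \textit{(C)} induce permutations whose ratio swaps $\OP$ and $\OQ$, which forces $\mu(g_{1}^{-1}g_{2})=-1$.

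The main obstacle is careful bookkeeping: types \textit{(A)} and \textit{(C)} each split into two subtypes according to $\rho(g)$, and the statements (1) and (3) should be read as applying to compatible subtypes, i.e.\ to pairs whose ratio lies in the kernel of $\rho$. Once this refinement is made explicit, every assertion reduces to either a closure check for signed permutation matrices or a one-line calculation in $S_{3}$ combined with the parity homomorphism $\mu$.
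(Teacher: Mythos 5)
Your proof is correct in substance, and it is considerably more structured than what the paper offers: the paper's proof of this lemma is a one-line ``direct check'' for the first three items plus an appeal to \Cref{orbits} for the fourth. What you do differently is organise that direct check around the homomorphism $\rho\colon\OC\to S_{\{\OOE,\OP,\OQ\}}$ supplied by \Cref{azioneindotta} (types \textit{(B)}, \textit{(A)}, \textit{(C)} corresponding to $\rho(g)$ stabilising $\OOE$, being a transposition moving $\OOE$, or being a $3$-cycle, exactly as in \Cref{orbits}), refined by the sign-parity character $\mu$ on the signed permutation matrices. This buys a uniform derivation: item (2) is closure of monomial matrices, item (4) is $\rho(g^{-1})=\rho(g)^{-1}$ (the same idea as the paper's appeal to $\langle g\rangle=\langle g^{-1}\rangle$), and items (1) and (3) reduce to multiplying permutations in $S_3$ and reading off $\mu$. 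Your explicit caveat about compatible subtypes is not pedantry but is actually needed for the statement to be true: for instance, if $g_1$ swaps $\OOE\leftrightarrow\OP$ and $g_2$ swaps $\OOE\leftrightarrow\OQ$ (both type \textit{(A)}, and both subtypes do occur in $\OC$, e.g.\ by conjugating the symmetric matrix with columns $p_1,\dots,p_4$ by $\mathrm{diag}(1,1,1,-1)$), then $g_1^{-1}g_2$ maps under $\rho$ to a $3$-cycle and is of type \textit{(C)}, not \textit{(B)}; so the lemma must be read per subtype, which is consistent with the parenthetical ``(or in $\OQ$)'' of \Cref{possibilemma}.

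Two small repairs. First, $\ker\rho$ is not all of $\OC_{\mbox{\tiny\textit{(B)}}}$ but its index-two subgroup of even-sign elements, since $\OC_{\mbox{\tiny\textit{(B)}}}=\rho^{-1}(\langle(\OP\,\OQ)\rangle)$ and $(\OP\,\OQ)$ is realised by $\mathrm{diag}(1,1,1,-1)$; your argument survives because what you actually use is that $g_1^{-1}g_2$ fixes all three of $\OOE,\OP,\OQ$ setwise. Second, the claim that $\mu$ detects whether $\OP$ and $\OQ$ are preserved or swapped should be justified for all of $\OC_{\mbox{\tiny\textit{(B)}}}$, not just one generator: note that, up to global sign, $\OP$ consists of the points of $\OP\cup\OQ$ with an odd number of entries equal to $-1$ and $\OQ$ of those with an even number, a parity that is preserved by coordinate permutations and by the global sign (as $4$ is even) and flipped by each diagonal sign change; this makes $\mu$ well defined on classes in $\BP\GL(4,\BC)$ and completes the check.
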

\begin{proof}
        The proof of the first three points consists of a direct check, while the fourth point is a direct consequence of \Cref{orbits}.
\end{proof}

\begin{theorem}
    \label{thm:cardinality}
    The cardinality of $\OC $ is
    \begin{equation}
        |\OC|=576.
        \label{eq:cdim}
    \end{equation}
\end{theorem}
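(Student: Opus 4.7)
The plan is to exploit the partition of $\OC$ into the three types given by \Cref{possibilemma} and count elements in a single type explicitly, using \Cref{proprietagruppi} to reduce the total count to three times the cardinality of the subgroup $\OC_{\mbox{\tiny\textit{(B)}}}$.

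First, I would show that the elements of type \textit{(B)} are precisely the images in $\BP\GL(4,\BC)$ of signed permutation matrices. An element $g\in \OC_{\mbox{\tiny\textit{(B)}}}$ permutes $\OOE$, so it sends each $e_i$ to some $\pm e_{\sigma(i)}$; lifting to $\GL(4,\BC)$, some representative of $g$ is obtained from a permutation matrix by changing signs of the columns. Conversely any such signed permutation matrix preserves $\OOE$ and, since the points in $\OP\cup\OQ$ have all coordinates in $\{-1,1\}$, it also preserves $\OP\cup\OQ$, hence lies in $\OC$. Counting lifts in $\GL(4,\BC)$ gives $4!\cdot 2^4=384$ signed permutation matrices; two such matrices define the same class in $\BP\GL(4,\BC)$ if and only if they differ by a nonzero scalar, and among signed permutation matrices the only scalar multiple that remains a signed permutation matrix is $\pm1$. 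Therefore
\[
\abs{\OC_{\mbox{\tiny\textit{(B)}}}}=\frac{384}{2}=192.
\]

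Next, I would exhibit a concrete element of type \textit{(A)} and of type \textit{(C)} to ensure that these classes are non-empty. The Hadamard-type matrix whose columns are the coordinate vectors of $p_1,p_2,p_3,p_4$ is invertible and gives an element of type \textit{(A)}. Composing it with a suitable signed permutation yields one of type \textit{(C)}. By the first and third bullet points of \Cref{proprietagruppi}, the sets of type \textit{(A)} and type \textit{(C)} elements are single (non-empty) left cosets of $\OC_{\mbox{\tiny\textit{(B)}}}$ in $\OC$, so each has cardinality $\abs{\OC_{\mbox{\tiny\textit{(B)}}}}=192$.

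Finally, since \Cref{possibilemma} shows the three types are pairwise disjoint and exhaust $\OC$, I would conclude
\[
\abs{\OC}=3\cdot\abs{\OC_{\mbox{\tiny\textit{(B)}}}}=3\cdot 192=576.
\]
I expect the only delicate point to be the careful passage from $\GL(4,\BC)$ to $\BP\GL(4,\BC)$, namely verifying that the only projective equivalences among signed permutation matrices come from multiplication by $\pm 1$; the remaining work is the bookkeeping already provided by \Cref{azioneindotta,proprietagruppi}.
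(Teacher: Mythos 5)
Your strategy is genuinely different from the paper's: the paper proves \Cref{thm:cardinality} by a direct computer computation in \texttt{Macaulay2}, so a conceptual hand count like yours is valuable, and your central computation is sound. The identification of $\OC_{\mbox{\tiny\textit{(B)}}}$ with the image in $\BP\GL(4,\BC)$ of the $4!\cdot 2^4=384$ signed permutation matrices is correct (every such matrix preserves $\OOE$ and preserves $\OP\cup\OQ$, which is exactly the set of eight points of $\BP^3$ all of whose homogeneous coordinates are $\pm1$; conversely \Cref{possibilemma} forces a type \textit{(B)} element to be of this form), and the passage to $\BP\GL$ dividing by $\pm1$ gives $192$. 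Your Hadamard matrix $2I-J$ is indeed an element of type \textit{(A)}: it is an involution up to scalar, swaps $\OOE$ and $\OP$, and fixes $\OQ$ pointwise.

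There is, however, a genuine gap in the final step: the sets of type \textit{(A)} and type \textit{(C)} elements are \emph{not} single left cosets of $\OC_{\mbox{\tiny\textit{(B)}}}$. By \Cref{azioneindotta}, $\OC$ acts on the three-element set $\Set{\OOE,\OP,\OQ}$, and $\OC_{\mbox{\tiny\textit{(B)}}}$ is precisely the stabiliser of $\OOE$; but $\OC_{\mbox{\tiny\textit{(B)}}}$ contains elements that swap $\OP$ and $\OQ$ (e.g. the class of $\mathrm{diag}(1,1,1,-1)$, which sends $p_i$ to $q_i$). Consequently, if $g$ sends $\OOE$ to $\OP$ and exchanges $\OOE$ and $\OP$ (type \textit{(A)}), then $g\cdot\mathrm{diag}(1,1,1,-1)$ still sends $\OOE$ to $\OP$ but induces a $3$-cycle on $\Set{\OOE,\OP,\OQ}$, hence is of type \textit{(C)}: the coset $g\,\OC_{\mbox{\tiny\textit{(B)}}}$ mixes the two types. (Relatedly, a literal reading of the first bullet of \Cref{proprietagruppi} would make type \textit{(A)} a coset of the index-two subgroup of \emph{even} signed permutations, which has order $96$, and your count would then come out to $384$.) The repair is immediate and bypasses the types altogether: the two nontrivial left cosets of $\OC_{\mbox{\tiny\textit{(B)}}}$ are $\Set{g\in\OC \mid g(\OOE)=\OP}$ and $\Set{g\in\OC\mid g(\OOE)=\OQ}$, both nonempty by your explicit elements, so the action on $\Set{\OOE,\OP,\OQ}$ is transitive and orbit--stabiliser gives $\abs{\OC}=3\cdot\abs{\OC_{\mbox{\tiny\textit{(B)}}}}=3\cdot192=576$. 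With that substitution your proof is complete and, unlike the paper's, fully self-contained.
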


\begin{proof}
    Direct computation of the cardinality of $\OC$ using the
    computer software \texttt{Macaulay2} \cite{M2} with the package
    \texttt{InvariantRing} \cite{INVRING}.
\end{proof}

\begin{remark}
    \label{rem:identification}
    We observe that the Cremona-cubes group $\OC$ is
    isomorphic to $((A_4\times A_4)\rtimes \BZ/2\BZ)\rtimes
    \BZ/2\BZ$, where $A_4<S_4$ is the alternating subgroup in the symmetric group of four elements, i.e. the subgroup consisting of permutations with even order.  This identification is obtained using the function
    \texttt{StructureDescription} of the system for computational
    discrete algebra GAP \cite{GAP4}\footnote{Using the function
    \texttt{IdGroup} we see that  $\OC$ is the 8654-th finite
    group of order 576 of the finite groups database provided by
    GAP (see  $\texttt{SmallGroupInformation}$ \cite{GAP4}).}.

\end{remark}

\begin{theorem}
	\label{thm:main}
Let $\OOE=\Set{e_1,\ldots,e_4}$	be the set of coordinate points of $\BP^3$. Consider a birational map of the form $\Phi=g\circ\crem_{3}\in\Bir(\BP^{3})$ (or $\Phi=\crem_{3}\circ g$),  for some $g\in\OC$. 
    Then, there are three possibilities depending on the cardinality of the orbit 
    $\langle g\rangle\cdot \OOE$ of the points in $\OOE$, under the action of $g$.
	\begin{itemize}
		\item[\textit{(A)}] If $\abs{\langle g\rangle\cdot \OOE} = 8$ then the map is integrable
            in the sense of \Cref{def:integrability}, with  $d_n^\Phi\sim n^{2}$ as $n\to\infty$. 
		\item[\textit{(B)}] If $\abs{\langle g\rangle\cdot \OOE} = 4$  then the map has periodic degree growth 
            in the sense of \Cref{def:integrability}, with  $d_{n}^{\Phi}\in\Set{1,3}$. 
		\item[\textit{(C)}] If $\abs{\langle g\rangle\cdot \OOE} = 12$
            then the map is non-integrable in the sense of \Cref{def:integrability}, with $d_{n}^{\Phi}\sim \varphi^{2n}$, where $\varphi$ is the golden ratio. 
	\end{itemize}
\end{theorem}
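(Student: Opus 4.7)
The unified strategy is to build, case by case, a space of initial conditions for $\Phi$ as the blow-up $\varepsilon\colon B_g\to\BP^3$ of $\BP^3$ at the finite orbit $\OOE_g:=\langle g\rangle\cdot\OOE$, to verify algebraic stability of the lifted map $\widetilde\Phi$ on $B_g$, and then to extract $d_n^\Phi$ from the pull-back action on $H^2(B_g,\BZ)$ via formula \eqref{eq:dncoeff}.

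A preliminary reduction simplifies the work to finitely many computations. By \Cref{rem:algent} the asymptotics of $d_n^\Phi$ is a birational conjugation invariant; since $\crem_3\circ g$ is birationally conjugate to $g\circ\crem_3$ via $\crem_3$, it suffices to treat $\Phi=g\circ\crem_3$. Moreover, \Cref{proprietagruppi} presents the three types as cosets of the signed-permutation subgroup $\OC_{\mbox{\tiny\textit{(B)}}}\subset\OC$, and a direct check shows that every signed permutation commutes with $\crem_3$ (both pure coordinate permutations and diagonal $\pm1$ matrices intertwine with the componentwise inverse). Hence the asymptotic behaviour of $d_n^\Phi$ depends only on the type of $g$, and a single representative per type suffices.

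On $B_g$, \Cref{lemma:eltransf} yields the basis $\Set{\varepsilon^*H}\cup\Set{E_p|p\in\OOE_g}$ of $H^2(B_g,\BZ)$. Because $\OP\cup\OQ=\Fix\crem_3$ by \Cref{puntifissi}, the $\Phi$-orbit of each coordinate hyperplane $H_i$ visits only points of $\OOE_g$, so after blow-up the strict transforms of the $H_i$ are never contracted to an indeterminacy locus; \Cref{prop:anstable} then gives algebraic stability of $\widetilde\Phi$. A direct affine computation shows $D\crem_3|_p=-\id$ at every $p\in\OP\cup\OQ$, so the lift of $\crem_3$ acts as the identity on each exceptional divisor $E_p\cong\BP^2$; combined with the fact that generic hyperplanes miss $\OP\cup\OQ$, the matrix of $\crem_3^*$ is the natural extension of \eqref{CREMONACTION} by the identity on the $E_p$ with $p\in\OP\cup\OQ$, while $g^*$ is a permutation of $\Set{E_p | p\in\OOE_g}$ (trivial on $\varepsilon^*H$). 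The $5\times5$, $9\times9$, or $13\times13$ matrix of $\widetilde\Phi^*=\crem_3^*\circ g^*$ is thereby explicit.

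Its spectrum then delivers the growth rate. In case \textit{(B)} the two factors commute and are involutions, so $(\widetilde\Phi^*)^2=(g^*)^2$ has finite order and $\Phi^{2k}=g^{2k}$ is a projectivity, producing the alternating pattern $d_n^\Phi\in\Set{1,3}$. In case \textit{(A)} the $9\times 9$ matrix has spectral radius $1$ with a single nontrivial Jordan block of size $3$, giving $d_n^\Phi\sim n^2$ and $S_\Phi=0$; in line with \Cref{rem:integrability} one may additionally exhibit the pencil of cubic surfaces through $\OOE\cup\OP$ preserved by $\Phi$ and identify an invariant elliptic fibration inside it. In case \textit{(C)} the characteristic polynomial of the $13\times 13$ matrix has the golden-ratio square $\varphi^2=(3+\sqrt{5})/2$ as its dominant real root, whence $d_n^\Phi\sim\varphi^{2n}$ and $S_\Phi=2\log\varphi>0$. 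The main obstacle is expected to be the algebraic stability check in case \textit{(C)}, where one must track the twelve exceptional divisors through the interaction of the Cremona contractions with the twelve-cycle induced by $g$ and rule out, via \Cref{prop:anstable}, any prime divisor eventually falling into an iterated indeterminacy locus; a secondary difficulty is making the invariant elliptic fibration in case \textit{(A)} explicit to corroborate the cohomological quadratic prediction.
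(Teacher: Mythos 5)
Your proposal follows essentially the same route as the paper: blow up $\BP^3$ at the orbit $\langle g\rangle\cdot\OOE$, use the known matrix \eqref{CREMONACTION} for $\crem_3$ extended by the identity on the exceptional divisors over the fixed points $\OP\cup\OQ$, compose with the permutation induced by $g$, and read off $d_n^\Phi$ from the resulting linear action via \eqref{eq:dncoeff}; your spectral analysis ($(\lambda-1)^3$ with a full Jordan block in case \textit{(A)}, dominant root $\varphi^2=(3+\sqrt5)/2$ in case \textit{(C)}) is equivalent to the paper's explicit recurrence \eqref{eq:dnen} solved by Putzer's algorithm, and your direct observation that $\Phi^2=g^2$ in case \textit{(B)} is a nice elementary shortcut. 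One step, however, does not hold as stated: from $g'=gh$ with $h$ of type \textit{(B)} commuting with $\crem_3$ you get $g'\circ\crem_3=(g\circ\crem_3)\circ h$, which is conjugate to $(hg)\circ\crem_3$, not to $g\circ\crem_3$; since $hg\neq g$ in general, this does not reduce each type to a single representative. The gap is inessential — as in the paper's proof, the subspace spanned by $\varepsilon^*H$, $\sum_i E_i$, $\sum_i P_i$, $\sum_i Q_i$ is invariant for \emph{every} $g$ of a given type and carries the same matrix regardless of the particular permutations $\sigma_1,\sigma_2,\sigma_3$ in \eqref{eq:actionB} — but you should either drop the reduction or replace it with this uniformity argument.
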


\begin{proof}

We provide the proof in case \textit{(C)}. The other cases are similar.

Let $g\in\OC$ be an element of type \textit{(C)} and let $B_g=\Bl_{\mathscr R}\BP^3$ be the blow-up of $\BP^3$ centred at the orbit $\mathscr R=\langle g\rangle\cdot \OOE$ of $\OOE$ via $g$ and by $\varepsilon_g$ the blow-up morphism. We fix (see \cite[Section 4.6.2]{GRIHARR} or \Cref{lemma:eltransf}) the following basis of the second cohomology
group of $B_{g}$
\begin{equation}
    \label{eq:H2caseB} 
    H_{\mbox{\tiny\textit{(C)}}}^2 = 
    H^{2}\left(B_{g},\BZ \right) = 
    \langle \varepsilon_{g}^*H,E_1,E_2,E_3,E_4, P_1,P_2,P_3,P_4,Q_1,Q_2,Q_3,Q_4\rangle_{\BZ},
\end{equation}
where $H$ is the class of a hyperplane in $\BP^3$, and, for $i=1,\ldots,4$,  $E_i,P_i,Q_i$ are the cohomology classes of the exceptional divisors over the points $e_i,p_i$ and $q_i$ respectively. We
want to compute the action induced by $\Phi=g\circ \crem_3$ on the
cohomology group $ H_{\mbox{\tiny\textit{(C)}}}^2$. Equivalently, we
want to compute the matrix representing $\Phi_*$ 
with respect to the
basis \eqref{eq:H2caseB}. The action of the standard Cremona transformation on the elements $\varepsilon_{g}^*H,E_1,E_2,E_3,E_4$ agrees with \Cref{CREMONACTION}, while the elements $P_i,Q_i$,
for $i=1,\ldots,4$, are fixed by ${\crem_3}_*$ because they lie over the
fixed points of $\crem_3$. In terms of singular orbits \cite{BedfordKim2004} this implies that  there are four closed singular orbits of length three. 

Now, the linear map $g_*=(g^{-1})^*$ fixes
$\varepsilon_{g}^*H$ and it permutes the remaining elements of
the basis of the cohomology we have chosen.
As a consequence,
the matrix that represents $g_*$ with respect to the basis
\eqref{eq:H2caseB} has a block decomposition whose blocks correspond to permutations $\sigma_1,\sigma_2,\sigma_3\in\mathfrak{S}_4$, where $\mathfrak{S}_4$ denotes the symmetric group in four letters. 
In particular, the cyclic subgroup of
$\GL(H_{\mbox{\tiny\textit{(C)}}}^2,{\BZ})$ generated by $g_*$ induces
a transitive action on the set $\Set{\OOE,\OP,\OQ}$.

Summarising, the action of $\Phi_{*}=g_*\circ {\crem_3}_*$ on
$H_{\mbox{\tiny\textit{(C)}}}^2$ is \begin{equation}
	\label{eq:actionB} \begin{tikzcd}[row sep=tiny] \varepsilon_{g}^{*}H
		\arrow[mapsto]{r}{\Phi_*} & 3 \varepsilon_{g}^{*}H -
		2\sum_{j=1}^{4} P_{j},\\
  E_{i} \arrow[mapsto]{r}{\Phi_*}
		&\varepsilon_{g}^{*} H -
		\sum_{j\neq i}^{} P_{\sigma_{1}(j)}&\mbox{for }i=1,\ldots,4,\\ P_i
		\arrow[mapsto]{r}{\Phi_*} &Q_{\sigma_{2}(i)}&\mbox{for
		}i=1,\ldots,4,\\
		Q_{i} \arrow[mapsto]{r}{\Phi_*}& E_{\sigma_{3}(i)}&\mbox{for }i=1,\ldots,4,
	\end{tikzcd}
\end{equation} where $\sigma_1,\sigma_2,\sigma_3$ are the afore mentioned elements of
$\mathfrak{S}_4$ and correspond to the non-zero $4\times4$ blocks of the matrix
representing $g_*$ with respect to the basis in \eqref{eq:H2caseB}. The same action can be recovered from the four singular orbits using \cite[Eqs. (4.1, 4.3)]{BedfordKim2004}.

To conclude, we claim that the following formula holds for the map $\Phi=g\circ \crem_{3}\in\Bir( \BP^{3})$
    \begin{equation}
        \label{eq:caseBphinH} 
        (\Phi_{*})^{n}( \varepsilon_{g}^{*}H) 
            = d_{n} \varepsilon_{g}^{*}H - f_{n}\sum_{j=1}^{4} E_{j}
            - b_{n}\sum_{j=1}^{4} P_{j} - c_{n}\sum_{j=1}^{4} Q_{j},
    \end{equation} where the coefficients solve the following system of
    difference equations
    \begin{equation}
        \label{eq:dnen}
        \begin{gathered}
            d_{n}=d_n^\Phi = 3 d_{n-1} - 4 f_{n-1}, 
            \quad
            f_{n} = c_{n-1},
            \\
            b_{n} = 2 d_{n-1} - 3 f_{n-1},
            \quad
            c_{n} = b_{n-1},
        \end{gathered}
    \end{equation} 
    with initial conditions
    \begin{equation}
        \label{eq:ini} 
        d_{0}= 1, \,
        f_{0} = 0, \, 
        b_{0} = 0, \, 
        c_{0} = 0. \,
    \end{equation}
    This would imply that the map $\Phi$ has positive algebraic entropy given by
    \begin{equation}
        \label{eq:entropyB} 
        S_{\mbox{\tiny\textit{(C)}}} = 2 \log \varphi,
    \end{equation} 
    where $\varphi$ is the \emph{golden ratio}, i.e. the only positive solution 
    of the algebraic equation $\varphi^{2}=\varphi+1$.
    Thus, the map $\Phi$ is \emph{non-integrable} according to the algebraic entropy. 

  In order to prove the claim, we start by
    evaluating $\Phi_*$ on the sums $\sum_{j=1}^4E_j$, $\sum_{j=1}^4P_j$
    and $\sum_{j=1}^4Q_j$. Thanks to \Cref{eq:actionB}, we have
    \begin{equation}
        \label{evalB} 
        \begin{aligned}
            \Phi_{*}\left(\sum_{j=1}^{4}E_j\right) &=4\varepsilon_{g}^*H-3\sum_{j=1}^{4}P_j,&
            \\ \Phi_{*}\left(\sum_{j=1}^{4}P_j\right) &=
            \sum_{j=1}^{4}Q_{\sigma_2(j)}=\sum_{j=1}^{4}Q_{j},&
            \\ \Phi_{*}\left(\sum_{j=1}^{4}Q_j\right)
            &=
            \sum_{j=1}^{4}E_{\sigma_2(j)}=\sum_{j=1}^{4}E_{j}.
        \end{aligned}
    \end{equation} 
    We proceed now by induction on $n\ge 1$. The case $n=1$ is a direct
    computation. Suppose that \Cref{eq:caseBphinH} 
    is true for some $n\ge 1$ and we prove it for $n+1$. Then, we have
    \begin{equation}
            \label{eq:caseBphiHpcomput} \begin{aligned}
                    (\Phi_{*})^{n+1}( \varepsilon_{g}^{*}H )
                    &=\Phi_*\left[\left(\Phi_*\right)^n\left(\varepsilon_{g}^*H\right)\right]=
                    \\ &= \Phi_* \left[d_{n} \varepsilon_{g}^{*}H -
                    f_{n}\sum_{j=1}^{4} E_{j} - b_n\sum_{j=1}^{4}
                    P_{j} - c_n\sum_{j=1}^{4}
                    Q_{j}\right]=\\ &=\left(3d_n-4f_{n}
                    \right)\varepsilon_{g}^*H-c_n\sum_{j=1}^{4}E_j-\left( 2d_{n}-3f_{n}
                    \right)\sum_{j=1}^{4}P_j-b_n\sum_{j=1}^{4}Q_j,
            \end{aligned}
    \end{equation} where the third equality is a consequence of
    \Cref{evalB}.  On the other hand, we must have \begin{equation}
            \label{eq:caseBphinHp} 
            (\Phi_{*})^{n+1}( \varepsilon_{g}^{*}H ) = d_{n+1} \varepsilon_{g}^{*}H -
            f_{n+1}\sum_{j=1}^{4} E_{j} - b_{n+1}\sum_{j=1}^{4} P_{j} -
            c_{n+1}\sum_{j=1}^{4} Q_{j}.
    \end{equation} So, the condition is satisfied by equating
    the right-hand sides of \eqref{eq:caseBphiHpcomput} and
    \eqref{eq:caseBphinHp} and invoking the linear independence of
    the generators of $H^2(B_g,\BZ)$.  This implies that $d_n,f_n,b_n$ and
    $c_n$ satisfy the system \eqref{eq:dnen} with initial conditions
    \eqref{eq:ini}.
    
    In order to compute the algebraic entropy from \Cref{def:algent}, we
    need to evaluate the asymptotic behaviour of $d_{n}^{\Phi}$ in
    \Cref{eq:dnen}. Since the system \eqref{eq:dnen} is linear
    we use the technique explained in \cite[Chap. 3]{Elaydi2005}.
    Writing the system as
    \begin{equation}
        \begin{pmatrix}
            d_n
            \\
            f_n
            \\
            b_n
            \\
            c_n
        \end{pmatrix}
        =
        M_g
        \begin{pmatrix}
            d_{n-1}
            \\
            f_{n-1}
            \\
            b_{n-1}
            \\
            c_{n-1}
        \end{pmatrix}\!,
        \quad \mbox{ where }
        M_g =
        \begin{pmatrix}
            3&-4&0&0
            \\ 
            0&0&0&1
            \\
            2&-3&0&0
            \\
            0&0&1&0
        \end{pmatrix}\!,
    \end{equation}
    then the solution is
    \begin{equation}
        \begin{pmatrix}
            d_n
            \\
            f_n
            \\
            b_n
            \\
            c_n
        \end{pmatrix}
        =
        M_g^n
        \begin{pmatrix}
            d_{0}
            \\
            f_{0}
            \\
            b_{0}
            \\
            c_{0}
        \end{pmatrix}
        =
        M_g^n
        \begin{pmatrix}
            1
            \\
            0
            \\
            0
            \\
            0
        \end{pmatrix}\!.
    \end{equation}
    Computing $M_g^n$, e.g. using Putzer algorithm \cite[Sect. 3.1.1]{Elaydi2005},
    we obtain the following solution for all $n\in\BN$
    \begin{equation}
        \begin{pmatrix}
            d_n
            \\
            f_n
            \\
            b_n
            \\
            c_n
        \end{pmatrix}
        =
        \begin{pmatrix}
            \displaystyle
            \frac{8}{5}\left(\varphi^{2n}+\varphi^{-2n}\right)
            -\frac{1}{5}\left( -1 \right) ^{n}-2
            \\ 
            \noalign{\medskip}
            \displaystyle
            \frac{2}{5}\left(\varphi^{2n-2}+\varphi^{-2n+2}\right)
            -\frac{1}{5}\left( -1\right) ^{n}-1
            \\ 
            \noalign{\medskip}
            \displaystyle
            \frac{2}{5}\left(\varphi^{2n+2}+\varphi^{-2n-2}\right)
            -\frac{1}{5}\left( -1\right) ^{n}-1
            \\
            \noalign{\medskip}
            \displaystyle
            \frac{2}{5}\left(\varphi^{2n}+\varphi^{-2n}\right)
            -\frac{1}{5}\left( -1 \right) ^{n}-1
        \end{pmatrix}\!,
    \end{equation}
    where $\varphi$ is the golden ratio. 
    Since $d_n=d_n^\Phi$, we have
    \begin{equation}
        d_n^\Phi \sim \varphi^{2n}, \quad
        n\to\infty,
    \end{equation}
    and, from \eqref{eq:algentdef},
    formula \eqref{eq:entropyB} follows.
    
\end{proof}

\section{Rational surfaces associated with differential and discrete Painlev\'e equations }\label{sec:sec3}

In the remainder of these notes we survey the theory of rational surfaces associated with continuous and discrete Painlev\'e equations and the insights it brings into their integrability.
For the sake of completeness we begin with a brief overview of Painlev\'e equations and the mechanism by which they are associated to rational surfaces, but for a more complete historical account we refer to one of the existing standard references, e.g. \cite{painlevehandbook,riemannhilbertapproach,gromaklaineshimomura, gausstopainleve}.

The classical Painlev\'e equations are six non-linear non-autonomous second-order ordinary differential equations (ODEs) which were singled out as part of the program  of \textit{Painlev\'e} and then his student \textit{Gambier}, which aimed to obtain non-linear ODEs defining new special functions.
These are often denoted by $\pain{I}$-$\pain{VI}$, and can be written in the following forms:
\begin{align*}
\pain{I}:~ w'' &= 6 w^2 + t, \\
\pain{II}:~ w'' &= 2 w^3 + t w + \alpha, \\
\pain{III}:~ w'' &= \frac{ (w')^2}{w} - \frac{w'}{t} + \alpha \frac{w^2}{t} + \frac{\beta}{t}  + \gamma w^3 + \frac{\delta}{w} ,   \\
\pain{IV}:~ w'' &= \frac{1}{2w} \left( w' \right)^2 + \frac{3}{2} w^3 + 4 t w^2 + 2(t^2 - \alpha) w + \frac{\beta}{w}, \\
\pain{V}:~ w'' &= \left( \frac{1}{2w} + \frac{1}{w-1} \right) (w')^2 - \frac{w'}{t}  + \frac{(w-1)^2}{t^2} \left(\alpha w + \frac{\beta}{w} \right) + \gamma \frac{w}{t} + \delta \frac{w (w+1)}{w-1}, \\
\pain{VI}:~ w'' &= \frac{1}{2} \left( \frac{1}{w} + \frac{1}{w-1} + \frac{1}{w-t} \right) (w')^2 - \left( \frac{1}{t} + \frac{1}{t-1} + \frac{1}{w-t} \right)w' \\
&\quad \quad \quad \quad \quad \quad + \frac{w (w-1)(w-t)}{t^2(t-1)^2} \left( \alpha + \beta \frac{t}{w^2} + \gamma \frac{t-1}{(w-1)^2} + \delta \frac{t(t-1)}{(w-t)^2} \right)\!.
\end{align*}
In each case, $\alpha, \beta, \gamma, \delta$ are complex parameters and, for brevity, we put $w= w(t)$ and $'=\frac{d}{dt}$.

The idea of defining special functions in terms of solutions of ODEs which are non-linear is complicated by the fact that, in general, different solutions can exhibit branching at different points in the complex plane, which prevents one from considering all solutions  as single-valued functions on the same Riemann surface. While the locations of singularities of solutions of linear ODEs must belong to a set of points dictated by the coefficients of the equation, in the non-linear case there may be movable singularities, whose locations depend on the particular solution in question.
For example, consider the non-linear first-order ODE 
\begin{equation}
    w' + w^3=0.
\end{equation}
Any solution is given by $w(t) = \frac{1}{\sqrt{2(t-C)}}$ for some constant of integration $C \in\BC$, so solutions have branch points whose locations depend on $C$.
In order for there to be a notion of general solution of a non-linear ODE as a function on a single Riemann surface and a way to define special functions in terms of it, one imposes the condition that all solutions are single-valued around all movable singularities. 
Painlev\'e \cite{painleve} set out to classify ODEs of the form $w''=R(w,w',t)$, 
with $R$ rational in $w,w'$ and meromorphic in $t$, subject to this condition, now known as the Painlev\'e property, up to changes of dependent and independent variables by M\"obius transformations.
The equations $\pain{I}$-$\pain{VI}$ arose as representatives of classes of equations whose solutions in general cannot be reduced to those of linear ODEs, and in this sense define new special functions called the \textit{Painlev\'e transcendents} \cite{NIST}.

\begin{remark}
    Note that $\pain{VI}$ was missed in Painlev\'e's initial attempt at classification \cite{painleve}.
    The sixth Painlev\'e equation was derived by \textit{Richard Fuchs} as the equation governing a monodromy-preserving deformation of a linear system of two first-order ODEs with four regular singular points on $\BP^1$ \cite{fuchs} and was added to the classification by Gambier when he completed the list\cite{gambier}. 
    Note also that a special case of $\pain{VI}$ appeared earlier in the work of \textit{Picard} \cite{Picard1889}. 
\end{remark}

The term `discrete Painlev\'e equation' appeared in the literature for the first time in a paper of \textit{Its, Kitaev} and \textit{Fokas} \cite{ItsKitaevFokas}, see also \cite{FokasItsKitaev}.
After this, there were many efforts by researchers in integrable systems to derive discrete analogues of the Painlev\'e differential equations, and many early examples were found via discrete counterparts to ways that the Painlev\'e differential equations can be derived.
These include through reductions of integrable partial-difference equations, by analogy with the relation of Painlev\'e equations to integrable partial differential equations (e.g. \cite{NijhoffPapageourgiou}), and through discrete versions of isomonodromic deformations (e.g. \cite{isomonodromicdPs, JimboSakai}).
There was also the proposal, by \textit{Grammaticos, Ramani} and \textit{Papageorgiou}, of \emph{singularity confinement} as a discrete counterpart to the Painlev\'e property \cite{singularityconfinement}.
Procedures based on this were used to great effect (see \cite{dPsreview} and references therein) to obtain discrete Painlev\'e equations by de-autonomisation of discrete systems solved by elliptic functions, namely \textit{Quispel-Roberts-Thompson (QRT)} maps \cite{QRT1, QRT2}.

In these notes, we  present the definition of a discrete Painlev\'e equation in the sense of the \textit{Sakai} framework. 
This was proposed in the seminal paper \cite{Sakai2001}, and has since formed the basis for many insights into discrete and differential Painlev\'e equations and their properties, as surveyed in \cite{KNY}.
Sakai's approach takes cues from the construction, by \textit{Okamoto}, of \emph{spaces of initial conditions} for the Painlev\'e differential equations. 
This involves using techniques from classical algebraic geometry, see \Cref{sec:AG} for some classical constructions, to obtain an augmented phase space for a Hamiltonian form of a Painlev\'e equation on which it is regularised, in a sense which we will explain in \Cref{subsec:okamotospace}.
As part of the construction, there appears a smooth projective rational surface with an effective divisor given by a collection of curves in a configuration related to an affine root system.

For discrete Painlev\'e equations, Sakai recognised singularity confinement as reflecting an analogous regularisability of discrete systems defined by birational mappings, and introduced a class of surfaces associated with affine root systems, which generalise those appearing in Okamoto's work.
Discrete Painlev\'e equations are then defined in terms of these surfaces, in a way such that they provide their spaces of initial conditions.

\subsection{Okamoto's spaces of initial conditions for Painlev\'e equations}

\label{subsec:okamotospace}

The Painlev\'e differential equations can be associated with a special class of rational surfaces. 
This association is via a certain resolution of singularities of the foliations defined by their flows, and goes back to the work of Okamoto \cite{Okamoto1979}.
 Denote by $\mathcal{T}   \subset\BC$ the  complement of the finite set of isolated fixed singularities defined by the coefficients in the equation $\pain{\bullet}$, for some $\bullet\in\Set{\mathrm{I},\ldots,\mathrm{VI}}$. 
 The fact that all solutions are single-valued about all movable singularities means that any local analytic solution $w(t)$ at $t=t_* \in \mathcal{T}$ can be meromorphically continued along any path $\ell$ in $\mathcal{T}$ with starting point $t_*$. Explicitly, we have $\mathcal{T}=\BC\setminus \{0,1\}$ for $\pain{VI}$,  $\mathcal{T}=\BC\setminus \{0\}$ for $\pain{III}$, $\pain{V}$, and $\mathcal{T}=\BC$ for $\pain{I}$, $\pain{II}$, $\pain{IV}$. 

Given $\pain{\bullet}$, for $\bullet\in \Set{\mathrm{I},\ldots,\mathrm{VI}}$, Okamoto considered an equivalent form of the equation as a non-autonomous Hamiltonian system
\begin{equation} \label{eq:hamiltonianHJ}
    q' = \frac{ \partial \operatorname{H}_{\bullet}}{\partial p}, \quad 
    p' = - \frac{ \partial \operatorname{H}_{\bullet}}{\partial q},
\end{equation}
where $\mathrm{H}_{\bullet}=\mathrm{H}_{\bullet}(q,p,t)$ is polynomial in $q,p$ with coefficients being rational functions of $t$ analytic in $\mathcal{T}$.
Equation \eqref{eq:hamiltonianHJ} can be interpreted as a rational vector field which is everywhere regular on $\BC^2\times\mathcal{T}$, and we have existence and uniqueness of local analytic solutions.
The Painlev\'e property of $\pain{\bullet}$ translates, in this setting, to the fact that any local solution $(q(t),p(t))$ near $t=t_*$ can be meromorphically continued in $\BC^2\times\mathcal{T}$ along any path in $\mathcal{T}$ with starting point $t_*$. 
 However, since solutions can develop (movable) poles, we cannot globally analytically continue solutions along paths in $\mathcal{T}$. This gives rise to the need to compactify appropriately the $\BC^2$ fibres in order to give a parametrisation of the space of solutions. For instance, Okamoto used a compactification isomorphic to a Hirzebruch surface; more examples can be found in \cite{3dpd}. Doing so, one obtains a trivial bundle over $\mathcal{T}$ with compact fibres, in which solutions can be globally analytically continued. 
However, the vector field ceases to be regular on the part of the fibres added in the compactification process, and there may be infinitely many solutions passing through the same point in the fibre at the same $t\in \mathcal{T}$.
To resolve this, Okamoto performed an appropriate sequence of blow-ups of the compactified fibre (of possibly $t$-dependent points) to separate such solutions.

Then, the flow of the system extended to the resulting space defines a foliation of it into disjoint complex one-dimensional leaves. 
Some of these will be vertical with respect to the bundle structure over $\mathcal{T}$, i.e. contained in a single fibre. 
Removing these \emph{vertical leaves} yields a
triple $(E,\pi,\mathcal{T})$ such that the flow of the Hamiltonian system \eqref{eq:hamiltonianHJ} defines a foliation with properties that we collect in the following definition.
\begin{definition} \label{def:uniformfoliation}
    Consider a triple $(E,\pi,\mathcal{T})$, with $E$ a smooth quasi-projective variety and $\pi : E\rightarrow \mathcal{T}$ a surjective morphism, such that $E$ contains an open subset $\mathcal{T}$-isomorphic to $\BC^2\times\mathcal{T}$.
    The flow of the Hamiltonian system \eqref{eq:hamiltonianHJ} extended to $E$ defines a (nonsingular) \emph{uniform foliation} $\mathcal{F}$ of $E$ into complex-analytic 1-dimensional leaves transverse to the fibres if
        \begin{itemize}
        \item each leaf of $\mathcal{F}$ intersects every fibre $E_t=\pi^{-1}(t)$, for $t\in \mathcal{T}$, transversally; 
        \item for any path $\ell$ in $\mathcal{T}$ with starting point $t_*  \in \mathcal{T}$ and any point $p\in E_{t_* }$, the path $\ell$ can be lifted to the leaf passing through $p$.
    \end{itemize} 
\end{definition}

\begin{remark}
    Note that for Painlev\'e equations with fixed singularities, namely $\pain{III}$, $\pain{V}$ and $\pain{VI}$ a leaf of $\mathcal{F}$ may intersect a fibre $E_t$ at more than one point due to the branching that can occur when solutions are continued around fixed singularities. This phenomenon  can be regarded as a kind of non-linear monodromy.
\end{remark}
\begin{remark}
    While the space $E$ constructed from each Painlev\'e equation is a quasi-projective variety and $\pi : E \rightarrow \mathcal{T}$ is a surjective morphism, $E$ carries the structure of a complex analytic fibre bundle over $\mathcal{T}$, but not an algebraic one.
    This is because the isomorphisms between different fibres are given by the flow of the Painlev\'e equation, which is generally transcendental.
\end{remark}
Every point in the fibre over $t_*\in \mathcal{T}$ determines a solution that can be continued along any path starting from $t_*$. 
Each fibre is called a \emph{space of initial conditions} for the corresponding Painlev\'e equation, and this is the origin of the terminology in the literature for spaces of initial conditions in the discrete case as adopted in \Cref{def:spaceofinitialcond}.
For each of the Painlev\'e equations, before the removal of vertical leaves, each fibre of the projection onto $\mathcal{T}$ is a smooth rational projective surface $S$.
The vertical leaves give a collection of curves in each fibre, which are the irreducible components of an effective anti-canonical divisor $D\in\Div(S)$ on the surface. 
These curves have intersection configuration encoded by an (extended) Dynkin diagram associated with an affine root system, which will be introduced in \Cref{subsubsec:affinerootsystems}, see \Cref{E8dynkin} for an example.
In this setting, vertices correspond to irreducible components of the divisor $D$, with two vertices joined by an edge if the corresponding curves intersect.
These are labelled by their types in \Cref{table:surfacetypes}, and the $E_8^{(1)}$ diagram in \Cref{E8dynkin} corresponds to $\pain{I}$.
The three different types for $\pain{III}$ correspond to generic and degenerate cases, each associated with a different type of surface in Sakai's classification,  see \cite{StudiesV}.

\renewcommand{\arraystretch}{1.5}
\begin{table}[htb]
\centering
    \begin{tabular}{  | c | c | c c c  | c | c | c |}
    $\pain{I}$ & $\pain{II}$ & ~& $\pain{III}$ &~& $\pain{IV}$ & $\pain{V}$ & $\pain{VI}$ \\ \hline   
   $E_8^{(1)}$ & $E_7^{(1)}$ & $D_8^{(1)}$ & $D_7^{(1)}$ & $D_6^{(1)}$ & $E_6^{(1)}$ & $D_5^{(1)}$ & $D_4^{(1)}$   
    \end{tabular}
    \caption{Dynkin diagrams from intersection configuration of vertical leaves for differential Painlev\'e equations.}
   \label{table:surfacetypes}
   \end{table}
   \renewcommand{\arraystretch}{1}
   
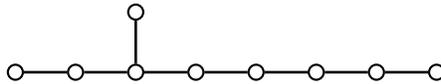
\begin{figure}[htb]
\centering
    \begin{tikzpicture}[
 			elt/.style={circle,draw=black!100,thick, inner sep=0pt,minimum size=2mm},scale=.8]
 		\path 	(-2,0) 	node 	(d1) [elt] {}
                    (-1,0) 	node 	(d2) [elt] {}
                    (0,0) 	node 	(d3) [elt] {}
                    (1,0) 	node 	(d4) [elt] {}
                    (2,0) 	node 	(d5) [elt] {}
                    (3,0) 	node 	(d6) [elt] {}
                    (4,0) 	node 	(d7) [elt] {}
                    (5,0) 	node 	(d0) [elt] {}
                    (0,1) 	node 	(d8) [elt] {}
 		       ;
 		\draw [black,line width=1pt ] (d1) -- (d2) -- (d3) -- (d4) -- (d5) -- (d6) -- (d7) -- (d0)  (d3) -- (d8);
 	\end{tikzpicture}
\caption{(Extended) Dynkin diagram $E_8^{(1)}$ giving intersection configuration of vertical leaves for $\pain{I}$.}
\label{E8dynkin}
\end{figure}

\begin{remark}
    Okamoto's space is not the only way in which the Painlev\'e equations are associated to algebraic surfaces. 
    Another way is via \emph{monodromy manifolds} for associated linear systems, each of which can be realised as an affine cubic surface, i.e. the vanishing locus in $\BA^3$ of a single cubic polynomial with coefficients given in terms of parameters from the corresponding Painlev\'e equation.
    These realisations can be found in \cite{vanderputsaito, chekhovmazzoccorubtsov}. 
    The affine cubic surfaces are related to the spaces of initial conditions by instances of the Riemann-Hilbert correspondence, but this is in general accepted to be transcendental and to lie outside the class of isomorphisms of varieties.
\end{remark}

\subsection{Space of initial conditions for an ODE solved by elliptic functions}
\label{example:wp} 

    We show the calculations involved in the construction of a space of initial conditions in the case of a second-order ODE with the Painlev\'e property.
    For simplicity we consider an 
    autonomous version of $\pain{I}$ whose solutions are given in terms of the Weierstrass elliptic function $\wp$, which we define now, following \cite[pg. 327]{Hartshorne77}, see also \cite[Ch. 23]{DLMF}.
   \begin{definition} \label{def:weierstrassp}
        Let $\Lambda=\BZ+\BZ\tau\subset \BC$, for some $\tau\in\BC\setminus \BR$, and also let $\Lambda'=\Lambda\setminus\Set{0}$.
        The \textit{Weierstrass }$\wp$-\textit{function} associated to these data  is
        \begin{equation}
            \wp(t;g_2,g_3) = \frac{1}{t^2} + \sum_{\omega\in\Lambda'} \left( \frac{1}{(t-\omega)^2} - \frac{1}{\omega^2}\right),
        \end{equation}
        where $g_2,g_3$ are parameters defined by
        \begin{equation}
            g_2 = 60 \sum_{\omega\in\Lambda'}\frac{1}{\omega^4} ,\qquad g_3 = 140 \sum_{\omega\in\Lambda'} \frac{1}{\omega^6}.
        \end{equation}
        As a function of $t$, $\wp$ is a $\Lambda$-periodic meromorphic function, as is its derivative $\wp'$. 
        Then $q(t)=\wp(t;g_2,g_3)$ solves the first-order ODE
    \begin{equation} \label{eq:weierstrassfirstorder}
        (q')^2 = 4 q^3 - g_2 q - g_3.
    \end{equation} 
    \end{definition}
    Differentiating \Cref{eq:weierstrassfirstorder} with respect to $t$ leads to the autonomous second-order ODE 
    \begin{equation} \label{eq:autonomousP1scalar}
        q'' = 6 q^2 - \frac{g_2}{2}.
    \end{equation}
    We regard $g_2$ as a complex parameter in this ODE, which we consider as fixed for the remainder of this section.
     \begin{lemma} \label{lem:wpexpansion}
         \Cref{eq:autonomousP1scalar} has the Painlev\'e property and all local solutions can be extended to meromorphic functions on $\BC$, given, for fixed $g_2\in\BC$, by $q(t)=\wp(t-c;g_2,g_3)$ for some $c,g_3\in\BC$. 
       Further, if a solution $q(t)$ of \Cref{eq:autonomousP1scalar} fails to be analytic at some $t=t_*\in\BC$, then $q(t)$ has a pole of order 2 at $t_*$ and is given in a neighbourhood of $t_*$ by a Laurent expansion
     \begin{equation} \label{eq:laurentexpansion}
         q(t) = \frac{1}{(t-t_*)^2} + \frac{g_2}{20} (t-t_*)^2 + \mu(t-t_*)^4 + \frac{g_2^2}{1200}(t-t_*)^6 + \cdots,
     \end{equation}
     for some $\mu\in\BC$.
     \end{lemma}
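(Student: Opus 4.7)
The first assertion has two halves. That any $q(t) = \wp(t-c; g_2, g_3)$ solves \eqref{eq:autonomousP1scalar} is a one-line verification: differentiating the Weierstrass identity \eqref{eq:weierstrassfirstorder} in $t$ and dividing by $2q'$ gives exactly \eqref{eq:autonomousP1scalar}, independently of $c$ and $g_3$. For the converse, I would exhibit a first integral of \eqref{eq:autonomousP1scalar} by multiplying by $q'$ and integrating, obtaining that $(q')^2 - 4q^3 + g_2 q$ is a constant which I call $-g_3$; it is fixed by the initial data $(q(t_0), q'(t_0))$ at any point $t_0$ of analyticity. Choosing $c \in\BC$ so that $\wp(\,\cdot\, - c; g_2, g_3)$ matches both $q(t_0)$ and $q'(t_0)$ and invoking uniqueness of local holomorphic solutions of \eqref{eq:autonomousP1scalar}, I conclude that $q$ coincides locally with $\wp(\,\cdot\, - c; g_2, g_3)$. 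Since $\wp$ is meromorphic on all of $\BC$, this identification extends $q$ meromorphically to $\BC$ and establishes the Painlev\'e property.

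For the Laurent expansion at a singularity $t_*$, I would work in the local variable $s = t-t_*$ and first perform a leading-order analysis. An ansatz $q \sim a_{-k} s^{-k}$ with $a_{-k}\neq 0$ forces, by matching the dominant powers of $q''$ and $6q^2$, the relations $-k-2 = -2k$ and $k(k+1)a_{-k} = 6a_{-k}^2$, yielding $k = 2$ and $a_{-2} = 1$; this already establishes that the pole has order exactly $2$. I would then substitute the full ansatz $q(t) = s^{-2} + \sum_{n\geq -1} a_n s^n$ into \eqref{eq:autonomousP1scalar} and equate coefficients of successive powers of $s$. A short computation shows that $a_{-1}$, $a_0$, $a_1$, $a_3$, $a_5$ are successively forced to vanish while $a_2$ and $a_6$ are uniquely determined to be $g_2/20$ and $g_2^2/1200$, reproducing the stated form \eqref{eq:laurentexpansion}.

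The point that requires attention is the equation obtained at order $s^2$: there the coefficient of $a_4$ is $12$ on both sides and no inhomogeneity from lower-order terms is produced, so this equation holds identically for every value of $a_4$. This coefficient is therefore undetermined by the recursion and serves as the free parameter $\mu$. This \emph{resonance} is structurally necessary since the general solution of the second-order ODE \eqref{eq:autonomousP1scalar} carries two free parameters: the location $t_*$ of the pole accounts for one, and $\mu$ accounts for the other; comparison with the first part of the lemma shows that $\mu$ encodes the freedom to choose $g_3$. Convergence of the resulting Laurent series on a punctured disc about $t_*$ is automatic from the first assertion, since $q$ is already known to be meromorphic with a double pole at $t_*$.
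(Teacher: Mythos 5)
The paper states this lemma without proof (it is treated as classical background), so there is no authors' argument to compare yours against; judged on its own, your proposal is correct and is the standard one. Both halves check out. For the first: differentiating \eqref{eq:weierstrassfirstorder} gives \eqref{eq:autonomousP1scalar}, and conversely multiplying \eqref{eq:autonomousP1scalar} by $q'$ and integrating produces the first integral $(q')^2-4q^3+g_2q=-g_3$, after which local uniqueness identifies $q$ with $\wp(\cdot-c;g_2,g_3)$ and meromorphic continuation follows. For the second: the dominant balance $-k-2=-2k$, $k(k+1)a_{-k}=6a_{-k}^2$ does force a double pole with unit leading coefficient, and the recursion $\left[n(n-1)-12\right]a_n = 6\sum_{i+j=n-2}a_ia_j - \tfrac{g_2}{2}\delta_{n,2}$ (with $a_{-2}=1$, indices $\geq -1$ in the sum after the leading term is split off) indeed gives $a_{-1}=a_0=a_1=a_3=a_5=0$, $a_2=g_2/20$, $a_6=g_2^2/1200$, with the resonance at $n=4$ leaving $a_4=\mu$ free; comparison with the classical expansion of $\wp$ gives $\mu=g_3/28$, consistent with your remark that $\mu$ encodes $g_3$. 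Your appeal to the first half for convergence, rather than estimating the recursion, is also the right move.

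The one point you pass over silently --- though the lemma as stated does too --- is the inversion problem for the Weierstrass invariants: to ``choose $c$'' so that $\wp(\cdot-c;g_2,g_3)$ matches the initial data, you must know that a lattice with prescribed invariants $(g_2,g_3)$ exists. This is the classical surjectivity statement, valid exactly when $g_2^3-27g_3^2\neq 0$. When the discriminant vanishes (for instance for the constant solutions $q\equiv\pm\sqrt{g_2/12}$, or for $q=(t-c)^{-2}$ when $g_2=g_3=0$) the solution is a trigonometric or rational degeneration of $\wp$ rather than a genuine Weierstrass function, so the literal identification $q=\wp(t-c;g_2,g_3)$ needs these limits interpreted appropriately, even though meromorphy --- and hence the Painlev\'e property --- still holds. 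A sentence acknowledging this, or a citation of the inversion theorem, would make the first half airtight.
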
 
    \Cref{eq:autonomousP1scalar} can be written as the (autonomous) system of first-order ODEs
    \begin{equation} \label{eq:autonomousP1system}
        q' = p, \qquad p' = 6 q^2 - \frac{g_2}{2}.
    \end{equation}
    Consider this first as a rational vector field on $\BC^2 \times \mathcal{T}$, with $\mathcal{T}=\BC$, by taking $q,p$ as coordinates for $\BC^2$. 
    This vector field is holomorphic on $\BC^2 \times \mathcal{T}$, and local existence and uniqueness theorems for ODEs ensure that the flow of \Cref{eq:autonomousP1system} defines a foliation of  $\BC^2 \times \mathcal{T}$, with leaves being disjoint solution curves, transverse to the fibre over each $t\in\mathcal{T}$. 
    However, solutions having poles at $t=t_*$ as in \Cref{lem:wpexpansion} means that paths in $\mathcal{T}$ cannot be globally lifted to leaves. 
    
So we compactify the fibres to $\BP^2$ in the following way. We fix homogeneous coordinates $[x_0:x_1:x_2]$ on $\BP^2$ and we identify the coordinate chart $\mathcal{U}_0$ with $\BC^2$ according to $[1:q:p]=[x_0:x_1:x_2]$.  Finally, we consider the system \eqref{eq:autonomousP1system} extended to a rational vector field on $\BP^2 \times \mathcal{T}$, see \Cref{rmk:coorchart}. 
    Then, any local solution at $t_* \in \mathcal{T}$ can be continued to a holomorphic embedding of $\mathcal{T}$ into $\BP^2 \times \mathcal{T}$. 
    We call the images of these embeddings \textit{solution curves}.
    The fact that there are infinitely many solutions with a pole at the same $t=t_*$ as in \Cref{lem:wpexpansion} means that the solution curves are not disjoint and they do not define a foliation of $\BP^2 \times \mathcal{T}$.
    We will perform explicit calculations to verify the following.
    \begin{prop} \label{prop:wpspaceofICs} There are a  birational morphism $\varepsilon : S \rightarrow \BP^2$, where $S$ is a smooth projective rational surface, and a hypersurface of $S$ given by a union of irreducible curves $D_i$, $i=0,\dots,8$, such that the flow of the system \eqref{eq:autonomousP1system} defines a foliation $\mathcal{F}$ of the total space of the trivial bundle $E = (S\setminus \cup_{i=0}^{8} D_i) \times \mathcal{T} \rightarrow \mathcal{T}$  as in \Cref{def:uniformfoliation}.
        Further, the morphism $\varepsilon $ is a composition of nine blow-ups each centred at a point. 
    \end{prop}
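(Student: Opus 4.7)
\medskip

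\noindent\textbf{Proof proposal.} The plan is to start from the trivial $\BP^2$-bundle $\BP^2 \times \mathcal{T}\to \mathcal{T}$, with $\BP^2$ compactifying each $\BC^2$-fibre via $[x_0:x_1:x_2]=[1:q:p]$, and to analyse where the rational vector field coming from \eqref{eq:autonomousP1system} fails to be regular, and where distinct solutions accumulate.  In the affine chart $\mathcal{U}_0$ the field is already holomorphic, so I would first change to the chart $\mathcal{U}_2$ with coordinates $(u,v)=(1/p,q/p)$, and check that the line at infinity $L=V(x_0)$ is invariant under the flow and that the only indeterminacy of the extended field on $L\times\mathcal{T}$ is at $p_1=[0{:}0{:}1]$.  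The Laurent expansion \eqref{eq:laurentexpansion} guarantees that every pole-solution approaches $p_1$ in a controlled way, namely $u\sim -\tfrac12(t-t_*)^3$, $v\sim -\tfrac12(t-t_*)$.

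Next, I would construct the morphism $\varepsilon\colon S\to\BP^2$ by a cascade of nine point blow-ups, following these solutions.  At each stage the previous expansion singles out a unique infinitely near point on the newly created exceptional curve (the leading behaviours of $u/v^k$ for successive $k$ determine it), and I would blow up exactly that point, reading off the new local equations of the strict transforms and of the lifted vector field.  After the first blow-up the field is still singular along $E_1\cap \widehat L$, and continuing in this way, the two free parameters $(t_*,\mu)$ in \eqref{eq:laurentexpansion} first become manifest as coordinates on the last exceptional curve $E_9$ appearing after the ninth blow-up, at which point the lifted vector field becomes regular on the complement of the strict transforms of $L,E_1,\ldots,E_8$.

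Then I would set $D_0=\widehat L$ and $D_i=\widehat{E_i}$ for $i=1,\dots,8$, and verify the geometric data:
\begin{itemize}
\item each $D_i\cong\BP^1$, and \Cref{lemma:strictmult,lemma:eltransf} give the self-intersections $D_i^2=-2$ for $i=0,\ldots,7$ and $D_8^2=-1$, with incidences that match the $E_8^{(1)}$ Dynkin diagram of \Cref{E8dynkin};
\item the iterated blow-up formula \Cref{th:blow-upformula} gives $K_S=\varepsilon^*K_{\BP^2}+\sum_{i=1}^{9} (\text{total transforms of exceptional curves})$, which, together with $K_{\BP^2}=-3L$, yields $-K_S\sim \sum_{i=0}^{8} m_i D_i$ with $m_i$ the marks of $E_8^{(1)}$, showing that $D=\sum m_i D_i$ is an effective anti-canonical divisor supported on $\bigcup D_i$;
\item the lifted field has a pole of order one along each $D_i$ (so every $D_i$ is a vertical leaf) and is holomorphic and non-vanishing, with horizontal direction transverse to the fibres, elsewhere on $S\times\mathcal T$, so that on $E=(S\setminus\bigcup D_i)\times\mathcal T$ it defines a foliation $\mathcal F$ transverse to $\pi$.
\end{itemize}
The path-lifting condition in \Cref{def:uniformfoliation} then follows from the Painlev\'e property of \eqref{eq:autonomousP1scalar} asserted in \Cref{lem:wpexpansion}: any local leaf continues meromorphically in $q,p$ along any path in $\mathcal T=\BC$, and all such meromorphic continuations are captured inside $S$ away from $D$.

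\emph{Expected main obstacle.}  The bookkeeping of the nine blow-ups is the delicate part: one must choose at each stage the correct affine chart in which the next base point is visible, check that the centre is indeed a single reduced point and not a curve, and confirm that no further indeterminacy of the vector field survives after the ninth blow-up.  The Laurent expansion \eqref{eq:laurentexpansion} is the key input that makes this tractable, since its successive coefficients dictate the precise infinitely near points and guarantee that the cascade terminates after exactly nine steps, in agreement with the $E_8^{(1)}$ configuration of \Cref{table:surfacetypes}.
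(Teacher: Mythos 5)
Your strategy is the same as the paper's: compactify the fibres to $\BP^2$, pass to the chart around $[0{:}0{:}1]$, use the Laurent expansion \eqref{eq:laurentexpansion} to locate a cascade of nine infinitely near base points, blow them up until the parameter $\mu$ separates the solution curves on the last exceptional curve, and then remove the strict transforms of the line at infinity and of $E_1,\ldots,E_8$ as vertical leaves. Your identification of the nine removed curves and of $-K_S=3H-E_1-\cdots-E_9$ as $\sum m_iD_i$ with the $E_8^{(1)}$ marks also agrees with \Cref{eq:Denum,eq:Dwp}.

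There is one concrete error: you claim $D_8^2=-1$ for $D_8=\widehat{E_8}$. Since the ninth centre $b_9$ lies on $E_8$, the strict transform of $E_8$ is $E_8-E_9$ in $\Pic(S)$, so by \Cref{lemma:eltransf} its self-intersection is $(-1)+(-1)=-2$, not $-1$. Indeed all nine curves $D_0,\dots,D_8$ are $-2$-curves (this is the content of \Cref{ex:E8config}); a $-1$-curve could not be a node of the $E_8^{(1)}$ diagram, nor a component of an anti-canonical divisor of canonical type, since the genus formula \eqref{eq:genusformula} forces $D_i^2=-2$ for a rational curve with $D_i.K_S=0$. The curve of self-intersection $-1$ in this picture is $E_9$ itself, which is precisely the one \emph{not} removed, as the solution curves meet it transversally at the points parametrised by $\mu$. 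Correcting this, your argument coincides with the paper's proof.
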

\begin{proof}
    We will construct the surface $S$ by performing blow-ups of $\BP^2$ to separate the solution curves passing through the same point in the fibre over $t_*\in\mathcal{T}$, corresponding to Laurent expansions \eqref{eq:laurentexpansion} with different values of $\mu$.
    First introduce notation for the coordinate atlas for $\BP^2$, see \Cref{rmk:coorchart}, as follows
    \begin{equation}
        \begin{aligned}
         \mathcal{U}_0 \cong \BA^2_{(q,p)}, \quad &[x_0:x_1:x_2]=[1:q:p], \\
         \mathcal{U}_1\cong \BA^2_{(x,y)}, \quad &[x_0:x_1:x_2]=[x:1:y], \\
         \mathcal{U}_2\cong \BA^2_{(z,w)},  \quad &[x_0:x_1:x_2]=[z:w:1],
        \end{aligned}
    \end{equation}
    so we think of $\BP^2$ as three copies of $\BA^2$ glued together according to transition maps on the overlaps $\mathcal{U}_i\cap \mathcal{U}_j$, for $i,j\in\Set{0,1,2}$, which we write (formally) as the equalities
    \begin{equation}
        q = \frac{1}{x} = \frac{w}{z}, \quad p = \frac{y}{x} = \frac{1}{z}.
    \end{equation}
    Note that the expansions \eqref{eq:laurentexpansion} correspond to solution curves passing through the point 
    \begin{equation}
        b_1 = [0:0:1] \in \BP^2.
    \end{equation}
    In the affine chart $\mathcal{U}_2$, having $b_1$ as the origin, the expansion \eqref{eq:laurentexpansion} reads as follows:
    \begin{equation} \label{eq:laurentzw}
        z(t) = -\frac{1}{2}(t-t_*)^3 - \frac{g_2}{40}(t-t_*)^7+\cdots, \quad w(t) = -\frac{1}{2}(t-t_*)- \frac{g_2}{20}(t-t_*)^5+\cdots.
    \end{equation}
    Note we have only shown the first few terms in the expansions \eqref{eq:laurentzw}, but all of them can be recursively computed.
    The constant $\mu$ appears only in later terms, and we keep track of further coefficients in the calculations that follow.

    Our strategy now is to perform blow-ups over $b_1$ to separate the solution curves corresponding to solutions with different values of $\mu$ in their Laurent expansions at a pole at the same $t_*$.

    Recall from \Cref{exa:plow1} that the blow-up of $\BA^2$ at a point can be realised as a surface embedded in $\BA^2\times\BP^1$. Moreover, the exceptional line is contained in two affine charts which we have described in the example. We denote here by $\CW^{(i)}_j$, for $i=0,1$ and $j=1,\ldots ,9$, the two charts covering the $j$-th exceptional line which we denote by $E_j$. Explicitly, we have

    \begin{equation}
        \begin{aligned}
            \CW_1^{(0)} &\cong \BA^2_{(U_1,V_1)},\\
            \CW_1^{(1)}&\cong \BA^2_{(u_1,v_1)},  
        \end{aligned} \quad 
        \pi_{b_1} : \left\{
        \begin{aligned} 
            (U_1,V_1) &\mapsto (z,w) = (V_1, U_1 V_1),\\
            (u_1,v_1) &\mapsto (z,w) = (u_1 v_1, v_1).
        \end{aligned}   
        \right.
    \end{equation}
     
    Note that the local equations of the exceptional curve $E_1$ in the coordinates $(u_1,v_1)$ and $(U_1,V_1)$ are $v_1=0$ and $V_1=0$, respectively.
We give a pictorial  representation of these charts on \Cref{fig:blow-upb1}.

   \begin{figure}[!ht]
\centering
    \begin{tikzpicture}[scale=.5,basept/.style={circle, draw=red!100, fill=red!100, thick, inner sep=0pt,minimum size=1.2mm}]
    \node at (0,-3) {$\mathbb{P}^2$};
    \draw[thick,black] (-2.5,-2) -- (+2.5,-2) node [pos=0,left] {\tiny $x_2=0$} ;
    \draw[thick,black] (-2.25,-2.5) -- (+.3,+2) node [pos=0,below,black] {\tiny $x_1=0$} ;
    \draw[thick,black] (+2.25,-2.5) -- (-.3,+2) node [pos=1,left,black] {\tiny $x_0=0$} ;
    \node (b1) at (0,+1.475) [basept,label={[xshift=12pt, yshift = -10 pt] \small $b_{1}$}] {};
    \draw[->,thick, opacity=.8,magenta] (b1) -- (.55,+.5) node[pos=1,right] {\tiny $w$};
    \draw[->,thick, opacity=.8,magenta] (b1) -- (-.55,+.5) node[pos=1,left] {\tiny $z$};
    \draw[->,thick, opacity=.8,magenta] (-1.96,-2) -- (-1.54,-1.25) node[pos=1,above left] {\tiny $p$};
    \draw[->,thick, opacity=.8,magenta] (-1.96,-2) -- (-1.16,-2) node[pos=1,below] {\tiny $q$};
    \draw[->,thick, opacity=.8,magenta] (1.96,-2) -- (1.54,-1.25) node[pos=1,above right] {\tiny $y$};
    \draw[->,thick, opacity=.8,magenta] (1.96,-2) -- (1.16,-2) node[pos=1,below] {\tiny $x$};
    \draw[thick,black,->] (+4.5,0)--(+2.5,0) node[ midway,above] {$\pi_{b_1}$};

        \begin{scope}[xshift=+8cm]
    \node at (0,-3) {$\Bl_{b_1} \BP^2$};
    \draw[thick,black] (-2.5,-2)  -- (+2.5,-2)  node[pos=0,left] {} ;
    \draw[thick,red] (-2.5,+2) -- (+2.5,+2) node[pos=0,left,black] {} node[pos=0.5,above,red] {\small $E_1$};
    \draw[thick,black] (-2,-2.5) -- (-2,+2.5) node[pos=0,below] {} ;
    \draw[thick,black] (+2,-2.5) -- (+2,+2.5)node[pos=0,below,black] {} ;
    \draw[->,thick, opacity=.8,magenta] (2,2) -- (2,1.4) node[pos=1,right] {\tiny $v_1$};
    \draw[->,thick, opacity=.8,magenta] (2,2) -- (1.4,2) node[pos=1,above] {\tiny $u_1$};
    \draw[->,thick, opacity=.8,magenta] (-2,2) -- (-2,1.4) node[pos=1,right] {\tiny $V_1$};
    \draw[->,thick, opacity=.8,magenta] (-2,2) -- (-1.4,2) node[pos=1,above] {\tiny $U_1$};   
    \draw[->,thick, opacity=.8,magenta] (-2,-2) -- (-2,-1.4) node[pos=1,right] {\tiny $p$};
    \draw[->,thick, opacity=.8,magenta] (-2,-2) -- (-1.4,-2) node[pos=1,above] {\tiny $q$};   
    \draw[->,thick, opacity=.8,magenta] (2,-2) -- (2,-1.4) node[pos=1,right] {\tiny $y$};
    \draw[->,thick, opacity=.8,magenta] (2,-2) -- (1.4,-2) node[pos=1,above] {\tiny $x$};
        \end{scope}

\end{tikzpicture}
\caption{Coordinate charts for the blow-up of $\BP^2$ at $b_1$.}
\label{fig:blow-upb1}
\end{figure}
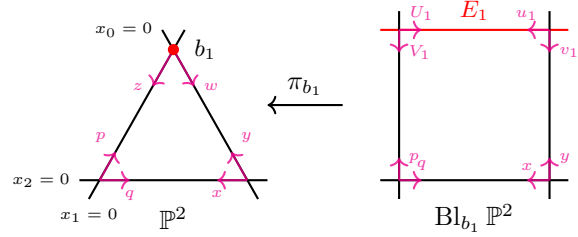 

    Lifting the solution curves given by expansions \eqref{eq:laurentzw} to $(\Bl_{b_1}\BP^2) \times \mathcal{T}$, these are written in the chart $\CW_1^{(1)}$ as
    \begin{equation}
        u_1(t) = (t-t_*)^2 - \frac{g_2}{20} (t-t_*)^2 - \mu (t-t_*)^8 + \cdots, \quad v_1(t) = -\frac{1}{2}(t-t_*) - \frac{g_2}{20}(t-t_*)^5 + \cdots .
    \end{equation}
    Note that all of them still pass through a single point in the fibre over $t_*$, given explicitly in coordinates by
    \begin{equation}
        b_2 : (u_1,v_1)=(0,0),  \quad b_2 \in E_1 \subset \Bl_{b_1}\BP^2.
    \end{equation}
    Note also that $b_2$ lies at the intersection of $E_1$ with the strict transform of the coordinate axis $V(x_0)\subset \BP^2$ under $\pi_{b_1}$, see \Cref{subsec:ratmap}. By blowing-up $b_2$, we introduce the charts 
    \begin{equation}
        \begin{aligned}
            \CW_2^{(0)} &\cong \BA^2_{(U_2,V_2)},\\
            \CW_2^{(1)}&\cong \BA^2_{(u_2,v_2)},  
        \end{aligned} \quad 
        \pi_{b_2} : \left\{
        \begin{aligned}
            (U_2,V_2) &\mapsto (u_1,v_1) = (V_2, U_2 V_2),\\
            (u_2,v_2) &\mapsto (u_1,v_1) = (u_2 v_2, v_2).
        \end{aligned}   
        \right.
    \end{equation}
    The lifts of the solution curves under $\pi_{b_2}$ still all pass through a single point
    \begin{equation}
        b_3 : (u_2,v_2)=(0,0),  \quad b_3 \in E_2 \subset \Bl_{b_2}\Bl_{b_1}\BP^2,
    \end{equation} 
    which lies at the intersection of $E_2$ and the strict transform of the curve locally given by $V(u_1)\subset \CW_1^{(1)}\subset \Bl_{b_1}\BP^2$ under $\pi_{b_2}$, which is the same as the strict transform of $V(x_0)\subset\BP^2$ under $\pi_{b_1}\circ \pi_{b_2}$.
    Continuing this way, we perform in total 9 blow-ups of $\BP^2$, with points and coordinate charts listed in \Cref{fig:blow-updatawp}, and obtain the surface and the morphism 
    \begin{equation*}
        S=\Bl_{b_9}\cdots \Bl_{b_1}\BP^2, \qquad \varepsilon = \pi_{b_1}\circ\cdots\circ \pi_{b_9}: S \rightarrow \BP^2.
    \end{equation*} 

\begin{table}[!ht]
    \centering
    {\small \begin{tabular}{c|c}
        $b_i$ & $\pi_{b_i}$ \\\hline 
        $\CU_2 \ni b_1:(z,w)=(0,0)$\hspace{0.5cm} & $\begin{tikzcd}[row sep =tiny]
            \CW_1^{(0)} \ni (U_1,V_1)\arrow[r,mapsto]& (V_1, U_1 V_1)\in\CU_2\\[-1em]
            \CW_1^{(1)} \ni(u_1 , v_1)\arrow[r,mapsto]& (u_1 v_1, v_1)\in\CU_2
        \end{tikzcd}$ \\\hline 
        $E_1 \ni b_2:(u_1,v_1)=(0,0)$\hspace{0.5cm}  & $\begin{tikzcd}[row sep =tiny]
            \CW_2^{(0)} \ni (U_2,V_2)\arrow[r,mapsto]& (V_2, U_2 V_2)\in\CW_1^{(1)}\\[-1em]
            \CW_2^{(1)} \ni(u_2 , v_2)\arrow[r,mapsto]& (u_2 v_2, v_2)\in\CW_1^{(1)}
        \end{tikzcd}$ \\\hline 
        $E_2 \ni b_3:(u_2,v_2)=(0,0)$\hspace{0.5cm}  & $\begin{tikzcd}[row sep =tiny]
            \CW_3^{(0)} \ni (U_3,V_3)\arrow[r,mapsto]& (V_3, U_3 V_3)\in\CW_2^{(1)}\\[-1em]
            \CW_3^{(1)} \ni(u_3 , v_3)\arrow[r,mapsto]& (u_3 v_3, v_3)\in\CW_2^{(1)}
        \end{tikzcd}$ \\\hline 
        $E_3 \ni b_4:(u_3,v_3)=(4,0)$\hspace{0.5cm}  & $\begin{tikzcd}[row sep =tiny]
            \CW_4^{(0)} \ni (U_4,V_4)\arrow[r,mapsto]& (4+V_4, U_4 V_4)\in\CW_3^{(1)}\\[-1em]
            \CW_4^{(1)} \ni(u_4 , v_4)\arrow[r,mapsto]& (4+u_4 v_4, v_4)\in\CW_3^{(1)}
        \end{tikzcd}$ \\\hline 
        $E_4 \ni b_5:(u_4,v_4)=(0,0)$\hspace{0.5cm}  & $\begin{tikzcd}[row sep =tiny]
            \CW_5^{(0)} \ni (U_5,V_5)\arrow[r,mapsto]& (V_5, U_5 V_5)\in\CW_4^{(1)}\\[-1em]
            \CW_5^{(1)} \ni(u_5 , v_5)\arrow[r,mapsto]& (u_5 v_5, v_5)\in\CW_4^{(1)}
        \end{tikzcd}$ \\\hline 
        $E_5 \ni b_6:(u_5,v_5)=(0,0)$\hspace{0.5cm}  & $\begin{tikzcd}[row sep =tiny]
            \CW_6^{(0)} \ni (U_6,V_6)\arrow[r,mapsto]& (V_6, U_6 V_6)\in\CW_5^{(1)}\\[-1em]
            \CW_6^{(1)} \ni(u_6 , v_6)\arrow[r,mapsto]& (u_6 v_6, v_6)\in\CW_5^{(1)}
        \end{tikzcd}$ \\\hline 
        $E_6 \ni b_7:(u_6,v_6)=(0,0)$\hspace{0.5cm}  & $\begin{tikzcd}[row sep =tiny]
            \CW_7^{(0)} \ni (U_7,V_7)\arrow[r,mapsto]& (V_7, U_7 V_7)\in\CW_6^{(1)}\\[-1em]
            \CW_7^{(1)} \ni(u_7 , v_7)\arrow[r,mapsto]& (u_7 v_7, v_7)\in\CW_6^{(1)}
        \end{tikzcd}$ \\\hline 
        $E_7 \ni b_8:(u_7,v_7)=(-16g_2,0)$\hspace{0.5cm}  & $\begin{tikzcd}[row sep =tiny]
            \CW_8^{(0)} \ni (U_8,V_8)\arrow[r,mapsto]& (-16g_2+V_8, U_8 V_8)\in\CW_7^{(1)}\\[-1em]
            \CW_8^{(1)} \ni(u_8 , v_8)\arrow[r,mapsto]& (-16g_2+u_8 v_8, v_8)\in\CW_7^{(1)}
        \end{tikzcd}$ \\\hline 
        $E_8 \ni b_9:(u_8,v_8)=(0,0)$\hspace{0.5cm}  & $\begin{tikzcd}[row sep =tiny]
            \CW_9^{(0)} \ni (U_9,V_9)\arrow[r,mapsto]& (V_9, U_9 V_9)\in\CW_8^{(1)}\\[-1em]
            \CW_9^{(1)} \ni(u_9 , v_9)\arrow[r,mapsto]& (u_9 v_9, v_9)\in\CW_8^{(1)}
        \end{tikzcd}$  
    \end{tabular}}
    \caption{Blow-up data for the space of initial conditions for system \ref{eq:autonomousP1system}.}
    \label{fig:blow-updatawp}
\end{table}
 
        Lifting the solution curves to $S\times\mathcal{T}$, we see they are represented in the chart $\CW_9^{(0)}\subset S$ by the series
    \begin{equation}
        u_9(t) = -1792 \mu +32 g_2^2(t-t_*)^2 + \cdots, \quad v_9(t) = -\frac{1}{2}(t-t_*) - \frac{g_2}{20}(t-t_*)^5+\cdots.
    \end{equation}
    So, in particular the solution curve with a pole at $t_*$ and parameter $\mu$ intersects the fibre over $t_*$ at the point in $E_9$ whose coordinates are $(u_9,v_9)=(-1792 \mu,0)$. Thus, we have separated the family of solution curves passing through $b_1\in\BP^2$ in the fibre over $t_*$.
    
    The final step in verifying that we have a space of initial conditions is  to determine and remove the vertical leaves, in order to achieve transversality  of the intersections of leaves and fibres.
    Consider the exceptional curves $E_1,\ldots,E_9$ on $S$, which we give a pictorial representation of in \Cref{fig:surfaceautonomousP1}. 
    Note that we have slightly abused notation in denoting by $E_i$ both the exceptional curve on $\Bl_{b_i}\cdots\Bl_{b_1}\BP^2$ of the blow-up of $b_i$ and its pull-back by any further blow-ups of $b_{i+1},\cdots ,b_9$, which is a divisor on $S$. We also denote by $H=\varepsilon^*V(x_0) \in \Div(S)$ the pull-back of the hyperplane $V(x_0)\subset \BP^2$.

    The following curves on $S$ correspond to the vertical leaves, i.e. none of the solution curves corresponding to the family \eqref{eq:laurentexpansion} pass through  them:
    \begin{equation} \label{eq:Denum}
        D_0 =  E_8-E_9, \quad D_i = E_i - E_{i+1}, ~~(i=1,\ldots,7), \quad D_8 = H- E_1-E_2-E_3.
    \end{equation}
    These form the irreducible components of an effective anti-canonical divisor 
    \begin{equation} \label{eq:Dwp}
        D :=D_0+2D_1 +4D_2+6D_3+5D_4+ 4D_5+ 3D_6+2 D_7+3 D_8 \in |-K_S|,
    \end{equation}
    since from the blow-up formula in \Cref{th:blow-upformula}, the anti-canonical class of $S$ is 
    \begin{equation} \label{eq:ACdivP2blownup}
        -K_S = 3H - E_1-E_2-\cdots-E_8-E_9,
    \end{equation}
    in which we have used the same symbol to denote divisors and their classes, c.f. \Cref{not:linearequiv}.
    By removing $\bigcup_{i=0}^8 D_i$ from $S$, we obtain the bundle $E$ with foliation $\mathcal{F}$ as required.
    \end{proof}

       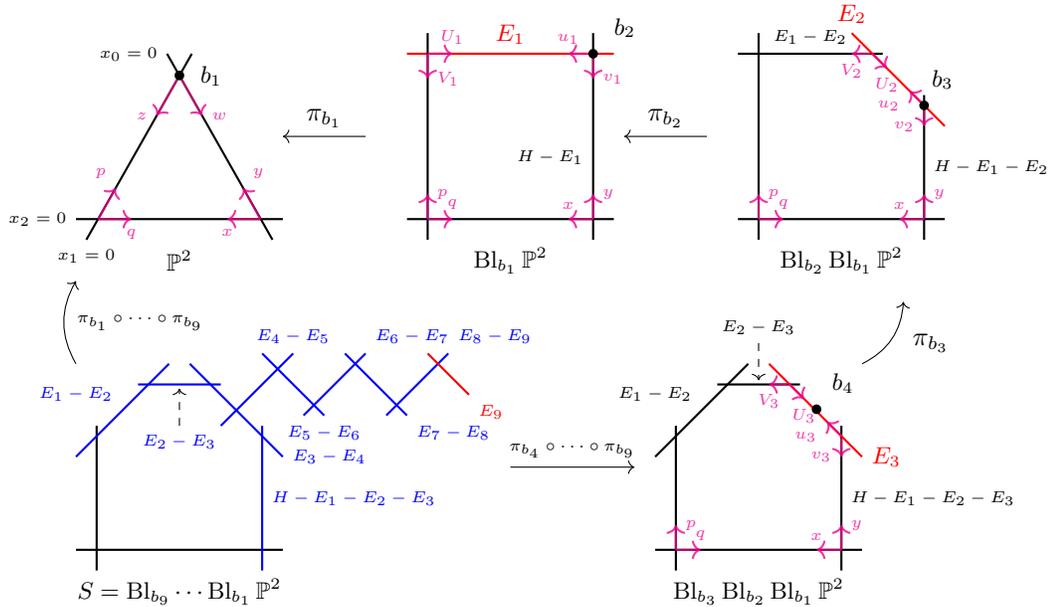
\begin{figure}[!ht]
\centering
    \begin{tikzpicture}[scale=.55,basept/.style={circle, draw=black!100, fill=black!100, thick, inner sep=0pt,minimum size=1.0mm}] 
    \node at (0,-3) {\small$\mathbb{P}^2$};
    \draw[thick,black] (-2.5,-2) -- (+2.5,-2) node [pos=0,left] {\tiny $x_2=0$} ;
    \draw[thick,black] (-2.25,-2.5) -- (+.3,+2) node [pos=0,below,black] {\tiny $x_1=0$} ;
    \draw[thick,black] (+2.25,-2.5) -- (-.3,+2) node [pos=1,left,black] {\tiny $x_0=0$} ;
    \node (b1) at (0,+1.475) [basept,label={[xshift=12pt, yshift = -10 pt] \small $b_{1}$}] {}; 
    \draw[->,thick, opacity=.8,magenta] (b1) -- (.55,+.5) node[pos=1,right] {\tiny $w$};
    \draw[->,thick, opacity=.8,magenta] (b1) -- (-.55,+.5) node[pos=1,left] {\tiny $z$}; 
    \draw[->,thick, opacity=.8,magenta] (-1.96,-2) -- (-1.54,-1.25) node[pos=1,above left] {\tiny $p$};
    \draw[->,thick, opacity=.8,magenta] (-1.96,-2) -- (-1.16,-2) node[pos=1,below] {\tiny $q$}; 
    \draw[->,thick, opacity=.8,magenta] (1.96,-2) -- (1.54,-1.25) node[pos=1,above right] {\tiny $y$};
    \draw[->,thick, opacity=.8,magenta] (1.96,-2) -- (1.16,-2) node[pos=1,below] {\tiny $x$};
    \draw[black,->] (+4.5,0)--(+2.5,0) node[ midway,above] {$\pi_{b_1}$}; 
        \begin{scope}[xshift=+8cm]
    \node at (0,-3) {\small$\Bl_{b_1} \BP^2$};
    \draw[thick,black] (-2.5,-2)  -- (+2.5,-2)  node[pos=0,left] {} ;
    \draw[thick,red] (-2.5,+2) -- (+2.5,+2) node[pos=0,left,black] {} node[pos=0.5,above,red] {\small $E_1$};
    \draw[thick,black] (-2,-2.5) -- (-2,+2.5) node[pos=0,below] {} ;
    \draw[thick,black] (+2,-2.5) -- (+2,+2.5) node[pos=0.4,left,black] { \tiny $H-E_1$}  ; 
    \draw[->,thick, opacity=.8,magenta] (2,2) -- (2,1.4) node[pos=1,right] {\tiny $v_1$};
    \draw[->,thick, opacity=.8,magenta] (2,2) -- (1.4,2) node[pos=1,above] {\tiny $u_1$}; 
    \draw[->,thick, opacity=.8,magenta] (-2,2) -- (-2,1.4) node[pos=1,right] {\tiny $V_1$};
    \draw[->,thick, opacity=.8,magenta] (-2,2) -- (-1.4,2) node[pos=1,above] {\tiny $U_1$}; 
    \draw[->,thick, opacity=.8,magenta] (-2,-2) -- (-2,-1.4) node[pos=1,right] {\tiny $p$};
    \draw[->,thick, opacity=.8,magenta] (-2,-2) -- (-1.4,-2) node[pos=1,above] {\tiny $q$};  
    \draw[->,thick, opacity=.8,magenta] (2,-2) -- (2,-1.4) node[pos=1,right] {\tiny $y$};
    \draw[->,thick, opacity=.8,magenta] (2,-2) -- (1.4,-2) node[pos=1,above] {\tiny $x$}; 
    \node (b2) at (+2,+2) [basept,label={[xshift=12pt, yshift = 0 pt] \small $b_{2}$}] {};

        \draw[black,->] (+4.75,0)--(+2.75,0) node[ midway,above] {$\pi_{b_2}$};

        \end{scope} 
    \begin{scope}[xshift=+16cm]
    \node at (0,-3) {\small$\Bl_{b_2}\Bl_{b_1} \BP^2$};
    \draw[thick,black] (-2.5,-2)  -- (+2.5,-2)  node[pos=0,left] {} ;
    \draw[thick,black] (-2.5,+2) -- (1,+2) node[pos=0,left,black] {} node[pos=0.5,above,black] { \tiny $E_1-E_2$}  ;
    \draw[thick,black] (-2,-2.5) -- (-2,+2.5) node[pos=0,below] {} ;
    \draw[thick,black] (+2,-2.5) -- (+2,1)node[pos=0.5,right,black] { \tiny $H-E_1-E_2$} ; 
    \draw[thick,red] (0.25,2.5) -- (+2.5,0.25) node[pos=0,above,red] {\small $E_2$} ;  
    \draw[->,thick, opacity=.8,magenta] (0.7,2) -- (0.25,2) node[pos=1,below] {\tiny $V_2$};
    \draw[->,thick, opacity=.8,magenta] (0.75,2) -- (1.1,1.65) node[pos=1,below] {\tiny $U_2$}; 
    \draw[->,thick, opacity=.8,magenta] (2,0.7) -- (2,0.25) node[pos=1,left] {\tiny $v_2$};
    \draw[->,thick, opacity=.8,magenta] (2,0.75) -- (1.65,1.1) node[pos=1, below left] {\tiny $u_2$}; 
    \draw[->,thick, opacity=.8,magenta] (-2,-2) -- (-2,-1.4) node[pos=1,right] {\tiny $p$};
    \draw[->,thick, opacity=.8,magenta] (-2,-2) -- (-1.4,-2) node[pos=1,above] {\tiny $q$};    
    \draw[->,thick, opacity=.8,magenta] (2,-2) -- (2,-1.4) node[pos=1,right] {\tiny $y$};
    \draw[->,thick, opacity=.8,magenta] (2,-2) -- (1.4,-2) node[pos=1,above] {\tiny $x$}; 
    \node (b3) at (2,0.75) [basept,label={[xshift=7pt, yshift = 0 pt] \small $b_{3}$}] {};
    \end{scope}

    \begin{scope}[xshift=+14cm, yshift=-8cm]
    \node at (0,-3) {\small$\Bl_{b_3}\Bl_{b_2}\Bl_{b_1} \BP^2$};
    \draw[thick,black] (-2.5,-2)  -- (+2.5,-2)  node[pos=0,left] {} ;
    \draw[thick,black] (-1,+2) -- (+1,+2) node[pos=0.5,below,black] {} node[pos=0.5,below] {};
    \draw[thick,black] (-2,-2.5) -- (-2,1) node[pos=0.5,right,black] {} ;
    \draw[thick,black] (+2,-2.5) -- (+2,1) node[pos=0.5,right,black] { \tiny $H-E_1-E_2-E_3$} ; 
    \draw[dashed, ->] (0,3)  -- (0,2.1) node[pos=0,above,black] {\tiny $E_2-E_3$}; 
    \draw[thick,red] (0.25,2.5) -- (+2.5,0.25) node[pos=1,right,red] {\small $E_3$} ; 
    \draw[thick,black] (-0.25,2.5) -- (-2.5,0.25) node[pos=0.5,above left,black] {\tiny $E_1-E_2$} ;   
    \draw[->,thick, opacity=.8,magenta] (0.7,2) -- (0.25,2) node[pos=1,below] {\tiny $V_3$};
    \draw[->,thick, opacity=.8,magenta] (0.75,2) -- (1.1,1.65) node[pos=1,below] {\tiny $U_3$};  
    \draw[->,thick, opacity=.8,magenta] (2,0.7) -- (2,0.25) node[pos=1,left] {\tiny $v_3$};
    \draw[->,thick, opacity=.8,magenta] (2,0.75) -- (1.65,1.1) node[pos=1,below left] {\tiny $u_3$}; 
    \draw[->,thick, opacity=.8,magenta] (-2,-2) -- (-2,-1.4) node[pos=1,right] {\tiny $p$};
    \draw[->,thick, opacity=.8,magenta] (-2,-2) -- (-1.4,-2) node[pos=1,above] {\tiny $q$};    
    \draw[->,thick, opacity=.8,magenta] (2,-2) -- (2,-1.4) node[pos=1,right] {\tiny $y$};
    \draw[->,thick, opacity=.8,magenta] (2,-2) -- (1.4,-2) node[pos=1,above] {\tiny $x$}; 
    \node (b4) at (1.4,1.4) [basept,label={[xshift=10pt, yshift = 0 pt] \small $b_4$}] {}; 
    \draw[black, -> ] (+2.5,+2.5) to[bend right=30] (+3.5,4) node[ midway,right] {};
    \draw (+3.5,3) node[right] {$\pi_{b_3}$};
    \end{scope} 
    
    \begin{scope}[xshift=0cm, yshift=-8cm]
    \node at (0,-3) {\small$S=\Bl_{b_9}\cdots\Bl_{ b_1}\BP^2$};
    \draw[thick,black] (-2.5,-2)  -- (+2.5,-2)  node[pos=0,left] {} ;
    \draw[thick,blue] (-1,+2) -- (+1,+2) node[pos=0.5,below,black] {} node[pos=0.5,below] {};
    \draw[thick,black] (-2,-2.5) -- (-2,1) node[pos=0.5,right,black] {} ;
    \draw[thick,blue] (+2,-2.5) -- (+2,1) node[pos=0.5,right] { \tiny $H-E_1-E_2-E_3$} ; 
    \draw[dashed, ->] (0,1)  -- (0,1.9) node[pos=0,below,blue] {\tiny $E_2-E_3$}; 
    \draw[thick,blue] (0.25,2.5) -- (+2.5,0.25) node[pos=1,right] {\tiny $E_3-E_4$} ; 
    \draw[thick,blue] (-0.25,2.5) -- (-2.5,0.25) node[pos=0.5,above left] {\tiny $E_1-E_2$} ; 
    \draw[thick,blue] (1,1) -- (+2.75,2.75) node[pos=1,above] {\tiny $E_4-E_5$} ; 
    \draw[thick,blue] (2,2.75) -- (+3.5,1.25) node[pos=1,below] {\tiny $E_5-E_6$} ; 
    \draw[thick,blue] (3,1.25) -- (+4.5,2.75) node[pos=1,above right] {\tiny $E_6-E_7$} ; 
    \draw[thick,blue] (4,2.75) -- (+5.5,1.25) node[pos=1,below right] {\tiny $E_7-E_8$} ; 
    \draw[thick,blue] (5,1.25) -- (+6.5,2.75)      node[pos=1,above right] {\tiny $E_8-E_9$}  ; 
    \draw[thick,red] (6,2.75) -- (+7,1.75) node[pos=1,below right] {\tiny $E_9$} ; 
        \draw[black, -> ] (-2.5,2.5) to[bend left=30] (-2.5,4.5) node[ midway,right] {};
    \draw (-2.7,3.5) node[right] {\tiny$\pi_{b_1}\circ\cdots\circ\pi_{b_9}$};
    \draw[black, -> ] (8,0) --  (11,0) node[ midway,above] {\tiny $\pi_{b_4}\circ\cdots \circ\pi_{b_9}$ };
    \end{scope}
    
\end{tikzpicture}
\caption{Surface $S=\Bl_{b_9}\cdots\Bl_{ b_1}\BP^2$, with curves $D_i$ on $S$ giving vertical leaves in blue. 
}
\label{fig:surfaceautonomousP1}
\end{figure}

    \begin{prop} \label{prop:autP1anticanonicallinearsystem}
        The anti-canonical linear system of the surface $S$ in \Cref{prop:wpspaceofICs} has dimension $\dim |-K_S|=1$. 
        It is written as
        \begin{center}
        \begin{tikzcd}[row sep=tiny]
            \left|-K_S\right| \arrow[r] & \BP^1   \\
            D \arrow[r,mapsto] & \left[0:1\right] \\
            \widehat{C}_{g_2,g_3} \arrow[mapsto,r] & \left[1:g_3\right],
        \end{tikzcd}
        \end{center}
        where $\widehat{C}_{g_2,g_3}$ is the strict transform, under $\varepsilon$, 
        of the Weierstrass cubic\footnote{We remind the reader that in this section $g_2\in\BC$ is fixed.}
    \begin{equation} \label{eq:weierstrasscubic}
        C_{g_2,g_3} := V\left(x_0 x_2^2 - 4 x_1^3 + g_2 x_0^2{x_1}+g_3 x_0^3\right) \subset \BP^2.
    \end{equation}
        
    \end{prop}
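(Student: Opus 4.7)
My plan proceeds in three steps. First, I would observe that system \eqref{eq:autonomousP1system} admits the first integral $I(q,p) = p^2 - 4q^3 + g_2 q$, whose level set $\{I=g_3\}$ in $\CU_0$ is precisely the affine part of $C_{g_2,g_3}$. This motivates the parametrisation in the statement: the non-degenerate members $\widehat{C}_{g_2,g_3}$ correspond to level sets of $I$, while $D$ is to be recovered as the ``fibre at infinity'' of $I$.

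Second, I would verify that $\widehat{C}_{g_2,g_3} \in |-K_S|$ for every $g_3 \in \BC$ by iterating through the blow-up data in \Cref{fig:blow-updatawp}. Starting from the local equation $z - 4w^3 + g_2 z^2 w + g_3 z^3=0$ of $C_{g_2,g_3}$ in the chart $\CU_2$, I would substitute the coordinate formula for $\pi_{b_i}$ at each step, extract the local equation of the new exceptional divisor as a factor, and verify the residual factor vanishes at $b_{i+1}$ with multiplicity exactly one. By \Cref{lemma:strictmult} and \eqref{eq:ACdivP2blownup}, this shows $\widehat{C}_{g_2,g_3} = 3H - E_1 - \cdots - E_9 = -K_S$, so $\widehat{C}_{g_2,g_3} \in |-K_S|$, and together with $D \in |-K_S|$ from \eqref{eq:Dwp} this yields at least a pencil of distinct divisors.

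Third, I would show $\dim|-K_S| = 1$. Since $\chi_{\topo}(S) = 3+9 = 12$, Noether's formula (\Cref{thm:noetherformula}) gives $K_S^2=0$, whence Riemann-Roch yields $\chi(-K_S) = 1 + K_S^2 = 1$. Serre duality (\Cref{thm:SErredual}) combined with rationality of $S$ gives $h^2(-K_S) = h^0(2K_S) = P_2(S) = 0$ (see \Cref{prop:brinva,thm:CASTrat}), so $h^0(-K_S) = 1 + h^1(-K_S)$. To upgrade to the equality $h^0(-K_S)=2$, I would argue that the pencil is base-point free: from the proof of \Cref{prop:wpspaceofICs}, the intersections $\widehat{C}_{g_2,g_3}\cap E_9$ sweep out every point of $E_9$ except one as $g_3$ varies (reflecting the bijection between the Laurent parameter $\mu$ and initial conditions on $E_9$), while $D$ contains the missing point. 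Base-point freeness produces a surjective morphism $\phi \colon S \to \BP^1$ whose fibres lie in $|-K_S|$; any $D'\in|-K_S|$ then satisfies $D'\cdot F = K_S^2 = 0$ for a general fibre $F$, forcing $D'$ to consist only of vertical components and hence to equal a fibre of $\phi$.

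The main obstacle is the final upgrade from $h^0(-K_S)\ge 2$ to equality. The geometric route via base-point freeness and the induced elliptic fibration is clean but leans on the explicit control of intersections with $E_9$ inherited from the proof of \Cref{prop:wpspaceofICs}. A purely algebraic alternative would exploit the short exact sequence $0\to \OO_S\to \OO_S(-K_S)\to \OO_D(-K_S|_D)\to 0$ and analyse the cohomology of $\OO_D(-K_S|_D)$ on the reducible anti-canonical divisor $D$ whose components realise the $E_8^{(1)}$ Dynkin diagram.
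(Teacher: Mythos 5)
Your proposal is correct in substance but reaches the dimension count by a genuinely different route from the paper. The paper's proof observes that any effective anticanonical divisor other than $D$ is the strict transform of a cubic $V(f)\subset\BP^2$, imposes the nine (mostly infinitely near) point conditions as linear equations on the ten coefficients $c_{i,j,k}$, and solves them directly: the solution space is exactly the two-parameter family $f_{\lambda}=\lambda_0\left(x_0x_2^2-4x_1^3+g_2x_0^2x_1\right)+\lambda_1 x_0^3$, from which both $\dim|-K_S|=1$ and the parametrisation by $[\lambda_0:\lambda_1]$ fall out in one pass. You instead verify membership of the explicit family $\widehat{C}_{g_2,g_3}$ (the same chart-by-chart computation, run in the ``checking'' rather than ``solving'' direction) to get $h^0(-K_S)\geq 2$, and then cap the dimension using Riemann--Roch together with the elliptic fibration induced by the pencil. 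The paper's route is more elementary and self-contained (linear algebra on coefficients plus the blow-up formula); yours avoids solving the full system and explains structurally why no further anticanonical divisors exist. Two points to tighten in your version: (i) for base-point-freeness it is cleaner to note that $D$ and $\widehat{C}_{g_2,g_3}$ share no irreducible component (the latter is irreducible of class $-K_S$, the former a sum of $-2$-curves) and $D.\widehat{C}_{g_2,g_3}=K_S^2=0$, hence they are disjoint; your check along $E_9$ alone does not exclude base points elsewhere. (ii) Concluding $h^0(-K_S)=2$ from the fibration requires connectedness of the fibres (immediate from irreducibility of the generic Weierstrass cubic via Stein factorisation), so that $\phi_*\OO_S=\OO_{\BP^1}$ and $h^0(-K_S)=h^0(\OO_{\BP^1}(1))=2$; note also that Riemann--Roch for surfaces, which you invoke, is standard but is not among the tools the paper sets up.
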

 
    \begin{proof}
    We have already seen that $D$ is an effective divisor representing $-K_S$, and indeed it is given by $D = - \operatorname{div}\omega$, where $\omega$ is a rational 2-form on $S$ defined in the coordinates $q,p$ by $\omega = dq \wedge dp$, or in the coordinates $z,w$ by $\frac{dz\wedge dw}{z^3}$.  
    Any effective anti-canonical divisor on $S$ which is different from $D$ will be given by the strict  transform of a curve   $V(f)\subset \BP^2$, where
    \begin{equation}
          f = \sum_{\underset{i+j+k=3}{  i,j,k \geq 0}} c_{i,j,k} x_0^{i}x_1^j x_2^k\in\BC[x_0,x_1,x_2],
    \end{equation} 
    such that the strict transform of $V(f)$ is an effective divisor giving $-K_S$. Note that the expression \eqref{eq:ACdivP2blownup} obtained from the blow-up formula tells us that the degree of the polynomial $f$ must be 3, and the curve $V(f)$ must pass through the nine points.
    This leads to linear conditions on the coefficients $c_{i,j,k}$.
    For example, $b_1$ must lie on $V(f)\subset \BP^2$, which leads to $\sum_{i=0}^{3}c_{0,0,i}=0$.
    Since the remaining points are all infinitely near, requiring that they lie on (the strict transform of) $V(f)\subset \BP^2$ gives conditions on the partial derivatives of $f$.
    In practice, one may calculate the local equations in $\CW_i^{(1)}\cong \BA^2_{(u_i,v_i)}$ and $\CW_i^{(0)}\cong \BA^2_{(U_i,V_i)}$ for the pull-backs of $V(f)$ under the blow-up projections and enforce that they have the correct factors of $v_i, V_i$, i.e. the difference between the pull-back and strict transform as divisors is $E_i$.
    These computations lead to $f$ of the form
    \begin{equation} \label{eq:flambda}
       f= f_{\lambda} 
       = \lambda_0\left(x_0 x_2^2 - 4 x_1^3 + g_2 x_0^2x_1\right)+ \lambda_1 x_0^3, \quad \lambda=[\lambda_0:\lambda_1]\in \BP^1\!.
    \end{equation}
    The divisors corresponding to the open chart $\mathcal{U}_{0}\subset\BP^1$ can be parametrised in terms of $g_3= \frac{\lambda_1}{\lambda_0}$, and we have the representation in \Cref{eq:weierstrasscubic}.

\end{proof}

\begin{remark}
    The curves in the linear system $|-K_S|$ are preserved by the flow of the system \eqref{eq:autonomousP1system} on $(S\setminus \cup_{i=0}^{8}D_i )\times \mathcal{T}$.
    This is by construction since the system is solved by the Weierstrass elliptic function $\wp(t;g_2,g_3)$, which parametrises the curve $f_{\lambda}$ in \Cref{eq:flambda} with $\lambda=[g_3,1]$.
    We can construct a conserved quantity for the system \eqref{eq:autonomousP1system} as rational functions on $S$ given by ratios of distinct polynomials $f_{\lambda},f_{\lambda'}$. 
    For example, in the original $q,p$ variables, we can take
    \begin{equation}
        I = \frac{f_{[0:1]}}{f_{[1:0]}} = \frac{4 x_1^3 - x_0x_2^2 - g_2 x_0^2 x_1}{x_0^3} = 4 q^3 - p^2 - g_2 q,
    \end{equation} 
    which is conserved under the flow of the system \eqref{eq:autonomousP1system} and corresponds exactly to the Weierstrass cubic curve in \Cref{eq:weierstrassfirstorder}, with the conserved quantity denoted by $I=g_3$. 
\end{remark}

    Returning to the hypersurface $\cup_{i=0}^{8}D_i$, in \Cref{ex:E8config} the reader can verify that the irreducible components $D_i$, for $i=0,\ldots,8$, as in \Cref{eq:Denum} intersect according to the $E_8^{(1)}$ Dynkin diagram.  
    This will be formalised in terms of the generalised Cartan matrix and root lattice of type $E_8^{(1)}$ in \Cref{subsec:generalisedHalphensurfaces}.
    
    \begin{exercise} \label{ex:E8config}
        Use \Cref{lemma:eltransf} to compute the intersection pairings among $D_i\in \Div(S)$, $i=0,\ldots,8$ given in \Cref{eq:Denum}.
        Show that $D_i^2=-2$ for each $i$ and determine the enumeration of the vertices of the $E_8^{(1)}$ Dynkin diagram in \Cref{E8dynkin} by $i=0,\ldots,8$   such that it encodes the intersection configuration of the divisors $D_i$, for $i=0,\ldots,8$ as enumerated in \Cref{eq:Denum}.
        That is, show that $D_i.D_j\in\Set{0,1}$, for $i\not=j$ and then find an explicit bijiection between indices $i=0,\ldots,8$ and vertices of the Dynkin diagram such that two vertices $i$ and $j$ are joined by an edge if $D_i.D_j=1$, and not joined otherwise.  
    \end{exercise}

\begin{remark} \label{rem:ellipticsurfaces}
\Cref{prop:autP1anticanonicallinearsystem} is related to the fact that the surface $S$ constructed in \Cref{prop:wpspaceofICs} is an \textit{elliptic surface}. 
While a general definition of this is outside the scope of these notes, we describe it heuristically in this setting as a surface $S$ admitting an elliptic fibration, i.e. a morphism $S\to C$ to a curve $C$, with almost all fibres being smooth elliptic curves.
Examples of rational elliptic surfaces provide spaces of initial conditions for autonomous limits of Painlev\'e differential equations \cite{SakaiRationalEllipticSurfaces}.
This fact has counterpart in the discrete setting through QRT maps, which can be regarded as autonomous limits of discrete Painlev\'e equations and also have spaces of initial conditions provided by rational elliptic surfaces \cite{Tsuda,Duistermaat}, see also \cite{CarsteaTakenawaRationalEllipticSurfaces}. 
\end{remark}

\begin{remark} \label{rem:p1surface} 
    If, instead of the autonomous system \eqref{eq:autonomousP1system}, we perform the construction for $\pain{I}$ in the form of the non-autonomous Hamiltonian system
\begin{equation} \label{eq:hamP1}
    q' = p, \qquad p' = 6q^2 + t,
\end{equation}
we also obtain a space of initial conditions. 
However, some of the points to be blown-up will be $t$-dependent, and the projective surface $S$ forming the fibre will no longer admit an elliptic fibration. Rather, there will be a unique $D \in |-K_S|$, the irreducible components of which are the vertical leaves removed in the construction.
These will be $-2$ curves in the same configuration as pointed out in the proof of \Cref{prop:wpspaceofICs}, which intersect according to the $E_8^{(1)}$ Dynkin diagram in \Cref{E8dynkin}.
This surface is an example of a generalised Halphen surface $S$ with $\dim  |-K_S|=0$, which we call a \emph{Sakai surface} (see \Cref{def:sakaisurface}), the classification of which by Sakai \cite{Sakai2001} will be the main subject of \Cref{subsec:classificationofSakaisurfaces}.
\end{remark}
\begin{exercise}
    Construct a space of initial conditions along the same lines as in \Cref{prop:wpspaceofICs} for the second-order ODE
    \begin{equation} \label{eq:autonomousP2scalar}
        q'' = 2 q^3 + a q,
    \end{equation}
    where $a\in \BC$.
    This is an autonomous version of $\pain{II}$ and all solutions of \Cref{eq:autonomousP2scalar} are meromorphic on $\BC$, given in terms of the Jacobi elliptic function $\operatorname{sn}(z;k)$,  the definition and properties of which can be found in, e.g., \cite[Ch. 22]{DLMF}.
    Note that if a solution $q(t)$ of \Cref{eq:autonomousP2scalar}  fails to be analytic at $t=t_*\in \BC$, then it has a simple pole at $t=t_*$ of residue $\pm 1$, given by a Laurent series expansion, involving a free parameter which plays an analogous role to $\mu$ in \Cref{lem:wpexpansion}, that can be computed explicitly. 
\end{exercise}

\subsection{Space of initial conditions for a QRT map}
\label{subsec:QRT}

Consider an autonomous system of two first-order difference equations
\begin{equation} \label{eq:differenceeqgeneral}
(x_{n+1}, y_{n+1} ) = ( f(x_n,y_n), g(x_n,y_n) ),
\end{equation}
where $f$, $g$ are rational functions of their arguments with coefficients independent of $n$, such that the mapping 
\begin{equation} \label{eq:birationaldifferenceeqgeneral} 
\begin{tikzcd}[row sep =tiny]
    \BA^2_{(x_n,y_n)} \arrow[r,dashed]& \BA^2_{(x_{n+1},y_{n+1})}    \\
    (x_n,y_n) \arrow[r,mapsto]& ( f(x_n,y_n), g(x_n,y_n) ),
    \end{tikzcd}
\end{equation} 
is birational. 
By taking $(x_n,y_n)$ as coordinates on an affine chart for $\BP^2$ we can extend \eqref{eq:birationaldifferenceeqgeneral} to a birational transformation $\varphi\in\Bir(\BP^2)$. 
A space of initial conditions in the automorphism sense  as defined in  in \Cref{def:spaceofinitialcond}, for $\varphi$ as above, consists of a birational morphism $\varepsilon : S\rightarrow \BP^2$ from a smooth projective rational surface $S$, such that the lifted map $\tilde{\varphi}\in\Bir(S)$ is an automorphism.

For systems of difference equations of the form \eqref{eq:differenceeqgeneral}, it is sometimes more convenient to extend the map \eqref{eq:birationaldifferenceeqgeneral} to $\BP^1 \times \BP^1$ rather than $\BP^2$, and then find a birational morphism $\varepsilon : S\rightarrow \BP^1 \times \BP^1$ under which $\varphi$ lifts to an automorphism of $S$.
Note that as long as $\varepsilon$ is not an isomorphism, then we automatically also get a morphism $S\rightarrow \BP^2$, since $\BP^1\times\BP^1$ blown-up at one point is isomorphic $\BP^2$ blown-up at two points, as illustrated in \Cref{exa:2points}. Note that the converse is not true if the two points in $\BP^2$ are infinitely near - the surface $S$ from \Cref{prop:wpspaceofICs} provides an example which admits a morphism to $\BP^2$ but not to $\BP^1 \times \BP^1$.
This alternative choice of $\BP^1 \times \BP^1$ as \emph{compactification} of $\BA^2$ is performed  by letting $(x,y)=(x_n,y_n)$ and and $(\bar{x},\bar{y})=(x_{n+1},y_{n+1})$ be the affine coordinates on  $\mathcal{U}_{0,0}\subset \BP^1\times\BP^1$ as in \Cref{rmk:coorchart}, so the birational map \eqref{eq:birationaldifferenceeqgeneral} gives $\varphi\in \Bir(\BP^1\times\BP^1)$.
We will often specify a birational map $\varphi \in \Bir(\BP^1 \times \BP^1)$, just in the affine coordinates $(x,y)$, $(\bar{x},\bar{y})$, with extension to the rest of $\BP^1\times\BP^1$ being via the transition functions in \Cref{rmk:coorchart}.

We will illustrate this in the example of the second-order difference equation
\begin{equation} \label{scalarQRT}
x_{n+1} = \frac{(x_n-k)(x_n+k) x_{n-1}}{k^2 - x_n^2 +2 t x_n x_{n-1} },
\end{equation}
with parameters $k \neq 0, \pm 1$ and $t \neq 0$. 
Note that this can be written as a system of two first-order difference equations as in \Cref{eq:differenceeqgeneral} by letting $y_n=x_{n-1}$, so 
\begin{equation}
    x_{n+1} = \frac{(x_n-k)(x_n+k) y_{n}}{k^2 - x_n^2 +2 t x_n y_{n} }, \quad y_{n+1}=x_n.
\end{equation}
Equation \eqref{scalarQRT} in fact belongs to the family of QRT maps \cite{QRT1, QRT2}, the definition of which ensures that they admit a rational elliptic surface as a space of initial conditions as anticipated in \Cref{rem:ellipticsurfaces} (see \cite{Tsuda, Duistermaat}).
Consider \Cref{scalarQRT} as a birational transformation
\begin{equation} \label{eq:QRTphi} 
\varphi : \BP^1 \times \BP^1 \dashrightarrow \BP^1 \times \BP^1, \quad
(x,y) \mapsto (\bar{x}, \bar{y}) = \left( \frac{(x-k)(x+k) y}{k^2 - x^2 +2 t x y },  x\! \right)\!. 
\end{equation}
Then, we have the following.

\begin{prop} \label{prop:QRTspaceofICs} Let $\varphi\in\Bir(\BP^1\times\BP^1)$ be as in \eqref{eq:QRTphi}. Then, there is a birational morphism $\varepsilon : S \rightarrow \BP^1 \times \BP^1$, with $S$ a smooth projective rational surface, such that $\varphi $ lifts to an automorphism $\tilde{\varphi} = \varepsilon^{-1}\circ \varphi\circ \varepsilon$ of $S$.
    Further, the surface $S=\Bl_{b_8} \cdots\Bl_{b_1} (\BP^1 \times \BP^1)$ is obtained via a sequence of eight blow-ups each centred at a point. 
\end{prop}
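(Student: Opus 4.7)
The plan is to identify the indeterminacy loci and singularity confinement patterns (in the sense of \Cref{rem:singularities}) of $\varphi$ and $\varphi^{-1}$ on $\BP^1\times\BP^1$ and, through a sequence of eight blow-ups analogous to those in the proof of \Cref{prop:wpspaceofICs}, to produce a surface $S$ on which $\tilde\varphi\in\Bir(S)$ becomes an automorphism.

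First, inverting \eqref{eq:QRTphi} gives
\[
\varphi^{-1}:(\bar x,\bar y)\mapsto \left(\bar y,\;\frac{-\bar x(\bar y^2-k^2)}{\bar y^2-k^2-2t\bar y\bar x}\right)\!,
\]
and a direct check in the affine charts $\CU_{i,j}$ of $\BP^1\times\BP^1$ (see \Cref{rmk:coorchart}) shows that the indeterminacy locus of $\varphi$ consists of the two points $b_1=(k,0)$ and $b_2=(-k,0)$, while that of $\varphi^{-1}$ consists of $b_3=(0,k)$ and $b_4=(0,-k)$. Moreover, $\varphi$ contracts the rulings $\{x=k\}$ and $\{x=-k\}$ to the points $b_3$ and $b_4$, respectively, while $\varphi^{-1}$ contracts $\{\bar y=k\}$ and $\{\bar y=-k\}$ to $b_1$ and $b_2$, yielding four singularity patterns for $\varphi$.

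Next, proceeding as in the proof of \Cref{prop:wpspaceofICs}, I would compute the lift of $\varphi$ to $\Bl_{b_1}(\BP^1\times\BP^1)$ in local coordinates $(s,v)$ defined by $(x-k,y)=(sv,v)$. A direct calculation gives $\bar x\sim sv/(t-s)$ and $\bar y-k\sim sv$ as $v\to 0$ for $s\neq t$, which shows that the exceptional curve $E_1=\{v=0\}$ is still contracted to $b_3$ by the lifted map, and that the lift remains indeterminate at the point $b_5:(s,v)=(t,0)$. Blowing up $b_5$ then resolves this indeterminacy, and by the evident symmetry of the four singularity patterns one identifies three further infinitely near points $b_6,b_7,b_8$ above $b_2,b_3,b_4$, respectively. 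Setting
\[
S := \Bl_{b_8}\cdots\Bl_{b_1}(\BP^1\times\BP^1),
\]
I would complete the verification by checking in all the remaining charts that the lifted map $\tilde\varphi$ sends each of the divisors $\widehat{\{x=\pm k\}}$, $\widehat{\{\bar y=\pm k\}}$, and $E_1,\ldots,E_8$ biholomorphically onto its image, and likewise for $\tilde\varphi^{-1}$.

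The main obstacle is the coordinate bookkeeping required to verify the no-indeterminacy condition in every local chart and at each infinitely near blow-up. Conceptually, however, $\varphi$ is a QRT map and preserves a pencil of biquadratic (that is, bidegree $(2,2)$) curves on $\BP^1\times\BP^1$; the base locus of this pencil consists precisely of $b_1,\ldots,b_8$ (counted with the infinitely near structure), so that $S$ is the rational elliptic surface obtained by resolving the pencil and $\tilde\varphi$ acts fibre-preservingly (cf.\ \Cref{rem:ellipticsurfaces}).
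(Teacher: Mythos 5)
Your proposal follows essentially the same route as the paper's proof: compute $\ind\varphi$ and $\ind\varphi^{-1}$, blow up the four points $(\pm k,0)$, $(0,\pm k)$, locate one further infinitely near indeterminacy over each (in the direction determined by $t$), and verify by local computation that the lift to the resulting eight-point blow-up contracts no curve in either direction, so that singularity confinement upgrades the resolution of indeterminacies to an automorphism. The only cosmetic differences are your use of the conjugating symmetry $(x,y)\mapsto(y,x)$ to transfer the computation from the chains over $\ind\varphi$ to those over $\ind\varphi^{-1}$, and your closing remark on the invariant pencil of biquadratics, which the paper records separately as an exercise.
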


\begin{proof}
    We begin by finding the indeterminacy loci of both $\varphi,\varphi^{-1}\in\Bir(\BP^1 \times \BP^1)$.
    Writing both in the affine coordinate charts from \Cref{rmk:coorchart}, we see that 
    \begin{equation} \label{eq:indQRT}
    \begin{aligned}
        \ind\varphi &= \{ b_1, b_3\}, \qquad b_1 : (x,y)=(k,0), \quad  b_{3}:(x,y)=(-k,0),\\
        \ind\varphi^{-1} &= \{ b_5, b_7\}, \qquad b_5 : (x,y)=(0,k), \quad  b_{7}:(x,y)=(0,-k).
    \end{aligned}
    \end{equation}
    Our aim is first to resolve these indeterminacies through an appropriate sequence of blow-ups, then verify that the lifted map is an automorphism by calculations in coordinates.
    First consider $b_1\in \ind\varphi$.
    By blowing-up $b_1$, we introduce two new affine charts $\CW_1^{(1)}\cong \BA^2_{(u_1,v_1)}$, $\CW_1^{(0)}\cong \BA^2_{(U_1,V_1)}$ using the same convention as in the proof of \Cref{prop:wpspaceofICs}.
    The lifted map $\Bl_{b_1}(\BP^1\times\BP^1)\dashrightarrow \BP^1 \times \BP^1$ is written in the chart $\CW_1^{(1)}$ as 
    \begin{equation}
        (u_1,v_1) \mapsto (\bar{x},\bar{y}) = \left( \frac{u_1v_1 (2k+u_1 v_1)}{2 k(u_1-t) +u_1v_1(u_1-2t)} , k+ u_1v_1\right)\!,
    \end{equation}
    so there is still an indeterminacy at $b_2 \in E_1$, given by
    $$b_2 : (u_1,v_1)=(t,0).$$
    By blowing-up $b_2$ and introducing coordinates in the same way (given explicitly in \Cref{fig:blow-updataQRT}), we see that the lifted map $\Bl_{b_2}\Bl_{b_1}(\BP^1 \times \BP^1) \dashrightarrow \BP^1 \times \BP^1$ is written in the chart $\CW_2^{(1)}\cong \BA^2_{(u_2,v_2)}$ as  
    \begin{equation}
        (u_2,v_2) \mapsto (\bar{x},\bar{y}) = \left( \frac{ (t+ u_2v_2) (2k+ t v_2 + u_2 v_2^2)}{ t^2 -2 k u_2-u_2^2 v_2^2} , k+ t v_2 + u_2v_2^2\right)\!.
    \end{equation}
    Then, the restriction of $\varphi$ to  $\CW_2^{(1)}\cap E_2$ has no indeterminacies and hence it restricts on $  V(v_2) \subset \BA^2_{(u_2,v_2)}$ to the morphism  given by 
    \begin{equation}
        (u_2,0) \mapsto (\bar{x},\bar{y}) = \left( \frac{ 2 t k}{ t^2 -2 k u_2} , k\right)\!.
    \end{equation}  
    This means that the lifted map restricts to an isomorphism from $E_2$ to the strict transform of the curve $\bar{y}=k$, which corresponds to $H_y-E_5\in \Pic(S)$.
    Similar calculations show that it requires two blow-ups to resolve each of the three remaining indeterminacies in \Cref{eq:indQRT}, and we obtain\footnote{Note the abuse of notation: the points $b_3,b_7$ formally do not lie on $\BP^1\times \BP^1$, but rather on a blow-up of it. However, no confusion should arise, as they do not lie on the exceptional divisors, outside of which the blow-up morphism is an isomorphism.} the surface $S = \Bl_{b_8}\cdots \Bl_{b_1}(\BP^1 \times \BP^1)$, with data of points and coordinates in \Cref{fig:blow-updataQRT}.
    A pictorial representation of $S$ is given in \Cref{fig:surfaceQRT}.

\begin{table}[htb]
    \centering
   {\small \begin{tabular}{c|c}
        $b_i$ & $\pi_{b_i}$ \\\hline 
        $\mathcal{U}_{0,0} \ni b_1:(x,y)=(k,0)$\hspace{0.5cm} & $\begin{tikzcd}[row sep =tiny]
            \CW_1^{(0)} \ni (U_1,V_1)\arrow[r,mapsto]& (k+V_1, U_1 V_1)\in\mathcal{U}_{0,0}\\[-1em]
            \CW_1^{(1)} \ni(u_1 , v_1)\arrow[r,mapsto]& (k+u_1 v_1, v_1)\in\mathcal{U}_{0,0}
        \end{tikzcd}$ \\\hline 
        $E_1 \ni b_2:(u_1,v_1)=(t,0)$\hspace{0.5cm}  & $\begin{tikzcd}[row sep =tiny]
            \CW_2^{(0)} \ni (U_2,V_2)\arrow[r,mapsto]& (t+V_2, U_2 V_2)\in\CW_1^{(1)}\\[-1em]
            \CW_2^{(1)} \ni(u_2 , v_2)\arrow[r,mapsto]& (t+u_2 v_2, v_2)\in\CW_1^{(1)}
        \end{tikzcd}$ \\\hline 
        $\mathcal{U}_{0,0} \ni b_3:(x,y)=(-k,0)$\hspace{0.5cm}  & $\begin{tikzcd}[row sep =tiny]
            \CW_3^{(0)} \ni (U_3,V_3)\arrow[r,mapsto]& (-k+V_3, U_3 V_3)\in\mathcal{U}_{0,0}\\[-1em]
            \CW_3^{(1)} \ni(u_3 , v_3)\arrow[r,mapsto]& (-k+u_3 v_3, v_3)\in\mathcal{U}_{0,0}
        \end{tikzcd}$ \\\hline 
        $E_3 \ni b_4:(u_3,v_3)=(t,0)$\hspace{0.5cm}  & $\begin{tikzcd}[row sep =tiny]
            \CW_4^{(0)} \ni (U_4,V_4)\arrow[r,mapsto]& (t+V_4, U_4 V_4)\in\CW_3^{(1)}\\[-1em]
            \CW_4^{(1)} \ni(u_4 , v_4)\arrow[r,mapsto]& (t+u_4 v_4, v_4)\in\CW_3^{(1)}
        \end{tikzcd}$ \\\hline 
        $\mathcal{U}_{0,0} \ni b_5:(x,y)=(0,k) $\hspace{0.5cm}  & $\begin{tikzcd}[row sep =tiny]
            \CW_5^{(0)} \ni (U_5,V_5)\arrow[r,mapsto]& (V_5,k+ U_5 V_5)\in\mathcal{U}_{0,0}\\[-1em]
            \CW_5^{(1)} \ni(u_5 , v_5)\arrow[r,mapsto]& (u_5 v_5,k+ v_5)\in\mathcal{U}_{0,0}
        \end{tikzcd}$ \\\hline 
        $E_5 \ni b_6:(U_5,V_5)=(t,0)$\hspace{0.5cm}  & $\begin{tikzcd}[row sep =tiny]
            \CW_6^{(0)} \ni (U_6,V_6)\arrow[r,mapsto]& (t+V_6, U_6 V_6)\in\CW_5^{(0)}\\[-1em]
            \CW_6^{(1)} \ni(u_6 , v_6)\arrow[r,mapsto]& (t+u_6 v_6, v_6)\in\CW_5^{(0)}
        \end{tikzcd}$ \\\hline 
        $\mathcal{U}_{0,0} \ni b_7:(x,y)=(0,-k)$\hspace{0.5cm}  & $\begin{tikzcd}[row sep =tiny]
            \CW_7^{(0)} \ni (U_7,V_7)\arrow[r,mapsto]& (V_7,-k+ U_7 V_7)\in\mathcal{U}_{0,0}\\[-1em]
            \CW_7^{(1)} \ni(u_7 , v_7)\arrow[r,mapsto]& (u_7 v_7, -k+v_7)\in\mathcal{U}_{0,0}
        \end{tikzcd}$ \\\hline 
        $E_7 \ni b_8:(U_7,V_7)=(t,0)$\hspace{0.5cm}  & $\begin{tikzcd}[row sep =tiny]
            \CW_8^{(0)} \ni (U_8,V_8)\arrow[r,mapsto]& (t+V_8, U_8 V_8)\in\CW_7^{(0)}\\[-1em]
            \CW_8^{(1)} \ni(u_8 , v_8)\arrow[r,mapsto]& (t+u_8 v_8, v_8)\in\CW_7^{(0)}
        \end{tikzcd}$  
    \end{tabular}}
    \caption{Blow-up data for the space of initial conditions for the mapping \ref{eq:QRTphi}.}
    \label{fig:blow-updataQRT}
\end{table}

           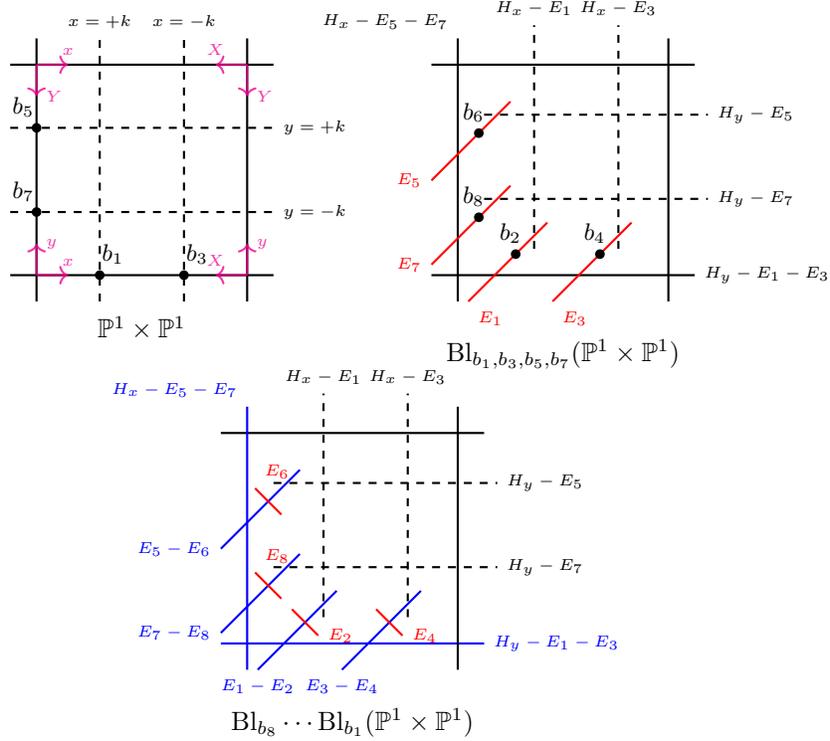
\begin{figure}[htb]
\centering
    \begin{tikzpicture}[scale=.7,basept/.style={circle, draw=black!100, fill=black!100, thick, inner sep=0pt,minimum size=1.0mm}] 
    \node at (0,-3) {$\BP^1\times\BP^1$};
    \draw[thick,black] (-2.5,-2)  -- (+2.5,-2)  node[pos=0,left] {} ;
    \draw[thick,black] (-2.5,+2) -- (+2.5,+2) node[pos=0,left,black] {} node[pos=0.5,above] {};
    \draw[thick,black] (-2,-2.5) -- (-2,+2.5) node[pos=0,below] {} ;
    \draw[thick,black] (+2,-2.5) -- (+2,+2.5) node[pos=0.4,left,black] {}  ; 
    \draw[->,thick, opacity=.8,magenta] (2,2) -- (2,1.4) node[pos=1,right] {\tiny $Y$};
    \draw[->,thick, opacity=.8,magenta] (2,2) -- (1.4,2) node[pos=1,above] {\tiny $X$}; 
    \draw[->,thick, opacity=.8,magenta] (-2,2) -- (-2,1.4) node[pos=1,right] {\tiny $Y$};
    \draw[->,thick, opacity=.8,magenta] (-2,2) -- (-1.4,2) node[pos=1,above] {\tiny $x$}; 
    \draw[->,thick, opacity=.8,magenta] (-2,-2) -- (-2,-1.4) node[pos=1,right] {\tiny $y$};
    \draw[->,thick, opacity=.8,magenta] (-2,-2) -- (-1.4,-2) node[pos=1,above] {\tiny $x$}; 
    \draw[->,thick, opacity=.8,magenta] (2,-2) -- (2,-1.4) node[pos=1,right] {\tiny $y$};
    \draw[->,thick, opacity=.8,magenta] (2,-2) -- (1.4,-2) node[pos=1,above] {\tiny $X$}; 
    \node (b3) at (+.8,-2) [basept,label={[xshift=5pt, yshift = -2 pt] \small $b_{3}$}] {};
    \node (b1) at (-.8,-2) [basept,label={[xshift=5pt, yshift = -2 pt] \small $b_{1}$}] {};
    \node (b5) at (-2,+.8) [basept,label={[xshift=-5pt, yshift = -2 pt] \small $b_{5}$}] {};
    \node (b7) at (-2,-.8) [basept,label={[xshift=-5pt, yshift = -2 pt] \small $b_{7}$}] {};

    \draw[thick, dashed, black] ($(b1) - (0,.5)$) -- ($(b1) + (0,4.5)$) node[pos=1,above] {\tiny $x=+k$};
    \draw[thick, dashed, black] ($(b3) - (0,.5)$) -- ($(b3) + (0,4.5)$) node[pos=1,above] {\tiny $x=-k$};
    \draw[thick, dashed, black] ($(b5) - (.5,0)$) -- ($(b5) + (4.5,0)$) node[pos=1,right] {\tiny $y=+k$};
    \draw[thick, dashed, black] ($(b7) - (.5,0)$) -- ($(b7) + (4.5,0)$) node[pos=1,right] {\tiny $y=-k$};

    \begin{scope}[xshift=+8cm]
    \node at (0,-3.5) {$\Bl_{b_1,b_3,b_5,b_7}(\BP^1\times\BP^1)$};
    \draw[thick,black] (-2.5,-2)  -- (+2.5,-2)  node[pos=1,right] { \tiny $H_y - E_1 - E_3$} ;
    \draw[thick,black] (-2.5,+2) -- (+2.5,+2) node[pos=0,left,black] {} node[pos=0.5,above] {};
    \draw[thick,black] (-2,-2.5) -- (-2,+2.5) node[pos=1,above left] { \tiny $H_x - E_5 - E_7$} ;
    \draw[thick,black] (+2,-2.5) -- (+2,+2.5) node[pos=0.4,left,black] {}  ; 
    \node (b3) at (+.8,-2)  {};
    \node (b1) at (-.8,-2)  {};
    \node (b5) at (-2,+.8)  {};
    \node (b7) at (-2,-.8)  {};
    \draw[thick, red] ($(b3)-(.5,0) - (.5,.5)$) -- ($(b3) -(.5,0)+ (1,1)$) node [pos=0, below right] {\tiny $E_3$};
    \draw[thick, red] ($(b1) -(.5,0)- (.5,.5)$) -- ($(b1)-(.5,0) + (1,1)$) node [pos=0, below right] {\tiny $E_1$};
    \draw[thick, dashed] ($(b1) +(.25,.75) - (0,0.25)$) -- ($(b1)+(.25,.75)+(0,4)$) node [pos=1, above] {\tiny $H_x-E_1$};
    \draw[thick, dashed] ($(b3) +(.25,.75) - (0,0.25)$) -- ($(b3)+(.25,.75)+(0,4)$) node [pos=1, above] {\tiny $H_x-E_3$}; 
    \draw[thick, red] ($(b5)-(0,.5) - (.5,.5)$) -- ($(b5) -(0,.5)+ (1,1)$) node [pos=0, left] {\tiny $E_5$};
    \draw[thick, red] ($(b7) -(0,.5)- (.5,.5)$) -- ($(b7)-(0,.5) + (1,1)$) node [pos=0, left] {\tiny $E_7$};
    \draw[thick, dashed] ($(b5) +(.75,.25) - (0.25,0)$) -- ($(b5)+(.75,.25)+(4,0)$) node [pos=1, right] {\tiny $H_y-E_5$};
    \draw[thick, dashed] ($(b7) +(.75,.25) - (0.25,0)$) -- ($(b7)+(.75,.25)+(4,0)$) node [pos=1, right] {\tiny $H_y-E_7$}; 
    \node (b2) at ($(b1)+  (-.1,.4)$) [basept,label={[xshift=-2pt, yshift = -2 pt] \small $b_{2}$}]  {};
    \node (b4) at ($(b3)+  (-.1,.4)$) [basept,label={[xshift=-2pt, yshift = -2 pt] \small $b_{4}$}]  {};
    \node (b6) at ($(b5)+  (.4,-.1)$) [basept,label={[xshift=-2pt, yshift = -2 pt] \small $b_{6}$}]  {};
    \node (b8) at ($(b7)+  (.4,-.1)$) [basept,label={[xshift=-2pt, yshift = -2 pt] \small $b_{8}$}]  {};
    \end{scope}

        \begin{scope}[xshift=+4cm, yshift=-7cm]
    \node at (0,-3.5) {$\Bl_{b_8}\cdots\Bl_{b_1}(\BP^1\times\BP^1)$};
    \draw[thick,blue] (-2.5,-2)  -- (+2.5,-2)  node[pos=1,right] { \tiny $H_y - E_1 - E_3$};
    \draw[thick,black] (-2.5,+2) -- (+2.5,+2) node[pos=0,left,black] {} node[pos=0.5,above] {};
    \draw[thick,blue] (-2,-2.5) -- (-2,+2.5) node[pos=1,above left] { \tiny $H_x - E_5 - E_7$}  ;
    \draw[thick,black] (+2,-2.5) -- (+2,+2.5) node[pos=0.4,left,black] {}  ; 
    \node (b3) at (+.8,-2)  {};
    \node (b1) at (-.8,-2)  {};
    \node (b5) at (-2,+.8)  {};
    \node (b7) at (-2,-.8)  {};
    \draw[thick, blue] ($(b3)-(.5,0) - (.5,.5)$) -- ($(b3) -(.5,0)+ (1,1)$) node [pos=0, below] {\tiny $E_3-E_4$};
    \draw[thick, blue] ($(b1) -(.5,0)- (.5,.5)$) -- ($(b1)-(.5,0) + (1,1)$) node [pos=0, below] {\tiny $E_1-E_2$};
    \draw[thick, dashed] ($(b1) +(.25,.75) - (0,0.25)$) -- ($(b1)+(.25,.75)+(0,4)$) node [pos=1, above] {\tiny $H_x-E_1$};
    \draw[thick, dashed] ($(b3) +(.25,.75) - (0,0.25)$) -- ($(b3)+(.25,.75)+(0,4)$) node [pos=1, above] {\tiny $H_x-E_3$}; 
    \draw[thick, blue] ($(b5)-(0,.5) - (.5,.5)$) -- ($(b5) -(0,.5)+ (1,1)$) node [pos=0, left] {\tiny $E_5-E_6$};
    \draw[thick, blue] ($(b7) -(0,.5)- (.5,.5)$) -- ($(b7)-(0,.5) + (1,1)$) node [pos=0, left] {\tiny $E_7-E_8$};
    \draw[thick, dashed] ($(b5) +(.75,.25) - (0.25,0)$) -- ($(b5)+(.75,.25)+(4,0)$) node [pos=1, right] {\tiny $H_y-E_5$};
    \draw[thick, dashed] ($(b7) +(.75,.25) - (0.25,0)$) -- ($(b7)+(.75,.25)+(4,0)$) node [pos=1, right] {\tiny $H_y-E_7$}; 
    \draw[thick, red] ($(b1)+  (-.1,.4)+(-.25,+.25)$) -- ($(b1)+  (-.1,.4)+(+.25,-.25)$) node[pos=1,right] {\tiny $E_2$};
    \draw[thick, red] ($(b3)+  (-.1,.4)+(-.25,+.25)$) -- ($(b3)+  (-.1,.4)+(+.25,-.25)$) node[pos=1,right] {\tiny $E_4$};
    \draw[thick, red] ($(b5)+  (.4,-.1)+(-.25,+.25)$) -- ($(b5)+  (.4,-.1)+(+.25,-.25)$) node[pos=0,above right] {\tiny $E_6$};
    \draw[thick, red] ($(b7)+  (.4,-.1)+(-.25,+.25)$) -- ($(b7)+  (.4,-.1)+(+.25,-.25)$) node[pos=0, above right] {\tiny $E_8$};
    
    \end{scope}

\end{tikzpicture}
\caption{Surface $S=\Bl_{b_8}\cdots \Bl_{b_1}(\BP^1\times\BP^1)$ forming the space of initial conditions for the mapping \eqref{eq:QRTphi}. Blue indicates $-2$ curves. 
}
\label{fig:surfaceQRT}
\end{figure}
        It is important to note that resolving the indeterminacies of $\varphi$ and $\varphi^{-1}$ is a priori not sufficient to ensure that the lift $\tilde{\varphi}$ of $\varphi$ to $S$ is an automorphism. 
    In this example, we have that $\varphi$ sends  the points in $\ind\varphi^{-1}$  to $\ind\varphi$, and the mapping exhibits singularity confinement along the same lines as explained in \Cref{rem:singularities}.
    Precisely, we have
    \[
    \begin{tikzcd}[row sep =tiny]
        \ind\varphi^{-1} \arrow[r,"\varphi"]&\ind \varphi\\
        b_7\arrow[r,mapsto]&b_1\\
        b_5\arrow[r,mapsto]&b_3,
    \end{tikzcd}
    \] 
    as shown in \Cref{fig:singpattern}.
    This ensures that $\varphi$ lifts to an automorphism under the resolution of $\ind \varphi$ and $\ind \varphi^{-1}$. 
    The fact that $\tilde{\varphi}$ is an automorphism is verified by calculations in the local coordinates given in \Cref{fig:blow-updataQRT}. 
\end{proof} 
    \begin{figure}[htb] 
\begin{center}
\begin{tikzpicture}[scale=.2]
\draw (1, -1) -- node[midway,left]{\small$~~~~S~~~~~~~~~~~$}(1,5) node[above]{}
(8,-1) -- (8,5) node[above]{\small$E_8$} 
(15,-1) -- (15,5) node[above]{\small$E_2$} 
(22,-1) -- (22,5) node[above]{} ;
\draw (1, -9) node[below]{\small$x=-k$}-- node[midway, left]{\small$\BP^1\times\BP^1~~~~~~~~$} (1,-3)
(22, -9) node[below]{\small$y=k$}-- (22,-3);
\filldraw 
(8,-8) circle (4pt) node[below]{\small$b_7$}
(15,-8) circle (4pt) node[below]{\small$b_1$}; 
\draw[->]
(3,2) -- node[above,midway]{\small$\tilde{\varphi}$} (6,2);
\draw[->]
(10,2) -- node[above,midway]{\small$\tilde{\varphi}$} (13,2);
\draw[->]
(17,2) -- node[above,midway]{\small$\tilde{\varphi}$} (20,2);
 
\draw[->]
(3,-8) -- node[above,midway]{} (6,-8);
\draw[->]
(10,-8) -- node[above,midway]{} (13,-8);
\draw[->]
(17,-8) -- node[above,midway]{} (20,-8);
 
\draw[->]
(8,-3) -- node[left,midway]{\small$\varepsilon$} (8,-5);
\draw[->]
(15,-3) -- node[left,midway]{\small$\varepsilon$} (15,-5);
\end{tikzpicture}
\caption{Movement of $\ind\varphi^{-1}$, under $\varphi$ in \Cref{eq:QRTphi} and isomorphism between exceptional curves.}
\label{fig:singpattern}
\end{center}
\end{figure}

 \Cref{th:blow-upformula} implies that the surface $S = \Bl_{b_8}\cdots \Bl_{b_1} (\BP^1 \times \BP^1)$ has 
    \begin{equation} \label{eq:picbasisQRT}
        \Pic(S) = \langle H_x, H_y, E_1, \ldots , E_8\rangle_{\BZ}, 
    \end{equation}
    where $H_x$ and $H_y$ are (the classes of) the fibres of the  two canonical  projections from $\BP^1 \times \BP^1$ to the $\BP^1$ factors.
    We give a pictorial description of $S$ in \Cref{fig:surfaceQRT}.

\begin{exercise}
    Show that the surface $S$ in \Cref{prop:QRTspaceofICs} has $\dim |-K_S|=1$, with the divisors in $|-K_S|$  being the strict transforms under $\varepsilon = \pi_{b_1}\circ  \cdots\circ \pi_{b_8}$ of members of the pencil of biquadratic curves
    \begin{equation} \label{pencil}
    \Set{ \!V( \lambda_0(k^2 - x^2 -y^2 +2 t x y ) + \lambda_1 x^2 y^2) \subset \BP^1 \times \BP^1 ~|~  [\lambda_0,\lambda_1]\in \BP^1 }\cong\BP^1.
\end{equation}
\end{exercise}

\begin{exercise} \label{ex:pushforwardQRT}
    For the automorphism $\tilde{\varphi}$ of $S$ lifted from $\varphi$ in \Cref{eq:QRTphi}, compute the induced pushforward
    \begin{equation*}
        \tilde{\varphi}_* : \Pic(S) \rightarrow \Pic(S),
    \end{equation*}
    in terms of the basis $H_x,H_y,E_1,\dots,E_8$ in \Cref{eq:picbasisQRT}.
    \\\textbf{Hint:}
    Since $\tilde{\varphi}$ is an isomorphism, the pushforward of the class of a prime divisor is just the class of its image under $\tilde{\varphi}$. So, one can complete the exercise by computing the images of sufficiently many divisors under $\tilde{\varphi}$, using coordinates in \Cref{fig:blow-updataQRT}. It will be sufficient to consider the ones marked in red and blue in \Cref{fig:surfaceQRT}.
\end{exercise}
    
\begin{exercise}
    Construct a space of initial conditions along the same lines as in \Cref{prop:QRTspaceofICs} for the mapping
    \begin{equation} \label{eq:autqP1}
\varphi : \BP^1 \times \BP^1 \dashrightarrow \BP^1 \times \BP^1, \quad
(x,y) \mapsto (\bar{x}, \bar{y}) = \left( \frac{a+b x}{x^2y}, x\right)\!,
    \end{equation}
    where $a,b \in \BC\setminus \{0\}$.
\end{exercise}

\begin{exercise} \label{ex:qP1}
    Instead of the birational transformation $\varphi\in\Bir(\BP^1\times\BP^1)$ in \Cref{eq:autqP1}, consider the sequence of birational mappings $\varphi_n\in\Bir(\BP^1\times\BP^1)$, $n\in \BZ$, defined by 
\begin{equation} \label{eq:qP1} 
\varphi_n : \BP^1 \times \BP^1 \dashrightarrow \BP^1 \times \BP^1, \quad
(x,y) \mapsto (\bar{x}, \bar{y}) = \left( \frac{a+b q^n x}{x^2 y},  x \right)\!, 
\end{equation}
where $a,b \in \BC\setminus \{0\}$, and $q\in\BC$, $\abs{q}\neq1$.
Construct a space of initial conditions for this (sequence of) mapping(s). That is, find a (sequence of) birational morphisms $\varepsilon_n: S_n \rightarrow\BP^1\times\BP^1$, with $S_n$ a (sequence of) smooth projective rational surface(s), such that $\varphi_n$ lifts to an isomorphism $\tilde{\varphi}_n = \varepsilon_{n+1}^{-1}\circ \varphi_n\circ \varepsilon_n : S_n \rightarrow S_{n+1}$, with $S_n$ obtained via a sequences of eight blow-ups, each centred at a (possibly $n$-dependent) point. 
    \end{exercise}
\begin{remark}
    \Cref{ex:qP1} can be regarded similarly to how $\pain{I}$ was a non-autonomous analogue of the ODE \eqref{eq:weierstrassfirstorder}, and the mapping \eqref{eq:qP1} has a space of initial conditions formed of surfaces with only a single effective anti-canonical divisor, as anticipated  in \Cref{rem:p1surface}.
    In fact, \Cref{eq:qP1} is a discrete Painlev\'e equation of multiplicative ($q$-difference) type, sometimes called $q\pain{I}$, and \Cref{eq:autqP1} is its autonomous degeneration solved by elliptic functions.
\end{remark}

\subsection{Generalised Halphen surfaces}
\label{subsec:generalisedHalphensurfaces} 

 The starting point for the Sakai framework is a definition of surfaces which generalise those forming the spaces of initial conditions for Painlev\'e differential equations as constructed by Okamoto \cite{Okamoto1979}.
 For the remainder of this section, by surface we mean a smooth projective rational surface.

\begin{definition}[Generalised Halphen surface \cite{Sakai2001}]
\label{def:GHS}
    A surface $S$ is called a \emph{generalised Halphen surface} if it has an effective anti-canonical divisor $D \in |-K_S|$ such that if $D= \sum_i m_i D_i$, $m_i>0$ is its  decomposition as a linear combination of prime divisors then $D_i\cdot K_S = 0$ for all indices $i$.\footnote{This condition is referred to in \cite{Sakai2001} as $D$ being of canonical type.}
    \end{definition}
A generalised Halphen surface $S$ has $\dim |-K_S|$ equal to either $0$ or $1$. In the latter case $S$ is a Halphen surface of index 1, which is a type of rational elliptic surface,  see \Cref{rem:ellipticsurfaces}, with $|-K_S|$ providing its elliptic fibration.
 
We have seen examples of these in \Cref{example:wp} and \Cref{subsec:QRT}.
In the former case, $S$ has a unique effective anti-canonical divisor and  corresponds to the type of surface associated with discrete and differential Painlev\'e equations. We therefore make the following definition.
\begin{definition}[Sakai surface] \label{def:sakaisurface}
A \emph{Sakai surface} $S$ is a generalised Halphen surface with $\dim |-K_S|=0$.
\end{definition}

\begin{exercise} \label{ex:canonicaltype}
     Consider the surface $S=\Bl_{b_9}\cdots \Bl_{b_1}\BP^2$ constructed in \Cref{example:wp}.
     Verify that $D = \sum_{i=0}^{8} m_i D_i \in|-K_S|$ as given in \Cref{eq:Dwp} is such that $D_i.D=0$ for all $i=0,\ldots,8$, so $S$ is a generalised Halphen surface. 
\end{exercise} 
Sakai surfaces are associated with affine root systems, so we begin with some basic facts that will be used when we formalise this association in \Cref{subsec:classificationofSakaisurfaces}. 
We have the following from {\cite[Proposition 2]{Sakai2001}}, the proof of which we omit for brevity.
\begin{prop} \label{prop:rankPic10}
Let $S$ be a generalised Halphen surface. 
\begin{enumerate}
    \item The Picard group $\Pic(S)$ has $\rk \Pic(S) = 10$.  
    \item The surface $S$ admits $\BP^2$ as a minimal model, i.e. there exists a birational morphism $\pi : S \rightarrow \BP^2$.
\end{enumerate}
\end{prop}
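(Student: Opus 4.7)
For (1), my plan is to apply Noether's formula (\Cref{thm:noetherformula}). The canonical-type hypothesis $K_S \cdot D_i = 0$ together with $-K_S \sim \sum_i m_i D_i$ gives $K_S^2 = -\sum_i m_i (K_S \cdot D_i) = 0$. Since $S$ is rational, $\chi(\OO_S) = 1$, and by Hodge theory $b_1(S) = 2q(S) = 0$, hence $b_3(S) = 0$ by Poincar\'e duality. Noether's formula then yields $\chi_\mathrm{top}(S) = 12 \chi(\OO_S) - K_S^2 = 12$, so $b_2(S) = 10$. Finally, \Cref{prop:ratCOhomo} identifies $\Pic(S) \cong H^2(S, \BZ)$, giving $\rk \Pic(S) = 10$.

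For (2), my plan uses the classification of minimal rational surfaces at the end of \Cref{subset:ratsurf}: $S$ admits a birational morphism $\pi$ to either $\BP^2$ or some Hirzebruch surface $\BF_k$ with $k \neq 1$; since $\rk \Pic(S) = 10$, $\pi$ is not an isomorphism. The target $\BP^2$ is the conclusion. For target $\BF_0 = \BP^1 \times \BP^1$, factoring $\pi$ through any first blow-up and applying the isomorphism $\Bl_p(\BP^1 \times \BP^1) \cong \Bl_{q,r}\BP^2$ of \Cref{exa:2points} produces the required morphism.

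For target $\BF_k$ with $k \geq 2$, let $C_0 \subset \BF_k$ denote the negative section: $C_0^2 = -k$ and $K_{\BF_k} \cdot C_0 = k - 2$ by adjunction. Iterating \Cref{lemma:strictmult} and \Cref{th:blow-upformula} along a factorisation of $\pi$, the strict transform $\tilde C_0$ satisfies
\begin{equation*}
K_S \cdot \tilde C_0 = (k-2) + \sum_j m_j \geq k - 2,
\end{equation*}
where the $m_j \geq 0$ are multiplicities along strict transforms of $C_0$ at successive blow-up centres. For $k \geq 3$, the negative intersection $C_0 \cdot (-K_{\BF_k}) = 2 - k < 0$ forces $C_0$ to be a component of every effective representative in $|-K_{\BF_k}|$; pushing forward $D$ then makes $\tilde C_0$ a component of $D$, and canonical type gives $K_S \cdot \tilde C_0 = 0$, contradicting $k - 2 > 0$.

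The main obstacle is the boundary case $k = 2$, where $C_0 \cdot (-K_{\BF_2}) = 0$ and $|-K_{\BF_2}|$ contains representatives avoiding $C_0$ (such as $2 C_\infty$), so the pushforward argument breaks down. To overcome this, I apply the genus formula on $\tilde C_0 \cong \BP^1$: $-K_S \cdot \tilde C_0 = 2 + \tilde C_0^2 = -\sum_j m_j$. If any $m_j \geq 1$, this intersection is negative, forcing $\tilde C_0$ to appear in every effective representative of $|-K_S|$, and canonical type then forces $K_S \cdot \tilde C_0 = 0$, contradicting $K_S \cdot \tilde C_0 \geq 1$. Hence $\sum_j m_j = 0$, so no blow-up centre lies on (a strict transform of) $C_0$; in particular, any factorisation of $\pi$ can be rearranged (blow-ups at disjoint points commute) so that its first centre $q_1 \in \BF_2$ lies off $C_0$. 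Composing $S \to \Bl_{q_1}\BF_2$ with the elementary transformation $\Bl_{q_1}\BF_2 \to \BF_1$ (contracting the $(-1)$-curve given by the strict transform of the fibre through $q_1$, which produces $\BF_1$ because the image of $C_0$ becomes a $(-1)$-section when $q_1 \notin C_0$), and finally with $\BF_1 \to \BP^2$ contracting the $(-1)$-section, yields the required birational morphism $S \to \BP^2$.
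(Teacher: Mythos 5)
The paper does not actually prove this proposition --- it is quoted from Sakai with the proof explicitly omitted --- so there is no in-paper argument to compare against; your proof has to stand on its own, and it does. Part (1) is the standard argument: the canonical-type condition gives $K_S^2 = -\sum_i m_i (K_S.D_i) = 0$, rationality (built into the paper's convention that ``surface'' means smooth projective rational surface in this section) gives $\chi(\OO_S)=1$ and $b_1=b_3=0$, and Noether's formula together with \Cref{prop:ratCOhomo} yields $\rk \Pic(S)=b_2=10$. Part (2) is the more substantial half and is also correct: the reduction of the $\BF_0$ case to $\BP^2$ via \Cref{exa:2points}, the exclusion of $\BF_k$ for $k\geq 3$ by pushing the canonical-type divisor $D$ forward onto the negative section, and the treatment of $\BF_2$ by first showing that no blow-up centre meets a strict transform of $C_0$ and then passing through an elementary transformation to $\BF_1$ are all sound. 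Two small points worth spelling out: (i) in the chain $-K_S.\widetilde{C}_0 = 2+\widetilde{C}_0^2 = -\sum_j m_j$ the second equality uses $\sum_j m_j^2=\sum_j m_j$, which holds because $C_0$ and its successive strict transforms are smooth, so each $m_j\in\{0,1\}$ --- alternatively you can bypass the genus formula and read $K_S.\widetilde{C}_0=\sum_j m_j$ directly off your displayed formula at $k=2$; (ii) the ``rearrangement'' of the factorisation is unnecessary, since you have already shown that no centre at all lies on a strict transform of $C_0$, so the first centre $q_1\in\BF_2$ is automatically off $C_0$.
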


\begin{remark} \label{rem:P2vsP1P1compactification}
  The result in \Cref{ex:canonicaltype} is true for $D\in \Div(S)$ as constructed from the autonomous system \eqref{eq:autonomousP1system}, but it also holds for the unique effective anti-canonical divisor on the surface constructed from the non-autonomous system \eqref{eq:hamP1} in \Cref{rem:p1surface}, equivalent to $\pain{I}$.
  This Sakai surface, with components of $D$ intersecting according to the $E_8^{(1)}$ diagram, is the only type in the classification which  also does not  admit $\BP^1 \times \BP^1$ as a minimal model. 
    If a surface $S$ admits a birational morphism to $\BP^2$ which can be written as a composition of blow-ups $\pi = \pi_1 \circ\pi_2\circ \cdots \circ \pi_{n-1} \circ \pi_n :S\rightarrow \BP^2$ where $\pi_{2}$ and $\pi_1$ contract curves onto distinct points, then $S$ also admits a birational morphism to $\BP^1\times\BP^1$.
    This can be achieved by replacing the last two contractions $\pi_{2}$ and $\pi_1$  onto, say, $p,q\in \BP^2$ with the contraction of a single curve onto $r \in \BP^1\times\BP^1$, as in \Cref{exa:2points}.
    This is not possible for a Sakai surface with  $E_8^{(1)}$ configuration of components of $D$, since to obtain such a surface by blow-ups of $\BP^2$, all of the points must be infinitely near.
\end{remark}

\begin{lemma} \label{prop:ACdivconnected}
Let $S$ be a surface with an effective anti-canonical divisor. Then, for any $D= \sum_i m_i D_i \in |-K_S|$, the support $\supp D:= \cup_{i} D_i$ is connected.
\end{lemma}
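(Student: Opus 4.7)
The plan is to deduce connectedness of $\supp D$ from the vanishing of certain cohomology groups on a rational surface, via the standard short exact sequence associated with a closed subscheme. Concretely, I would start from
\[
0 \longrightarrow \OO_S(-D) \longrightarrow \OO_S \longrightarrow \OO_D \longrightarrow 0,
\]
and note that since $D \in |-K_S|$ we have $\OO_S(-D) \cong \omega_S$. The associated long exact sequence in cohomology therefore reads
\[
0 \to H^0(S,\omega_S) \to H^0(S,\OO_S) \to H^0(D,\OO_D) \to H^1(S,\omega_S) \to \cdots.
\]

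Next I would argue that, since $S$ is rational, $H^0(S,\omega_S) = 0$ and $H^1(S,\omega_S) = 0$. Indeed, $p_g$ and $q$ are birational invariants of smooth projective surfaces by \Cref{prop:brinva}, and an immediate computation on $\BP^2$ shows $p_g(\BP^2) = \dim H^0(\BP^2,\OO_{\BP^2}(-3)) = 0$ and $q(\BP^2) = \dim H^1(\BP^2,\OO_{\BP^2}) = 0$. Thus $H^0(S,\omega_S) \cong H^0(\BP^2,\omega_{\BP^2}) = 0$, and by Serre duality (\Cref{thm:SErredual}) together with the vanishing of $H^1(S,\OO_S)$, one gets $H^1(S,\omega_S) \cong H^1(S,\OO_S)^{\vee} = 0$. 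These vanishings were already recorded inside the proof of \Cref{prop:ratCOhomo}, so I would simply cite that argument.

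Substituting these vanishings into the long exact sequence, and using that $S$ is irreducible (hence connected) so that $H^0(S,\OO_S) \cong \BC$, yields
\[
H^0(D,\OO_D) \cong \BC.
\]
Finally, I would invoke the standard fact that a projective scheme $Z$ with $H^0(Z,\OO_Z) \cong \BC$ is connected: any decomposition $Z = Z_1 \sqcup Z_2$ into disjoint nonempty closed subschemes would yield $H^0(Z,\OO_Z) = H^0(Z_1,\OO_{Z_1}) \oplus H^0(Z_2,\OO_{Z_2})$, which has dimension at least $2$. Hence $D$ is connected as a closed subscheme of $S$, so its underlying topological space $\supp D$ is connected, as desired.

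There is no serious obstacle here; the only point that requires a little care is making sure the vanishings $H^0(S,\omega_S) = H^1(S,\omega_S) = 0$ are correctly justified for a general smooth projective rational surface (not just for the minimal models $\BP^2$ and the Hirzebruch surfaces), which is handled by birational invariance of $p_g$ and $q$ as noted above.
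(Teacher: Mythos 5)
Your proof is correct, and it reaches the conclusion by a slightly different route than the paper. Both arguments hinge on the same short exact sequence $0\to\OO_S(-D)\to\OO_S\to\OO_D\to 0$ and on the vanishings $H^1(S,\OO_S)=H^2(S,\OO_S)=0$ supplied by rationality of $S$ (which is part of the standing convention for this section and is genuinely needed). The difference is which portion of the long exact sequence is exploited. You extract $H^0(D,\OO_D)\cong H^0(S,\OO_S)\cong\BC$, using $\OO_S(-D)\cong\omega_S$ and $p_g(S)=q(S)=0$, and then invoke the standard criterion that a nonempty projective scheme whose structure sheaf has one-dimensional $H^0$ is connected; this is self-contained and requires no auxiliary statement about subdivisors. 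The paper instead shows, via Serre duality, that $\dim H^1(D,\OO_D)=1$ while $H^1(D',\OO_{D'})=0$ for every nonzero effective $D'$ strictly dominated by $D$ (since $\OO_S(K_S+D')$ is then the inverse of a nonzero effective divisor and hence has no sections), and derives a contradiction from the splitting $H^1(D,\OO_D)\cong H^1(D',\OO_{D'})\oplus H^1(D'',\OO_{D''})$ that a disconnection $D=D'+D''$ would force. Your argument is the more economical of the two; the paper's buys the extra fact $\dim H^1(D,\OO_D)=1$, i.e. that $D$ behaves like a curve of arithmetic genus one, which is consonant with \Cref{prop:Sakaisurface-2curves}. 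The only cosmetic point in your write-up is that $H^0(S,\omega_S)\cong H^0(\BP^2,\omega_{\BP^2})$ should be read as an equality of dimensions furnished by the birational invariance of $p_g$ in \Cref{prop:brinva}, not as an isomorphism induced by a morphism; the vanishing you need follows all the same.
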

\begin{proof}
The idea, following \cite{Sakai2001}, is to suppose that $\supp D$ is not connected and obtain a contradiction. 
In order to do so, we will need two facts.
For $D \in |-K_S|$, we have 
\begin{enumerate}
\item $\dim H^1(D, \OO_D) = 1$,
\item $H^1(D',\OO_{D'}) = 0$, for any $D'\in\Eff (S)$ such that $D'\begin{tikzpicture}[baseline=-0.8ex]
    \node at (0,0) {$\preceq$};
    \draw (-0.108,-0.18)--(0.108,-0.18); 
    \draw (-0.108,-0.21)--(0.108,-0.1); 
\end{tikzpicture}D$.
\end{enumerate}

To prove (1), we consider the exact sequence of sheaves 
\begin{equation}
0 \longrightarrow \OO_S(-D) \longrightarrow \OO_S \longrightarrow \OO_D \longrightarrow 0.
\end{equation}
We recall briefly that $\OO_S(-D)$ is the sheaf of rational functions on $S$ that vanish along $D$, so the exactness of the sequence is immediate. From this, we obtain the long exact sequence in cohomology, which includes the following:
\begin{equation} \label{eq:connectedproofLES}
H^1(S,\OO_S ) \longrightarrow H^1(D,\OO_D) \longrightarrow H^2(S,\OO_S(-D) ) \longrightarrow H^2(S,\OO_S).
\end{equation}
Now as $S$ is rational, by \Cref{prop:brinva} we have that $H^1(S,\OO_S) = H^2(S,\OO_S) = 0$, since $H^1(\BP^2,\OO_{\BP^2})=H^2(\BP^2,\OO_{\BP^2}) = 0$.
Then the exact sequence \eqref{eq:connectedproofLES} gives
\begin{equation}
H^1(D, \OO_D) \cong H^2(S, \OO_S(-D) ).
\end{equation}
By Serre duality, see \Cref{thm:SErredual}, we have 
\begin{equation}
H^2(S, \OO_S(-D)) \cong H^0(S, \OO_S( K_S+D) )= H^0(S, \OO_S) \cong \BC,
\end{equation}
as $D$ is a representative of the anti-canonical divisor class. 
So $\dim H^1(D, \OO_D) =1$ as required. 

To prove (2), we assume $D'\in\Eff(S)$ and $D'\begin{tikzpicture}[baseline=-0.8ex]
    \node at (0,0) {$\preceq$};
    \draw (-0.108,-0.18)--(0.108,-0.18); 
    \draw (-0.108,-0.21)--(0.108,-0.1); 
\end{tikzpicture}D$, and use the same argument with $D'$ in place of $D$ to deduce 
\begin{equation}
H^1(D',\OO_{D'}) \cong H^0(S, \OO_S(K_S + D') ).
\end{equation}
Now as $D$ is an anti-canonical divisor and $D'$ is an effective divisor such that $D'\begin{tikzpicture}[baseline=-0.8ex]
    \node at (0,0) {$\preceq$};
    \draw (-0.108,-0.18)--(0.108,-0.18); 
    \draw (-0.108,-0.21)--(0.108,-0.1); 
\end{tikzpicture}D$, we have that $K_S + D' < 0$. 
Therefore, as a line bundle, $K_S + D'$ has no non-vanishing sections and 
\begin{equation}
H^0(S, \OO_S(K_S + D')) = 0,
\end{equation}
so $H^1(D',\OO_{D'})=0$ as required.

Using (1) and (2) we can prove by contradiction that $D$ is connected. 
Suppose there exist effective divisors $D', D''$ such that $D = D' + D''$, $D' \cap D'' = \varnothing$. Then $\OO_D \cong \OO_{D'} \oplus \OO_{D''}$, and 
\begin{equation}
H^1(D,\OO_D) \cong H^1(D',\OO_{D'}) \oplus H^1(D'',\OO_{D''}).
\end{equation} 
By (1) and (2), the left-hand side is isomorphic to $\BC$, while each component of the direct sum on the right-hand side is zero, so we have obtained a contradiction.
\end{proof}

\begin{prop} \label{prop:Sakaisurface-2curves}
    Let $S$ be a Sakai surface and let  $D = \sum_i m_i D_i\in |- K_S|$ be its unique effective anti-canonical divisor.  
    \begin{enumerate}
        \item If $D$ is irreducible, then it has arithmetic genus $p_a(D)=1$, so $D$ is an elliptic curve, a rational curve with a single nodal singularity, or a rational curve with a single cusp.
        \item If $D$ is not irreducible, then all of its irreducible components $D_i$ are rational curves with $D_i^2=-2$. 
    \end{enumerate}
\end{prop}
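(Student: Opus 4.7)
The plan is to deduce both parts from the genus formula \eqref{eq:genusformula}, combined with two key inputs: the canonical type condition $D_i.K_S=0$ built into \Cref{def:GHS}, and the connectedness of $\supp D$ from \Cref{prop:ACdivconnected}. The unifying observation is that, since $D\sim -K_S$, any intersection number $D.C$ equals $-K_S.C$, and the latter is already controlled on every $D_i$ by the canonical type condition.

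For part (1), if $D$ is a prime divisor, then the canonical type condition gives $D.K_S=0$ directly, and $D^2=-K_S.D=0$ follows. The genus formula \eqref{eq:genusformula} therefore yields
\[
p_a(D) = 1 + \tfrac{1}{2}(D^2 + D.K_S) = 1.
\]
The conclusion then reduces to the classical classification of reduced irreducible projective curves of arithmetic genus one: a smooth such curve is an elliptic curve, while a singular one has positive $\delta$-invariant at its singular locus, and the identity $p_a = p_g + \delta$ forces $\delta = 1$ and $p_g = 0$. Since a singularity with $\delta=1$ is either a node or a cusp, the three listed possibilities exhaust the cases.

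For part (2), I would fix an irreducible component $D_i$ of $D$. The canonical type condition gives $D_i.D = -D_i.K_S = 0$, which rearranges to
\[
m_i D_i^2 = -\sum_{j\neq i} m_j D_i.D_j.
\]
Since $\supp D$ is connected by \Cref{prop:ACdivconnected} and has at least two irreducible components by assumption, there must exist some $j\neq i$ with $D_i.D_j\geq 1$, and all remaining intersections are non-negative because distinct prime divisors on a smooth surface meet non-negatively. Hence $D_i^2<0$. On the other hand, using $D_i.K_S=0$, the genus formula applied to $D_i$ gives $p_a(D_i) = 1 + \tfrac{1}{2}D_i^2 \geq 0$, so $D_i^2 \geq -2$. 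Combining the two bounds forces $D_i^2=-2$ and $p_a(D_i)=0$, and an irreducible projective curve of arithmetic genus zero is necessarily isomorphic to $\BP^1$. The only external input needed is the classification of arithmetic-genus-one irreducible curves invoked in part (1); everything else is intersection-theoretic bookkeeping forced by the canonical type condition and connectedness of the anti-canonical support, so I do not anticipate any serious obstacle.
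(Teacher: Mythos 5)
Your proposal is correct and follows essentially the same route as the paper's proof: part (1) via the genus formula with $D^2=D.K_S=0$, and part (2) by combining the canonical type condition $D_i.D=0$ with the connectedness of $\supp D$ from \Cref{prop:ACdivconnected} to get $D_i^2<0$, then using the genus formula and integrality of $p_a(D_i)$ to force $D_i^2=-2$ and $p_a(D_i)=0$. The only cosmetic difference is that you spell out the $\delta$-invariant classification of arithmetic-genus-one curves in part (1), which the paper leaves implicit.
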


\begin{proof}
The statement (1) follows from the genus formula \eqref{eq:genusformula} applied to $D$, which by assumption has $D.K_S=D^2=0$.

To prove statement (2), we first show that $D_i^2 <0$. 
Since $D$ is of canonical type we have that 
\begin{equation}
0 = D_i.D =m_i D_i^2+ \sum_{k \neq i} m_k \left(D_k . D_i\right)\!.
\end{equation}
The connectedness of $D$, from \Cref{prop:ACdivconnected}, implies that each component $D_i$ has positive intersection with at least one other, which implies that $\sum_{k \neq i} m_k \left(D_k . D_i\right) > 0$. 
Therefore we obtain $m_i  D_i^2<0$ and, since $m_i > 0$, it means
\begin{equation}
D_i^2 < 0.
\end{equation}
By applying the genus formula to $D_i$ we get
\begin{equation*}
p_a(D_i) = 1 + \frac{1}{2} \left( D_i . D_i + K_S . D_i \right) = 1 + \frac{1}{2} D_i \cdot D_i.
\end{equation*}
Since $p_a(D_i)$ must be a non-negative integer and $D_i^2<0$, it follows that $D_i^2=-2$. 
Further, $p_a(D_i)=0$ so $D_i$ is a rational curve.
\end{proof}

\subsection{Classification of Sakai surfaces}
 \label{subsec:classificationofSakaisurfaces} 

Sakai surfaces are classified in terms of associated affine root systems realised in their Picard groups.
We have seen some preliminary facts leading to these structures in \Cref{prop:rankPic10} and \Cref{prop:Sakaisurface-2curves}, and we will now recall some facts about affine root systems and associated Weyl groups necessary to state the Sakai classification.

\subsubsection{Affine root systems and affine Weyl groups}
\label{subsubsec:affinerootsystems}

We now give an account of the theory of affine root systems and associated affine Weyl groups relevant to discrete Painlev\'e equations, adapting parts of \cite{KAC1990} to this specific situation.
This theory is related to infinite-dimensional complex Lie algebras generalising the classical finite-dimensional semi-simple ones, and much of what follows is motivated by the study of these (affine) \textit{Kac--Moody algebras} \cite{KAC1990}, including the associated Weyl groups.
This theory begins with the following definition, from which the associated root systems and Weyl groups are developed.

\begin{definition} \label{def:generalisedCartanmatrix}
A \emph{generalised Cartan matrix} is a square matrix $A = (A_{ij})_{i,j=1}^n$ of size $n$ with
\begin{itemize}
\item $A_{ii} = 2,$\quad  for \quad $i=1,\ldots n$;
\item $A_{ij}$ non-positive integers for $i \neq j$;
\item $A_{ij}=0 \quad \iff \quad A_{ji}=0$.
\end{itemize}
\end{definition}
We first introduce the Weyl group associated with a generalised Cartan matrix purely as an abstract Coxeter group as follows.

\begin{definition} \label{def:Weylgroup}
Let $A = (A_{ij})_{i,j=1}^n$ be a generalised Cartan matrix of size $n$. 
The \emph{Weyl group} of $A$, denoted by $W(A)$, is the free group generated by the symbols $r_i$, $i=1,\ldots, n$, subject to the following relations, in which $e$ indicates the identity element:
\begin{itemize}
\item $r_i^2 = e$, \quad for \quad $i=1 ,\ldots, n$,
\item $(r_i r_j)^{m_{ij}}=e$, \quad for $i \neq j, \quad \text{where } m_{ij} = \begin{cases} 2 & \text{if } A_{ij}A_{ji} = 0, \\ 3 & \text{if } A_{ij}A_{ji} = 1, \\ 4 & \text{if } A_{ij}A_{ji} = 2, \\ 6 & \text{if } A_{ij}A_{ji} = 3.\end{cases} $
\end{itemize}
In particular, when $A_{ij}A_{ji} \geq 4$ there are no relations between $r_i$ and $r_j$.
\end{definition}

The group $W(A)$ arises naturally in the study of the Kac--Moody algebra associated to $A$.
For our purposes, it will be sufficient to understand it in terms of automorphisms of a complex vector space defined by  reflections about hyperplanes, which can be done without introducing the algebra itself. 

\begin{definition} \label{def:realisation}
A realisation of a generalised Cartan matrix $A$ of size 
$n$ is a triple $(\mathfrak{h}, \Pi, \Pi^{\vee})$, where 
\begin{itemize}
\item $\mathfrak{h}$ is a vector space over $\BC$, 
\item $\Pi = \Set{ \alpha_1, \ldots, \alpha_n} \subset \mathfrak{h}^* = \Hom_{\BC}(\mathfrak{h}, \BC)$  is a linearly independent set, whose elements are called \emph{simple roots},
\item $\Pi^{\vee} = \Set{ \alpha^{\vee}_1, \ldots, \alpha^{\vee}_n } \subset \mathfrak{h}$ is a linearly independent set, whose elements are called \emph{simple coroots},
\end{itemize}
subject to the conditions 
\begin{itemize}
\item $\langle \alpha_i^{\vee} , \alpha_j \rangle = A_{ij}$,\quad for  $i,j=1, \ldots n$, 
\item $n - \rk A = \dim \mathfrak{h}  - n,$ 
\end{itemize}
where $\langle \,~~,~ \rangle : \mathfrak{h} \times \mathfrak{h}^* \rightarrow \BC$ is the evaluation pairing.
\end{definition}
Although in \Cref{def:Weylgroup} the Weyl group was introduced as an abstract group generated by symbols $r_i$, with a realisation of $A$ we have the following representation of $W(A)$.
\begin{definition} \label{def:simplereflection}
    Let $(\mathfrak{h}, \Pi, \Pi^{\vee})$ be a realisation  of a generalised Cartan matrix $A$. 
    For each $i=1,\dots,n$, the \emph{simple reflection} $r_i \in \GL(\mathfrak{h}^*)$ corresponding to $\alpha_i$ is defined by
\begin{equation}
r_i (\lambda) = \lambda - \langle\alpha_i^{\vee} , \lambda \rangle \alpha_i, \qquad \lambda\in\mathfrak{h}^*.
\end{equation} 
\end{definition}
It can be checked that \Cref{def:simplereflection} gives a faithful representation of the Weyl group $W(A)$ on $\mathfrak{h}^*$, and we abuse notation by writing $r_i$ both for the symbol as in \Cref{def:Weylgroup} and for the element of $\GL(\mathfrak{h}^*)$ as in \Cref{def:simplereflection}.

\begin{example} \label{ex:A2rootsystem}
    Consider the matrix 
    \begin{equation}\label{eq:matrixA}
        A = \left(\begin{array}{cc}
            2 & -1 \\
            -1 & 2
        \end{array}
        \right)\!,
    \end{equation}
    which is a generalised Cartan matrix, coinciding with the usual Cartan matrix of the complex simple Lie algebra $\mathfrak{sl}(3,\BC)$.
    The Weyl group of $A$ is
    \begin{equation}
        W(A)=\langle r_1, r_2 ~|~ r_1^2=r_2^2=e, ~r_1 r_2 r_1=r_2 r_1r_2\rangle \cong \mathfrak{S}_3,
    \end{equation} 
    where $\mathfrak{S}_3$ is the symmetric group on 3 symbols.
    A realisation of $A$ can be constructed as follows.
    Since $n=2$ and $\rk A=2$, we require $\dim\mathfrak{h}=2$.
    Take $\mathfrak{h}$ to be a two-dimensional vector space over $\BC$ and let $e_1,e_2\in\mathfrak{h}$ form a basis. 
    Let $\lambda_1,\lambda_2 \in \mathfrak{h}^*$ be the elements forming the dual basis to this, so $\langle e_i,\lambda_j\rangle=\delta_{i,j}$ for $i,j = 1,2$, where $\delta_{i,j}$  is the Kronecker delta.
    Then, we have a realisation $(\mathfrak{h},\Pi,\Pi^{\vee})$, where 
    \begin{equation}
    \begin{aligned}
        \Pi&=\Set{\alpha_1,\alpha_2}, \quad &\text{with}&\quad  \alpha_1 = \sqrt{2} \lambda_1, \quad\alpha_2= - \tfrac{1}{\sqrt{2}}\lambda_1 + \tfrac{\sqrt{3}}{\sqrt{2}}\lambda_2, \\
        \Pi^{\vee}&=\Set{\alpha^{\vee}_1,\alpha^{\vee}_2}, \quad &\text{with}&\quad  \alpha^{\vee}_1 = \sqrt{2} e_1, \quad\alpha^{\vee}_2= - \tfrac{1}{\sqrt{2}}e_1 + \tfrac{\sqrt{3}}{\sqrt{2}}e_2,
    \end{aligned}
    \end{equation}
    Then, the elements $r_i$, for  $i=1,2$, correspond,  as in \Cref{def:simplereflection}, to  reflections about the hyperplanes in $\mathfrak{h}^*$ orthogonal to $\alpha_1,\alpha_2$ with respect to the Hermitian inner product $(~~|~~)$ on $\mathfrak{h}^*$, given by $(\lambda_i|\lambda_j)=\delta_{i,j}$. 
    We illustrate the Weyl group action on $\mathfrak{h}^*$ restricted to $\BR \lambda_1+\BR\lambda_2\subset \mathfrak{h}^*$ in \Cref{fig:A2rootsystem}, which recovers the usual realisation in a 2-dimensional Euclidean space of the root system and Weyl group associated with the matrix $A$ in \eqref{eq:matrixA}.
\end{example}

\begin{figure}[htb]
    \centering
\scalebox{0.8}{\begin{tikzpicture} 
    \draw[dashed, thick, ->] (0,0) -- (2,0) node [right] {\small $\alpha_1$};
    \draw[dashed, thick, ->](0,0) -- (-1,1.73205) node [above] {\small $\alpha_2$};
    \draw[dashed, thick, ->](0,0) -- (1,1.73205) node [above] {\small $\alpha_1+\alpha_2$};
    \draw[dashed, thick, ->] (0,0) -- (-2,0) node [left] {\small $-\alpha_1$};
    \draw[dashed, thick, ->](0,0) -- (1,-1.73205) node [below] {\small $-\alpha_2$};
    \draw[dashed, thick, ->](0,0) -- (-1,-1.73205) node [below] {\small $-\alpha_1-\alpha_2$}; 
    \draw[dotted, thick, blue](0,-2.2) -- (0,2.2);
    \draw[blue] (0,2.5) node {\small $r_1$};
    \draw[blue, <->] (-.3,2.2) to [bend right=-30] (.3,2.2) ; 
    \draw[dotted, thick, blue,rotate=-60](0,-2.2) -- (0,2.2);
    \draw[blue,rotate=-60] (0,2.5) node {\small $r_2$};
    \draw[blue, <->,rotate=-60] (-.3,2.2) to [bend right=-30] (.3,2.2) ; 
    \draw[dotted, thick, blue,rotate=60](0,-2.2) -- (0,2.2);
    \draw[blue,rotate=60] (0,2.7) node {\small $r_1r_2r_1$};
    \draw[blue, <->,rotate=60] (-.3,2.2) to [bend right=-30] (.3,2.2) ;    \end{tikzpicture} }   
\caption{Weyl group of $A$ in \Cref{ex:A2rootsystem}.}
    \label{fig:A2rootsystem}
\end{figure}

If we have two matrices $A_1,A_2$ with realisations $(\mathfrak{h}_1, \Pi_1, \Pi^{\vee}_1),(\mathfrak{h}_2, \Pi_2, \Pi^{\vee}_2)$ respectively, we obtain a realisation of the block matrix
\begin{equation} \label{eq:GCMdirectsummatrix}
\left(\begin{array}{cc}A_1 & 0 \\0 & A_2\end{array}\right)\!,
\end{equation}
given by 
\begin{equation} \label{eq:GCMdirectsumrealisation}
\left( \mathfrak{h}_1 \oplus \mathfrak{h}_2 , (\Pi_1 \times \{0\}) \cup (\{0\} \times \Pi_2), (\Pi^{\vee}_1 \times \{0\}) \cup (\{0\} \times \Pi^{\vee}_2) \right)\!.
\end{equation}
\begin{definition} \label{def:decomposableGCM}
    If a generalised Cartan matrix and a realisation can be written as a non-trivial direct sum as in \eqref{eq:GCMdirectsummatrix} and \eqref{eq:GCMdirectsumrealisation}, possibly after a reordering of indices, then it is called \emph{decomposable}. If not, it is called \emph{indecomposable}.
\end{definition}
\begin{remark}
    Note that a realisation of $A$ is unique up to isomorphism \cite[Prop. 1.1]{KAC1990}, so we attribute the notions in \Cref{def:decomposableGCM} to generalised Cartan matrices without reference to any realisation.
\end{remark}

\begin{prop}[{\cite[Th. 4.3]{KAC1990}}]
An indecomposable generalised Cartan matrix $A$ belongs to one of three classes, which are referred to as of \emph{finite}, \emph{affine} and \emph{indefinite} types respectively. 
 These are defined as follows, where the matrix $A$ is of size $n \times n$ and, following \cite[Ch. 4]{KAC1990}, for $\mathbf{u} =(u_1,\ldots,u_n)\in \BR^n$, we write $\mathbf{u} > 0$ (respectively $\mathbf{u} \geq 0)$ if $u_i>0$ (respectively $u_i\geq 0$) for all $i=1,\dots,n$.
\begin{itemize}
\item[(Fin)] \begin{itemize}
    \item $\det A \neq 0$,
    \item there exists $\mathbf{u} > 0$ such that $A \mathbf{u} > 0$, and
    \item $A\mathbf{v} \geq 0$ implies $\mathbf{v}>0$ or $\mathbf{v}=0$.
\end{itemize} 
\item[(Aff)] 
\begin{itemize}
    \item $\operatorname{corank} A = 1$,
    \item there exists $\mathbf{u} > 0$ such that $A \mathbf{u} = 0$, and
    \item $A\mathbf{v} \geq 0$ implies $A \mathbf{v} = 0$.
\end{itemize} 
\item[(Ind)] 
\begin{itemize}
    \item there exists $\mathbf{u} > 0$ such that $A \mathbf{u} < 0$, and
    \item $A\mathbf{v} \geq 0$ for $v \geq 0$ implies $A \mathbf{v}=0$.
\end{itemize} 
\end{itemize}
\end{prop}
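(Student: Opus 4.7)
The plan is to prove the trichotomy via a Perron--Frobenius style argument applied to $A$, exploiting the sign conditions from \Cref{def:generalisedCartanmatrix} together with indecomposability. First I would reformulate indecomposability graph-theoretically: associate to $A$ a graph $\Gamma(A)$ with vertices $\{1,\dots,n\}$ and an edge between $i$ and $j$ whenever $A_{ij}\neq 0$ (this is well-defined since $A_{ij}=0\Leftrightarrow A_{ji}=0$). Then $A$ is indecomposable if and only if $\Gamma(A)$ is connected.

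The heart of the argument is the following key lemma, which I would establish first: if $A$ is an indecomposable generalised Cartan matrix and $v\in\BR^n$ satisfies $v\ge 0$ and $Av\ge 0$, then either $v>0$ or $v=0$. To prove it, let $I=\{i:v_i=0\}$ and suppose $I$ is a proper non-empty subset. For $i\in I$, the $i$-th component of $Av$ is $\sum_{j\notin I} A_{ij}v_j$; this is $\le 0$ because $A_{ij}\le 0$ for $i\neq j$ and $v_j\ge 0$, but also $\ge 0$ by hypothesis, hence each term vanishes. Since $v_j>0$ for $j\notin I$, this forces $A_{ij}=0$ for all $i\in I$, $j\notin I$, which contradicts the connectedness of $\Gamma(A)$. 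The same lemma applied to $A^T$ (which is also indecomposable) gives the dual statement.

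Next I would derive the trichotomy. Consider the closed convex cone $K=\{u\in\BR^n: u\ge 0,\ Au\ge 0\}$. Three cases arise. If $K$ contains a $u$ with $Au>0$ strictly, then by continuity a neighbourhood of $u$ also lies in $K$, and a separation/convex-duality argument (Gordan's or Stiemke's alternative) combined with the lemma shows $A$ is non-singular and any solution of $Av\ge 0$ with $v\ge 0$ is strictly positive, yielding the finite case. If $K$ contains a nonzero vector $u$ with $Au\neq 0$ having some component equal to zero, apply the lemma to $Au$: since $Au\ge 0$ and $Au$ is not strictly positive, we get $Au=0$, contradicting our assumption; so in the remaining case the only non-strict elements of $K$ have $Au=0$. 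Then by the lemma $u>0$ is forced, and applying the lemma to $A^T$ yields a vector $w>0$ with $w^TA=0$; using both, one shows $\operatorname{corank}(A)=1$, giving the affine case. Finally, if no nonzero $u\ge 0$ has $Au\ge 0$, then by a Farkas-type theorem of alternatives there exists $u>0$ with $Au<0$, giving the indefinite case.

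The main obstacle will be the rank/corank analysis in the affine case: ruling out higher-dimensional kernels requires combining the lemma (applied to both $A$ and $A^T$) with a careful analysis of the image of the simplex $\{u\ge 0:\sum u_i=1\}$ under $A$, and the cleanest route is to show that any two strictly positive vectors in $\ker A$ must be proportional (by scaling the difference and using the lemma to derive a contradiction). The remaining verifications---that the cases are mutually exclusive and cover all indecomposable $A$---are then immediate from the lemma together with the standard separation theorems.
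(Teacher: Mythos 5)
First, a point of reference: the paper does not prove this proposition at all --- it is quoted from Kac \cite[Th.~4.3]{KAC1990} and used as a black box --- so there is no in-paper argument to compare against. What you have written is essentially a reconstruction of Kac's own proof from Chapter~4 of that book: your key lemma is precisely Kac's Lemma~4.2, your proof of it (isolating the zero set $I$ of $v$, using $A_{ij}\le 0$ for $i\ne j$ together with connectedness of the graph $\Gamma(A)$) is the standard one and is correct, and the convex-cone/theorem-of-the-alternative framework is the right machinery for the trichotomy.

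That said, one step in your case analysis is genuinely broken as written. In the affine case you say ``apply the lemma to $Au$: since $Au\ge 0$ and $Au$ is not strictly positive, we get $Au=0$.'' The lemma draws a conclusion about a vector $v$ from the two hypotheses $v\ge 0$ \emph{and} $Av\ge 0$; to apply it to the vector $Au$ you would need $A(Au)\ge 0$, which you do not have. The correct route (Kac's) is dual: the theorem of the alternative applied to the rows of $A$ yields, in the non-finite case, some $\lambda\ge 0$, $\lambda\ne 0$, with $\lambda^{T}A=0$; the key lemma applied to $A^{T}$ (which is again an indecomposable generalised Cartan matrix with the same graph) forces $\lambda>0$; and then for any $v$ with $Av\ge 0$ the identity $0=\lambda^{T}(Av)=\sum_i\lambda_i(Av)_i$, with all $\lambda_i>0$ and all $(Av)_i\ge 0$, forces $Av=0$. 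This also delivers $\operatorname{corank}A=1$ cleanly, since any two positive kernel vectors whose difference had a zero coordinate would violate the lemma. Two further steps are glossed rather than wrong: in the finite case the condition ``$Av\ge 0$ implies $v>0$ or $v=0$'' is claimed for arbitrary $v$, whereas your lemma only covers $v\ge 0$ (one must take $w=v+tu$ with $Au>0$ and $t\ge 0$ minimal such that $w\ge 0$, and rule out $t>0$); and in the indefinite case, producing a \emph{strictly} positive $u$ with $Au<0$ requires the alternative theorem combined with another use of the indecomposability lemma, not Farkas alone.
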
 

The indecomposable matrices from the class (Fin) account for all Cartan matrices associated with finite-dimensional simple complex Lie algebras. 
Each of these matrices can be encoded in a \textit{Dynkin diagram}, which is a finite graph consisting of a node for each index $i \in \{1,\ldots , n \}$, with nodes corresponding to $i$ and $j$ connected by $|A_{ij} A_{ji}|$ edges, marked with arrows pointing towards $i$ if $|A_{ij}| > |A_{ji}|$, and non-oriented otherwise. The Dynkin diagrams for the class (Fin) are given in \Cref{DDfin}.

\begin{figure}[htb]
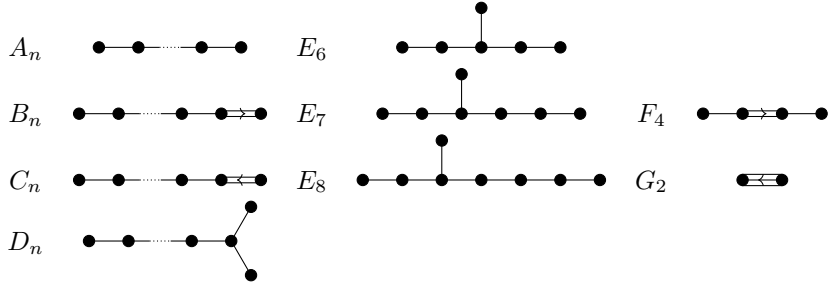

\begin{tabular}{ c c c c c c  }
 $A_n$ & \dynkin[scale=1.5]{A}{} & $E_6 $& \dynkin[scale=1.5]{E}{6} &	& \\ 
 $B_n$ & \dynkin[scale=1.5]{B}{} & $E_7 $& \dynkin[scale=1.5]{E}{7} &  $F_4$ & \dynkin[scale=1.5]{F}{4}\\  
 $C_n$ & \dynkin[scale=1.5]{C}{} & $E_8 $& \dynkin[scale=1.5]{E}{8} & $G_2$ & \dynkin[scale=1.5]{G}{2}\\
 $D_n$ & \dynkin[scale=1.5]{D}{} & 	&			&	&
\end{tabular}
\caption{Dynkin diagrams for indecomposable generalised Cartan matrices of finite type.}
\label{DDfin}
\end{figure}

In the Sakai framework for discrete Painlev\'e equations we are interested in the generalised Cartan matrices of affine type, and in particular those which are symmetric.
These correspond to (affine) Dynkin diagrams, sometimes known as extended Dynkin diagrams, which are \emph{simply laced} (meaning they only have single edges). 
From this point on we will focus on these matrices, which are relevant to discrete Painlev\'e equations, and present the corresponding Dynkin diagrams in \Cref{DDaff}. 

\begin{figure}[htb]
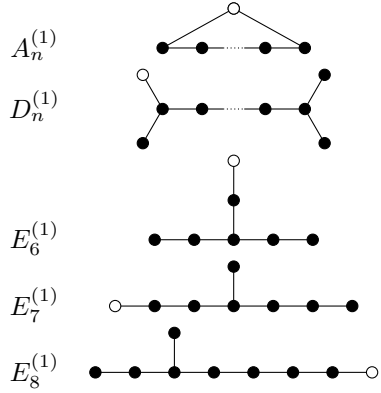

\begin{tabular}{ c c    }
$ A_n^{(1)} $& \dynkin[scale=1.5]{A}[1]{} \\
$ D_n^{(1)} $& \dynkin[scale=1.5]{D}[1]{}  \\
 $ E_6^{(1)} $ & \dynkin[scale=1.5]{E}[1]{6} \\
 $ E_7^{(1)} $& \dynkin[scale=1.5]{E}[1]{7} \\
 $ E_8^{(1)} $& \dynkin[scale=1.5]{E}[1]{8} 
\end{tabular}
\caption{Dynkin diagrams for symmetric generalised Cartan matrices of affine type.}
\label{DDaff}
\end{figure}
\begin{notation} \label{not:GCMaffinesimplylaced}
    For the remainder of this section, we let $A$ be a generalised Cartan matrix which is irreducible, of affine type, and symmetric, so with simply laced Dynkin diagram.
    We take $A = ( A_{ij} )_{i,j=0}^n$ to be of size $(n+1)\times(n+1)$ and choose a realisation $(\mathfrak{h},\Pi,\Pi^{\vee})$ with simple roots and coroots enumerated as $\Pi = \{ \alpha_0, \ldots, \alpha_n\}$, $\Pi^{\vee} = \{ \alpha^{\vee}_0, \ldots, \alpha^{\vee}_n\}$ respectively. 
    Importantly, by convention the index 0 corresponds to the non-filled node of the Dynkin diagram in \Cref{DDaff}.
    All of the constructions we will present in the remainder of this section depend, in principle, on the matrix $A$ and the choice of realisation, but we will suppress this dependence from the notation, e.g. writing $W=W(A)$, when there is no risk of confusion.  
\end{notation} 

\begin{definition} \label{def:setofroots}
    The \emph{root system} of $A$ is 
    $$\Phi \defeq W \,\Pi = \Set{ w (\alpha_i) ~|~ w \in W, \alpha_i\in \Pi}.$$
    Elements of $\Phi$ are called \emph{roots}. Strictly speaking, for $A$ as in \Cref{not:GCMaffinesimplylaced}, these are the \emph{real roots} in the sense of \cite{KAC1990}.
\end{definition}
\Cref{def:simplereflection} of the simple reflections ensures that $\Phi \subset \langle \alpha_0, \ldots, \alpha_n \rangle_{\BZ}\subset \mathfrak{h}^*$.
Next, as $A$ is of affine type, it is of corank $1$ and we have the following.
\begin{definition} \label{def:nullroot}
For $A$ as in \Cref{not:GCMaffinesimplylaced}, the \textit{null root} $\delta\in\mathfrak{h}^*$ is the element  
    \begin{equation} \label{nullroot}
    \delta = \sum_{i=0}^n m_i \alpha_i , \quad m_0=1,
    \end{equation}
where $(m_0,\ldots,m_n) \in \BZ^{n+1}$ is uniquely determined by the condition that $\sum_{j=0}^n m_j A_{ij}=0$ for all $i= 0,\ldots,n$  and $m_0=1$, so 
$\langle\alpha_i^{\vee}, \delta \rangle=0$
for $i=0,\ldots, n$.  
\end{definition}
The values of $m_i$ for all symmetric $A$ of affine type can be computed directly using its null space and are listed in \cite{KAC1990}.  
\begin{definition}
    \label{def:canonicalcentralelement}
    The \emph{canonical central element} $K \in \mathfrak{h}$ is the element
    \begin{equation} \label{canonicalcentralelement}
    K = \sum_{i=0}^n m^{\vee}_i \alpha^{\vee}_i, \quad m_0^{\vee}=1,
    \end{equation}
    where $(m^{\vee}_0,\ldots,m^{\vee}_n) \in \BZ^{n+1}$ is uniquely determined by the condition that $\sum_{i=0}^n m^{\vee}_i A_{ij}=0$ for all $j=0,\ldots n$,   and $m_0^{\vee}=1$, so 
    $\langle K , \alpha_i \rangle = 0$  
    for $i=0,\ldots, n$. 
\end{definition} 

Note that the definition of $(m^{\vee}_0,\ldots,m^{\vee}_n)$ for $A$ is the same as $(m_0,\ldots,m_n)$ for the transpose of $A$.
Since we are considering the case when $A$ is symmetric, we have $m^{\vee}_i = m_i$ for all $i=0,\ldots,n$.
The term canonical central element  comes from the fact that, when $\mathfrak{h}$ is considered as part of the Kac--Moody algebra associated with $A$, $K$ spans its centre, see \cite{KAC1990}. 
The conditions in \Cref{def:realisation} with our enumeration of $\Pi$ as in \Cref{not:GCMaffinesimplylaced} require that $\dim \mathfrak{h} = n+2$, so we extend $\Pi$, $\Pi^{\vee}$ to bases of $\mathfrak{h}^*$, $\mathfrak{h}$ as follows.

Fix a \emph{scaling element} $d \in \mathfrak{h}$ satisfying
\begin{equation}
\langle d, \alpha_i \rangle = \delta_{i,0} \quad \text{ for } i=0, \ldots, n. 
\end{equation}
The scaling element is determined up to addition of a constant multiple of $K$, and is linearly independent from $\Pi^{\vee}$.
This uniquely determines  an element $\Lambda_0 \in \mathfrak{h}^*$  by the conditions
\begin{equation}
\langle \alpha_i^{\vee}, \Lambda_0 \rangle = \delta_{i,0} \quad \text{ for } i=0, \ldots, n, \quad \langle d, \Lambda_0\rangle = 0.
\end{equation} 
\begin{lemma}
    The subsets $\Set{d, \alpha_0^{\vee}, \ldots, \alpha_n^{\vee}} \subset \mathfrak{h}$ and $\Set{\Lambda_0, \alpha_0, \ldots, \alpha_n} \subset \mathfrak{h}^*$ are bases over $\BC$.
\end{lemma}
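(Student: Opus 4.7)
The plan is to verify linear independence for each set, which suffices since both sets have cardinality $n+2$ and, from the realisation conditions in \Cref{def:realisation} combined with $\operatorname{corank} A = 1$ for $A$ of affine type, we have $\dim\mathfrak{h} = \dim\mathfrak{h}^{*} = (n+1) + ((n+1)-\operatorname{rank} A) = n+2$. In each case the relevant $\alpha_i$ (resp.\ $\alpha_i^{\vee}$) are linearly independent by the definition of a realisation, so the task reduces to showing that $d$ (resp.\ $\Lambda_0$) does not lie in the span of the simple coroots (resp.\ simple roots).

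For the subset $\{d,\alpha_0^{\vee},\ldots,\alpha_n^{\vee}\}\subset\mathfrak{h}$, I would suppose a relation $\mu d + \sum_{i=0}^{n} c_i\alpha_i^{\vee}=0$ and evaluate it on each $\alpha_j$ using the defining relations $\langle\alpha_i^{\vee},\alpha_j\rangle=A_{ij}$ and $\langle d,\alpha_j\rangle=\delta_{j,0}$. This yields $\mu\delta_{j,0}+\sum_i c_i A_{ij}=0$ for $j=0,\ldots,n$, or equivalently $A^{T}c = -\mu e_0$ where $c=(c_0,\ldots,c_n)^{T}$ and $e_0$ is the first standard basis vector. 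Using symmetry of $A$, this is $Ac = -\mu e_0$. The key structural input is that, by \Cref{def:nullroot}, $\ker A$ is spanned by $m=(m_0,\ldots,m_n)^{T}$ with $m_0=1$. Since $A$ is symmetric, $\operatorname{im}(A)=(\ker A)^{\perp}$ with respect to the standard bilinear pairing, so $e_0\in\operatorname{im}(A)$ would force $\langle e_0,m\rangle = m_0 = 0$, a contradiction. Hence $\mu=0$, and then $c$ lies in $\ker A$, so $c=\lambda m$ and $\sum_i c_i\alpha_i^{\vee}=\lambda K$ by the definition of $K$; the linear independence of the $\alpha_i^{\vee}$ forces $\lambda=0$.

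For the subset $\{\Lambda_0,\alpha_0,\ldots,\alpha_n\}\subset\mathfrak{h}^{*}$, the argument is entirely dual: suppose $\Lambda_0=\sum_i b_i\alpha_i$ and evaluate $\alpha_j^{\vee}$ on both sides, producing the system $(Ab)_j=\delta_{j,0}$, i.e.\ $Ab=e_0$. The same observation $e_0\notin\operatorname{im}(A)$ gives a contradiction, so $\Lambda_0$ is not in the span of the simple roots. A relation $\mu \Lambda_0 + \sum_i b_i\alpha_i=0$ with $\mu\neq 0$ reduces to this case after dividing by $-\mu$, and the case $\mu=0$ is excluded by the linear independence of the $\alpha_i$.

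The main subtlety is the structural fact that $e_0\notin\operatorname{im}(A)$, which is the single nontrivial ingredient and relies on the normalisation $m_0=1$ built into \Cref{def:nullroot}; once this is in hand, both statements follow from essentially the same short pairing argument. Everything else is bookkeeping, so I do not expect any genuine obstacle beyond correctly tracking the role of symmetry of $A$ and of the normalisations of $m$ and $m^{\vee}$.
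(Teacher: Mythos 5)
Your proof is correct; the paper states this lemma without proof (deferring to the construction of realisations in \cite{KAC1990}), and your argument --- reducing both halves to the single fact that $e_0\notin\operatorname{im}(A)$ because $\ker A=\BC m$ with $m_0=1$ and $\operatorname{im}(A)\subseteq(\ker A)^{\perp}$ --- is the standard verification and fills that gap cleanly. One cosmetic remark: once you have $\mu=0$ in the first part, the original relation $\sum_i c_i\alpha_i^{\vee}=0$ together with the linear independence of $\Pi^{\vee}$ already forces $c=0$, so the detour through $\ker A$ and the canonical central element $K$ is unnecessary (and only the inclusion $\operatorname{im}(A)\subseteq(\ker A)^{\perp}$, not the equality, is ever needed).
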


We can now define a symmetric bilinear form on $\mathfrak{h}^*$, in terms of which the Weyl group can be described. 
\begin{definition}\label{def:bil}
    Define the symmetric bilinear form ${(\,\,\,|\,\,\,)} : \mathfrak{h}^*\times\mathfrak{h}^*\rightarrow \BC$ by 
    \begin{equation} \label{bilinformh}
        \begin{aligned}
        (\alpha_i | \alpha_j ) &= A_{ij}, \\
        (\alpha_i | \Lambda_0 ) &= \delta_{i,0}, \qquad \quad \qquad \text{ for } i,j,=0,\ldots, n,\\
        (\Lambda_0 | \Lambda_0) &= 0.
        \end{aligned}
    \end{equation} 
\end{definition}

\begin{lemma}
    The action of $W$ on $\mathfrak{h}^*$  defined by the simple reflections in \Cref{def:simplereflection} is written in terms of the symmetric bilinear form ${(\,\,\,|\,\,\,)}$ as
        \begin{equation} \label{eq:refformula}
        r_i(\lambda) = \lambda - ( \lambda | \alpha_i ) \alpha_i,
        \end{equation}
        for $\lambda\in \mathfrak{h}^*$.
    For any root $\alpha\in \Phi$, we have an element $r_{\alpha} \in W$, which acts on $\mathfrak{h}^*$ by the formula
    \begin{equation}
    r_{\alpha}(\lambda) = \lambda - (\lambda | \alpha ) \alpha,
    \end{equation}
    so in particular $r_i = r_{\alpha_i}$. 
    The element $r_{\alpha}$ is called the \emph{reflection associated to} $\alpha$. 
\end{lemma}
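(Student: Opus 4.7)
The plan is to establish the two assertions in sequence, using the symmetry of $A$ and the explicit definition of the bilinear form on the basis $\{\Lambda_0,\alpha_0,\ldots,\alpha_n\}$.

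First, for the simple reflection formula, I would show that the two linear functionals $\lambda\mapsto\langle\alpha_i^{\vee},\lambda\rangle$ and $\lambda\mapsto(\lambda\,|\,\alpha_i)$ coincide on $\mathfrak{h}^*$. Both are linear, so it suffices to test them on the basis $\{\Lambda_0,\alpha_0,\ldots,\alpha_n\}$. On $\alpha_j$ one gets $\langle\alpha_i^{\vee},\alpha_j\rangle = A_{ij}$ by \Cref{def:realisation}, while $(\alpha_j\,|\,\alpha_i) = A_{ji} = A_{ij}$ using \Cref{def:bil} together with the symmetry of $A$ (note this is the only place where the symmetry hypothesis on $A$ is used). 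On $\Lambda_0$ one has $\langle\alpha_i^{\vee},\Lambda_0\rangle = \delta_{i,0} = (\Lambda_0\,|\,\alpha_i)$ directly. Substituting into the definition in \Cref{def:simplereflection} yields the stated formula.

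Next, I would prove that the bilinear form $(\,\,|\,\,)$ is $W$-invariant, i.e.\ $(w(\mu)\,|\,w(\nu))=(\mu\,|\,\nu)$ for all $w\in W$ and $\mu,\nu\in\mathfrak{h}^*$. Since $W$ is generated by the simple reflections, it is enough to verify invariance under each $r_i$, and expanding $(r_i(\mu)\,|\,r_i(\nu))$ using the formula just established and the key identity $(\alpha_i\,|\,\alpha_i)=A_{ii}=2$ shows that the cross terms cancel.

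Finally, for an arbitrary root $\alpha = w(\alpha_i)\in\Phi$, define $r_{\alpha}\defeq w\, r_i\, w^{-1}\in W$. Applying this to $\lambda\in\mathfrak{h}^*$, using the simple reflection formula for $r_i$ acting on $w^{-1}(\lambda)$, and then the $W$-invariance of the bilinear form in the form $(w^{-1}(\lambda)\,|\,\alpha_i)=(\lambda\,|\,w(\alpha_i))=(\lambda\,|\,\alpha)$, gives
\[
r_{\alpha}(\lambda) = \lambda - (\lambda\,|\,\alpha)\,\alpha,
\]
as required. The residual subtlety is to check that $r_{\alpha}$ is independent of the choice of $w$ and $i$ with $\alpha=w(\alpha_i)$, but this follows at once from the faithfulness of the action of $W$ on $\mathfrak{h}^*$, since the displayed formula depends only on $\alpha$. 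The case $\alpha=\alpha_i$ with $w=e$ recovers $r_{\alpha_i}=r_i$. The only conceptual step that requires care is the $W$-invariance of $(\,\,|\,\,)$; the rest amounts to bookkeeping with the definitions.
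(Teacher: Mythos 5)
Your proof is correct. The paper states this lemma without proof (this part of the text is adapted from Kac's book), so there is no in-paper argument to compare against; your route is the standard one and is complete: you verify $\langle\alpha_i^{\vee},\cdot\,\rangle=(\,\cdot\,|\,\alpha_i)$ on the basis $\Set{\Lambda_0,\alpha_0,\ldots,\alpha_n}$ using the symmetry of $A$, deduce $W$-invariance of the form from $(\alpha_i\,|\,\alpha_i)=A_{ii}=2$ on generators, and define $r_{w(\alpha_i)}=w\,r_i\,w^{-1}$, with well-definedness settled by the faithfulness of the representation asserted just before the lemma. Note that the $W$-invariance and the conjugation formula you establish along the way are exactly the second and third items of \Cref{prop:weylgroupproperties}, so your argument also supplies those.
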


\begin{prop} \label{prop:weylgroupproperties}
    The Weyl group $W$ considered as a subgroup of $\GL(\mathfrak{h}^*)$ has the following properties:
    \begin{itemize}
        \item $w(\delta)=\delta$ for all $w\in W$, where $\delta$ is the null root as in \Cref{def:nullroot},
        \item $(w(\lambda_1) | w(\lambda_2) ) = (\lambda_1 | \lambda_2 ),$ for all $w \in W,~ \lambda_1, \lambda_2 \in \mathfrak{h}^*$,
        \item $r_{w(\alpha)} = w r_{\alpha} w^{-1}$, for all $w\in W$, $\alpha\in\Pi$.
    \end{itemize}
\end{prop}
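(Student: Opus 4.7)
The plan is to prove each of the three properties in turn, exploiting the fact that $W$ is generated by the simple reflections $r_0,\ldots,r_n$ so that it suffices to verify each statement either for generators or by a straightforward conjugation argument. The key computational input will be \Cref{eq:refformula}, which expresses $r_i$ entirely in terms of the bilinear form $(\,\,\,|\,\,\,)$ of \Cref{def:bil}, and the defining property of the coefficients $m_i$ appearing in the null root $\delta$.

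For the first property, since $W$ is generated by simple reflections, it is enough to show that $r_i(\delta)=\delta$ for each $i=0,\ldots,n$. Applying \Cref{eq:refformula} gives $r_i(\delta)=\delta-(\delta|\alpha_i)\alpha_i$, so the claim reduces to showing $(\delta|\alpha_i)=0$. Expanding using the definition \eqref{nullroot} of $\delta$ and the prescription $(\alpha_j|\alpha_i)=A_{ji}$ from \eqref{bilinformh}, we obtain $(\delta|\alpha_i)=\sum_{j=0}^n m_j A_{ji}$, which vanishes by the defining condition of $(m_0,\ldots,m_n)$ in \Cref{def:nullroot} (noting that $A$ is symmetric, so $\sum_j m_j A_{ij}=\sum_j m_j A_{ji}=0$).

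For the second property, since every element of $W$ is a product of simple reflections, a telescoping argument reduces the claim to invariance of $(\,\,\,|\,\,\,)$ under each $r_i$. A direct calculation, using \eqref{eq:refformula} and $(\alpha_i|\alpha_i)=A_{ii}=2$, yields
\begin{equation*}
(r_i(\lambda_1)|r_i(\lambda_2))=(\lambda_1|\lambda_2)-2(\lambda_1|\alpha_i)(\lambda_2|\alpha_i)+(\lambda_1|\alpha_i)(\lambda_2|\alpha_i)(\alpha_i|\alpha_i)=(\lambda_1|\lambda_2),
\end{equation*}
as required.

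For the third property, I would compute $(w r_\alpha w^{-1})(\lambda)$ directly. Applying \eqref{eq:refformula} to $r_\alpha$ gives
\begin{equation*}
(w r_\alpha w^{-1})(\lambda)=w\bigl(w^{-1}(\lambda)-(w^{-1}(\lambda)|\alpha)\alpha\bigr)=\lambda-(w^{-1}(\lambda)|\alpha)\,w(\alpha).
\end{equation*}
By the $W$-invariance of $(\,\,\,|\,\,\,)$ established in the previous step, $(w^{-1}(\lambda)|\alpha)=(\lambda|w(\alpha))$, so the right-hand side equals $\lambda-(\lambda|w(\alpha))w(\alpha)=r_{w(\alpha)}(\lambda)$. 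No step is really an obstacle here; the mild subtlety is the logical order, since the third property relies on the second, which in turn relies on identifying $(\alpha_i|\alpha_i)$ with the diagonal entry $A_{ii}=2$ of the generalised Cartan matrix, so the arguments must be carried out in the order presented.
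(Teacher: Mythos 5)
Your proof is correct, and there is nothing in the paper to compare it against: the paper states this proposition without proof, as standard background adapted from Kac's book. Your argument is exactly the standard one — reduction to the generators $r_i$, the identity $(\delta|\alpha_i)=\sum_j m_j A_{ji}=0$ (using the symmetry of $A$ to match the defining condition of \Cref{def:nullroot}), the cancellation via $(\alpha_i|\alpha_i)=A_{ii}=2$ for invariance of the form, and the conjugation computation $w r_\alpha w^{-1}(\lambda)=\lambda-(\lambda|w(\alpha))\,w(\alpha)$ — and your remark about the logical ordering is apt, since identifying the right-hand side with $r_{w(\alpha)}$ rests on the reflection formula of the preceding lemma together with the $W$-invariance established in the second item.
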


\begin{definition}
    The \emph{root lattice} is the free abelian group
    \begin{equation}
    Q=  \langle \alpha_0, \ldots, \alpha_n \rangle_{\BZ} \subset \mathfrak{h}^*,
    \end{equation}
    equipped with the symmetric bilinear form ${(\,\,\,|\,\,\,)}$ defined in \Cref{def:bil}, which is $\BZ$-valued on $Q$. 
\end{definition} 
The Weyl group $W(A)$ associated with $A$ of affine type is of infinite order and contains a subgroup of translations, which corresponds to a sublattice of $Q$ associated with an underlying finite root system. 
This allows Kac's formalism to recover the classical definition of an affine Weyl group. 
In this formulation, one takes the Weyl group associated to a finite root system realised in a Euclidean space as reflections about hyperplanes through the origin orthogonal to the simple roots, and extends this to include reflections about certain affine hyperplanes \cite{HumphreysReflectionGroups}. This can also be seen as extending the finite Weyl group by translations corresponding to its root lattice, see \Cref{prop:affineWeylgroup} below.

\begin{lemma} 
    The matrix $\nought{A}= \left(A_{ij} \right)_{i,j=1}^n$ obtained by deleting the $0$-th row and column from $A$ is a generalised Cartan matrix of finite type.
    From the realisation of $A$, we obtain one for $\nought{A}$, denoted by $\left( \nought{\mathfrak{h}}, \nought{\Pi}, \nought{\Pi}^{\vee} \right)$, where 
    \begin{equation}
    \begin{aligned}
    \nought{\Pi} &= \Set{ \alpha_1 , \ldots , \alpha_n }\!,  &\quad \nought{\mathfrak{h}}^* &= \Span_{\BC}\Set{ \alpha_1, \ldots , \alpha_n  }\!, \\
    \nought{\Pi}^{\vee} &= \Set{ \alpha^{\vee}_1 , \ldots , \alpha^{\vee}_n }\!, &\quad \nought{\mathfrak{h}} &= \Span_{\BC}\Set{ \alpha^{\vee}_1, \ldots , \alpha^{\vee}_n  }\!.
    \end{aligned}
    \end{equation}
\end{lemma}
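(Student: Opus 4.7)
The plan is to split the argument according to the two assertions: first, that $\nought{A}$ is a generalised Cartan matrix of finite type, and second, that $(\nought{\mathfrak{h}}, \nought{\Pi}, \nought{\Pi}^{\vee})$ as described is a realisation of $\nought{A}$ in the sense of \Cref{def:realisation}.

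For the first assertion, the three defining conditions of \Cref{def:generalisedCartanmatrix} are inherited verbatim by $\nought{A}$ upon deleting the $0$-th row and column, since each of them is an entry-wise condition. To establish that $\nought{A}$ is of finite type, the cleanest route is to appeal to the classification: since $A$ is indecomposable, symmetric, and affine, its Dynkin diagram is one of those in \Cref{DDaff}, and in each case removing the node labelled $0$ (the unfilled one) yields a Dynkin diagram in \Cref{DDfin}, corresponding to an indecomposable generalised Cartan matrix of finite type. As an intrinsic alternative, one can show directly that $\det\nought{A}\neq 0$: since $A$ is symmetric of corank one with $\ker A$ spanned by $\mathbf{u}=(m_0,\ldots,m_n)$ and $m_0=1$, the adjugate of $A$ has rank one and equals a nonzero scalar multiple of $\mathbf{u}\mathbf{u}^T$; the $(0,0)$-cofactor of $A$, which is precisely $\det\nought{A}$, then coincides with that nonzero scalar.

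For the second assertion, I would verify the axioms of \Cref{def:realisation} in turn. Linear independence of $\nought{\Pi}^{\vee}=\{\alpha_1^{\vee},\ldots,\alpha_n^{\vee}\}$ in $\nought{\mathfrak{h}}$ is immediate from the linear independence of $\Pi^{\vee}$ in $\mathfrak{h}$, and this set is a basis of $\nought{\mathfrak{h}}$ by construction, so $\dim\nought{\mathfrak{h}}=n$. To see that the restrictions $\alpha_i|_{\nought{\mathfrak{h}}}$, for $i=1,\ldots,n$, are linearly independent as functionals on $\nought{\mathfrak{h}}$, one observes that a relation $\sum_{i=1}^n c_i\,\alpha_i|_{\nought{\mathfrak{h}}}=0$ paired against each $\alpha_j^{\vee}$ yields $\sum_{i=1}^n c_i A_{ji}=0$, i.e.\ $\nought{A}^T\mathbf{c}=0$, whence $\mathbf{c}=0$ by invertibility of $\nought{A}$. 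The pairing identities $\langle\alpha_i^{\vee},\alpha_j\rangle=\nought{A}_{ij}$ are inherited verbatim from the realisation of $A$, and the dimension condition $n-\rk\nought{A}=\dim\nought{\mathfrak{h}}-n$ reduces to $0=0$ using $\rk\nought{A}=n$ and $\dim\nought{\mathfrak{h}}=n$.

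The only subtle point is the identification, made in the statement, of $\nought{\mathfrak{h}}^*$ with the subspace $\Span_{\BC}\{\alpha_1,\ldots,\alpha_n\}\subset\mathfrak{h}^*$: formally $(\nought{\mathfrak{h}})^*$ is the linear dual of $\nought{\mathfrak{h}}$, and the content of the identification is that the restriction map $\mathfrak{h}^*\to(\nought{\mathfrak{h}})^*$ carries the span of $\alpha_1,\ldots,\alpha_n$ bijectively onto $(\nought{\mathfrak{h}})^*$. This bijectivity is again a consequence of $\det\nought{A}\neq 0$, so it requires no extra work beyond what is already in place, and I do not foresee any genuine obstacle in the argument.
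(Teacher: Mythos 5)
The paper states this lemma without proof --- it is one of several facts in \Cref{subsubsec:affinerootsystems} adapted directly from \cite[Ch.~1, Ch.~4]{KAC1990} --- so there is no in-paper argument to compare yours against. Your verification is correct and is essentially the standard one: the entrywise conditions of \Cref{def:generalisedCartanmatrix} pass to principal submatrices, finite type follows from the case check on the diagrams of \Cref{DDaff} (removing the node $0$ always yields a diagram from \Cref{DDfin}), and the realisation axioms for $\nought{A}$ reduce to the invertibility of $\nought{A}$ together with the inherited pairing identities. Your handling of the identification of $\nought{\mathfrak{h}}^*$ with $\Span_{\BC}\Set{\alpha_1,\ldots,\alpha_n}\subset\mathfrak{h}^*$ via the restriction map is exactly the right point to be careful about.

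One caveat on your ``intrinsic alternative'': the computation showing $\det\nought{A}\neq 0$ is correct (the adjugate of a symmetric corank-one matrix is a nonzero multiple of $\mathbf{u}\mathbf{u}^T$, and the $(0,0)$-cofactor is $\det\nought{A}$), but a nonvanishing determinant alone does not place an indecomposable generalised Cartan matrix in class (Fin) --- indefinite-type matrices can also have nonzero determinant. To make the intrinsic route self-contained, use the null vector directly: with $\mathbf{u}'=(m_1,\ldots,m_n)>0$ one has $(\nought{A}\mathbf{u}')_i=-m_0A_{i0}\geq 0$ for all $i$, with strict inequality for at least one $i$ by indecomposability of $A$; this is incompatible with the defining conditions of both (Aff) and (Ind) in the trichotomy, forcing (Fin). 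Since your primary argument is the classification check, this does not affect the validity of the proof as a whole.
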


\begin{definition}\label{def:underlyingfiniteweylgroupandsetofrootsfinite}
    The subgroup 
    $$\nought{W} \defeq W(\nought{A}) =\langle r_1, \ldots, r_n \rangle  \subset W(A)$$ 
    is the \emph{underlying finite Weyl group} of $A$.  
    The \emph{underlying finite root system} of $A$, or the \emph{root system of} $\nought{A}$ is 
    $$\nought{\Phi} = \nought{W}\, \nought{\Pi} = \Set{ w(\alpha_i) ~|~ {w} \in \nought{W}, \alpha\in \nought{\Pi} }\!.$$
\end{definition}
The underlying finite Weyl group $\nought{W}$ is finite, as is the set $\nought{\Phi}$ of roots.
Once the enumeration of simple roots $\nought{\Pi}$ is fixed, any $\alpha \in \nought{\Phi}$ can be written as a linear combination of elements of $\nought{\Pi}$ with coefficients being either all non-negative integers or all non-positive integers, which gives us the decomposition
$$ \nought{\Phi} = \nought{\Phi}^+ \amalg \nought{\Phi}^-,$$
into the set $\nought{\Phi}^+$ of \emph{positive roots} and the set $\nought{\Phi}^-$ of \emph{negative roots}.

This induces a partial ordering on $\nought{\Phi}$ defined by 
    \begin{equation}\label{eq:orderingroots}
    \alpha\prec \alpha'\mbox{ if } \alpha'-\alpha \in \nought{\Phi}^+,
    \end{equation} 
which we remark is similar in spirit to the partial order on $\Div(X)$ defined in terms of $\Eff(X)$ in \Cref{eq:partialordereffectivedivisors}.

There is a unique \emph{highest root} $\theta$ of $\nought{\Phi}$ with respect to the ordering in \Cref{eq:orderingroots}, given by
\begin{equation}
\theta = \delta - \alpha_0 = \sum_{i=1}^l m_i \alpha_i,
\end{equation}
where $m_i$ are the same as in the expression \eqref{nullroot} for the null root. 
By composing reflections associated with $\theta$ and $\alpha_0$ one obtains
\begin{equation} \label{eq:r0rtheta}
r_0 r_{\theta}(\lambda) = \lambda + (\lambda | \delta) \theta - \left[ (\lambda | \theta) + (\lambda | \delta )\right] \delta,
\end{equation}
for $\lambda\in \mathfrak{h}^*$, where we have used the fact that $(\theta | \theta) = 2$, which follows from \Cref{prop:weylgroupproperties} and the definition of $\nought{\Phi}$ given in \Cref{def:underlyingfiniteweylgroupandsetofrootsfinite}. 
The element $r_0 r_{\theta}\in W$ is of infinite order, and the formula \eqref{eq:r0rtheta} motivates the following definition.

\begin{definition} \label{def:kactranslation}
    For $v \in \nought{\mathfrak{h}}^*$, the \emph{Kac translation} associated to $v$ is the element $T_v \in \GL(\mathfrak{h}^*)$ defined by the \emph{Kac translation formula}
\begin{equation} \label{kactranslation}
T_{v}(\lambda) = \lambda + (\lambda | \delta ) v - \left[ (\lambda | v) + (\lambda | \delta ) \frac{(v | v)}{2} \right] \delta.
\end{equation} 
\end{definition}

\begin{prop} \label{prop:kactranslationproperties}
    The Kac translation has the following properties:
    \begin{itemize}
        \item $T_u T_v = T_{u+v}$ for any $u,v \in \nought{\mathfrak{h}}^*$,
        \item $T_{w(v)} = w T_{v} w^{-1}$ for any $u, v \in \nought{\mathfrak{h}}^*$, $w \in \nought{W}$.
    \end{itemize}
\end{prop}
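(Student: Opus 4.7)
The plan is to establish both identities by direct substitution into the Kac translation formula \eqref{kactranslation}, after first recording two preliminary facts about the bilinear form that will reduce the algebra to something manageable.

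The preliminary step: I would observe that for the enumeration fixed in \Cref{not:GCMaffinesimplylaced}, the coefficients $m_i$ defining $\delta$ satisfy $\sum_j m_j A_{ij}=0$, so
\[
(\alpha_i|\delta)=\sum_j m_j(\alpha_i|\alpha_j)=\sum_j m_j A_{ij}=0
\]
for every $i=0,\ldots,n$. In particular $(\delta|\delta)=0$ and, since $\nought{\mathfrak{h}}^{*}=\Span_{\BC}\{\alpha_1,\ldots,\alpha_n\}$, one has $(v|\delta)=0$ for every $v\in\nought{\mathfrak{h}}^{*}$. This makes $\delta$ a null direction to which the ``translation'' correction is confined.

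For the first identity, I would compute $T_u\bigl(T_v(\lambda)\bigr)$ by applying \eqref{kactranslation} twice. Setting $c=(\lambda|\delta)$ and $A=(\lambda|v)+c(v|v)/2$, the preliminary step gives $(T_v(\lambda)|\delta)=c$ and $(T_v(\lambda)|u)=(\lambda|u)+c(v|u)$, because all terms proportional to $\delta$ and to $(v|\delta)$ drop out. Substituting back yields
\[
T_u T_v(\lambda)=\lambda+c(u+v)-\Bigl[(\lambda|u)+(\lambda|v)+c\tfrac{(u|u)}{2}+c(u|v)+c\tfrac{(v|v)}{2}\Bigr]\delta,
\]
and the bracket factors as $(\lambda|u+v)+c\,(u+v|u+v)/2$ by symmetry of the form, which is exactly the bracket produced by $T_{u+v}(\lambda)$. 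The second identity is even quicker: from \Cref{prop:weylgroupproperties} we have $w(\delta)=\delta$ and $w$-invariance of $(\cdot|\cdot)$, so applying $w$ to $T_v(w^{-1}\lambda)$ and then repeatedly using $(w^{-1}\lambda|\delta)=(\lambda|\delta)$, $(w^{-1}\lambda|v)=(\lambda|w(v))$, and $(v|v)=(w(v)|w(v))$ converts every occurrence of $v$ inside the formula into $w(v)$, giving precisely $T_{w(v)}(\lambda)$.

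There is no real obstacle; the only thing to be careful about is the symmetric bookkeeping in the first identity, where the cross term $c(u|v)$ must appear with the correct coefficient so that $\tfrac{(u|u)}{2}+(u|v)+\tfrac{(v|v)}{2}=\tfrac{(u+v|u+v)}{2}$. Once the two preliminary vanishings $(\delta|\delta)=0$ and $(v|\delta)=0$ for $v\in\nought{\mathfrak{h}}^{*}$ are in hand, both claims reduce to a one-line verification.
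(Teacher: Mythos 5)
Your proof is correct and rests on exactly the two facts the paper relies on --- $(v\,|\,\delta)=0$ for $v\in\nought{\mathfrak{h}}^{*}$ (hence $(\delta\,|\,\delta)=0$), together with the $W$-invariance of the form and $w(\delta)=\delta$ from \Cref{prop:weylgroupproperties} --- so it is essentially the argument the paper sketches in the remark immediately following the proposition. The only cosmetic difference is that the paper organises the verification by splitting $\mathfrak{h}^{*}$ into the part orthogonal to $\delta$, where $T_v(\beta)=\beta-(\beta\,|\,v)\delta$, and the complementary direction spanned by $\Lambda_0$, whereas you substitute directly into the full formula once; your bookkeeping, including the cross term $(\lambda\,|\,\delta)(u\,|\,v)$ needed to reassemble $(u+v\,|\,u+v)/2$, is correct.
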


\begin{remark}
For $\beta \in \mathfrak{h}^*$ such that $(\beta | \delta) = 0$, we have
\begin{equation} \label{translationorth}
T_v(\beta) = \beta - (\beta | v) \delta,
\end{equation}
so the properties in \Cref{prop:kactranslationproperties} can be deduced on this part of $\mathfrak{h}^*$ using the $W$-invariance of the symmetric bilinear form and the fact that $\delta$ is fixed by all $w\in W$, as in \Cref{prop:weylgroupproperties}.
The extra terms in formula \eqref{kactranslation} ensure that these properties hold on the rest of $\mathfrak{h}^*$, which is spanned by $\Lambda_0$. 
Indeed, for $v \in \nought{\mathfrak{h}}^*$, we have
\begin{equation}
T_{v}(\Lambda_0) = \Lambda_0 + v - \frac{(v | v)}{2} \delta,
\end{equation}
where we have used $(v | \Lambda_0) = 0$ as $v \in \nought{\mathfrak{h}}^*$, and $(\Lambda_0 | \delta) = 1$. 
\end{remark}

As anticipated, the Kac translation allows us to describe $W$ as an extension of $\nought{W}$. 
The elements of $\nought{\mathfrak{h}}^*$ whose associated Kac translations will form this extension are given in the following definition.

\begin{definition}
    The root lattice of the underlying finite root system is 
    \begin{equation}
        \nought{Q} = \langle \alpha_1,\ldots,\alpha_n\rangle_{\BZ} \subset \nought{\mathfrak{h}}^*,
    \end{equation}
    equipped with the symmetric bilinear form ${(\,\,\,|\,\,\,)}$ restricted to $\nought{\mathfrak{h}}^*$.
\end{definition}

Because of the properties of Kac translations given in \Cref{prop:kactranslationproperties} and the fact that $T_{\theta} = r_0r_{\theta} \in W$, we get a normal subgroup
\begin{equation}
T_{\nought{Q}} = \Set{ T_v ~~|~~ v \in \nought{Q} } \lhd  W,
\end{equation}
and every $T_{v} \in T_{\nought{Q}}$ preserves the root lattice $Q$. 

\begin{prop} \label{prop:affineWeylgroup}
    We have an isomorphism
    \begin{equation}
W \cong \nought{W} \ltimes T_{\nought{Q}}, 
\qquad r_0 \mapsto r_{\theta} T_{-\theta}, \quad r_i\mapsto r_i, ~~i=1,\ldots,n.
\end{equation} 
\end{prop}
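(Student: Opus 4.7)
The plan is to exhibit $W$ as an internal semidirect product of $\nought{W}$ and $T_{\nought{Q}}$, so I will verify three things: (i) $T_{\nought{Q}} \lhd W$, (ii) $\nought{W} \cap T_{\nought{Q}} = \{e\}$, and (iii) $W = \nought{W} \cdot T_{\nought{Q}}$. The crucial preliminary identity is $T_\theta = r_0 r_\theta$, equivalently $r_0 = r_\theta T_{-\theta}$. The first form follows by direct comparison of \eqref{eq:r0rtheta} with the Kac translation formula \eqref{kactranslation} applied to $v=\theta$, using $(\theta|\theta)=2$; the second form then follows from $T_{-\theta} = T_\theta^{-1} = (r_0 r_\theta)^{-1} = r_\theta r_0$, whence $r_\theta T_{-\theta} = r_\theta\, r_\theta r_0 = r_0$.

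For normality, it suffices to check that each Weyl generator $r_0,r_1,\ldots,r_n$ normalises $T_{\nought{Q}}$. For $i \geq 1$, \Cref{prop:kactranslationproperties} gives $r_i T_v r_i^{-1} = T_{r_i(v)}$, which lies in $T_{\nought{Q}}$ because $\nought{W}$ preserves $\nought{Q}$ (the underlying finite root lattice is stable under its own Weyl group). For $r_0$, I will use $r_0 = r_\theta T_{-\theta}$ and the commutativity of translations among themselves to obtain
\[
r_0 T_v r_0^{-1} \;=\; r_\theta T_{-\theta} T_v T_{\theta} r_\theta^{-1} \;=\; r_\theta T_v r_\theta^{-1} \;=\; T_{r_\theta(v)} \;\in\; T_{\nought{Q}},
\]
again using that $r_\theta \in \nought{W}$ preserves $\nought{Q}$. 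Surjectivity onto $W$ follows immediately: the product set $\nought{W}\cdot T_{\nought{Q}}$ contains $r_1,\ldots,r_n$ (paired with the identity translation) as well as $r_0 = r_\theta\cdot T_{-\theta}$, and combined with the normality just established it is itself a subgroup, hence all of $W$.

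The step I expect to require the most care is injectivity, namely $\nought{W} \cap T_{\nought{Q}} = \{e\}$. The plan is to suppose $w \in \nought{W}$ equals $T_v$ as elements of $\GL(\mathfrak{h}^*)$ for some $v \in \nought{Q}$, and compare their actions on the subspace $\nought{\mathfrak{h}}^* \subset \mathfrak{h}^*$. Any $w \in \nought{W}$ preserves $\nought{\mathfrak{h}}^*$, whereas for $\alpha \in \nought{\mathfrak{h}}^*$, the action $T_v(\alpha) = \alpha - (\alpha|v)\delta$ sits in $\nought{\mathfrak{h}}^*$ only when $(\alpha|v)=0$, since $\delta = \alpha_0 + \sum_{i=1}^{n} m_i \alpha_i \notin \nought{\mathfrak{h}}^*$. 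Therefore $v$ pairs trivially with the whole of $\nought{\mathfrak{h}}^*$; since $\nought{A}$ is a symmetric Cartan matrix of finite type (as $A$ is simply laced and of affine type), the restriction of $(\,\cdot\,|\,\cdot\,)$ to $\nought{\mathfrak{h}}^*$ is positive definite and in particular non-degenerate, forcing $v=0$ and hence $w=e$. Together with (i) and (iii), standard internal semidirect-product theory yields the isomorphism, under which $r_i \in W$ corresponds to $r_i\in \nought{W}$ for $i \geq 1$, and the preimage of $r_0$ is the pair $(r_\theta, T_{-\theta})$, written multiplicatively as $r_\theta T_{-\theta}$, exactly as claimed.
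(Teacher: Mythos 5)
The paper states this proposition without proof, deferring to the standard theory surveyed from \cite{KAC1990}; your argument is exactly the standard internal-semidirect-product proof that the surrounding text alludes to, and the individual computations are all correct: the identity $T_\theta = r_0 r_\theta$ obtained by comparing \eqref{eq:r0rtheta} with \eqref{kactranslation} at $v=\theta$, the conjugation formulas $wT_vw^{-1}=T_{w(v)}$, and the trivial-intersection argument via $T_v(\alpha)=\alpha-(\alpha|v)\delta$ together with non-degeneracy of the form on $\nought{\mathfrak{h}}^*$ (equivalently $\det\nought{A}\neq 0$).

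One step of your program is, however, left unverified: you never show that $T_{\nought{Q}}$ is actually contained in $W$. Your item (i) only establishes that the generators of $W$ normalise $T_{\nought{Q}}$ inside $\GL(\mathfrak{h}^*)$, and your item (iii) only gives the inclusion $W \subseteq \nought{W}\cdot T_{\nought{Q}}$; without the containment $T_{\nought{Q}} \subseteq W$ these facts would only exhibit $W$ as a subgroup of $\nought{W}\ltimes T_{\nought{Q}}$ rather than as the whole group. The missing containment is a one-line fix (and is asserted in the paragraph preceding the proposition): since $\nought{\Phi}$ is irreducible and simply laced, every simple root $\alpha_i$ with $1\le i\le n$ lies in the $\nought{W}$-orbit of $\theta$, so $T_{\alpha_i} = w\, T_\theta\, w^{-1} = w\, r_0 r_\theta\, w^{-1} \in W$ for a suitable $w\in\nought{W}$, and the $T_{\alpha_i}$ generate $T_{\nought{Q}}$ because $T_uT_v=T_{u+v}$. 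With that line added, your proof is complete.
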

In the theory of discrete Painlev\'e equations, it will not be just the affine Weyl group $A$ that is relevant, but its extension by \emph{Dynkin diagram automorphisms}.
\begin{notation}
    From now on, we consider the action of $W$ restricted to $Q \subset \mathfrak{h}^*$ and denote the group of $\BZ$-module automorphisms of $Q$ preserving the symmetric bilinear form {$(\,\,\,|\,\,\,)$} by $\Aut(Q)$.
\end{notation}

\begin{definition} \label{def:dynkinauto}
    For matrix $A$ with realisation as in \Cref{not:GCMaffinesimplylaced}, a \emph{Dynkin diagram automorphism} is a graph automorphism of the Dynkin diagram associated to $A$. 
    In other words it is a permutation $\sigma$ of the indices $0,\ldots,n$ such that 
    \begin{equation}
        \langle \alpha_{\sigma(i)}^{\vee}, \alpha_{\sigma(j)} \rangle = \langle \alpha_{i}^{\vee}, \alpha_{j}\rangle,
    \end{equation}
    for all $i,j=0,\ldots,n$. 
    A Dynkin diagram automorphism $\sigma$ defines an element of $\Aut(Q)$, which we also denote by $\sigma$, defined by 
    $\sigma(\alpha_i)=\alpha_{\sigma(i)}$.
    We denote the group of Dynkin diagram automorphisms by $\Aut(A)$.
\end{definition}

Dynkin diagram automorphisms appear naturally when one notices that, for $Q$ to be preserved by a Kac translation $T_v$, we may choose $v$ from a lattice finer than $\nought{Q}$.
\begin{definition}
    The \emph{weight lattice} of the underlying finite root system is
\begin{equation}
\nought{P} = \langle \omega_1,\dots,\omega_n\rangle_{\BZ} \subset \nought{\mathfrak{h}}^*,
\end{equation}
    where $\omega_1, \ldots, \omega_n$ are the \emph{fundamental weights}, defined as the dual basis to $\nought{\Pi}^{\vee}$ with respect to the evaluation pairing, so $\langle \omega_i , \alpha_j^{\vee} \rangle = \delta_{i,j}$ for $i,j=1,\ldots n$. 
\end{definition}

Note that in the case when the generalised Cartan matrix $A$ is symmetric, as we are considering here, $\nought{P}$ is the maximal set of elements $v\in \nought{\mathfrak{h}}^*$  such that $\left(\beta | v \right) \in \BZ $ for all $\beta \in Q$.
The Kac translation associated with any $\omega \in \nought{P}$ preserves $Q$ and we get a normal subgroup
\begin{equation}
T_{\nought{P}} = \Set{ T_v ~~|~~ v \in \nought{P} } \lhd \Aut(Q).
\end{equation}

\begin{definition}
    The \emph{extended affine Weyl group} of $A$ is defined as 
    \begin{equation}
    \widetilde{W} = \nought{W} \ltimes T_{\nought{P}} \subset \Aut(Q),
    \end{equation}
    where the semi-direct product structure comes from the fact that $T_{w(v)}=w T_v w^{-1}$ as in \Cref{prop:kactranslationproperties}.
\end{definition}
\begin{prop} \label{prop:specialdynkinautos}
    There is an isomorphism 
    \begin{equation}
    \widetilde{W} \cong W \rtimes \Sigma,
    \end{equation}
    where $\Sigma \cong T_{\nought{P}}/T_{\nought{Q}}\cong \nought{P} / \nought{Q} \subset \Aut(A)$ is a subgroup of the group of  Dynkin diagram automorphisms.
    We list the groups $\Sigma$, for $A$ as in \Cref{not:GCMaffinesimplylaced}, in \Cref{Sigmatable}.
\end{prop}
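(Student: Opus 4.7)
The plan is to (i) verify that $W \lhd \widetilde{W}$, (ii) identify the quotient as $\nought{P}/\nought{Q}$, and (iii) construct an explicit splitting whose image consists of Dynkin diagram automorphisms.

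For steps (i) and (ii), I would combine \Cref{prop:affineWeylgroup} with the definition of $\widetilde{W}$: both groups are semidirect products with common $\nought{W}$-factor and with translation subgroups $T_{\nought{Q}}$ and $T_{\nought{P}}$ respectively. Since $\nought{Q} \subset \nought{P}$ is a $\nought{W}$-stable sublattice, the inclusion $T_{\nought{Q}} \subset T_{\nought{P}}$ is a normal inclusion of abelian groups compatible with the $\nought{W}$-action, so $W \lhd \widetilde{W}$ and
\begin{equation*}
\widetilde{W}/W \;\cong\; T_{\nought{P}}/T_{\nought{Q}} \;\cong\; \nought{P}/\nought{Q}.
\end{equation*}

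For step (iii), the idea is to show that each class $[\omega] \in \nought{P}/\nought{Q}$ admits a unique representative of the form $\sigma_\omega = u_\omega T_{\omega + v} \in \widetilde{W}$, with $u_\omega \in \nought{W}$ and $v \in \nought{Q}$, which permutes the set $\Pi = \{\alpha_0, \ldots, \alpha_n\}$ of affine simple roots. Geometrically, the extended affine Weyl group acts on the alcoves of $\nought{\mathfrak{h}}^*_{\BR}$, and since $W$ acts simply transitively on them, each $T_\omega$ can be adjusted by a unique element of $W$ so as to stabilise the fundamental alcove; unraveling the semidirect product structure of $W$ produces the desired factorisation. Because every element of $\widetilde{W}$ preserves the bilinear form $(\,|\,)$ on $Q$, by \Cref{prop:weylgroupproperties} together with a direct verification from the Kac translation formula \eqref{kactranslation}, the induced permutation $\sigma$ of $\{0,\ldots,n\}$ automatically satisfies $A_{\sigma(i)\sigma(j)} = (\alpha_{\sigma(i)} | \alpha_{\sigma(j)}) = (\alpha_i | \alpha_j) = A_{ij}$, and hence lies in $\Aut(A)$.

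The main obstacle will be verifying that the assignment $[\omega] \mapsto \sigma_\omega$ descends to an injective group homomorphism $\nought{P}/\nought{Q} \hookrightarrow \Aut(A)$. Both well-definedness and the homomorphism property follow from the uniqueness built into the alcove-stabilising choice: two elements of $\widetilde{W}$ in the same $W$-coset that both stabilise the fundamental alcove must coincide, since the $W$-action on $Q$ is faithful and its kernel on $\Pi$ is trivial. Setting $\Sigma$ to be the image of this section in $\widetilde{W}$, we obtain $\widetilde{W} = W \cdot \Sigma$ and $W \cap \Sigma = \{e\}$, whence $\widetilde{W} \cong W \rtimes \Sigma$ with $\Sigma \cong \nought{P}/\nought{Q}$ realised as a subgroup of $\Aut(A)$. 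The explicit tabulation of $\Sigma$ for each affine type referenced after the statement can then be obtained by computing $\nought{P}/\nought{Q}$ case-by-case from the values of $m_i$ listed in \cite{KAC1990}.
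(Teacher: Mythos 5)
The paper offers no proof of this proposition; it is stated as standard background on extended affine Weyl groups, implicitly deferring to \cite{KAC1990} and the classical literature, so there is nothing in the text to compare your argument against line by line. Your argument is correct and is the standard one: normality of $W$ in $\widetilde{W}$ and the identification $\widetilde{W}/W\cong T_{\nought{P}}/T_{\nought{Q}}\cong\nought{P}/\nought{Q}$ follow from \Cref{prop:affineWeylgroup} together with the $\nought{W}$-stability of $\nought{Q}\subset\nought{P}$ and the conjugation rule $T_{w(v)}=wT_vw^{-1}$, while the splitting is realised by the stabiliser of the fundamental alcove, whose elements permute the walls, hence permute $\Pi$, and preserve $(\,\,\,|\,\,\,)$, hence land in $\Aut(A)$. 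The one imprecise spot is your justification of uniqueness of the alcove-stabilising representative: the clean reason is simply that $W$ acts simply transitively on alcoves, so the stabiliser in $W$ of the fundamental alcove is trivial; the appeal to faithfulness of the $W$-action on $Q$ and triviality of its kernel on $\Pi$ is unnecessary and slightly obscures this, but it does not affect the validity of the argument.
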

    
\renewcommand{\arraystretch}{2}
\begin{table}[htb]
    \centering
\begin{tabular}{ c | c | c | c | c | c | c }
 		& $A_n^{(1)}$ & $D_{2n}^{(1)}$ & $D_{2n+1}^{(1)} $ & $E_6^{(1)}$ & $E_7^{(1)}$ & $E_8^{(1)}$ \\  \hline
 $\Sigma$     & $\BZ / (n+1)\BZ$ & $(\BZ / 2\BZ) \times (\BZ / 2\BZ)$ & $\BZ / 4 \BZ$ & $\BZ / 3\BZ$ & $\BZ / 2\BZ$ & $-$ 
 \end{tabular}
    \caption{Special Dynkin diagram automorphism groups for symmetric generalised Cartan matrices of affine type.}
 \label{Sigmatable}
\end{table} 

\renewcommand{\arraystretch}{1}

\begin{definition} \label{def:fullyextendedaffineWeylgroup}
    The \emph{(fully) extended Weyl group} of $A$ is 
    \begin{equation}
        \widehat{W}(A) = W(A) \rtimes \Aut(A),
    \end{equation}
    where the semi-direct product structure is defined by $r_{\sigma(i)} = \sigma \,r_i \, \sigma^{-1}$, where $\sigma\in \Aut(A)$.
\end{definition}
To summarise, we have the following inclusions and isomorphisms among subgroups of $\Aut(Q)$:
    \begin{equation}
        \nought{W} \subset W \cong \nought{W}\ltimes T_{\nought{Q}} \subset\nought{W}\ltimes T_{\nought{P}} \cong W \rtimes \Sigma = \widetilde{W} \subset W \rtimes \Aut(A) = \widehat{W}.
        \end{equation}

\subsubsection{Surface and symmetry root sublattices of $Q(E_8^{(1)})$ in $\Pic(S)$}

We are now ready to describe the way that Sakai surfaces are associated to affine root systems.
We begin, following \cite{Sakai2001}, with the observation of the root lattice $Q(E_8^{(1)})$ naturally appearing in relation to generalised Halphen surfaces. 

\begin{prop}[{\cite[Prop.7, Prop. 8]{Sakai2001}}] \label{prop:E8latticeinPic}
    Let $S$ be a generalised Halphen surface. 
    Then, the Picard group of $S$ is isomorphic, when equipped with the symmetric bilinear form ${(F_1 | F_2)}= - F_1.F_2$, to the Lorentzian lattice of rank 10:
    \begin{equation} \label{eq:lorentzianlattice}
        \Lambda_{10} := \langle v_0,v_1,\ldots,v_9\rangle_{\BZ}, \qquad \left( v_0 | v_0 \right)= -1, ~  \left( v_i|v_0 \right)= 0, ~ \left( v_i|v_j \right)= \delta_{i,j},
    \end{equation}
    for $i,j=1,\ldots,9$. 
    
    Further, $K_S^{\perp} \defeq \{ F\in \Pic(S)~|~ F.K_S=0 \} \subset \Pic(S)$ is  isomorphic to the root lattice $Q(E_8^{(1)})$.
    Further, the isomorphism can be chosen such that $-K_S$ is identified with the null root $\delta\in Q(E_8^{(1)})$.
\end{prop}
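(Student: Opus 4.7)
The plan is to leverage the two facts about $S$ from \Cref{prop:rankPic10}: $\rk\Pic(S)=10$ and $S$ admits $\BP^2$ as a minimal model. The latter gives a birational morphism $\varepsilon\colon S\to \BP^2$, which by \Cref{prop:factor} factors as a composition of blow-ups and isomorphisms; since each blow-up at a point raises the Picard rank by one (\Cref{lemma:eltransf}) and $\rk\Pic(\BP^2)=1$, exactly nine blow-ups occur. Let $E_1,\ldots,E_9\in\Pic(S)$ be the resulting exceptional classes (taken as total transforms to $S$) and $H=\varepsilon^{*}\OO_{\BP^2}(1)$.

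For the first assertion, iterating \Cref{lemma:eltransf} yields $\Pic(S)=\langle H,E_1,\ldots,E_9\rangle_{\BZ}$ with $H^2=1$, $E_i^2=-1$, $H.E_i=0$ and $E_i.E_j=0$ for $i\neq j$. Setting $v_0=H$ and $v_i=E_i$ for $i=1,\ldots,9$ and using $(F_1|F_2)=-F_1.F_2$ reproduces exactly the presentation \eqref{eq:lorentzianlattice} of $\Lambda_{10}$, giving the first isomorphism.

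For the second assertion, iterating the blow-up formula (\Cref{th:blow-upformula}) gives $-K_S=3H-E_1-\cdots-E_9$, so $K_S^2=0$ and $K_S\in K_S^{\perp}$; moreover the map $F\mapsto F.K_S$ on $\Pic(S)$ is non-zero, so $K_S^{\perp}$ is a sublattice of rank $9$. Following the enumeration in \Cref{ex:E8config}, I would propose the simple roots
\[
\alpha_0=E_8-E_9,\qquad \alpha_i=E_i-E_{i+1}\ (i=1,\ldots,7),\qquad \alpha_8=H-E_1-E_2-E_3,
\]
corresponding to vertex $0$ at the free end of the long arm, vertices $1,\dots,7$ along the arm, and vertex $8$ the branch attached at vertex $3$ in \Cref{E8dynkin}. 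Direct intersection-pairing computations confirm $(\alpha_i|\alpha_j)=A_{ij}$, the generalised Cartan matrix of $E_8^{(1)}$, and $\alpha_i.K_S=0$ for each $i$, so we obtain a $\BZ$-linear, form-preserving homomorphism $\iota\colon Q(E_8^{(1)})\to K_S^{\perp}$.

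The main obstacle is showing that $\iota$ is an isomorphism rather than merely a finite-index embedding. My preferred approach is by explicit expansion: any $F\in K_S^{\perp}$ can be written $F=aH+\sum b_iE_i$ subject to the single linear constraint $3a=\sum_i b_i$, and one can give an explicit inversion expressing such an $F$ as a $\BZ$-combination of the $\alpha_i$'s, so $\iota$ is surjective. An alternative (and perhaps cleaner) argument proceeds by comparing discriminants: both $Q(E_8^{(1)})$ and $K_S^{\perp}$ carry degenerate symmetric bilinear forms whose $1$-dimensional radicals are spanned by $\delta$ and $K_S$ respectively, and $\iota$ sends the former into the latter, so it induces a form-preserving map of rank-$8$ lattices whose Gram determinants must agree; since the $E_8$ lattice is unimodular, $\iota$ induces an isomorphism on quotients by radicals and hence on the lattices themselves. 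Finally, the identification $\iota(\delta)=-K_S$ is exactly the content of the expansion \eqref{eq:Dwp} (checked in \Cref{ex:canonicaltype}), which shows $-K_S=\sum_{i=0}^{8}m_i\alpha_i$ with the $m_i$ equal to the marks of the affine $E_8^{(1)}$ Dynkin diagram.
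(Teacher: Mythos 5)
Your proof follows essentially the same route as the paper: the same geometric basis $H,E_1,\ldots,E_9$ coming from nine blow-ups of $\BP^2$, the same identification with $\Lambda_{10}$, and the same choice of simple roots $\alpha_0=E_8-E_9$, $\alpha_i=E_i-E_{i+1}$, $\alpha_8=H-E_1-E_2-E_3$ with $-K_S=\sum_i m_i\alpha_i$ identified with $\delta$. The only place you go beyond the paper's (terser) argument is in justifying surjectivity of the map onto $K_S^{\perp}$, and both of your arguments for this are sound --- the explicit one reduces to the standard fact that the differences $E_i-E_{i+1}$, $i=1,\ldots,8$, generate the sublattice of $\langle E_1,\ldots,E_9\rangle_{\BZ}$ with coefficient sum zero --- modulo one harmless sign slip: with $F=aH+\sum_i b_iE_i$ the condition $F.K_S=0$ reads $3a+\sum_i b_i=0$, not $3a=\sum_i b_i$.
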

\begin{proof}
    From \Cref{prop:rankPic10} we know that $S$ can be obtained from $\BP^2$ through a sequence of 9 blow-ups each centred at a point. Then, by \Cref{lemma:eltransf} we have that 
    \begin{equation}
        \Pic(S) \cong \langle H, E_1, E_2, E_3, E_4, E_5, E_6,E_7, E_8, E_9 \rangle_{\BZ}, 
    \end{equation}
    and from \Cref{th:blow-upformula} we have 
    \begin{equation}
        -K_S = 3 H - E_1 - E_2 - E_3 -E_4 -E_5 -E_6 - E_7 -E_8 - E_9.
    \end{equation}
    Then, we have an isomorphism $\Pic(S)\cong \Lambda_{10}$, $H\mapsto v_0$, $E_i\mapsto v_i$ for $i=1,\ldots,9$.
Consider the enumeration of the nodes in the $E_8^{(1)}$ Dynkin diagram given in \Cref{fig:placeholder2}.
    \begin{figure}[htb]
        \centering \begin{tikzpicture}[elt/.style={circle,draw=black!100,thick, inner sep=0pt,minimum size=2mm},scale=0.8]
 		\path 	(-2,0) 	node 	(d1) [elt, label={[xshift=0pt, yshift = -21 pt] $1$} ] {}
                    (-1,0) 	node 	(d2) [elt, label={[xshift=0pt, yshift = -21 pt] $2$} ] {}
                    (0,0) 	node 	(d3) [elt, label={[xshift=0pt, yshift = -21 pt] $3$} ] {}
                    (1,0) 	node 	(d4) [elt, label={[xshift=0pt, yshift = -21 pt] $4$} ] {}
                    (2,0) 	node 	(d5) [elt, label={[xshift=0pt, yshift = -21 pt] $5$} ] {}
                    (3,0) 	node 	(d6) [elt, label={[xshift=0pt, yshift = -21 pt] $6$} ] {}
                    (4,0) 	node 	(d7) [elt, label={[xshift=0pt, yshift = -21 pt] $7$} ] {}
                    (5,0) 	node 	(d0) [elt, label={[xshift=0pt, yshift = -21 pt] $0$} ] {}
                    (0,1) 	node 	(d8) [elt, label={[xshift=10pt, yshift = -10 pt] $8$} ] {}
 		       ;
 		\draw [black,line width=1pt ] (d1) -- (d2) -- (d3) -- (d4) -- (d5) -- (d6) -- (d7) -- (d0)  (d3) -- (d8);
 	\end{tikzpicture}
        \caption{An enumeration of the nodes in the $E_8^{(1)}$ Dynkin diagram.}
        \label{fig:placeholder2}
    \end{figure}
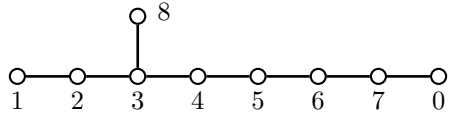 
    An isomorphism between $K_S^{\perp} \subset  \Pic(S)$ and $Q(E_8^{(1)}) = \langle\alpha_0, \alpha_1,\ldots,\alpha_8\rangle_{\BZ}$ can be chosen to be that given by the identification
    \begin{equation} \label{eq:E8simplerootbasis}
        \alpha_0 = E_8-E_9, \quad \alpha_i = E_i - E_{i+1} ~~\text{ for } ~~ i=1,\ldots, 7, \quad \alpha_8 = H - E_1 -E_2 -E_3,
    \end{equation}
    which in particular identifies $-K_S$ with the null root for $E_8^{(1)}$:
    \[
    -K_S = 3 H - E_1-\cdots-E_9 = \sum_{i=0}^{8} m_i \alpha_i =\delta, 
    \]
    with $(m_0,m_1,\ldots,m_8)=(1,2,4,6,5,4,3,2,3)$.
    Note that this isomorphism is unique only up to automorphisms of the lattice $Q(E_8^{(1)})$, which form the group $\pm W(E_8^{(1)})$\footnote{This is the group of automorphisms of $Q(E_8^{(1)})$ formed of $W(E_8^{(1)})$ as well as compositions of the action of $w\in W(E_8^{(1)})$ with the automorphism $Q(E_8^{(1)})\to Q(E_8^{(1)})$, $\alpha\mapsto-\alpha$.}, see \cite[Ex. 5.8]{KAC1990}. 
\end{proof}

For a Sakai surface $S$ with unique effective anti-canonical divisor $D=\sum_{i} m_i D_i \in |-K_S|$, we can already see from \Cref{prop:ACdivconnected} and \Cref{prop:Sakaisurface-2curves} that, when $D$ is not irreducible, the matrix having entries $D_i.D_j$ is a generalised Cartan matrix of affine type.
We again emphasise that we denote by the same symbol the divisor $D_i \in \Div(S)$ and the corresponding element $D_i \in \Pic(S)$.

\begin{definition}[Surface root lattice] \label{def:surfacerootlattice}
Let $S$ be a Sakai surface with unique effective anti-canonical divisor $D=\sum_i m_i D_i \in |-K_S|$.
Then, the $\BZ$-$\Span$ of the classes of components $D_i\in \Pic(S)$ is isomorphic, when equipped with the same symmetric bilinear form {$(\,\,\,|\,\,\,)$} as in \Cref{prop:E8latticeinPic},
to the root lattice of an affine root system of some type $\mathcal{R}$.  
We call this free $\BZ$-module the 
\emph{surface root lattice} $Q(\mathcal{R}) = \sum_{i} \BZ D_i \subset K_S^{\perp} \subset \Pic(S)$. 
Here $\mathcal{R}$ indicates the type of the corresponding generalised Cartan matrix.
The components $D_i$ define a basis $\Pi$ of simple roots for $Q(\mathcal{R})$, which we call the \emph{surface root basis}.
Note that when $D$ is irreducible, the type $\mathcal{R}=A_0^{(1)}$ is assigned to the lattice $\BZ D$.
When no confusion is possible, we will sometimes write $Q=Q(\mathcal{R})$. 
\end{definition}

\begin{definition}[Symmetry root lattice] \label{def:symmetryrootlattice}
   Let $S$ be a Sakai surface with unique effective anti-canonical divisor $D=\sum_i m_i D_i \in |-K_S|$. 
   The orthogonal complement 
    $$Q(\mathcal{R}^{\perp}) \defeq \Set{F\in\Pic(S)~|~ F.D_i=0 ~\text{ for all } i\!}\!,$$
    of $Q(\mathcal{R})$ in $K_S^{\perp} \subset \Pic(S)$ is isomorphic, when equipped with {$(\,\,\,|\,\,\,)$},
    to a root lattice of an affine type $\mathcal{R}^{\perp}$, which we call the \emph{symmetry root lattice}. 
    A choice of basis of simple roots $\Pi=\Set{\alpha_0,\dots,\alpha_n} \subset Q(\mathcal{R}^{\perp})$ 
    for the root lattice $Q(\mathcal{R}^{\perp})$ is called a \emph{symmetry root basis}.
    When the symmetry root lattice contains only multiples of $D$, i.e. $Q(\mathcal{R}^{\perp})=\BZ D$, it is assigned the type $\mathcal{R}^{\perp}=A_0^{(1)}$.
    We will sometimes write $Q^{\perp}=Q(\mathcal{R}^{\perp})$.
\end{definition}

Given the result of \Cref{prop:E8latticeinPic}, the possible pairs $(\mathcal{R},\mathcal{R}^{\perp})$ for Sakai surfaces are dictated by the possible complementary root sublattices in $Q(E_8^{(1)})$.
These are given in \Cref{Rtable} and \Cref{Rperptable} respectively, with the types corresponding to Painlev\'e differential equations, as in \Cref{table:surfacetypes}, indicated by boxes.  

\begin{remark}
    In some cases, namely when $\mathcal{R}=A_6^{(1)}, A_7^{(1)'}, D_7^{(1)}$, the symmetry root lattice $Q(\mathcal{R}^{\perp})$ is realised in $\Pic(S)$ with roots of non-standard lengths, which are indicated in the notation for the type $\mathcal{R}^{\perp}$ with ${|\alpha|^2 = (\alpha | \alpha)}$ being the squared length of roots.
    The arrows in \Cref{Rtable} and \Cref{Rperptable} represent surface degenerations in the sense of \textit{Rains} \cite{rains}.
\end{remark}

\begin{figure}[htb] 
\centering
\begin{tikzcd}[row sep=normal, column sep=small, every matrix/.append style={nodes={font=\small}},
/tikz/execute at end picture={
    \node (large) [rectangle, draw, fit=(D4) (D5) (D6) (D7) (D8) (E8) (E7) (E6) ] {};
  }] 
A_0^{(1)} \arrow[d] & & & & & & &  A_7^{(1)'} \arrow[rdd]& \\ 
A_0^{(1) *} \arrow[r] \arrow[rd] & A_1^{(1)} \arrow[r] \arrow[rd]&  A_2^{(1)} \arrow[r] \arrow[rd]&  A_3^{(1)} \arrow[r] \arrow[rd]& A_4^{(1)} \arrow[r] \arrow[rd]& A_5^{(1)} \arrow[r] \arrow[rd]\arrow[rdd] & A_6^{(1)} \arrow[r] \arrow[rd]\arrow[rdd] \arrow[ru]& A_7^{(1)} \arrow[r] \arrow[rd] \arrow[rdd] &  A_8^{(1)} \\ 
 & A_0^{(1) * *} \arrow[r] &  A_1^{(1) *} \arrow[r] & A_2^{(1) *} \arrow[r] & |[alias=D4]| D_4^{(1)} \arrow[r] & |[alias=D5]| D_5^{(1)} \arrow[r] \arrow[rd]& |[alias=D6]| D_6^{(1)} \arrow[r] \arrow[rd]& |[alias=D7]| D_7^{(1)} \arrow[r]  \arrow[rd]& |[alias=D8]| D_8^{(1)} \\ 
 & & & & & & |[alias=E6]| E_6^{(1)} \arrow[r] & |[alias=E7]| E_7^{(1)} \arrow[r] & |[alias=E8]| E_8^{(1)}
\end{tikzcd}
\caption{Surface types $\mathcal{R}$ for Sakai surfaces.} 
\label{Rtable}
\end{figure}
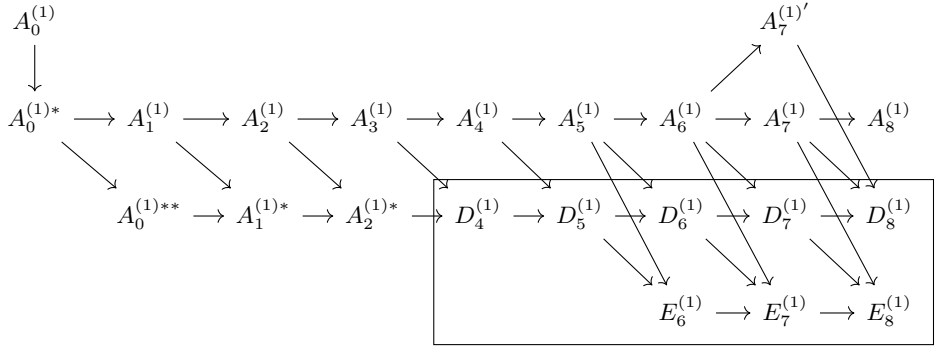

\begin{figure}[htb] 
\centering
\begin{tikzcd}[row sep=small, column sep=tiny, every matrix/.append style={nodes={font=\small}},
/tikz/execute at end picture={
    \node (large) [rectangle, draw, fit=(D4) (D5) (D6) (D7) (D8) (E8) (E7) (E6) ] {};
  }]
E_8^{(1)} \arrow[d] & & & & & & &  \underset{|\alpha|^2=8}{A_1^{(1)}} \arrow[rdd]& \\
E_8^{(1)} \arrow[r] \arrow[rd] & E_7^{(1)} \arrow[r] \arrow[rd]&  E_6^{(1)} \arrow[r] \arrow[rd]&  D_5^{(1)} \arrow[r] \arrow[rd]& A_4^{(1)} \arrow[r] \arrow[rd]&(A_2+A_1)^{(1)} \arrow[r] \arrow[rd]\arrow[rdd] & ( \underset{|\alpha|^2=14}{A_1}+A_1)^{(1)} \arrow[r] \arrow[rd]\arrow[rdd] \arrow[ru]&A_1^{(1)} \arrow[r] \arrow[rd] \arrow[rdd] & A_0^{(1)} \\
 & E_8^{(1)} \arrow[r] &  E_7^{(1)} \arrow[r] & E_6^{(1)} \arrow[r] & |[alias=D4]| D_4^{(1)} \arrow[r]& |[alias=D5]| A_3^{(1)} \arrow[r] \arrow[rd]& |[alias=D6]| (A_1+A_1)^{(1)}\arrow[r] \arrow[rd]& 
 |[alias=D7]| \underset{|\alpha|^2=4}{A_1^{(1)}} \arrow[r]  \arrow[rd]& |[alias=D8]| A_0^{(1)} \\
 & & & & & & |[alias=E6]| A_2^{(1)} \arrow[r] & |[alias=E7]| A_1^{(1)} \arrow[r] & |[alias=E8]|A_0^{(1)}
\end{tikzcd}
\caption{Symmetry types $\mathcal{R}^{\perp}$ for Sakai surfaces.} 
\label{Rperptable}
\end{figure}
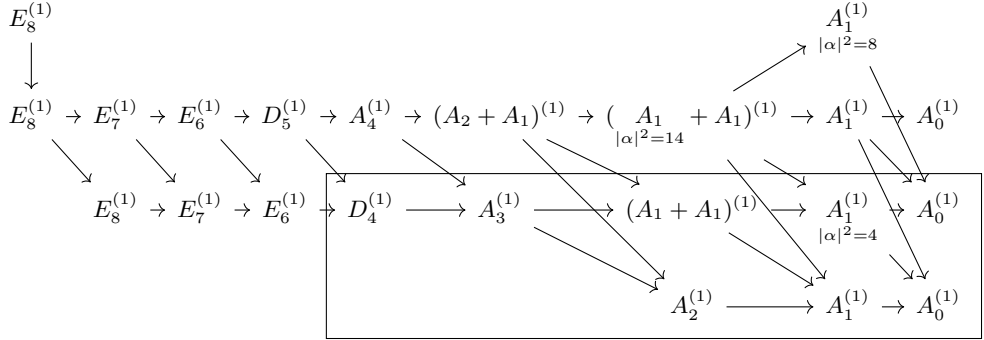

\subsubsection{Additive, multiplicative and elliptic types}

In \Cref{def:surfacerootlattice}, for a Sakai surface $S$ we introduced the surface type $\mathcal{R}$ as the type of the root lattice spanned by the components $D_i$ of $D\in|-K_S|$. 
We now, as in the classification in \cite{Sakai2001}, enrich the surface type $\mathcal{R}$ with the information of $h_1(D_{\red},\BZ) \defeq \rk H_1(D_{\red},\BZ)$, where we denote the support of $D$, following \cite{Sakai2001}, by $D_{\red}=\supp D$. 
This leads to the surface types as shown in \Cref{Rtable}.
From the classification, for any Sakai surface we have $h_1(D_{\red},\BZ) \in\Set{0,1,2}$. These three possibilities distinguish surfaces associated with discrete Painlev\'e equations of additive, multiplicative, and elliptic type, respectively. 
\begin{definition} \label{def:additivemultiplicativeelliptic}
    Let $S$ be a Sakai surface, $D = \sum_i m_i D_i \in|-K_S|$ its unique effective anti-canonical divisor with support $D_{\red}=\cup_iD_i$. 
    Then, the surface $S$ is of
    \begin{itemize}
        \item \emph{additive type} if $h_1(D_{\red},\BZ)=0$,
        \item \emph{multiplicative type} if $h_1(D_{\red},\BZ)=1$,
        \item \emph{elliptic type} if $h_1(D_{\red},\BZ)=2$.        
    \end{itemize}
\end{definition}

\begin{example}
    The surface types $A_0^{(1)}$, $A_0^{(1)*}$ and $A_0^{(1)**}$ (in the notation of \cite{Sakai2001} as in \Cref{Rtable}) refer to the cases when $D$ is an elliptic curve ($h_1(D_{\red},\BZ)=2$), a rational curve with a node ($h_1(D_{\red},\BZ)=1$) and a rational curve with a cusp ($h_1(D_{\red},\BZ)=0$), respectively. 
    In all these cases, the type of the root lattice as in \Cref{def:surfacerootlattice} is $A_0^{(1)}$.
\end{example}

\subsection{Symmetries of Sakai surfaces and discrete Painlev\'e equations}\label{subsec:symmetries}  

Discrete Painlev\'e equations are defined in terms of symmetries of Sakai surfaces. 
The symmetries in question form (fully) extended affine Weyl groups associated with the symmetry type $\mathcal{R}^{\perp}$.
We will present two descriptions of such symmetries and the discrete Painlev\'e equations they define.

Before this, we note that discrete Painlev\'e equations are \emph{non-autonomous} systems. 
Rather than being defined, for example, by a single birational transformation $\varphi\in \Bir(\BP^2)$, they should be understood as a pair $(\varphi_{\boldsymbol{a}}, \boldsymbol{a}\mapsto \bar{\boldsymbol{a}})$ consisting of a parametric family of birational transformations $\varphi_{\boldsymbol{a}} : \BP^2 \dashrightarrow \BP^2$ and a \emph{parameter evolution} $\boldsymbol{a}\mapsto \bar{\boldsymbol{a}}$.
Note that the perhaps more familiar notion of a non-autonomous difference equation defined by a sequence $\varphi_n$ of mappings indexed by $n \in \BZ$ fits into this framework, with parameter evolution induced by $n \mapsto n+1$.

The first description we will outline makes use of a single Sakai surface $S$ with the extra data of a \emph{blowing-down structure}, i.e. a way to blow-down $S$ to $\BP^2$ via a birational morphism $\pi : S \rightarrow \BP^2$. 
A symmetry in this setting is then described at the level of $\Pic(S)$ as a \emph{Cremona isometry}, defining a change of blowing-down structure. 
Given two blowing-down structures for $S$ with morphisms $\pi$, $\pi'$, the difference between the two ways to blow-down $S$ to $\BP^2$ gives a discrete Painlev\'e equation as a birational map $\pi' \circ \pi^{-1} : \BP^2 \dashrightarrow \BP^2$. The non-autonomous nature of the discrete Painlev\'e equations comes from how the \emph{root variables} $\boldsymbol{a}$, which are defined by a choice of simple roots for $Q(\mathcal{R}^{\perp})$, change under the Cremona isometry. We will explain the definition of root variables below, but at this point it is sufficient to understand them as entries $a_i$ of a tuple $\boldsymbol{a}$ of parameters, taking values in either $\BC$ for additive type, $\BC  \mod 2 \pi i\BZ$ for multiplicative type, or $\BC  \mod \BZ+\tau \BZ$ for elliptic type.

The second description considers a realisation of a \emph{family} of Sakai surfaces $S_{\boldsymbol{a}}$ of a given type, parametrised by root variables $\boldsymbol{a}$.
In this framework, a symmetry is defined as an automorphism of the family, consisting of an action $\boldsymbol{a} \rightarrow \bar{\boldsymbol{a}}$ on the parameters together with a family of isomorphisms $S_{\boldsymbol{a}} \rightarrow S_{\bar{\boldsymbol{a}}}$ between the corresponding surfaces. This determines a parametric family of transformations $\varphi_{\boldsymbol{a}}$ of $\BP^2$, represented by the pair $(\varphi_{\boldsymbol{a}}, \boldsymbol{a} \rightarrow \bar{\boldsymbol{a}})$.

\subsubsection{Cremona isometries and changes of blowing-down structures}\label{subsec:changeblowstruc}

The first description of symmetries of Sakai surfaces requires the following.

\begin{definition}[Geometric basis; blowing-down structure]\label{def:geobasis}
    Let $S$ be a Sakai surface. 
    A $\BZ$-basis $\mathcal{E}=(H,E_1,\ldots,E_{9})$ of $\Pic(S)$ is called a \emph{geometric basis} if there exists a birational morphism $\varepsilon : S \rightarrow \BP^2$ written as a composition   blow-ups $\varepsilon = \pi_{1} \circ \cdots \circ \pi_{9}$ each centred at a point,  such that $H$ is the pull-back under $\varepsilon$ of the class of a hyperplane in $\BP^2$ and $E_{i}$, for $1\leq i\leq 9$, is the class of the exceptional divisor contracted by $\pi_i$.
    We call $\mathcal{E}$ the geometric basis for $\Pic(S)$ associated to $\varepsilon$, and we call the pair $(\varepsilon,\mathcal{E})$ a \emph{blowing-down structure} for $S$.
\end{definition}

It is worth mentioning that in \cite{Sakai2001} the data carried by a geometric basis is called a \emph{strict geometric marking} of $\Pic(S)$, whereas the term geometric basis is used in \cite{Mase}. 
A change of blowing-down structure for a Sakai surface $S$ can be described in terms of an automorphism of $\Pic(S)$ that relates the corresponding geometric bases. 
We recall the following definition, the terminology for which we take from \cite{Sakai2001,dolgachevortland,dolgachevweylgroup, looijenga}.

\begin{definition}[Cremona isometry] \label{def:Cremonaisometry}
For a surface $S$, a \emph{Cremona isometry} is an invertible $\BZ$-linear map $\sigma : \Pic(S)\rightarrow \Pic(S)$ such that 
\begin{itemize}
    \item it preserves the intersection form, i.e. $\sigma(F_1).\sigma(F_2)= F_1 .F_2$ for all $F_1,F_2\in \Pic(S)$, 
    \item it preserves the canonical class, i.e. $\sigma(K_S)=K_S$,
    \item it preserves effectiveness of divisor classes, i.e. if $F \in \Eff(S)$ is effective, then also $\sigma(F)\in\Eff(S)$.
\end{itemize}
The Cremona isometries for a surface $S$ form a group, which we denote by $\operatorname{Cr}(S)$. 
\end{definition}

\begin{definition} \label{def:changeofblowingdownstructure}
Suppose a Sakai surface $S$ has two blowing-down structures $(\varepsilon,\mathcal{E})$ and $(\varepsilon',\mathcal{E}')$.
Then the \emph{change of blowing-down structure} from $(\varepsilon,\mathcal{E})$ to $(\varepsilon',\mathcal{E}')$ is the data of the birational transformation $\varepsilon' \circ \varepsilon^{-1}  : \BP^2 \dashrightarrow \BP^2$ and the lattice automorphism $\sigma$ of $\Pic(S)$ such that $\sigma(\mathcal{E})=\mathcal{E}'$, see \Cref{fig:changeofblowingdownstructure}.
\end{definition}

\begin{figure}[htb]
    \centering
    \begin{tikzcd}
        & S \arrow[dl, swap, "\varepsilon"] \arrow[dr, "\varepsilon'"] & \\
        \BP^2 \arrow[rr, dashed, "\varepsilon' \circ \varepsilon^{-1} "] & & \BP^2.
    \end{tikzcd}
    \caption{Change of blowing-down structure.}
    \label{fig:changeofblowingdownstructure}
\end{figure}
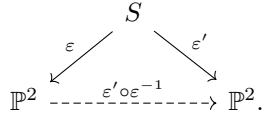 
We made \Cref{def:changeofblowingdownstructure} for Sakai surfaces, but it can be given in more general settings, e.g. \cite{dolgachevortland}.
See \Cref{exa:CREMONA} for an example of a change of blowing-down structure.

For a Sakai surface $S$ of surface type $\mathcal{R}$, Sakai gave a description  of $\operatorname{Cr}(S)$ in terms of the affine Weyl group of the root system of $Q(\mathcal{R}^{\perp})$, and constructed corresponding changes of blowing-down structures.

However, the group $\Cr(S)$ is not the same for all surfaces of a given type $\mathcal{R}$, and it depends on whether there are \emph{nodal curves} on the surface, which we describe now. These are related to special solutions of differential and discrete Painlev\'e equations that exist for particular parameter values, given in terms of classical special functions of hypergeometric type, see \cite{KNY} and \cite{SaitoTerajimiNodalCurves}.
\begin{definition}[Nodal curves] \label{def:nodalcurve}
    Let $S$ be a Sakai surface with unique effective anti-canonical divisor $D=\sum_i m_i D_i \in |-K_S|$. 
    A \emph{nodal curve} on $S$ is a smooth rational curve of self-intersection $-2$ which is not an irreducible component of $D$. 
    The set of classes of nodal curves is denoted by $\Delta^{\operatorname{nod}}\subset \Pic(S)$. 
\end{definition}

We now state part of Sakai's description of $\Cr(S)$.

\begin{theorem}{\cite[Th. 26]{Sakai2001}} \label{thm:cremonaisometries}
    For a Sakai surface $S$ of surface type $\mathcal{R}\neq A_6^{(1)}, A_7^{(1)}, A_7^{(1)'}, A_8^{(1)}, D_7^{(1)}, D_8^{(1)}$ as shown in \Cref{Rtable}, 
    \begin{equation} \label{eq:Cr(S)}
        \Cr(S) \cong \left( W(\mathcal{R}^{\perp})\rtimes \Aut(\mathcal{R}^{\perp})\right)_{\Delta^{\operatorname{nod}}}.
    \end{equation} 
    \end{theorem}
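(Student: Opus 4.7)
The plan is to work in two stages: first to describe the action of any Cremona isometry on the orthogonal decomposition of $K_S^{\perp}$, and then to identify exactly which elements of the lattice symmetry group arise from Cremona isometries via the nodal condition.

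First I would observe that since $S$ is a Sakai surface, the effective anti-canonical divisor $D=\sum m_i D_i \in |-K_S|$ is unique and hence, by preservation of $K_S$ and effectiveness, any $\sigma\in \operatorname{Cr}(S)$ must send $D$ to $D$. Because the $D_i$ are precisely the prime components of the unique $D\in |-K_S|$, the map $\sigma$ permutes the classes $D_i$ and thus preserves the surface root lattice $Q(\mathcal{R})\subset \Pic(S)$ setwise. Preservation of the intersection form then forces the induced permutation of the $D_i$ to be a Dynkin diagram automorphism of $\mathcal{R}$ (and in fact one compatible with the multiplicities $m_i$, so it belongs to $\operatorname{Aut}(\mathcal{R})$). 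Since $\sigma$ preserves the pairing on $K_S^{\perp}$, it also preserves the orthogonal complement $Q(\mathcal{R}^{\perp})$ and restricts there to a lattice automorphism preserving the bilinear form $(\,\,\,|\,\,\,)$.

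Next I would use the fact that for the symmetry lattices $Q(\mathcal{R}^{\perp})$ occurring in Sakai's list (outside the excluded cases in the statement), the group of $\mathbb{Z}$-linear isometries of $Q(\mathcal{R}^{\perp})$ which are compatible with the global constraint coming from $K_S^{\perp}\cong Q(E_8^{(1)})$ is exactly $W(\mathcal{R}^{\perp})\rtimes \operatorname{Aut}(\mathcal{R}^{\perp})$. Thus the restriction of $\sigma$ lies in $W(\mathcal{R}^{\perp})\rtimes \operatorname{Aut}(\mathcal{R}^{\perp})$, and the compatibility with the simultaneously induced Dynkin automorphism of $\mathcal{R}$ pins down the data of $\sigma$ on the whole of $K_S^{\perp}$; the rank-one complement of $K_S^{\perp}$ in $\Pic(S)$ is spanned by a class determined by $K_S$, so $\sigma$ is determined by its restriction to $K_S^{\perp}$. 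This constructs an injective group homomorphism $\operatorname{Cr}(S) \hookrightarrow W(\mathcal{R}^{\perp})\rtimes \operatorname{Aut}(\mathcal{R}^{\perp})$.

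The main obstacle, which I expect to carry most of the weight of the proof, is characterising the image by the nodal condition. For one direction, if $\sigma\in \operatorname{Cr}(S)$, then $\sigma$ preserves $\operatorname{Eff}(S)$ and in particular sends classes of smooth rational $(-2)$-curves to classes of smooth rational $(-2)$-curves (the self-intersection and genus formula are preserved, and effectiveness is preserved); since the components of $D$ are already fixed setwise, this shows $\sigma(\Delta^{\operatorname{nod}})=\Delta^{\operatorname{nod}}$. For the converse, given $w\in W(\mathcal{R}^{\perp})\rtimes \operatorname{Aut}(\mathcal{R}^{\perp})$ with $w(\Delta^{\operatorname{nod}})=\Delta^{\operatorname{nod}}$, I would extend $w$ uniquely to a lattice automorphism $\tilde{w}$ of $\Pic(S)$ that fixes $K_S$ and acts on $Q(\mathcal{R})$ by the associated diagram automorphism, and then verify that $\tilde w$ preserves effectiveness. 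The key reduction is the classical fact (going back to work of Looijenga and used by Sakai, cf.\ \cite[Prop.~25]{Sakai2001}) that for a rational surface with $-K_S$ effective, an isometry of $\Pic(S)$ fixing $K_S$ preserves $\operatorname{Eff}(S)$ if and only if it preserves every class of an irreducible $(-2)$-curve; a class $F$ with $F^2\geq -1$ and $F\cdot(-K_S)\geq 0$ is effective precisely when it is not blocked by reflections in nodal classes. Combining this with the preservation of the components of $D$ gives $\tilde w\in \operatorname{Cr}(S)$, completing the identification in \eqref{eq:Cr(S)}.
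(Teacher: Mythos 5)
You should first note that the paper does not actually prove this statement: \Cref{thm:cremonaisometries} is imported verbatim from \cite[Th.~26]{Sakai2001}, and the text following it only unpacks the notation on the right-hand side of \eqref{eq:Cr(S)}. So the comparison here is really against Sakai's original argument. Your outline does follow the same overall strategy as that argument: restrict a Cremona isometry to $K_S^{\perp}$, observe that it permutes the components $D_i$ of the unique effective anti-canonical divisor and hence preserves both $Q(\mathcal{R})$ and $Q(\mathcal{R}^{\perp})$, identify the resulting lattice automorphism with an element of $W(\mathcal{R}^{\perp})\rtimes\Aut(\mathcal{R}^{\perp})$, and characterise the image by the nodal condition. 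The unique-extension argument from $K_S^{\perp}$ to $\Pic(S)$ is correct (since $(K_S^{\perp})^{\perp}=\BZ K_S$ in the unimodular Lorentzian lattice, two isometries agreeing on $K_S^{\perp}$ and fixing $K_S$ agree everywhere), and the forward direction of the nodal criterion is fine.

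There are, however, two places where the weight of the proof is asserted rather than carried. First, the claim that the isometries of $Q(\mathcal{R}^{\perp})$ compatible with the embedding $Q(\mathcal{R})\oplus Q(\mathcal{R}^{\perp})\subseteq K_S^{\perp}\cong Q(E_8^{(1)})$ are \emph{exactly} $W(\mathcal{R}^{\perp})\rtimes\Aut(\mathcal{R}^{\perp})$ is precisely the lattice-theoretic heart of Sakai's Theorem~26, and it is also exactly where the excluded types $A_6^{(1)}, A_7^{(1)}, A_7^{(1)'}, A_8^{(1)}, D_7^{(1)}, D_8^{(1)}$ enter: for those, $Q(\mathcal{R}^{\perp})$ is realised with roots of non-standard length (cf.\ \Cref{Rperptable}) and the statement in the form \eqref{eq:Cr(S)} fails or must be modified. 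Your proof never uses the hypothesis $\mathcal{R}\neq A_6^{(1)},\dots$, which is a sign that this step is not actually established; a correct argument has to compute the automorphism group of the orthogonal pair of sublattices (via discriminant groups or an explicit case check) and see where the excluded types break. Relatedly, the injectivity of $\Cr(S)\hookrightarrow W(\mathcal{R}^{\perp})\rtimes\Aut(\mathcal{R}^{\perp})$ needs the action on $Q(\mathcal{R})$ to be determined by that on $Q(\mathcal{R}^{\perp})$ through the overlattice gluing; ``compatibility pins down the data'' is the right slogan but not yet a proof. Second, the converse direction rests on the statement that an isometry of $\Pic(S)$ fixing $K_S$ and each class of an irreducible $-2$-curve preserves $\Eff(S)$; this requires the Riemann--Roch description of the effective cone of a rational surface with effective anti-canonical divisor (every class $F$ with $F^2\geq -1$ and $F.K_S\leq 0$ is effective, so effectiveness is controlled by the $-1$- and $-2$-classes), which you gesture at but do not supply. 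With those two inputs made precise the sketch would close up into Sakai's proof; as written they are the genuine gaps.
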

   Let us explain the right-hand side of \Cref{eq:Cr(S)}. The group
   $W(\mathcal{R}^{\perp})$ is the Weyl group generated by the simple reflections $r_i$ corresponding to a symmetry root basis formed of $\alpha_i\in Q(\mathcal{R}^{\perp})$ as in \Cref{def:symmetryrootlattice}.
   For the surface types here, these roots have standard lengths, i.e. satisfy $\alpha_i.\alpha_i=-2$. They act on $\Pic(S)$ by 
   \begin{equation} \label{eq:refformulaPic}
      r_i (F)= F - (F | \alpha_i)\alpha_i = F + (F.\alpha_i )\alpha_i. 
   \end{equation}
   The Dynkin diagram automorphisms $\operatorname{Aut}(\mathcal{R}^{\perp})$ act by permutations of the symmetry root basis when restricted to $Q(\mathcal{R}^{\perp})$, and extend to $\Pic(S)$ according to the explicit expressions in \cite{Sakai2001}. 
   The subscript indicates the stabiliser, in the semi-direct product of these subgroups of lattice automorphisms of $\Pic(S)$, of the set $\Delta^{\operatorname{nod}}$ of classes of nodal curves.

\begin{remark}
    For surfaces $S$ of types $\mathcal{R}$ not accounted for in \Cref{thm:cremonaisometries}, the group $\Cr(S)$ can still be described in terms of $W(\mathcal{R}^{\perp})$ and its extension by Dynkin diagram automorphisms, but the realisation at the level of $\Pic(S)$ becomes subtle due to, for example, non-standard root lengths, see \cite{Sakai2001}. In order to ease the exposition, we refer to these groups also as $W(\mathcal{R}^{\perp})\rtimes\Aut(\mathcal{R}^{\perp})$, but note that the actions on $\Pic(S)$ may be slightly different to that explained above.
\end{remark}

\subsubsection{Period map and root variables}

In order for a change of blowing-down structure to define a non-autonomous discrete system as a pair $(\varphi_{\boldsymbol{a}}, \boldsymbol{a}\mapsto \bar{\boldsymbol{a}})$ along the lines discussed at the beginning of \Cref{subsec:symmetries}, we need to specify parameters and their evolution. 
For this we will introduce the \emph{root variables} for a Sakai surface, which are parameters for the set of isomorphism classes of Sakai surfaces of a fixed type, in a sense which is made precise in \cite[Th. 25]{Sakai2001} through a Torelli-type theorem.
Root variables are defined in terms of a kind of \emph{period map} for $S$, the construction of which is due to \textit{Looijenga} \cite{looijenga}. 
Looijenga defined  the period map in the case of a rational surface with an anti-canonical divisor given by a sum of rational curves whose intersection/dual  graph, meant in the same sense as in \Cref{ex:E8config}, consists of a cycle, which accounts for all Sakai surfaces of multiplicative type.

Let $S$ be a generalised Halphen surface, let $D= \sum_i m_i D_i\in|-K_S|$ be an effective anti-canonical divisor and denote by $D_{\red} $ its support $D_{\red}= \supp D$.  
Take a rational $2$-form $\omega$ on $S$ such that $- \divi \omega = D$, so $\omega$ defines a holomorphic symplectic form on $S \setminus D_{\red}$. 
This gives the mapping
\begin{equation} \label{eq:chihat} 
\begin{tikzcd}[row sep=tiny]
     H_2(S \,\setminus D_{\red} , \BZ)\arrow[r,"\hat{\chi} "]&\BC\\ \Gamma\arrow[r,mapsto]&\int_{\Gamma} \omega.
\end{tikzcd} 
\end{equation} 
Let $Q^{\perp}$ be the orthogonal complement in $\Pic(S)$ of the components of $D$ with respect to the intersection pairing.
The \emph{period map} for $S$ with respect to the pair\footnote{
Since not all generalised Halphen surfaces $S$ are Sakai surfaces, the choice of $D$ has been incorporated into the construction of the period map. 
    This is necessary to use the characterisation of generalised Halphen surfaces with $\dim |-K_S|=1$ as those with the value of the period map on $-K_S$ being zero, see \cite[Prop. 23]{Sakai2001}.} $(D,\omega)$ is the  function
\begin{equation}
\chi : Q^{\perp} \rightarrow \BC ~\mod \hat{\chi}(H_1(D_{\red},\BZ)),
\end{equation}
defined in terms of $\hat{\chi}$ using the long exact sequence in relative singular homology for the pair $(S, S \setminus D_{\red})$.
For the sake of brevity, we will not give the details of the construction of the period map.
Instead, we will outline how the period map is computed in practice, which proceeds along the same lines as in \cite[Chapter I, Section 5]{looijenga} and \cite[Lemma 21]{Sakai2001}. 

For an element $\alpha \in Q^{\perp}$  such that $\alpha.\alpha=-2$ and $\alpha \not\in \Delta^{\operatorname{nod}}$, express this as
\begin{equation}
\alpha = C^1 - C^0,
\end{equation}
where $C^1,C^0\in \Pic(S)$ correspond to exceptional curves in the sense of \Cref{def:exceptionalcurve}. 
This is guaranteed to be possible for all Sakai surfaces by \cite[Lem. 21 \& App. A]{Sakai2001}.
Then the computation of $\chi(\alpha)$ is done as follows.
\begin{itemize}
\item Find the unique $D_k$ among the irreducible components of $D$ such that 
\begin{equation}
C^1.D_k = C^0 . D_k = 1, \quad \text{ and } \quad C^1 . D_j = C^0 . D_j = 0\quad  \text{ for } j \neq k.
\end{equation}
\item  
The value of $\chi(\alpha)$ is then computed using the residue formula as
\begin{equation} \label{residueintegral}
\chi(\alpha) = 2 \pi i \int_{D_k \cap C^0}^{D_k \cap C^1} \operatorname{Res}_{D_k} \omega. 
\end{equation}
\end{itemize}
\begin{exercise}
  Show the existence and uniqueness of $D_k$ using the fact that $D$ is an effective anti-canonical divisor and $C^1$, $C^0$ are exceptional curves in the sense of \Cref{def:exceptionalcurve}, e.g. by using the genus formula \eqref{eq:genusformula}.
\end{exercise}

\begin{definition}[Root variables] \label{def:rootvariables}
    Let $S$ be a generalised Halphen surface and $D=\sum_i m_i D_i \in|-K_S|$. Fix a basis $\Pi = \Set{\alpha_0, \dots, \alpha_n}$ of simple roots for $Q^{\perp}$ and a choice of $\omega$ with $\divi \omega = - D$.
    The \emph{root variables} for the basis $\Pi$ are the values of the period map $\chi$ on $S$ with respect to $D$ and $\omega$ on the elements $\alpha_i$, i.e. 
    \begin{equation*}
         \chi(\alpha_i) \in \BC \mod  \hat{\chi}(H_1(D_{\red},\BZ)).
    \end{equation*}
\end{definition} 
The root variables provide a parametrisation of a family of surfaces of a given type via locations of points to be blown-up, or data for representing the surface as a gluing of affine open subsets, see \cite[Sec. 5]{Sakai2001}.
These parameters, denoted by $a_i$, are related to the values of the period map in one of the following ways depending on the type of $S$ as in \Cref{def:additivemultiplicativeelliptic}.
    \begin{description}
        \item[Additive] 
        In this case, $H_1(D_{\red},\BZ)$ is trivial and the root variables are
            \begin{equation} \label{eq:rootvaradd}
                a_i =  \chi(\alpha_i).
            \end{equation} 
            For a family of surfaces with $\dim |-K_S|=0$, usually the choice of $\omega$ is normalised such that $\chi(-K_S)=1$.
        \item[Multiplicative]
        In this case, $H_1(D_{\red},\BZ)$ has a generator, say $\gamma$, determining an orientation of $D_{\red}$.
        The period map gives $\chi : Q^{\perp}\rightarrow \BC \mod \hat{\chi}(\BZ \gamma)$, and usually one normalises $\hat{\chi}(\gamma) = 2 \pi i $. So, the root variable parameters are written as
            \begin{equation} \label{eq:rootvarmult}
                a_i =  e^{\chi(\alpha_i)}.
            \end{equation}
        One often denotes $q=e^{\chi(-K_S)}$, which becomes the shift parameter for $q$-difference Painlev\'e equations derived from a Sakai surface of multiplicative type.
        \item[Elliptic]    
            In this case, $D_{\red}$ is an elliptic curve so $H_1(D_{\red},\BZ)$ has rank two.
            Fixing a basis $(\gamma_0,\gamma_1)$ of $H_1(D_{\red},\BZ)$ and setting $\hat{\chi}(\gamma_0)=1$, $\hat{\chi}(\gamma_1) = \tau$ such that $\operatorname{Im}(\tau)>0$,
            the period map gives $\chi : Q^{\perp} \rightarrow \BC \mod \BZ+\BZ\tau$.
            Then, $\chi$ takes values in a torus, 
            and the parameters controlling the locations of points to be blown-up to obtain $S$ from $\BP^2$ or $\BP^1 \times \BP^1$ are related to $\chi(\alpha_i)$ through elliptic functions parametrising $D_{\red}$. 
            We will illustrate this in \Cref{ex:rootvarsweierstrass}.
    \end{description}
\begin{remark}
    In the literature, the term `root variables' sometimes refers to the parameters $a_i\in \BC$ and sometimes to the values $\chi(\alpha_i)$  of the period map as elements of $\BC \mod \hat{\chi}(H_1(D_{\red},\BZ))$.
    For the sake of precision, in these notes we call $a_i\in \BC$ \textit{root variable parameters}, and reserve the term root variables for $\chi(\alpha_i)$.
    For examples of explicit calculation of the period map leading to parameters in the form of \Cref{eq:rootvaradd,eq:rootvarmult}, see \cite[Sec. 5]{Sakai2001} or, e.g. \cite[Sec. 2.2]{DzhamayTakenawa} for the additive case and \cite[Sec. 3.1.3]{DzhamayKnizel} for the multiplicative case. We will illustrate the elliptic case in \Cref{ex:rootvarsweierstrass}, following \cite[Sec. 5]{Sakai2001}.
    For the elliptic case, we have used the Weierstrass parametrisation as is done in \cite{Sakai2001,MurataSakaiYoneda}, but it is possible to use other elliptic functions, see \cite{YamadaElliptic,KNY,CarsteaDzhamayTakenawa2017}.
\end{remark}
\begin{example} \label{ex:rootvarsweierstrass}
    Let $S$ be a Sakai surface of elliptic type $\mathcal{R}=A_0^{(1)}$ and assume without loss of generality\footnote{
    From \Cref{prop:rankPic10}, there always exists a morphism $\pi : S \rightarrow \BP^2$. Then $D$ will be the strict transform of a cubic curve passing through $b_1,\dots,b_9$, which can always be put into Weierstrass form via a change of coordinate.} that $S=\Bl_{b_9}\cdots\Bl_{b_1} \BP^2$, where $b_1,\dots,b_9$ are points\footnote{ Some points could be infinitely near, but in such case they must lie on the strict transform of the cubic in order to give $S$ of elliptic type.} on the Weierstrass cubic curve given, for some $g_2,g_3\in\BC$, by
    \begin{equation} \label{eq:weierstrasscubicexample}
         V(4 x_1^3 - x_0x_2^2-g_2 x_0^2x_1 - g_3 x_0^3 )\subset \BP^2,
    \end{equation}
    not lying simultaneously on any other cubic. 
    The curve \eqref{eq:weierstrasscubicexample} can be parametrised by the Weierstrass $\wp$ function according to $[x_0:x_1:x_2]=\left[1:\wp(z;g_2,g_3):\wp'(z;g_2,g_3)\right]$, so the nine points $b_1,\dots,b_9$ can be written as
    \begin{equation} \label{eq:pointlocationselliptic}
        b_i : [x_0:x_1:x_2]=\left[1:\wp(\theta_i;g_2,g_3):\wp'(\theta_i;g_2,g_3)\right]\!,
    \end{equation}
    for some $\theta_1,\dots,\theta_9\in\BC/\BZ+\BZ \tau$.

    The symmetry root lattice $Q^{\perp}$ is of type $E_8^{(1)}$, and we can take the basis of simple roots $\alpha_0,\dots,\alpha_8$ given by \Cref{eq:E8simplerootbasis} in the proof of \Cref{prop:E8latticeinPic}.
    For the simple roots of the form $\alpha_i=E_i-E_{i+1}$, $i=1,\dots,7$, we have
    \begin{equation}
        \chi(\alpha_i)=\theta_i-\theta_{i+1} \mod \BZ+\BZ\tau,
    \end{equation}
     where $\BZ+\BZ\tau$ is the period lattice of $\wp(z;g_2,g_3)$.
     Let us explain this.
    The two-form $\omega$ on $S$ giving $D=-\divi \omega$ can be chosen to be that given in coordinates $x=\frac{x_1}{x_0}$, $y=\frac{x_2}{x_0}$ by
    \begin{equation*}
        \frac{dx \wedge dy}{y^2-4 x^3+g_2 x + g_3}.
    \end{equation*}
    The values of $\hat{\chi}$ on $Q^{\perp}$ are computed by integrating the holomorphic 1-form on $D$, given by 
    \begin{equation}
        \operatorname{Res}_D \omega = \frac{1}{2\pi i}\frac{dx}{y}.
    \end{equation}        
    Via the parametrisation of the curve \eqref{eq:weierstrasscubicexample}, the holomorphic 1-form pulls back under the isomorphism $\BC/\BZ+\BZ\tau \simeq D$ to $\frac{1}{2 \pi i}dz$. 
    Further, $E_i\cap D$ and $E_{i+1}\cap D$ correspond to $z=\theta_i$ and $z=\theta_{i+1}$ respectively.
    Then, formula \eqref{residueintegral} gives
    \begin{align*}
    \chi(\alpha_i)&=\chi(E_i-E_{i+1}) =2 \pi i \int_{D \cap E_{i+1}}^{D \cap E_i} \operatorname{Res}_{D} \omega \\
    &= \int_{\theta_{i+1}}^{\theta_i} d z \mod \BZ+\BZ\tau 
    = \theta_i-\theta_{i+1}\mod \BZ+\BZ\tau.  
     \end{align*}
    Root variables corresponding to roots not of the form $E_i-E_j$ can be calculated similarly. 
    In particular, for the only simple root from the basis in \Cref{eq:E8simplerootbasis} which is not of the form $E_i-E_j$, namely $\alpha_8=H-E_1-E_2-E_3$, we have
    $\chi(\alpha_8)=\theta_1+\theta_2+\theta_3 \mod \BZ+\BZ \tau$.
    Recalling the expressions \eqref{eq:pointlocationselliptic} for the locations of the points $b_i$ in terms of $\theta_j$'s, we note that these are related to $\chi(\alpha_i)$ through the Weierstrass $\wp$ function.
\end{example}

\subsubsection{Families of Sakai surfaces and Cremona action} \label{subsec:familiesofSakaisurfacesandCremonaaction}

In order to formally define a discrete Painlev\'e equation, we consider the root variables as parameters for a \emph{family} of Sakai surfaces of surface type $\mathcal{R}$. 
This family will be parametrised by the space  of values of root variables $\mathscr{A}$. This is  a complex manifold given as a subset $\mathscr{A}\subset \left(\BC\mod \hat{\chi}(H_1(D_{\red},\BZ))\right)^{\times n}$, where $n$ and $\hat{\chi}(H_1(D_{\red},\BZ))$ are determined by the surface type $\mathcal{R}$, see \Cref{def:rootvariables}.

In \cite{Sakai2001}, the construction of discrete Painlev\'e equations from families of Sakai surfaces was described as taking a Sakai surface $S$ of surface type $\mathcal{R}$, then constructing an action of $\Cr(S)$ by isomorphisms on a family $\mathcal{S}$ of Sakai surfaces of the same type, i.e. isomorphisms between different surfaces in the family. 

One way to view this action as `realising' $\Cr(S)$ via pushforwards of isomorphisms, is to specify an identification of all of the Picard groups of surfaces in the family with that of $S$. 
This is often done, sometimes implicitly,   in the literature, and we will take this opportunity to present one way of spelling out the details.

\begin{definition} \label{def:familyofSakaisurfaces}
    For a given surface type $\mathcal{R}$, a family of Sakai surfaces parametrised by root variables in $\mathscr{A}$   is a family 
    \begin{equation}
        \mathcal{S} \rightarrow \mathscr{A},
    \end{equation}
    with fibre $S_{\boldsymbol{a}}$ over $\boldsymbol{a} \in \mathscr{A}$ being a Sakai surface of type $\mathcal{R}$, with additional data of 
    \begin{itemize}
        \item morphisms $\varepsilon_{\boldsymbol{a}}:S_{\boldsymbol{a}} \rightarrow \BP^2$, for each $\boldsymbol{a}\in\mathscr{A}$, varying holomorphically with respect to $\boldsymbol{a}$;
        \item the geometric basis $\mathcal{E}_{\boldsymbol{a}}$ for $\Pic(S_{\boldsymbol{a}})$ associated with $\varepsilon_{\boldsymbol{a}}$, which we write as
        \begin{equation*}
             \Pic(S_{\boldsymbol{a}}) = \langle H^{(\boldsymbol{a})}, E^{(\boldsymbol{a})}_1,\ldots,E^{(\boldsymbol{a})}_9 \rangle_{\BZ};
        \end{equation*} 
        \item an identification, provided by the geometric bases coming from the $\varepsilon_{\boldsymbol{a}}$, of all the Picard groups $\Pic(S_{\boldsymbol{a}})$ for $\boldsymbol{a}\in \mathscr{A}$, into the single lattice
        \begin{equation*}
            \Pic_{\mathcal{S}}:= \langle H, E_1,\ldots,E_9 \rangle_{\BZ},
        \end{equation*}
        which is isomorphic to the Lorentzian lattice $\Lambda_{10}$, see \Cref{prop:E8latticeinPic}.
        This identification is via the isomorphisms
        \begin{equation} 
            \begin{tikzcd}[row sep=tiny]
                 \Pic_{\mathcal{S}} \arrow[r,"\iota_{\boldsymbol{a}} "]&\Pic(S_{\boldsymbol{a}})\\ 
                  H  \arrow[r,mapsto]     &H^{(\boldsymbol{a})} \\
                  E_i \arrow[r,mapsto]   &E_i^{(\boldsymbol{a})}.
            \end{tikzcd} 
        \end{equation}
        Note that $\Pic_{\mathcal{S}}$ is equipped with a symmetric bilinear form inherited from the intersection form on $\Pic(S_{\boldsymbol{a}})$ via $\iota_{\boldsymbol{a}}$, which we denote by the same symbol;
        \item elements $D_i\in \Pic_{\mathcal{S}}$ such that $D_i^{(\boldsymbol{a})} = \iota_{\boldsymbol{a}}(D_i)$, with $D^{(\boldsymbol{a})}=\sum_i m_i D^{(\boldsymbol{a})}_i\in|-K_{S_{\boldsymbol{a}}}|$ being the unique effective anti-canonical divisor of $S_{\boldsymbol{a}}$;
        \item a basis $\Pi=\Set{\alpha_0,\dots,\alpha_n} \subset \Pic_{\mathcal{S}}$ of simple roots for $$Q^{\perp} = \Set{\! F \in \Pic_{\mathcal{S}} ~|~ F.D_i=0 \text{ for all } i\!}\!;$$
        \item rational two-forms $\omega_{\boldsymbol{a}}$ on $S_{\boldsymbol{a}}$ such that $-\divi \omega_{\boldsymbol{a}} = D^{(\boldsymbol{a})}\in|-K_{S_{\boldsymbol{a}}}|$, varying holomorphically  with respect to $\boldsymbol{a}$, defining period maps $\chi_{\boldsymbol{a}}$ on $\iota_{\boldsymbol{a}}(Q^{\perp})\subset \Pic(S_{\boldsymbol{a}})$;
    \end{itemize}
    such that the root variables of $S_{\boldsymbol{a}}$ for the basis $\Set{\!\iota_{\boldsymbol{a}}(\alpha_0),\dots,\iota_{\boldsymbol{a}}(\alpha_n)\!}$ of $\iota_{\boldsymbol{a}}(Q^{\perp})$ with respect to the period map $\chi_{\boldsymbol{a}}$ are $\boldsymbol{a} \in \mathscr{A}$.
\end{definition}

\begin{remark} \label{rem:extraparameter}
    For surface types $\mathcal{R}$ associated with differential Painlev\'e equations, there is an \emph{extra parameter} which plays the role of the independent variable in the differential Painlev\'e equation, see \Cref{rem:p1surface}.  
    In such cases we consider this among the parameters in $\mathscr{A}$, in addition to the values of the period map on simple roots.
\end{remark}

The following is a rephrasing of collected results of \cite{Sakai2001}, with details spelt out in line with \Cref{def:familyofSakaisurfaces}.
\begin{theorem}[\cite{Sakai2001}]  \label{thm:Cremonaaction}
    For each surface type $\mathcal{R}$, there is a family $\mathcal{S}\rightarrow \mathscr{A}$ of Sakai surfaces of   type $\mathcal{R}$, together with an action of $W(\mathcal{R}^{\perp})\rtimes\Aut(\mathcal{R}^{\perp})$, such that an element $w\in W(\mathcal{R}^{\perp})\rtimes\Aut(\mathcal{R}^{\perp})$ acts as follows
    \begin{itemize}
        \item on $\Pic_{\mathcal{S}}$ by $w: \Pic_{\mathcal{S}} \rightarrow \Pic_{\mathcal{S}}$ defined in the same way 
        \footnote{For cases when $\mathcal{R}^{\perp}$ has standard root lengths, simple reflections $r_i$ act according to the reflection formula $r_i(\lambda) = \lambda + (\lambda.\alpha_i)\alpha_i$, where $\lambda\in\Pic_{\mathcal{S}}$,  and $\Aut(\mathcal{R}^{\perp})$ acts as specified on a case-by-case basis in \cite{Sakai2001}.}
        as in \Cref{thm:cremonaisometries},
        \item on $\mathcal{S}$ by isomorphisms $\tilde{\varphi}_{\boldsymbol{a}}^{(w)} : S_{\boldsymbol{a}}\rightarrow S_{\bar{\boldsymbol{a}}}$, such that $\left(\tilde{\varphi}_{\boldsymbol{a}}^{(w)}\right)^* \omega_{\bar{\boldsymbol{a}}}=\omega_{\boldsymbol{a}}$, inducing  $w$ by pullback, i.e. for every $\boldsymbol{a}\in\mathscr{A}$, the following diagram commutes:
        \[
        \begin{tikzcd}
            \Pic_{\mathcal{S}} \arrow[d, "\iota_{\boldsymbol{a}}"']  & \Pic_{\mathcal{S}} \arrow[d, "\iota_{\bar{\boldsymbol{a}}}"] \arrow[l, "w"'] \\
            \Pic(S_{\boldsymbol{a}}) &  \Pic(S_{\bar{\boldsymbol{a}}}), \arrow[l,swap, "\left(\tilde{\varphi}_{\boldsymbol{a}}^{(w)}\right)^* "']
        \end{tikzcd}
        \]
        \item on $\mathscr{A}$ by transformations $\boldsymbol{a}\mapsto \bar{\boldsymbol{a}}$ given by
    \begin{equation} \label{eq:rootvarevol}
        \chi_{\bar{\boldsymbol{a}}}\left( \iota_{\bar{\boldsymbol{a}}}(\alpha_i)\right) = \chi_{\boldsymbol{a}} \left( \iota_{\boldsymbol{a}} \left(w(\alpha_i)\right)\right).
    \end{equation}
    \end{itemize} 
    This action is called the \emph{Cremona action} of $W(\mathcal{R}^{\perp})\rtimes\Aut(\mathcal{R}^{\perp})$.
\end{theorem}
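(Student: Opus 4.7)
The plan is to combine Sakai's description of Cremona isometries (Theorem~\ref{thm:cremonaisometries}) with a Torelli-type theorem for Sakai surfaces \cite[Theorem~25]{Sakai2001} to realise each $w\in W(\mathcal{R}^{\perp})\rtimes\Aut(\mathcal{R}^{\perp})$ as a concrete change of blowing-down structure varying holomorphically across the family.

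First, I would construct the family $\mathcal{S}\to\mathscr{A}$ explicitly. Building on Proposition~\ref{prop:rankPic10}, for each surface type $\mathcal{R}$ one fixes normal-form blow-up data: nine points of $\mathbb{P}^2$ (or of $\mathbb{P}^1\times\mathbb{P}^1$, cf. Remark~\ref{rem:P2vsP1P1compactification}), possibly infinitely near, whose coordinates are explicit holomorphic functions of $\boldsymbol{a}\in\mathscr{A}$ chosen so that the resulting Sakai surface $S_{\boldsymbol{a}}$ is of type $\mathcal{R}$. This determines the blow-down morphism $\varepsilon_{\boldsymbol{a}}$, the associated geometric basis $\mathcal{E}_{\boldsymbol{a}}$, and the identification $\iota_{\boldsymbol{a}}$. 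One then selects a holomorphically varying rational two-form $\omega_{\boldsymbol{a}}$ with $-\divi\omega_{\boldsymbol{a}}=D^{(\boldsymbol{a})}$, normalised so that the period map satisfies $\chi_{\boldsymbol{a}}(\iota_{\boldsymbol{a}}(\alpha_i))=a_i$ on the fixed basis $\Pi\subset Q^{\perp}$.

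Next, given $w\in W(\mathcal{R}^{\perp})\rtimes\Aut(\mathcal{R}^{\perp})$, push it forward to a lattice automorphism $\tilde{w}_{\boldsymbol{a}}:=\iota_{\boldsymbol{a}}\circ w\circ\iota_{\boldsymbol{a}}^{-1}$ of $\Pic(S_{\boldsymbol{a}})$. By Theorem~\ref{thm:cremonaisometries}, this is a Cremona isometry of $S_{\boldsymbol{a}}$, so it preserves the intersection form, the canonical class and effectiveness. The image $\mathcal{E}'_{\boldsymbol{a}}:=\tilde{w}_{\boldsymbol{a}}(\mathcal{E}_{\boldsymbol{a}})$ is again a geometric basis: the class $H':=\tilde{w}_{\boldsymbol{a}}(H^{(\boldsymbol{a})})$ satisfies $(H')^2=1$ and $H'.K_{S_{\boldsymbol{a}}}=-3$, and its linear system defines a new birational morphism $\varepsilon'_{\boldsymbol{a}}\colon S_{\boldsymbol{a}}\to\mathbb{P}^2$ contracting exceptional curves in the classes $\tilde{w}_{\boldsymbol{a}}(E^{(\boldsymbol{a})}_i)$. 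Define the parameter evolution by~\eqref{eq:rootvarevol}; the $W$-invariance of $(\,\cdot\,|\,\cdot\,)$ and the fact that $\chi_{\boldsymbol{a}}\circ\iota_{\boldsymbol{a}}$ matches the root variables ensure that the $a_i$-components of $\bar{\boldsymbol{a}}$ coincide with the root variables of $S_{\boldsymbol{a}}$ computed with the new blowing-down structure $(\varepsilon'_{\boldsymbol{a}},\mathcal{E}'_{\boldsymbol{a}})$ and the unchanged two-form $\omega_{\boldsymbol{a}}$.

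The Torelli theorem for Sakai surfaces \cite[Theorem~25]{Sakai2001} then yields a \emph{unique} isomorphism $\tilde{\varphi}^{(w)}_{\boldsymbol{a}}\colon S_{\boldsymbol{a}}\to S_{\bar{\boldsymbol{a}}}$ that matches the blowing-down structures and satisfies $(\tilde{\varphi}^{(w)}_{\boldsymbol{a}})^*\omega_{\bar{\boldsymbol{a}}}=\omega_{\boldsymbol{a}}$. The commutativity of the diagram relating $\iota_{\boldsymbol{a}}$, $\iota_{\bar{\boldsymbol{a}}}$, $w$ and $(\tilde{\varphi}^{(w)}_{\boldsymbol{a}})^*$ is then tautological from the construction of $\mathcal{E}'_{\boldsymbol{a}}$. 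The group action property $\tilde{\varphi}^{(w_1w_2)}_{\boldsymbol{a}}=\tilde{\varphi}^{(w_1)}_{w_2\cdot\boldsymbol{a}}\circ\tilde{\varphi}^{(w_2)}_{\boldsymbol{a}}$ follows from the uniqueness in the Torelli statement together with the fact that $w\mapsto\tilde{w}_{\boldsymbol{a}}$ is a homomorphism.

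The main obstacle is twofold. First, one must extend Theorem~\ref{thm:cremonaisometries} to the exceptional surface types $\mathcal{R}\in\{A_6^{(1)},A_7^{(1)},A_7^{(1)'},A_8^{(1)},D_7^{(1)},D_8^{(1)}\}$, where $\mathcal{R}^{\perp}$ has non-standard root lengths and the realisation of reflections on $\Pic(S_{\boldsymbol{a}})$ is subtler; this requires a case-by-case replacement for formula~\eqref{eq:refformulaPic} and care with classes of nodal curves, cf. Definition~\ref{def:nodalcurve}. Second, one needs to verify that the assignment $\boldsymbol{a}\mapsto\tilde{\varphi}^{(w)}_{\boldsymbol{a}}$ is holomorphic in $\boldsymbol{a}$; the cleanest way is to reduce to checking this on a set of generators (the simple reflections $r_i$ and the Dynkin automorphisms generating $\Aut(\mathcal{R}^{\perp})$) and to write down explicit birational formulas for each of these in the chosen normal form, after which holomorphic dependence on $\boldsymbol{a}$ is manifest.
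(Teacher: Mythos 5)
The paper gives no proof of this statement: it is presented explicitly as ``a rephrasing of collected results of \cite{Sakai2001}'', so the comparison here is with Sakai's original argument rather than with anything in the text. Your proposal reconstructs exactly that route --- explicit normal-form families parametrised by root variables, realisation of each $w$ as a Cremona isometry and hence as a change of blowing-down structure, the Torelli-type theorem \cite[Th.~25]{Sakai2001} to produce the unique isomorphism $S_{\boldsymbol{a}}\to S_{\bar{\boldsymbol{a}}}$ compatible with $\omega$, and uniqueness to get the cocycle/group-action property --- and you correctly flag the two genuine points of care (the exceptional surface types with non-standard root lengths, and holomorphic dependence checked on generators). The only step stated a little too quickly is the claim that the image $\tilde{w}_{\boldsymbol{a}}(\mathcal{E}_{\boldsymbol{a}})$ of a geometric basis under a Cremona isometry is again a geometric basis; the numerical conditions $(H')^2=1$, $H'.K_{S_{\boldsymbol{a}}}=-3$ alone do not suffice, and one must invoke the preservation of effectiveness (and the resulting nefness and base-point-freeness of $H'$, as in Sakai or Dolgachev--Ortland) to conclude that $|H'|$ induces a birational morphism to $\BP^2$ contracting the classes $\tilde{w}_{\boldsymbol{a}}(E_i^{(\boldsymbol{a})})$. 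With that caveat made explicit, the proposal is a faithful account of the intended proof.
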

The Cremona action realises the groups $\Cr(S_{\boldsymbol{a}})$, for $\boldsymbol{a}\in\mathscr{A}$, in the following way \cite[Sec. 6.2]{Sakai2001}.
We can regard $S_{\boldsymbol{a}}$ and $S_{\bar{\boldsymbol{a}}}$ as the same surface, but with root variables defined using $\Set{\iota_{\boldsymbol{a}}(\alpha_0),\dots,\iota_{\boldsymbol{a}}(\alpha_n)}$ and $\Set{\iota_{\boldsymbol{a}}\circ w(\alpha_0),\dots,\iota_{\boldsymbol{a}}\circ w(\alpha_n)}$, respectively. These are $\Pi$ written in the geometric bases $\mathcal{E}_{\boldsymbol{a}}$ and $w(\mathcal{E}_{\boldsymbol{a}})$.
Then, we have a change of blowing-down structure from $\mathcal{E}_{\boldsymbol{a}}$ to $w(\mathcal{E}_{\boldsymbol{a}})$, given by $\varphi_{\boldsymbol{a}}^{(w)} = \varepsilon_{\bar{\boldsymbol{a}}}\circ\tilde{\varphi}_{\boldsymbol{a}}^{(w)}\circ \varepsilon_{\boldsymbol{a}}^{-1} \in \Bir (\BP^2)$:
\begin{equation} \label{eq:changeofbdsdiagram}
\begin{tikzcd}
        & S_{\boldsymbol{a}} \arrow[dl, swap, "\varepsilon_{\boldsymbol{a}}"] \arrow[dr, "\varepsilon_{\bar{\boldsymbol{a}}} \circ \tilde{\varphi}_{\boldsymbol{a}}^{(w)}"] &  \\
        \BP^2 \arrow[rr, dashed, "\varphi_{\boldsymbol{a}}^{(w)}"] & & \BP^2. 
    \end{tikzcd}
\end{equation}

\begin{remark}
    The parameter evolution in \Cref{eq:rootvarevol} is dictated by $w$, which can be seen either in terms of $\bar{\boldsymbol{a}}$ being root variables for $\Pi$ written in the basis $w(\mathcal{E}_{\boldsymbol{a}})$, or at the level of the isomorphisms $S_{\boldsymbol{a}}\rightarrow S_{\bar{\boldsymbol{a}}}$ as follows, see \cite[Rem. 3.1]{DzhamayTakenawa}.
    Suppose $\psi : S\rightarrow S'$ is an isomorphism between two generalised Halphen surfaces $S,S'$ with rational 2-forms $\omega, \omega'$ such that $-\divi \omega = D\in |-K_S|$ and $-\divi \omega' = D'\in|-K_{S'}|$, and period maps $\chi_{S}$, $\chi_{S'}$ defined with respect to $(D,\omega)$ and $(D',\omega')$ respectively.
    Then, if $\psi^*\omega'=\omega$, the definition of the period map guarantees that when $\alpha\in Q^{\perp}\subset \Pic(S)$ is such that $\psi_*(\alpha) =\alpha'$, we have $\chi_{S'}(\alpha') = \chi_S(\alpha)$.
    By applying this to the situation in \Cref{thm:Cremonaaction}, we have for $\alpha_i\in Q^{\perp}\subset \Pic_{\mathcal{S}}$ that
    \begin{equation}
    \begin{aligned}        
         \chi_{\bar{\boldsymbol{a}}} \left( \iota_{\bar{\boldsymbol{a}}}(\alpha_i)\right) 
        &= \chi_{\bar{\boldsymbol{a}}} \left( (\iota_{\bar{\boldsymbol{a}}}\circ w^{-1}) \left(w(\alpha_i)\right)\right) \\
        &= \chi_{\bar{\boldsymbol{a}}} \left( \left( (\tilde{\varphi}_{\boldsymbol{a}}^{(w)})_*\circ \iota_{\boldsymbol{a}}\right) \left(w(\alpha_i)\right)\right)
        = \chi_{\boldsymbol{a}} \left( \iota_{\boldsymbol{a}} \left(w(\alpha_i)\right)\right)\!.
    \end{aligned}
    \end{equation}
\end{remark}

\begin{prop} \label{prop:rootvarevolution}
        With the notation of \Cref{def:familyofSakaisurfaces} and \Cref{thm:Cremonaaction}, let $M\in \GL(n+1,\BZ)$ be the matrix representing the restriction of $w$ to $Q^{\perp}$ with respect to the basis $\Set{\alpha_0,\dots,\alpha_n}$, so $w(\alpha_i)=\sum_{j=0}^n M_{ij} \alpha_j$.  Then, the evolution of root variable parameters, in the additive and multiplicative cases, can be written as 
\begin{align*}
    &\bar{a}_i  = \chi_{\bar{\boldsymbol{a}}} \left( \iota_{\bar{\boldsymbol{a}}}(\alpha_i)\right) = \chi_{\boldsymbol{a}} ( \iota_{{\boldsymbol{a}}} (\textstyle\sum_{j=0}^{n} M_{ij} \alpha_j) ) = \sum_{j=0}^n M_{ij}a_j &&(\text{additive}),  \\
    &\bar{a}_i  = e^{\chi_{\bar{\boldsymbol{a}}} \left( \iota_{\bar{\boldsymbol{a}}}(\alpha_i)\right)} = e^{\chi_{\boldsymbol{a}} ( \iota_{{\boldsymbol{a}}} (\sum_{j=0}^n M_{ij} \alpha_j) )} = \prod_{j=0}^n a_j^{M_{ij}} &&(\text{multiplicative}).  
\end{align*}
In the elliptic case, the evolution is described in terms of the group law on the elliptic curve, or of the  addition on a torus, see \cite{CarsteaDzhamayTakenawa2017,EguchiTakenawa,KMMOY}.
This covariant correspondence between the actions on $\Pic_{\mathcal{S}}$ and on the root variables is the reason why we have required $w$ to be induced by pullback rather than pushforward in \Cref{thm:Cremonaaction}. However, the opposite convention is sometimes used. 

\end{prop}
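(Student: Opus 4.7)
The plan is to proceed by a direct calculation, invoking the $\BZ$-linearity of both $\iota_{\boldsymbol{a}}$ and the period map $\chi_{\boldsymbol{a}}$. The key input is the formula for parameter evolution stated in \Cref{thm:Cremonaaction}, namely
\[
    \chi_{\bar{\boldsymbol{a}}}\!\left(\iota_{\bar{\boldsymbol{a}}}(\alpha_i)\right) = \chi_{\boldsymbol{a}}\!\left(\iota_{\boldsymbol{a}}(w(\alpha_i))\right)\!,
\]
which is the only nontrivial structural ingredient; everything else is formal manipulation.

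First I would substitute the expansion $w(\alpha_i)=\sum_{j=0}^{n}M_{ij}\alpha_j$ into the right-hand side and use that $\iota_{\boldsymbol{a}}$ is an isomorphism of $\BZ$-modules, so that $\iota_{\boldsymbol{a}}(\sum_j M_{ij}\alpha_j) = \sum_j M_{ij}\iota_{\boldsymbol{a}}(\alpha_j)$. Next I would use that $\chi_{\boldsymbol{a}}$ is a $\BZ$-linear map on $\iota_{\boldsymbol{a}}(Q^{\perp})$, with values in the appropriate quotient of $\BC$, to pass to
\[
    \chi_{\boldsymbol{a}}\!\left(\iota_{\boldsymbol{a}}\!\left(\textstyle\sum_{j=0}^{n}M_{ij}\alpha_j\right)\right) = \sum_{j=0}^{n}M_{ij}\,\chi_{\boldsymbol{a}}\!\left(\iota_{\boldsymbol{a}}(\alpha_j)\right)\!.
\]
Combining this with $\chi_{\boldsymbol{a}}(\iota_{\boldsymbol{a}}(\alpha_j))=a_j$ (additive case) or $=\log a_j$ (multiplicative case, where $a_j = e^{\chi_{\boldsymbol{a}}(\iota_{\boldsymbol{a}}(\alpha_j))}$) yields the two claimed formulas, respectively as the sum $\bar a_i = \sum_j M_{ij}a_j$ and the product $\bar a_i = \prod_j a_j^{M_{ij}}$ (where the latter uses that the exponential turns a $\BZ$-linear combination into a monomial with integer exponents, and the ambiguity in $\log$ is killed upon exponentiating modulo $2\pi i \BZ$).

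There is essentially no obstacle here — the statement is a formal rewriting of \Cref{thm:Cremonaaction} once one uses the $\BZ$-linearity of $\iota_{\boldsymbol{a}}$ and of the period map. The only point worth being careful about is that the root variables in the additive and multiplicative cases take values in $\BC$ and $\BC\mod 2\pi i\BZ$ respectively (via the corresponding quotient of $\BC$ by $\hat\chi(H_1(D_{\red},\BZ))$), so the identities hold in the appropriate quotients; the integrality of the matrix entries $M_{ij}\in\BZ$ is what ensures the multiplicative product $\prod_j a_j^{M_{ij}}$ is well-defined. The elliptic case is handled analogously, but the translation into a parametrisation of the points to be blown-up requires the group law on the anti-canonical elliptic curve, for which I would refer to \cite{CarsteaDzhamayTakenawa2017,EguchiTakenawa,KMMOY} as indicated in the statement.
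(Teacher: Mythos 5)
Your proposal is correct and matches the paper's own treatment: the proposition is essentially proved by the chain of equalities displayed in its statement, which combines the parameter evolution formula \eqref{eq:rootvarevol} from \Cref{thm:Cremonaaction} with the $\BZ$-linearity of $\iota_{\boldsymbol{a}}$ and of the period map $\chi_{\boldsymbol{a}}$, exactly as you describe. Your remarks on the quotients in which the root variables live and on the integrality of $M_{ij}$ making $\prod_j a_j^{M_{ij}}$ well-defined are consistent with the paper's conventions.
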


We say a family $\mathcal{S}\rightarrow \mathscr{A}$, as in \Cref{def:familyofSakaisurfaces}, is \textit{universal} if it exhausts all isomorphism classes, i.e. any Sakai surface of surface type $\mathcal{R}$ is isomorphic to $S_{\boldsymbol{a}}$ for some $\boldsymbol{a}\in\mathscr{A}$.
Note that by the Torelli-type Theorem \cite[Th. 25]{Sakai2001}, $S_{\boldsymbol{a}}$ and $S_{\boldsymbol{a}'}$ from a universal family are isomorphic if and only if $\boldsymbol{a},\boldsymbol{a}'\in \mathscr{A}$ are related by the action of $W(R^{\perp})\rtimes\operatorname{Aut}(R^{\perp})$ as in \Cref{thm:Cremonaaction}, and in this case the isomorphism is unique. 
This is one interpretation of why root variables are sometimes referred to in the discrete Painlev\'e literature as being `gauge-invariant' parameters \cite{DzhamayTakenawa}.

\begin{remark}
    It is important to note that the families constructed in \cite{Sakai2001} and referred to in \Cref{thm:Cremonaaction} admit the maximal possible symmetry group for their surface type, in the sense of realising all Cremona isometries.
    In various contexts, there appear families with constrained root variable parameters.
    Such families have restricted symmetry groups, i.e. they do not admit the action of the whole of $W(\mathcal{R}^{\perp})\rtimes\Aut(\mathcal{R}^{\perp})$. 
\end{remark}

\subsubsection{Discrete Painlev\'e equations}

We are now ready to define discrete Painlev\'e equations, using the setup in \Cref{def:familyofSakaisurfaces} and \Cref{thm:Cremonaaction}.

First note that, from \Cref{subsubsec:affinerootsystems}, we have a subgroup of translations 
\[
T_{\nought{P}(\mathcal{R}^{\perp})} \subset W(\mathcal{R}^{\perp}) \rtimes \Aut(\mathcal{R}^{\perp}),
\]
where $\nought{P}(\mathcal{R}^{\perp})$ is the weight lattice of the underlying finite root system of the symmetry type $\mathcal{R}^{\perp}$.
Discrete Painlev\'e equations were initially said in \cite{Sakai2001} to arise from the Cremona action of translations, but it is now common to define discrete Painlev\'e equations as corresponding to elements of infinite order, not just translations. 

\begin{definition}[Discrete Painlev\'e equation]
    Given a family $\mathcal{S}\rightarrow \mathscr{A}$ of Sakai surfaces of type $\mathcal{R}$ as in \Cref{def:familyofSakaisurfaces} with Cremona action of $W(\mathcal{R}^{\perp})\rtimes \operatorname{Aut}(\mathcal{R}^{\perp})$ as in \Cref{thm:Cremonaaction}, a \emph{discrete Painlev\'e equation} is a family of pairs $(\varphi_{\boldsymbol{a}}, \boldsymbol{a}\mapsto \bar{\boldsymbol{a}})$ parametrised by $\mathscr{A}$, coming from the Cremona action of an element $w$ of \emph{infinite order}, where:
    \begin{itemize}
        \item the association $\boldsymbol{a}\mapsto \bar{\boldsymbol{a}}$ is the action of $w$ on $\mathscr{A}$,
        \item the birational transformation $\varphi_{\boldsymbol{a}} \in \Bir(\BP^2)$ is as in the diagram  \eqref{eq:changeofbdsdiagram}, omitting the element $w$ from the notation.
    \end{itemize} 
 \end{definition}
    
\begin{remark}
    Since $T_{\nought{P}(\mathcal{R}^{\perp})}$ is a finite index subgroup of $W(\mathcal{R}^{\perp})\rtimes \Aut(\mathcal{R}^{\perp})$, any element of infinite order must become a translation after some finite number of iterations.  Elements which are of infinite order but which are not translations are sometimes referred to as \emph{quasi-translations} \cite{yangtranslations}.
    For all types of Sakai surfaces which give rise to discrete Painlev\'e equations, there are infinitely many non-conjugate elements of infinite order. This means that there are infinitely many inequivalent discrete Painlev\'e equations.
    This is implicit in the original definition from Sakai's paper, and was also noticed at the level of affine Weyl groups in \cite{dPsinfinite}.
\end{remark}
 
\begin{remark} \label{rem:translations}
We give some remarks related to the action of translation elements of $W(\mathcal{R}^{\perp})\rtimes \operatorname{Aut}(\mathcal{R}^{\perp})$ on $\Pic_{\mathcal{S}}$.
In the framework of \Cref{subsubsec:affinerootsystems}, the translations are associated with elements of the weight lattice $\nought{P}(\mathcal{R}^{\perp})$ of the underlying finite root system.
This is via the Kac translation formula \eqref{kactranslation} in \Cref{def:kactranslation}, which gives,  for $v \in\nought{P}(\mathcal{R}^{\perp})$, the action of $T_v$ on $\mathfrak{h}^*$.
The restriction of the Kac translation formula to the orthogonal complement of the null root is given by \Cref{translationorth}. 
In particular, this provides sthe action of $T_v$ on the root lattice.

However, it is important to note that, in general\footnote{For $\mathcal{R}^{\perp}$ with standard root lengths, it is still possible to use the formula for the action on $\Pic_{\mathcal{S}}$ of translations associated to elements of the root lattice $\nought{Q}(\mathcal{R}^{\perp})\subset \nought{P}(\mathcal{R}^{\perp})$, which can be expressed in terms of only simple reflections, without the need for Dynkin diagram automorphisms.}, the action of a translation on the whole of $\Pic_{\mathcal{S}}$ will not be given by the Kac translation formula.
Instead, it is computed by first writing the translation as a composition of generators of $W(\mathcal{R}^{\perp})\rtimes\Aut(\mathcal{R}^{\perp})$, and then using their actions on $\Pic_{\mathcal{S}}$ as in \Cref{thm:Cremonaaction}. 

Nevertheless, we still regard these elements as translations associated to elements of $\nought{P}(\mathcal{R}^{\perp})$ according to their action on $Q^{\perp}$. Explicitly, we have
\begin{equation} \label{eq:kactransQ}
    T_v(\lambda) = \lambda - (\lambda.v)\delta, \qquad \lambda \in Q^{\perp}, ~v\in \nought{P}(\mathcal{R}^{\perp}),
\end{equation}
where we have changed $v$ to $-v$ from the assignment of $T_v$ to $v$ in \Cref{def:kactranslation} in order to neaten some expressions when we illustrate translations explicitly in \Cref{sec:KNY}.
\end{remark}

\subsection{Example: discrete Painlev\'e equations on $D_5^{(1)}$ Sakai surfaces} 
\label{sec:KNY}

We now give an explicit example of a family of Sakai surfaces parametrised by root variables, as well as some associated discrete Painlev\'e equations.  
Throughout \Cref{subsec:familiesofSakaisurfacesandCremonaaction} we made definitions using $\BP^2$, but all Sakai surfaces of type from which discrete Painlev\'e equations can be constructed (i.e. $\mathcal{R} \neq A_8^{(1)},D_8^{(1)},E_8^{(1)}$) admit $\BP^1\times\BP^1$ as a minimal model, see \Cref{rem:P2vsP1P1compactification}. 
The framework of \Cref{subsec:familiesofSakaisurfacesandCremonaaction} can be translated to the alternative choice of $\BP^1\times\BP^1$, and we will implicitly do this in this section.

\subsubsection{Family of surfaces}

We construct a family of Sakai surfaces of type $\mathcal{R}=D_5^{(1)}$, $\mathcal{R}^{\perp}=A_3^{(1)}$, parametrised by root variables, following \cite[Section 8.2.18]{KNY}.
Begin with $\BP^1\times\BP^1$ with the atlas as in \Cref{rmk:coorchart} with $q,p$ in place of $x,y$, and perform eight blow-ups of points $b_1,\dots, b_8$ as given in \Cref{fig:KNYblowupdata}, depending on parameters in 
$$\mathscr{A} = \Set{\! \boldsymbol{a} = (a_0,a_1,a_2,a_3;t)\in\BC^4\times \mathcal{T} ~|~ a_{0} + a_{1} + a_{2} + a_{3} = 1 \!}\!,$$ 
where we have included the `extra parameter' $t\in\mathcal{T}=\BC\setminus\{0\}$ in the root variable space $\mathscr{A}$, see \Cref{rem:extraparameter}. This plays the role of the independent variable for $\pain{V}$, which also corresponds to this surface type.
For $\boldsymbol{a}\in\mathscr{A}$, we denote the resulting surface $S_{\boldsymbol{a}}$, with morphism $\varepsilon_{\boldsymbol{a}}:S_{\boldsymbol{a}}\rightarrow\BP^1\times\BP^1$. 
We give a representation of $S_{\boldsymbol{a}}$ in Figure \ref{fig:KNY-soic-5}.

\begin{table}[htb]
    \centering
    {\small 
    \begin{tabular}{c|c}
        $b_i$ & $\pi_{b_i}$ \\\hline 
        $\mathcal{U}_{1,0} \ni b_1:(Q,p)=(0,-t)$\hspace{0.5cm} & $\begin{tikzcd}[row sep =tiny]
            \CW_1^{(0)} \ni (U_1,V_1)\arrow[r,mapsto]& (V_1,-t+ U_1 V_1)\in\mathcal{U}_{1,0}\\[-1em]
            \CW_1^{(1)} \ni(u_1 , v_1)\arrow[r,mapsto]& (u_1 v_1,-t+ v_1)\in\mathcal{U}_{1,0}
        \end{tikzcd}$ \\\hline 
        $E_1 \ni b_2:(U_1,V_1)=(-a_0,0)$\hspace{0.5cm}  & $\begin{tikzcd}[row sep =tiny]
            \CW_2^{(0)} \ni (U_2,V_2)\arrow[r,mapsto]& (-a_0+V_2, U_2 V_2)\in\CW_1^{(0)}\\[-1em]
            \CW_2^{(1)} \ni(u_2 , v_2)\arrow[r,mapsto]& (-a_0+u_2 v_2, v_2)\in\CW_1^{(0)}
        \end{tikzcd}$ \\\hline 
        $\mathcal{U}_{1,0} \ni b_3:(Q,p)=(0,0)$\hspace{0.5cm}  & $\begin{tikzcd}[row sep =tiny]
            \CW_3^{(0)} \ni (U_3,V_3)\arrow[r,mapsto]& (V_3, U_3 V_3)\in\mathcal{U}_{1,0}\\[-1em]
            \CW_3^{(1)} \ni(u_3 , v_3)\arrow[r,mapsto]& (u_3 v_3, v_3)\in\mathcal{U}_{1,0}
        \end{tikzcd}$ \\\hline 
        $E_3 \ni b_4:(U_3,V_3)=(-a_2,0)$\hspace{0.5cm}  & $\begin{tikzcd}[row sep =tiny]
            \CW_4^{(0)} \ni (U_4,V_4)\arrow[r,mapsto]& (-a_2+V_4, U_4 V_4)\in\CW_3^{(0)}\\[-1em]
            \CW_4^{(1)} \ni(u_4 , v_4)\arrow[r,mapsto]& (-a_2+u_4 v_4, v_4)\in\CW_3^{(0)}
        \end{tikzcd}$ \\\hline 
        $\mathcal{U}_{0,1} \ni b_5:(q,P)=(0,0) $\hspace{0.5cm}  & $\begin{tikzcd}[row sep =tiny]
            \CW_5^{(0)} \ni (U_5,V_5)\arrow[r,mapsto]& (V_5,U_5 V_5)\in\mathcal{U}_{0,1}\\[-1em]
            \CW_5^{(1)} \ni(u_5 , v_5)\arrow[r,mapsto]& (u_5 v_5, v_5)\in\mathcal{U}_{0,1}
        \end{tikzcd}$ \\\hline 
        $E_5 \ni b_6:(u_5,v_5)=(a_1,0)$\hspace{0.5cm}  & $\begin{tikzcd}[row sep =tiny]
            \CW_6^{(0)} \ni (U_6,V_6)\arrow[r,mapsto]& (a_1+V_6, U_6 V_6)\in\CW_5^{(1)}\\[-1em]
            \CW_6^{(1)} \ni(u_6 , v_6)\arrow[r,mapsto]& (a_1+u_6 v_6, v_6)\in\CW_5^{(1)}
        \end{tikzcd}$ \\\hline 
        $\mathcal{U}_{0,1} \ni b_7:(q,P)=(1,0)$\hspace{0.5cm}  & $\begin{tikzcd}[row sep =tiny]
            \CW_7^{(0)} \ni (U_7,V_7)\arrow[r,mapsto]& (1+V_7,U_7 V_7)\in\mathcal{U}_{0,1}\\[-1em]
            \CW_7^{(1)} \ni(u_7 , v_7)\arrow[r,mapsto]& (1+u_7 v_7, v_7)\in\mathcal{U}_{0,1}
        \end{tikzcd}$ \\\hline 
        $E_7 \ni b_8:(u_7,v_7)=(a_3,0)$\hspace{0.5cm}  & $\begin{tikzcd}[row sep =tiny]
            \CW_8^{(0)} \ni (U_8,V_8)\arrow[r,mapsto]& (a_3+V_8, U_8 V_8)\in\CW_7^{(1)}\\[-1em]
            \CW_8^{(1)} \ni(u_8 , v_8)\arrow[r,mapsto]& (a_3+u_8 v_8, v_8)\in\CW_7^{(1)}
        \end{tikzcd}$  
    \end{tabular}
    }
    \caption{Blow-up data for the family of $D_5^{(1)}$ surfaces.}
    \label{fig:KNYblowupdata}
\end{table}

\begin{figure}[ht]
	\scalebox{0.8}{\begin{tikzpicture}[>=stealth,basept/.style={circle, draw=red!100, fill=red!100, thick, inner sep=0pt,minimum size=1.2mm}]
	\begin{scope}[xshift=0cm,yshift=0cm]
	\draw [black, line width = 1pt] (-0.4,0) -- (2.9,0)	node [pos=0,left] {
 \small $p=0$ 
 } node [pos=1,right] {\small $p=0$};
	\draw [black, line width = 1pt] (-0.4,2.5) -- (2.9,2.5) node [pos=0,left] {
 \small $P=0$ 
 } node [pos=1,right] {\small $P=0$};
	\draw [black, line width = 1pt] (0,-0.4) -- (0,2.9) node [pos=0,below] {
  \small $q=0$ 
 } node [pos=1,above] {\small $q=0$};
	\draw [black, line width = 1pt] (2.5,-0.4) -- (2.5,2.9) node [pos=0,below] {
  \small $Q=0$ 
 } node [pos=1,above] {\small $Q=0$};
	\node (p3) at (2.5,0) [basept,label={[xshift = -8pt, yshift=-15pt] \small $b_{3}$}] {};
	\node (p4) at (3,0.5) [basept,label={[yshift=-2pt] \small $b_{4}$}] {};
	\node (p1) at (2.5,1) [basept,label={[xshift = -8pt, yshift=-15pt] \small $b_{1}$}] {};
	\node (p2) at (3,1.5) [basept,label={[yshift=-2pt] \small $b_{2}$}] {};
	\node (p5) at (0,2.5) [basept,label={[xshift = 8pt, yshift=-2pt] \small $b_{5}$}] {};
	\node (p6) at (-.5,2) [basept,label={[yshift=-17pt] \small $b_{6}$}] {};
	\node (p7) at (1.5,2.5) [basept,label={[xshift = 8pt, yshift=-2pt] \small $b_{7}$}] {};
	\node (p8) at (1,2) [basept,label={[yshift=-17pt] \small $b_{8}$}] {};
	\draw [red, line width = 0.8pt, ->] (p2) -- (p1);
	\draw [red, line width = 0.8pt, ->] (p4) -- (p3);
	\draw [red, line width = 0.8pt, ->] (p6) -- (p5);
	\draw [red, line width = 0.8pt, ->] (p8) -- (p7);	
	\end{scope}
	\draw [->] (5.5,1)--(3.5,1) node[pos=0.5, below] {$\pi_{b_1}\circ \cdots \circ \pi_{b_{8}}$};
	\begin{scope}[xshift=7.5cm,yshift=0cm]
	\draw [red, line width = 1pt] (-0.4,0) -- (3.5,0)	node [pos=0, left] {\small $H_{p}-E_{3}$};
	\draw [red, line width = 1pt] (0,-0.4) -- (0,2.4) node [pos=0, below] {\small $H_{q}-E_{5}$};
	\draw [blue, line width = 1pt] (-0.2,1.8) -- (0.8,2.8) node [pos=0, left] {\small $E_{5}-E_{6}$};
	\draw [red, line width = 1pt] (-0.1,2.7) -- (0.4,2.2) node [pos=0, above] {\small $E_{6}$};
	\draw [blue, line width = 1pt] (1.2,1.8) -- (2.2,2.8) node [pos=0, xshift=-14pt, yshift=-5pt] {\small $E_{7}-E_{8}$};
	\draw [red, line width = 1pt] (1.6,2.4) -- (2.1,1.9) node [pos=1, below] {\small $E_{8}$}; 
	\draw [blue, line width = 1pt] (0.3,2.6) -- (4.2,2.6) node [pos=1,right] {\small $H_{p} - E_{5} - E_{7}$};
	\draw [blue, line width = 1pt] (3,-0.2) -- (4,0.8) node [pos=1,right] {\small $E_{3} - E_{4}$};
	\draw [red, line width = 1pt] (3.4,0.4) -- (3.9,-0.1) node [pos=1, below] {\small $E_{4}$};
	\draw [blue, line width = 1pt] (3.8,0.3) -- (3.8,3) node [pos=1, above] {\small $H_{q}-E_{1} - E_{3}$};	
	\draw [blue, line width = 1pt] (3,1) -- (4,2) node [pos=1,right] {\small $E_{1} - E_{2}$};
		\draw [red, line width = 1pt] (3.1,1.9) -- (3.6,1.4) node [pos=0, above] {\small $E_{2}$};
	\draw [red, line width = 1pt] (-0.4,1.2) -- (3.5,1.2)	node [pos=0, left] {\small $H_{p}-E_{1}$}; 
	\draw [red, line width = 1pt] (1.4,-0.4) -- (1.4,2.4) node [pos=0, below] {\small $H_{q}-E_{7}$};
	\end{scope}
	\end{tikzpicture}}
	\caption{Configuration of exceptional divisors for the family of $D_5^{(1)}$ surfaces.}
	\label{fig:KNY-soic-5}
\end{figure}
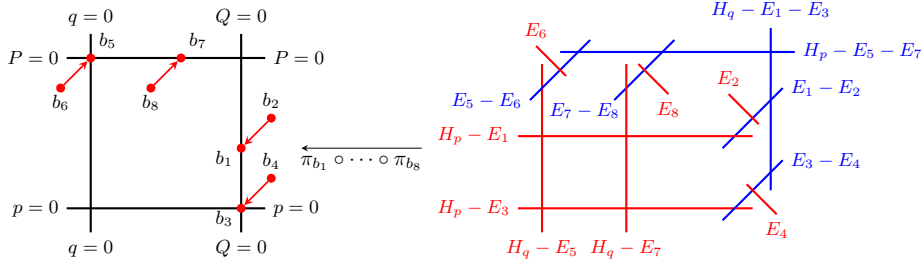

\subsubsection{Root data} \label{subsubsec:rootdata}

With the geometric bases coming from $\varepsilon_{\boldsymbol{a}}$, all the Picard groups $\Pic(S_{\boldsymbol{a}})$, for $\boldsymbol{a}\in\mathscr{A}$ are identified with the lattice 
\begin{equation}
    \Pic_{\mathcal{S}} = \langle H_q,H_p,E_1,\dots,E_8\rangle_{\BZ},
\end{equation}
which is isomorphic to the Lorentzian lattice as in \Cref{prop:E8latticeinPic} with basis $H_q=v_0-v_1$, $H_p=v_0-v_2$, $E_1=v_0-v_1-v_2$, $E_i=v_{i+1}$ for $i=2,\dots,8$.
We take surface and symmetry root bases as in \Cref{fig:d-roots-d5-KNY}	
and \Cref{fig:a-roots-a3-KNY} respectively, and denote the root lattice associated with the symmetry roots by 
\begin{equation} \label{eq:QperpD5example}
    Q^{\perp} = \langle \alpha_0, \alpha_1,\alpha_2,\alpha_3\rangle_{\BZ} \subset \Pic_{\mathcal{S}}.
\end{equation}
The parameters $\boldsymbol{a}\in\mathscr{A}$ are the root variables of $S_{\boldsymbol{a}}$ for the symmetry root basis in \Cref{fig:a-roots-a3-KNY}, using the rational 2-form given in coordinates by $dq\wedge dp$, together with the extra parameter $t$ as in \Cref{rem:extraparameter}.

\begin{figure}[htb]
\begin{equation*}\label{eq:d-roots-d51}			
	\raisebox{-22.1pt}{\scalebox{0.7}{\begin{tikzpicture}[
			elt/.style={circle,draw=black!100,thick, inner sep=0pt,minimum size=2mm}]
		\path 	(-1,1) 	node 	(d0) [elt, label={[xshift=-10pt, yshift = -10 pt] $D_{0}$} ] {}
		        (-1,-1) node 	(d1) [elt, label={[xshift=-10pt, yshift = -10 pt] $D_{1}$} ] {}
		        ( 0,0) 	node  	(d2) [elt, label={[xshift=-10pt, yshift = -10 pt] $D_{2}$} ] {}
		        ( 1,0) 	node  	(d3) [elt, label={[xshift=10pt, yshift = -10 pt] $D_{3}$} ] {}
		        ( 2,1) 	node  	(d4) [elt, label={[xshift=10pt, yshift = -10 pt] $D_{4}$} ] {}
		        ( 2,-1) node 	(d5) [elt, label={[xshift=10pt, yshift = -10 pt] $D_{5}$} ] {};
		\draw [black,line width=1pt ] (d0) -- (d2) -- (d1)  (d2) -- (d3) (d4) -- (d3) -- (d5);
	\end{tikzpicture}}} \qquad
			\begin{alignedat}{2}
			D_{0} &= E_{1} - E_{2}, &\qquad  D_{3} &= H_{p} - E_{5} - E_{7},\\
			D_{1} &= E_{3} - E_{4}, &\qquad  D_{4} &= E_{5} - E_{6},\\
			D_{2} &= H_{q} - E_{1} - E_{3}, &\qquad  D_{5} &= E_{7} - E_{8}.
			\end{alignedat}
\end{equation*}
	\caption{Surface root basis for the family of $D_5^{(1)}$ surfaces.}
	\label{fig:d-roots-d5-KNY}	
\end{figure}
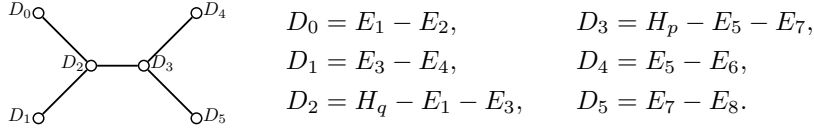

\begin{figure}[ht]
\begin{equation*}\label{eq:a-roots-a31}			
	\raisebox{-22.1pt}{\scalebox{0.7}{\begin{tikzpicture}[
			elt/.style={circle,draw=black!100,thick, inner sep=0pt,minimum size=2mm}]
		\path 	(-1,1) 	node 	(a0) [elt, label={[xshift=-10pt, yshift = -10 pt] $\alpha_{0}$} ] {}
		        (-1,-1) node 	(a1) [elt, label={[xshift=-10pt, yshift = -10 pt] $\alpha_{1}$} ] {}
		        ( 1,-1) node  	(a2) [elt, label={[xshift=10pt, yshift = -10 pt] $\alpha_{2}$} ] {}
		        ( 1,1) 	node 	(a3) [elt, label={[xshift=10pt, yshift = -10 pt] $\alpha_{3}$} ] {};
		\draw [black,line width=1pt ] (a0) -- (a1) -- (a2) --  (a3) -- (a0); 
	\end{tikzpicture}}} \qquad
			\begin{alignedat}{2}
			\alpha_{0} &= H_{p} - E_{1} - E_{2}, &\qquad  \alpha_{3} &= H_{q} - E_{7} - E_{8},\\
			\alpha_{1} &= H_{q} - E_{5} - E_{6}, &\qquad  \alpha_{2} &= H_{p} - E_{3} - E_{4}. 
			\end{alignedat}
\end{equation*}
	\caption{Symmetry root basis of type $A_3^{(1)}$ for the family of $D_5^{(1)}$ surfaces.}
	\label{fig:a-roots-a3-KNY}	
\end{figure}
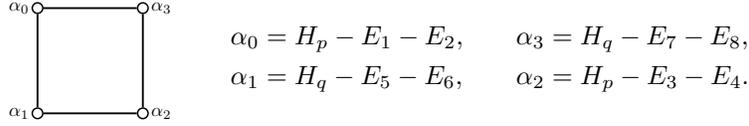

\subsubsection{Extended affine Weyl group $W(A_3^{(1)})\rtimes \Aut(A_3^{(1)})$}

The affine Weyl group of type $\mathcal{R}^{\perp}=A_3^{(1)}$ is 
\begin{equation}
    W(A_3^{(1)}) = \langle r_0, r_1,r_2,r_3\rangle,
\end{equation}
where the simple reflection $r_i$ associated to $\alpha_i$ acts on $\Pic_{\mathcal{S}}$ via the formula \eqref{eq:refformulaPic}.
For the Dynkin diagram automorphisms, we write $\Aut(A_3^{(1)})$ in terms of the generators $\sigma_1$ and $\sigma_2$, which as permutations of simple roots are given by 
\begin{equation}
		\sigma_{1} = (\alpha_{0}\alpha_{3})(\alpha_{1}\alpha_{2}),\quad 
		\sigma_{2} = (\alpha_{0}\alpha_{2})
\end{equation}
and act on $\Pic_{\mathcal{S}}$ by
\begin{equation}
    \sigma_1 = (H_q H_p)(E_1 E_7)(E_2 E_8)(E_3E_5)(E_4 E_6), \quad 
    \sigma_2 = (E_1 E_3)(E_2 E_4),
\end{equation}
where we have again used cycle notation for permutations.
The subgroup of special Dynkin diagram automorphisms, c.f. \Cref{prop:specialdynkinautos}, is  
\begin{equation}
    \Sigma = \langle \rho := \sigma_1 \sigma_2\rangle \cong \BZ / 4\BZ,    
\end{equation} 
which acts on $\Pic_{\mathcal{S}}$ according to $\rho=(H_qH_p)(E_1E_5 E_3 E_7)(E_2 E_6 E_4 E_8)$, and on $Q^{\perp}$ by $\rho=(\alpha_0\alpha_1\alpha_2\alpha_3)$. The weight lattice of the underlying finite root system is given by
\begin{equation}
    \nought{P} = \langle \omega_1,\omega_2,\omega_3\rangle \subset Q^{\perp}\otimes \BQ,
\end{equation}
where the fundamental weights $\omega_{i}$ are given by 
\begin{equation*}\label{A3w}
    \omega_1 = \frac{3}{4} \alpha_1 + \frac{1}{2} \alpha_{2} + \frac{1}{4} \alpha_3, \quad
\omega_2 = \frac{1}{2} \alpha_1 +  \alpha_{2} + \frac{1}{2} \alpha_3, 
\quad
\omega_3 = \frac{1}{4} \alpha_1 + \frac{1}{2} \alpha_{2} + \frac{3}{4} \alpha_3,
\end{equation*}
so $\alpha_i.\omega_j=-\delta_{i,j}$.
The associated translations are given, in terms of the simple reflections and the generator $\rho$ of $\Sigma$, by
\begin{equation}
         T_{\omega_1} = \rho^3 r_{2} r_{3} r_{0}, 
         \quad 
         T_{\omega_2} = \rho^2 r_0 r_3 r_1 r_0, 
         \quad 
         T_{\omega_3} = \rho r_2 r_1 r_0.
\end{equation}
Note that these translations are associated with weights that are not roots, so their action on the whole of $\Pic_{\mathcal{S}}$ cannot be written using the Kac translation formula, as explained in \Cref{rem:translations}.
Their actions on $\Pic_{\mathcal{S}}$ are computed by composing those of the generators.
Nevertheless, their actions on $Q^{\perp}$ are given by the translation formula \eqref{eq:kactransQ}. Explicitly, we have
\begin{equation*}
    T_{\omega_i}(\alpha_0) = \alpha_0 - \delta, \quad T_{\omega_i}(\alpha_i)=\alpha_i+\delta, \quad T_{\omega_i}(\alpha_j)=\alpha_j, \quad \text{for} \quad j\neq i.
\end{equation*}

\subsubsection{Cremona action}
 In what follows, we give the Cremona action of ${W}(A_{3}^{(1)})\rtimes \Aut(A_3^{(1)})$ on the family of surfaces. We express it in the format $w : (\boldsymbol{a},(q,p)) \mapsto (\bar{\boldsymbol{a}},(\bar{q},\bar{p}))$, with $\boldsymbol{a} \mapsto \bar{\boldsymbol{a}}$ being the action on $\mathscr{A}$ and $(q,p)\mapsto(\bar{q},\bar{p})$ being the map $\varphi^{(w)}_{\boldsymbol{a}}=\varepsilon_{\bar{\boldsymbol{a}}}\circ \tilde{\varphi}_{\boldsymbol{a}}^{(w)}\circ \varepsilon_{\boldsymbol{a}}^{-1}:\BP^1\times\BP^1\dashrightarrow\BP^1\times \BP^1$ written in the affine chart $\mathcal{U}_{0,0}$.

\begin{lemma}\label{thm:bir-weyl-d5} 
	The actions of simple reflections $r_{i}\in W(A_3^{(1)})$ are given by
	\begin{alignat*}{2}
		r_{0}&: 
		\left(\begin{matrix} a_{0} & a_{1} \\ a_{2} & a_{3} \end{matrix}\ ;\ t\ ;
		\begin{matrix} q \\ p \end{matrix}\right) 
		&&\mapsto 
		\left(\begin{matrix} -a_{0} & a_{0} + a_{1} \\ a_{2} & a_{0} + a_{3} \end{matrix}\ ;\ t\ ;
		\begin{matrix} \displaystyle q + \frac{a_{0}}{p + t} \\ p \end{matrix}\right)\!, \\
		r_{1}&: \left(\begin{matrix} a_{0} & a_{1} \\ a_{2} & a_{3} \end{matrix}\ ;\ t\ ;
		\begin{matrix} q \\ p \end{matrix}\right)
		&&\mapsto 
		\left(\begin{matrix} a_{0} + a_{1} & -a_{1} \\ a_{1} + a_{2} & a_{3} \end{matrix}\ ;\ t\ ;
		\begin{matrix}  q \\ \displaystyle p - \frac{a_{1}}{q} \end{matrix}\right)\!, \\
		r_{2}&: 
		\left(\begin{matrix} a_{0} & a_{1} \\ a_{2} & a_{3} \end{matrix}\ ;\ t\ ;
		\begin{matrix} q \\ p \end{matrix}\right) 
		&&\mapsto 
		\left(\begin{matrix} a_{0} & a_{1} + a_{2} \\ -a_{2} & a_{2} + a_{3} \end{matrix}\ ;\ t\ ;
		\begin{matrix} \displaystyle q + \frac{a_{2}}{p}\\ p \end{matrix}\right)\!, \\
		r_{3}&: 
		\left(\begin{matrix} a_{0} & a_{1} \\ a_{2} & a_{3} \end{matrix}\ ;\ t\ ;
		\begin{matrix} q \\ p \end{matrix}\right) 
		&&\mapsto 
		\left(\begin{matrix} a_{0}+a_{3} & a_{1} \\ a_{2}+a_{3} & -a_{3} \end{matrix}\ ;\ t\ ;
		\begin{matrix} q \\ \displaystyle  p - \frac{a_{3}}{q-1} \end{matrix}\right)\!.
	\end{alignat*}	
    The actions of the generators $\sigma_1,\sigma_2$ of $\Aut(A_3^{(1)})$ are given by 
	\begin{alignat*}{2}
		\sigma_{1}&: 
		\left(\begin{matrix} a_{0} & a_{1} \\ a_{2} & a_{3} \end{matrix}\ ;\ t\ ;
		\begin{matrix} q \\ p \end{matrix}\right) 
		&&\mapsto 
		\left(\begin{matrix} a_{3} & a_{2} \\ a_{1} & a_{0}  \end{matrix}\ ;\ -t\ ;
		\begin{matrix} \displaystyle -\frac{p}{ t} \\ q t \end{matrix}\right)\!, \\
		\sigma_{2}&: 
		\left(\begin{matrix} a_{0} & a_{1} \\ a_{2} & a_{3} \end{matrix}\ ;\ t\ ;
		\begin{matrix} q \\ p \end{matrix}\right) 
		&&\mapsto 
		\left(\begin{matrix} a_{2} & a_{1} \\ a_{0} & a_{3}  \end{matrix}\ ;\ -t\ ;
		\begin{matrix} q \\ p + t \end{matrix}\right)\!.
	\end{alignat*}

\end{lemma}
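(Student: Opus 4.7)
The plan is to verify the lemma separately for each generator $r_0, r_1, r_2, r_3, \sigma_1, \sigma_2$, since these generate $W(A_3^{(1)})\rtimes\Aut(A_3^{(1)})$ and Theorem \ref{thm:Cremonaaction} ensures that the Cremona action is multiplicative once defined on generators. For each such $w$, I will separately confirm the parameter evolution $\boldsymbol{a}\mapsto\bar{\boldsymbol{a}}$ and the birational formula for $\varphi^{(w)}_{\boldsymbol{a}}=\varepsilon_{\bar{\boldsymbol{a}}}\circ\tilde{\varphi}^{(w)}_{\boldsymbol{a}}\circ\varepsilon^{-1}_{\boldsymbol{a}}$.

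The parameter evolution follows from Proposition \ref{prop:rootvarevolution} once the action on the symmetry root lattice $Q^\perp$ is computed. For each reflection $r_i$ this uses the formula $r_i(\alpha_j)=\alpha_j+(\alpha_j.\alpha_i)\alpha_i$ together with the $A_3^{(1)}$ intersection data from Figure \ref{fig:a-roots-a3-KNY}; for the diagram automorphisms $\sigma_1,\sigma_2$ it is the specified permutation of simple roots. The evolution of the extra parameter $t$ (which enters the blow-up data of Figure \ref{fig:KNYblowupdata} only through $b_1$) is $\bar{t}=t$ for the four reflections and $\bar{t}=-t$ for $\sigma_1,\sigma_2$, in each case forced by the requirement that the birational map sends $b_1(\boldsymbol{a})$ onto the image point specified by the action on $\Pic_{\mathcal{S}}$.

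For the birational formulas, the diagram automorphisms form the easier case: $\sigma_2$ is a pure translation $p\mapsto p+t$ and $\sigma_1$ is an orientation-compatible swap of the two $\BP^1$-factors. For each, a direct check in the coordinates of Figure \ref{fig:KNYblowupdata} shows that the eight blow-up centres $b_i(\boldsymbol{a})$ are mapped to $b_{\sigma(i)}(\bar{\boldsymbol{a}})$, so the map lifts to an isomorphism $S_{\boldsymbol{a}}\to S_{\bar{\boldsymbol{a}}}$ inducing the required permutation on $\Pic_{\mathcal{S}}$. For each reflection $r_i$, the root $\alpha_i$ is the class of a unique smooth rational $(-2)$-curve on $S_{\boldsymbol{a}}$ which is the strict transform of a specific line on $\BP^1\times\BP^1$ (respectively $p=-t$, $q=0$, $p=0$, $q=1$ for $i=0,1,2,3$); the reflection $r_i$ should act as a fibre-preserving \emph{half-shift} along this line, leading naturally to the stated formulas. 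The two-form condition $d\bar{q}\wedge d\bar{p}=dq\wedge dp$ required by Theorem \ref{thm:Cremonaaction} is immediate from the formulas, since in each case the shift in one coordinate is an exact form in the complementary one.

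The main obstacle is verifying that for each reflection $r_i$ the proposed map lifts to a genuine isomorphism $S_{\boldsymbol{a}}\to S_{\bar{\boldsymbol{a}}}$ with the predicted pullback on $\Pic_{\mathcal{S}}$. Concretely, for each basis element $E_k$ one must check in the relevant charts $\CW_j^{(i)}$ that the strict transform of $E_k$ on $S_{\boldsymbol{a}}$ is mapped to a divisor representing the class $r_i(E_k)$ relative to the blowing-down $\varepsilon_{\bar{\boldsymbol{a}}}$. For instance, under $r_0$ one has $r_0(E_1)=H_p-E_2$, and working in $\CW_1^{(0)}\cong\BA^2_{(U_1,V_1)}$ one computes that the exceptional curve $V(V_1)=E_1(\boldsymbol{a})$ is mapped onto the line $p=-t$ in $\BP^1\times\BP^1$ passing through $b_1(\bar{\boldsymbol{a}})$ and avoiding $b_2(\bar{\boldsymbol{a}})=(a_0,0)$, which is precisely a representative of the class $H_p-E_2(\bar{\boldsymbol{a}})$. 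The remaining exceptional classes, the indeterminacy points of the rational maps, and the three other reflections are handled by analogous routine chart computations.
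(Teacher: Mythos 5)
The paper itself gives no proof of this lemma: the statement is imported from \cite[Section 8.2.18]{KNY}, and the surrounding text only sets up the family and the root data, so there is no in-paper argument to compare yours against line by line. Your overall strategy --- treat each generator separately, obtain the parameter evolution from the action on $Q^{\perp}$ via \Cref{prop:rootvarevolution}, and check that each birational formula lifts to an isomorphism $S_{\boldsymbol{a}}\rightarrow S_{\bar{\boldsymbol{a}}}$ inducing the prescribed lattice map --- is the standard verification and would yield a complete proof if the chart computations were carried out correctly. The parameter evolutions you derive are right, the treatment of $\sigma_1,\sigma_2$ is right, and the check $d\bar{q}\wedge d\bar{p}=dq\wedge dp$ is correct.

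However, the geometric picture you use for the reflections contains two genuine errors. First, for generic $\boldsymbol{a}$ the symmetry root $\alpha_i$ is \emph{not} the class of an effective $(-2)$-curve: for example $\alpha_0=H_p-E_1-E_2$ is effective only when the strict transform of the fibre $p=-t$ passes through $b_2:(U_1,V_1)=(-a_0,0)$, i.e.\ only when $a_0=\chi(\alpha_0)=0$; this is exactly the nodal-curve degeneration excluded for generic root variables (cf.\ \Cref{def:nodalcurve} and \Cref{thm:cremonaisometries}), so the ``half-shift along an effective line'' heuristic is not available. Second, the one sample verification you offer for $r_0$ mismatches curves and classes: the strict transform of the fibre $p=-t$ through $b_1(\bar{\boldsymbol{a}})$ and missing $b_2(\bar{\boldsymbol{a}})$ represents $H_p-E_1$, not $H_p-E_2$, and a direct computation in $\CW_1^{(0)}$ and $\CW_2^{(1)}$ shows that it is the \emph{last} exceptional curve $E_2(\boldsymbol{a})$ that is carried onto this fibre (consistently with $r_0(E_2)=H_p-E_1$), while the curve $V(V_1)$, whose strict transform has class $E_1-E_2$, is sent into the exceptional locus over $b_1(\bar{\boldsymbol{a}})$ (consistently with $r_0(E_1-E_2)=E_1-E_2$). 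The identity $r_0(E_1)=H_p-E_2$ holds only at the level of the reducible total transforms, $(E_1-E_2)+E_2\mapsto(E_1-E_2)+(H_p-E_1)$. These slips must be corrected before the ``routine chart computations'' you defer to can be trusted to close the argument.
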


\subsubsection{Examples of discrete Painlev\'e equations}

\begin{example} \label{example:dPKNY}
From the Cremona action of the translation $T_{\omega_1} T_{\omega_2}^{-1} T_{\omega_3}\in W(A_3^{(1)}) \rtimes \Aut(A_3^{(1)})$, we obtain the discrete Painlev\'e equation
\begin{equation*} \label{eq-KNY-dP}
	\left\{
	\begin{aligned}
		\bar{q} 	&= 1 - q - \frac{a_0}{p+t} - \frac{a_2}{p}, \\
		\bar{p} 	&= - p - t + \frac{a_1}{\bar{q}} + \frac{a_3-1}{\bar{q}-1}, 
	\end{aligned}
	\qquad 
	\begin{aligned}
		\bar{a}_0 &= a_0 + 1, &&\bar{a}_1 = a_1 - 1, \\
		\bar{a}_2 &= a_2 + 1, &&\bar{a}_3 = a_3 - 1.
	\end{aligned}
	\right.
\end{equation*} 
\end{example} 
\begin{example} \label{example:dPSakai}
From the Cremona action of the translation $T_{\omega_3}\in W(A_3^{(1)}) \rtimes \Aut(A_3^{(1)})$
we obtain the discrete Painlev\'e equation
\begin{equation*} \label{eq-Sakai-dP}
	\left\{
	\begin{aligned}
		\bar{q} 	&= \frac{p+t}{t} \left( 1 - \frac{a_0 + a_1}{ a_0 + (p+t)q} \right) \!, \\
		\bar{p} 	&= \frac{1}{1-\bar{q}}\left( a_2 + \frac{(a_0+a_1)p}{t+p - t \bar{q}} \right)\!,  
	\end{aligned}
	\qquad 
	\begin{aligned}
		\bar{a}_0 &= a_0 + 1, &&\bar{a}_1 = a_1, \\
		\bar{a}_2 &= a_2, &&\bar{a}_3 = a_3 - 1.
	\end{aligned}
	\right.
\end{equation*}
\end{example}

\begin{exercise}
    In both \Cref{example:dPKNY,example:dPSakai}, verify that the map 
    \[
    \begin{tikzcd}[row sep =tiny]
        \BP^1 \times \BP^1 \arrow[r,dashed,"\varphi_{\boldsymbol{a}} "]& \BP^1 \times \BP^1 \\
        (q,p) \arrow[r,mapsto]&  (\bar{q},\bar{p}),
    \end{tikzcd}
    \]
    with the parameter evolution $\boldsymbol{a}\mapsto\bar{\boldsymbol{a}}$, lifts to an isomorphism $\tilde{\varphi}_{\boldsymbol{a}} = \varepsilon_{\bar{\boldsymbol{a}}}\circ \varphi_{\boldsymbol{a}}\circ\varepsilon_{\boldsymbol{a}}^{-1} : S_{\boldsymbol{a}}\rightarrow S_{\bar{\boldsymbol{a}}}$.
    Compute the action on $\Pic_{\mathcal{S}}$ induced by $(\tilde{\varphi}_{\boldsymbol{a}})^*$, and its restriction to $Q^{\perp}$ in terms of the basis given in \Cref{fig:a-roots-a3-KNY}. Confirm in each case that this corresponds to the specified translation element of $W(A_3^{(1)})\rtimes\Aut(A_3^{(1)})$.
\end{exercise}


\end{document}